\newcommand\Om{\Omega}
\newcommand\pom{\partial\Omega}
\newcommand\bom{\overline\Omega}
\newcommand\R{\Bbb R}
\newcommand\eps{\varepsilon}
\newcommand\ve{\varepsilon}
\newcommand\p{\partial}
\newcommand\lan{\langle}
\newcommand\ran{\rangle}
\newcommand\beq{\begin{equation}}
\newcommand\eeq{\end{equation}}
\newtheoremstyle{mythm}{1.5ex plus 1ex minus .2ex}{1.5ex plus 1ex minus .2ex}{\kai}{\parindent}{\song\bfseries}{}{1em}{}
\numberwithin{equation}{section}
\newtheorem{theorem}{Theorem}[section]
\newtheorem{lemma}{Lemma}[section]
\newtheorem{example}{Example}[section]
\newtheorem{proposition}{Proposition}[section]
\newtheorem{remark}{Remark} [section]
\newtheorem{corollary}{Corollary}[section]
\begin{document}
\title[Free boundary for the Monge-Amp\`ere equation]
{Regularity of free boundary \\ for the Monge-Amp\`ere obstacle problem}

\author[G. Huang]
{Genggeng Huang}
\address
{School of Mathematical Sciences,
Fudan University,
Shanghai 200433, China.}
\email{genggenghuang@fudan.edu.cn}

\author[L. Tang]
{Lan Tang}
\address
	{School of Mathematics and Statistics, Central China Normal University, Wuhan 430079, China.}
\email{lantang@mail.ccnu.edu.cn}

\author[X.-J. Wang]
{Xu-Jia Wang}
\address
{Centre for Mathematics and Its Applications,
The Australian National University,
Canberra, ACT 0200, Australia}
\email{Xu-Jia.Wang@anu.edu.au}

\thanks{The first author was supported by NNSFC 11871160, 
the second author was supported by NNSFC 11831009 and NNSFC 12171185,
the third author was supported by ARC DP200101084.}

\subjclass[2000]{35J96, 35R35, 35B65.}

\keywords{Monge-Amp\`ere equation, free boundary, regularity.}

\maketitle

\begin{abstract}
In this paper, we prove the regularity of the free boundary in the Monge-Amp\`ere obstacle problem 
$\det D^2 v= f(y)\chi_{\{v>0\}}. $
By duality, the regularity of the free boundary is equivalent to that of the asymptotic cone of the solution to 
the singular Monge-Amp\`ere equation
$\det D^2 u = 1/f (Du)+\delta_0$
at the origin.
We first establish an asymptotic estimate for the solution $u$ near the singular point,
then use a partial Legendre transform to change the Monge-Amp\`ere equation to
a singular,  fully nonlinear elliptic  equation, 
and establish the regularity of solutions to the singular elliptic equation. 
\end{abstract}

\baselineskip16.8pt
\parskip5pt

\section{\bf Introduction}

In this paper we study  the Monge-Amp\`ere obstacle problem
\begin{equation}\label{MAob}
{\begin{split}
\det D^2 v & = f \, \chi_{\{v>0\}}\ \ \text{in}\ \Omega,\\
 v & = {v_0} \ \ \hskip31pt \text{on}\ \pom ,
 \end{split}}
\end{equation}
where $\Omega$ is a bounded domain in the Euclidean space $\Bbb R^n$,
$f, {v_0}$ are positive functions on $\bom$, and $\chi$ is the characteristic function.
Denote $\Gamma :=\p\{v=0\}$ the free boundary. 
The main objective of the paper is to prove the regularity of the free boundary $\Gamma$.

Problem \eqref{MAob} is the Monge-Amp\`ere counterpart of the classical free boundary problem
\begin{equation}\label{Lapob}
{\begin{split}
\Delta v & = f \, \chi_{\{v>0\}}\ \ \text{in}\ \Omega,\\
 v & = {v_0} \ \ \hskip30pt \text{on}\ \pom . 
 \end{split}}
\end{equation}
A central issue for the prototypal obstacle problem \eqref{Lapob} is the regularity of the free boundary $\Gamma=\p\{v=0\}$.
In a seminal work \cite{Ca77}, Caffarelli proved that the free boundary is $C^1$ smooth at regular points; 
and hence is $C^\infty$ smooth and analytic  \cite{KN77}.
Since then the regularity of free boundary problems has been extensively studied.



Regularity of  free boundary problems associated with the Monge-Amp\`ere equation 
has also been studied in a number of papers \cite{CM, ChW, GJM15, GJM16, Lee, LLP, S05}. 
See also \cite{DH2, DL1, Ham} for  a related free boundary problem with the Gauss curvature flow. 
In \cite{S05}, Savin studied problem \eqref{MAob} 
and proved that the free boundary $\Gamma$ is uniformly convex and $C^{1,1}$ smooth.
He also pointed out two other interpretations of the obstacle problem \eqref{MAob},
as a model in the optimal transportation with a Dirac measure 
and in the Monge-Amp\`ere equation with a cone singularity.
In dimensions two, Galvez, Jim\'enez  and Mira  \cite{GJM15} proved that the free boundary $\Gamma$
is $C^\infty$ smooth and analytic, if $f$ is respectively smooth and analytic.
As pointed out  in \cite{GJM15},
some of the arguments  in \cite{GJM15} are specific of the two-dimensional case, 
since they rely on complex analysis and surface theory. 
Also in dimension two, Daskalopoulos and Lee \cite{DL1} obtained the regularity of the free boundary 
in the Gauss curvature flow. 
For fully nonlinear, uniformly elliptic equations, 
the corresponding results can be found in \cite{Lee98, SY1, SY2}.
An open problem is the regularity of the free boundary problem \eqref{MAob} in high dimensions.
In this paper we resolve the problem completely.

\begin{theorem}\label{thmA}
Let $v$ be a generalized solution to the obstacle problem \eqref{MAob}, in the sense of Aleksandrov. 
Assume that $\Omega$ is a bounded 
domain in $\Bbb R^n$, and $f, v_0>0$.
Then the free boundary $\Gamma$ is smooth if $f$ is smooth; and $\Gamma$ is analytic if $f$ is analytic.
\end{theorem}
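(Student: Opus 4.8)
The plan is to follow the duality strategy laid out in the abstract. The obstacle problem \eqref{MAob} has a convex solution $v$ whose zero set $\{v=0\}$ is convex; by the Legendre transform, passing to $u = v^*$ converts \eqref{MAob} into the singular Monge-Amp\`ere equation $\det D^2 u = 1/f(Du) + \delta_0$ on the image domain, and the free boundary $\Gamma = \partial\{v=0\}$ corresponds to the asymptotic cone of $u$ at the origin: indeed, $v$ vanishes to order two on $\Gamma$ and is non-negative, so its Legendre dual is asymptotic at $0$ to the support function of the convex body $\{v=0\}$, i.e. to a cone whose cross-section is (a translate/scaling of) the polar of $\Gamma$. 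Thus smoothness (resp. analyticity) of $\Gamma$ is equivalent to smoothness (resp. analyticity) of the cross-section of this asymptotic cone, and the whole problem is reduced to a local regularity question for $u$ near its single singular point.

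Next I would carry out the analysis of $u$ near the origin in three steps. \textbf{Step 1: Asymptotic expansion.} Using the $C^{1,1}$ convexity and uniform convexity of $\Gamma$ from Savin \cite{S05} as the starting input, establish that after an affine normalization $u(x) = u(0) + C(x) + o(|x|)$ near $0$, where $C$ is a convex cone with smooth, uniformly convex cross-section, together with quantitative decay rates for the remainder (this is the ``asymptotic estimate'' promised in the abstract). \textbf{Step 2: Partial Legendre transform.} Choosing coordinates adapted to the cone, apply a \emph{partial} Legendre transform in the $n-1$ ``angular'' variables, fixing the radial direction. This turns $\det D^2 u = 1/f(Du) + \delta_0$ into a second-order fully nonlinear equation for the transformed function that is uniformly elliptic away from the radial axis but \emph{singular} along it — the singularity encoding the cone vertex. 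The cross-section of the asymptotic cone becomes (the graph of) a solution of this equation's leading-order part, and regularity of $\Gamma$ becomes interior-up-to-the-axis regularity of this solution. \textbf{Step 3: Regularity for the singular equation.} Develop a Schauder-type theory for this degenerate/singular elliptic equation — weighted H\"older estimates, then a bootstrap — to upgrade the $C^{1,1}$-type regularity to $C^\infty$; for analytic $f$, run an analytic continuation / holomorphic-extension argument (in the spirit of Kinderlehrer–Nirenberg \cite{KN77}) to get analyticity.

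I expect the main obstacle to be \textbf{Step 3}, the regularity theory for the singular fully nonlinear equation obtained after the partial Legendre transform. Unlike the two-dimensional case \cite{GJM15}, where complex analysis and surface theory provide shortcuts, in $\mathbb{R}^n$ one must handle a genuine PDE with a non-removable singularity along a codimension-$(n-1)$ set (the radial axis), and the standard elliptic regularity machinery does not apply directly near that axis. The key technical work is to identify the correct weighted function spaces in which the linearized operator is invertible with good estimates, to prove these estimates uniformly up to the singular axis, and to verify that the nonlinearity and the $1/f(Du)$ term are compatible with the bootstrap. A secondary difficulty is \textbf{Step 1}: one must rule out degeneracy of the asymptotic cone (e.g. that $\Gamma$ has no corners or flat pieces) and obtain the expansion with enough quantitative control to feed into the transformed equation — here the uniform convexity of $\Gamma$ from \cite{S05} is essential, but promoting it to a clean asymptotic expansion with controlled error still requires care. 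Once these are in place, the duality makes the passage back to $\Gamma$ essentially formal, and Theorem \ref{thmA} follows.
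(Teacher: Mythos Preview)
Your overall strategy---Legendre duality, asymptotic expansion near the singular point, partial Legendre transform, then regularity theory for a singular elliptic equation---matches the paper's architecture. However, you have misidentified both the geometry of the singularity and the main difficulty, and you are missing the key intermediate step.

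First, the geometry: after passing to polar coordinates $(\theta,r)$ and setting $\zeta=u/r$, the singular set is the hypersurface $\{r=0\}\cong\mathbb{S}^{n-1}$, not a ``codimension-$(n-1)$ radial axis.'' The crucial change of variable is $s=r^{n/2}$, dictated by the precise growth rate $u-\phi\approx|x|^{n+1}$ (not merely $o(|x|)$) established in Step~1; only with this exponent does the transformed equation for $\zeta(\theta,s)$ become uniformly elliptic up to $\{s=0\}$. You have Step~1 as ``quantitative decay rates,'' but the specific rate $|x|^{n+1}$ is the entire point, and it requires both Pogorelov's and Savin's second-derivative estimates, not just the $C^{1,1}$ input from \cite{S05}.

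Second, and more seriously, the hardest step is not your Step~3 but the passage from $C^{1,1}$ to $C^2$. The paper does \emph{not} apply the partial Legendre transform directly to the equation for $\zeta$; the lower-order term $s\zeta_s$ obstructs this. Instead one first blows up at a boundary point to obtain a limiting equation (your proposal skips this), proves a Bernstein theorem for that limit via partial Legendre transform and H\"older regularity for a Keldysh-type equation, and concludes that every blow-up limit is a quadratic polynomial. The delicate issue---which you do not mention---is then \emph{uniqueness} of the blow-up limit: showing that all subsequential limits at a given point coincide. This is handled by a maximum-principle argument iterated over an infinite sequence of shrinking domains, and is singled out in the paper as the most technical part. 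Your Step~3 (weighted Schauder/$W^{2,p}$ bootstrap for the Keldysh linearization) is correct for going from $C^2$ to $C^\infty$, but it only kicks in after $C^2$ is already in hand. For analyticity, the paper uses the Kato--Blatt method rather than Kinderlehrer--Nirenberg, though this is a minor point.
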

 
Monge-Amp\`ere type equations are different from uniformly elliptic equations in many ways,
and the techniques for uniformly elliptic equations  \cite{Ca77, Ca98, Lee98, SY1, SY2} 
do not apply to the Monge-Amp\`ere obstacle problem \eqref{MAob}.
A key property for uniformly elliptic equations is the growth estimate $\sup_{B_r(0)} v\approx r^2$ 
(assuming that $0$ is a point on the free bounadry).
By this property, the blow-up profile at $0$ is either of the form 
$\frac12 \max\{x\cdot e, 0\}^2$ for a vector $e$, or of the form $\frac12 x\cdot Ax$ for a matrix $A$. 
In the former case, $0$ is a regular point and the free boundary is smooth at the point.
But for the Monge-Amp\`ere equation  \eqref{MAob}, we have instead the estimate $\sup_{B_r(0)} v\approx r^{1+\frac{1}{n}}$.
It implies that equation  \eqref{MAob} is both singular and degenerate near the free boundary.
Therefore to prove the regularity of the free boundary, one needs a completely different approach.

Our new approach to the problem involves a series of transforms.
We first make the Legendre transform \eqref{Leg-Ch} to obtain the equation  \eqref{MAs0} with point singularity.
By estimate \eqref{wx} we then make the change \eqref{Trans2} to get equation \eqref{MAs1} in the polar coordinates.
Regularity of the free boundary $\Gamma$ is thus reduced to that for equation \eqref{MAs1}.
To establish the a priori estimate for \eqref{MAs1},
we again make a partial Legendre transform \eqref{Par-Leg} to obtain a fully nonlinear, singular elliptic equation \eqref{MAs3}.
The linearized equation of \eqref{MAs3} is a degenerate equation of  Keldysh type 
after a change of variables.
By the H\"older regularity of the linearized equation of \eqref{MAs3},
we can prove the blow-up limits for equation \eqref{MAs3} is a quadratic function (Theorem \ref{thmC}).
Technically the most difficult part in the approach is to prove the uniqueness of the blow-up limits, 
namely all blow-up sequences around a point converge to the same limit,
from which the $C^2$ regularity of solutions follows.
We will prove  the uniqueness of the blow-up limits by consecutively using the maximum principle in an infinite sequence of domains.
See also Remark \ref{r4.4}.

The Monge-Amp\`ere equation has a very useful property, namely the duality.
Let $v$ be the solution to the obstacle problem \eqref{MAob}, as in Theorem \ref{thmA}.
Then $v$ is convex and the set $\{v=0\}$ is a convex sub-set of $\Om$.
We may assume that the set $\{v=0\}$ has positive measure, 
otherwise the free boundary does not exist \cite{S05}. 
Let $u$ be the Legendre transform of $v$, given by
\beq\label{Leg-Ch}
 u(x) =\sup\{x\cdot y - v(y):\ y\in \Om\} , \ \ \ x\in\Om^*=:D_yv(\Om).
 \eeq
Then $u$ is a generalized solution to the following Monge-Amp\`ere equation with point singularity,
\begin{equation}\label{MAs0}
\det D^2 u=g(Du)+c^*\delta_0\ \ \text{in}\ \Om^*,
\end{equation}
where $g(Du(x))=\frac{1}{f(y)}$ at $y=Du(x)$, and 
$c^*=|\{v=0\}|$ is a constant. There is no loss of generality in assuming that $c^*=1$.
By a translation of the coordinates we assume that
the origin is an interior point of the convex set $\{v=0\}$. 
Then $u(0)=0$ and $u(y)> 0\ \forall\ y\ne 0$. 
Let $\phi$ be the tangential  cone of $u$ at $0$, namely it is a
homogeneous function of degree one defined in $\R^n$ and satisfying
\beq\label{tc}
{\begin{split}
  & u(x)\ge \phi(x)\ \ \  \forall\ \ x\in \Om^*,\\
  & u(x)-\phi(x)=o(|x|)\ \text{as}\ \ x\to 0,
  \end{split}} 
\eeq
such that $\p\phi\{0\}=\p u\{0\} $.
Given a convex function $w$, we denote by $\p w\{p\}$ the sub-differential of $w$ at $p$, 
$$\p w\{p\}=\{\xi \in\R^n:\ w(x)\ge \xi\cdot (x-p) +w(p)\ \forall\ x\ \text{near}\ p \}. $$
By duality, 
$$\p\phi\{0\} =\{v=0\}  . $$
Hence the section $S_{1,\phi} =: \{x\in\R^n:\ \phi(x)<1\}$ is the polar body of $\{v=0\}$,  i.e., 
\begin{equation}\label{dual}
S_{1, \phi} =\{x\in\mathbb R^n:\ \ x\cdot y< 1\ \ \forall\ y\in \{v=0\}\}. 
\end{equation}     
Denote $L=\p S_{1, \phi}$. 
Hence $L$ is $C^k$ smooth ($k\ge 2$) and uniformly convex if and only if  the free boundary $\Gamma$ is. 
It was proved in \cite{S05}  that  $\Gamma$ is uniformly convex and $C^{1,1}$ smooth for $f\equiv 1$.
In the two dimensional case, 
it was proved in \cite{GJM15}  that the curve $L$ is  smooth and analytic.
Moreover, if $u$ is an entire solution to \eqref{MAs0} with $g\equiv 1$, 
namely $u$ is defined in the whole space $\R^n$,  
then $u$ must be rotationally symmetric after an affine transform of coordinates \cite {JX16, Jo55}.
In this paper, we prove


\begin{theorem}\label{thmB}
Let $u$ be a strictly convex solution of \eqref{MAs0}. 
Then in the spherical  coordinates $(\theta, r)$, $u$ is smooth as a function
of  $\theta$ and $r$  if $f$ is smooth, and we have the Taylor expansion
 \vskip-26pt
\beq\label{tur}
u(\theta, r)=r\, {\Small\text{$ \sum_{i=0}^k $}} \phi_i(\theta) r^{ni} + O(r^{1+(k+1)n})
\eeq
for any integer $k\ge 0$, 
where $\phi_i$ are smooth functions of $\theta$.
Moreover, if $f$ is analytic, then $u$ is  analytic in $\theta$.
 \end{theorem}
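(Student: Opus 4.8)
The plan is to reduce Theorem~\ref{thmB} to an a priori regularity statement for the singular fully nonlinear equation \eqref{MAs3} obtained after the sequence of transforms outlined in the introduction, and then to bootstrap. Concretely, I would first record the asymptotic estimate \eqref{wx} for $u$ near the singular point $0$, which says (up to normalization) that $u(x)=\phi(x)(1+o(1))$ with quantitative control, so that in spherical coordinates $(\theta,r)$ one has $u(\theta,r)=r\,\psi(\theta,r)$ with $\psi(\theta,0)=\phi_0(\theta)>0$ and $\psi$ bounded above and below. Substituting this into \eqref{MAs0} produces equation \eqref{MAs1} for $\psi$ on the cylinder $S^{n-1}\times[0,r_0)$; the Dirac mass disappears and is replaced by the degeneracy of the coefficients as $r\to 0$. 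The partial Legendre transform \eqref{Par-Leg} then converts \eqref{MAs1} into the singular fully nonlinear elliptic equation \eqref{MAs3}, whose linearization, after the change of variables indicated in the introduction, is a degenerate elliptic operator of Keldysh type (degeneracy like $t\,\partial_{tt}+\partial_t+\Delta_\theta$ near $t=0$).

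The core of the argument is the regularity theory for \eqref{MAs3}. Granting the blow-up analysis of Theorem~\ref{thmC} (blow-up limits are quadratic) and the uniqueness of blow-up limits proved there, one obtains $C^2$ regularity of the solution to \eqref{MAs3} up to the singular boundary $\{t=0\}$, hence $C^{1,1}$-type information for $\psi$ in \eqref{MAs1}, i.e.\ $\phi_0\in C^2$ and $\psi(\theta,r)=\phi_0(\theta)+\phi_1(\theta)r^n+o(r^n)$ to first order. The next step is a bootstrap: once the solution of \eqref{MAs3} is $C^2$, the equation becomes a genuine (still degenerate, Keldysh-type) linear equation with coefficients inheriting the regularity of $f$ and of the lower-order Taylor data. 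Applying Schauder-type estimates for Keldysh operators in the weighted/subelliptic scale adapted to the $t\,\partial_{tt}+\partial_t$ degeneracy—iterated order by order—yields that $\psi$ admits an asymptotic expansion in powers of $r^n$ with smooth coefficients $\phi_i(\theta)$ and a remainder $O(r^{1+(k+1)n})$, which is precisely \eqref{tur} after multiplying by $r$. If $f$ is analytic, one upgrades the Schauder iteration to an analytic estimate: either via a majorant-series argument on the formal expansion in $r^n$ with coefficients analytic in $\theta$, or by a Cauchy-estimate induction on the $C^m$ norms of $\phi_i$, controlling the growth of derivatives to sum a convergent power series in $\theta$; this gives analyticity of $u$ in $\theta$.

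The main obstacle I anticipate is not the formal expansion but making the remainder estimate and the Schauder iteration rigorous across the Keldysh degeneracy at $r=0$: the operator $t\,\partial_{tt}+\partial_t+\Delta_\theta$ is non-uniformly elliptic, the characteristic variety touches the boundary, and standard elliptic Schauder estimates fail there. One must instead use the Hölder theory for Keldysh equations (the $C^\alpha$ estimate alluded to after \eqref{MAs3}), prove a boundary Schauder estimate in function spaces matched to the natural distance $\rho\sim\sqrt{t}$ (so that $r^n$ plays the role of the elliptic variable), and check that each successive correction $\phi_i r^{ni}$ actually solves the correct linear problem with a source term of the right order—this requires carefully expanding the fully nonlinear operator $F$ in \eqref{MAs3} and tracking how errors propagate. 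A secondary difficulty is that the coefficients of the linearized operator depend on the solution itself, so the bootstrap and the blow-up uniqueness (Theorem~\ref{thmC}, proved by the iterated maximum principle in nested domains, cf.\ Remark~\ref{r4.4}) must be invoked before the Schauder iteration can even begin; once $C^2$ regularity is in hand, the remaining steps are a controlled, if lengthy, induction.
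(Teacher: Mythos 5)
Your outline reproduces the paper's overall strategy (asymptotics, spherical coordinates, blow-up, Keldysh-type linear theory, bootstrap, analyticity), but as written it contains steps that would fail and it defers the genuinely hard parts to results that do not say what you need. First, you propose to apply the partial Legendre transform \eqref{Par-Leg} directly to \eqref{MAs1} to obtain \eqref{MAs3} and then to build "the core of the argument" on the regularity theory for \eqref{MAs3}. This does not work: because of the terms $\zeta+\frac n2 s\zeta_s$ on the diagonal, the partial Legendre transform of \eqref{MAs1} is not \eqref{MAs3} but a much more complicated equation (the paper points this out explicitly before Lemma \ref{lemconti1}). In the actual proof, \eqref{MAs3} arises only as the transform of the \emph{blow-up limit} equation \eqref{MAs2}, and its sole role is to prove the Liouville-type Theorem \ref{thmC}; the $C^2$ regularity of $\zeta$ up to $s=0$ is then obtained by working directly with \eqref{MAs1}, differentiating it, and running an iterated barrier/maximum-principle argument (the functions $\sigma^{\pm}_{k,m}$ over the nested domains $\delta_{k,m+1}\lambda_k^2\le t\le\delta_{k,m}\lambda_k^2$), supplemented for the tangential second derivatives by the concavity of $\log\det$ and the representation \eqref{Ha1}. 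Relatedly, you attribute "the uniqueness of blow-up limits" to Theorem \ref{thmC}; that theorem only classifies entire solutions of \eqref{MAs2} and gives no uniqueness of the limit along different sequences. Establishing that uniqueness is precisely the most delicate step (Lemmas \ref{lemconti2} and \ref{lemconti3}), and your proposal contains no mechanism for it beyond a citation.

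Second, the bootstrap as you describe it — "Schauder-type estimates for Keldysh operators, iterated order by order" — cannot start. After the $C^2$ step the coefficients of the linearized equation are merely \emph{continuous}, not H\"older, so no Schauder estimate applies. The paper first proves a weighted $W^{2,p}$ estimate for the model operator $\Delta u+b\,u_{x_n}/x_n$ (via Horiuchi's Green function, a weighted Calderon--Zygmund decomposition for the measure $x_n^b\,dx$, Marcinkiewicz interpolation, and a dimension-raising trick for the mixed derivatives $u_{in}$), uses it with Sobolev embedding to get $\zeta_\theta\in C^{1,\alpha}$, and only then recovers $\zeta\in C^{2,\alpha}$ by integrating the equation as an ODE in $s$ (formula \eqref{ode2}); Schauder theory enters afterwards. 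Your plan is missing this $W^{2,p}$ ingredient entirely, and without it the induction producing the expansion \eqref{tur} in powers of $r^n$ does not get off the ground.
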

 
 By the condition $v_0>0$ on $\pom$, we know that $u$ is strictly convex in $\Om^*$ \cite{Ca90a}.
Theorem \ref{thmA} follows from Theorem \ref{thmB} 
by the duality between the free boundary $\Gamma$ and the section $L$.
The Taylor expansion \eqref{tur} reveals a special geometric profile of the solution $u$ at the origin,
namely $\frac {u(\theta, r)}{r} $ is a smooth function of $\theta$ and $r^n$.

To prove Theorem \ref{thmB}, 
our goal is to prove that $\frac ur$, as a function of $\theta, r$, is smooth.
Therefore we will make the change
\beq\label{Trans2}
   \zeta(\theta,r) =\frac{u(\theta,r)}{r},\ \ \
   s =r^{\frac n2}.
  \eeq
The proof of Theorems \ref{thmB}   can be divided into three steps.


\vskip3pt

\noindent {\bf {Step 1}:}  
We first prove that the function $w=u-\phi$ satisfies the growth condition near $0$,
\beq\label{wx}
C_1|x|^{n+1}\le w(x)\le C_2|x|^{n+1}
\eeq
for two positive constants $C_2\ge C_1>0$. 
Estimates \eqref{wx} are built on Pogorelov and  Savin's interior second derivative estimates. 
\eqref{wx} is a key estimate in this paper. 
It implies that equation \eqref {MAs1} below is uniformly elliptic.

\vskip3pt

\noindent {\bf {Step 2}:} 
We next prove that the free boundary is $C^2$ smooth.  
We express the solution $u$ in the spherical coordinates $(\theta,r)$,
and make the changes \eqref{Trans2}.
Then   in an orthonormal frame $\theta$ on the unit sphere $\mathbb S^{n-1}$, $\zeta$ satisfies the equation
\begin{equation}\label{MAs1}
\det \begin{pmatrix}
\big(\frac n2\big)^2 \zeta_{ss}+\frac{n(n+2)}{4}\frac{\zeta_s}{s}&   \frac n2\zeta_{s\theta_{1}}&\cdots&  \frac n2\zeta_{s\theta_{n-1}}\\[3pt]
\frac n2\zeta_{s\theta_1}& \zeta_{\theta_1\theta_1}+\zeta+\frac n2 s\zeta_s & \cdots&\zeta_{\theta_1\theta_{n-1}}\\[3pt]
\cdots &\cdots &\cdots &\cdots  \\[3pt]
 \frac n2\zeta_{s\theta_{n-1}} &\zeta_{\theta_1\theta_ {n-1}} & \cdots&    \zeta_{\theta_{n-1}\theta_{n-1}}+\zeta+\frac n2 s\zeta_s
\end{pmatrix}=g.
\end{equation}
This is a fully nonlinear singular elliptic equation. 
By \eqref{wx} we have 
\beq\label{zeta}
 C_1s^2\le \zeta(\theta,s)-\frac{\phi}{r} \le C_2 s^2.  
\eeq
From \eqref{zeta}, we deduce that equation \eqref{MAs1} is uniformly elliptic.
Moreover, the regularity of the free boundary is equivalent to that of the function $\zeta(\theta,0)$. 

Therefore the main task of the paper 
is to establish the regularity, at the boundary $\{s=0\}$, for the fully nonlinear, singular elliptic equation \eqref{MAs1}.
A key step is to obtain the $C^2$ regularity of the solution $\zeta$.
Recall that for the classical obstacle problem \eqref{Lapob},
the key estimate is the $C^1$ regularity, proved in \cite{Ca77}.

The proof of the $C^2$ regularity of $\zeta$ will be carried out as follows. 
We first use a blow-up argument to simplify equation \eqref{MAs1} to the following equation
\begin{equation}\label{MAs2}
\det \begin{pmatrix}
    \psi_{x_nx_n}+\frac{n+2}{n}\frac{\psi_{x_n}}{x_n}&    \psi_{x_nx_1} & \cdots &  \psi_{x_nx_{n-1}}\\[3pt]  
   \psi_{x_nx_1}& \psi_{x_1x_1} & \cdots&\psi_{x_1x_{n-1}}\\[3pt] 
  \cdots &\cdots &\cdots &\cdots  \\[3pt]
  \psi_{x_nx_{n-1}} &\psi_{x_1x_{n-1}} & \cdots&\psi_{x_{n-1}x_{n-1}}
 \end{pmatrix}
    =1 \quad \text{in}\quad \mathbb R^n_+,
\end{equation}
where $\mathbb R^n_+=\Bbb R^n\cap\{x_n>0\}$.
The RHS of \eqref{MAs2} is a positive constant, which we assume is one.

We then make a partial Legendre transform for $\psi$ \cite{LS17}, i.e., 
\beq\label{Par-Leg}
{\begin{split}
  & y_n=x_n,\quad \\
  & y'=D_{x'}\psi,\hskip30pt\\
  & \psi^*=x'\cdot  D_{x'}\psi-\psi,
  \end{split}}
\eeq
where $x'=(x_1, \cdots, x_{n-1})$.
Then
 $\psi^*$ satisfies 
\begin{equation}\label{MAs3}
\psi^*_{y_ny_n}+\frac{n+2}{n}\frac{\psi^*_{y_n}}{y_n}+\det D^2_{y'} \psi^*=0\quad \text{in}\quad \mathbb R^{n}_+.
\end{equation}
A nice feature of equation \eqref{MAs3} is that the singular part $\frac{\psi^*_{y_n}}{y_n}$
is  separate from the nonlinear part of the equation, 
which enables us to prove the $C^{2,\alpha}$ regularity for equation \eqref{MAs3}. 
Hence by a scaling argument, we obtain the following Bernstein theorem.

\begin{theorem}\label{thmC}
Let $\psi\in C^{1,1}(\overline{\mathbb R^n_+})$ be a solution to \eqref{MAs2}.
Assume that $\psi_{x_n}(x',0)=0\ \forall\ x'\in\R^{n-1}$, 
and equation \eqref{MAs2} is uniformly elliptic.
Then 
\begin{equation}\label{quad}
\psi(x)=q(x')+ a x_n^2
\end{equation}
where $a$ is a positive constant and $q$ is a quadratic polynomial.  
\end{theorem}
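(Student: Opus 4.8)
The plan is to prove Theorem \ref{thmC} by establishing a Liouville-type rigidity result for the transformed equation \eqref{MAs3}, then pulling the conclusion back through the partial Legendre transform. First I would reduce to the partially-transformed picture: given a solution $\psi\in C^{1,1}(\overline{\mathbb R^n_+})$ to \eqref{MAs2} with $\psi_{x_n}(x',0)=0$ and uniform ellipticity, apply \eqref{Par-Leg} to obtain $\psi^*$ solving \eqref{MAs3} on $\mathbb R^n_+$. The uniform ellipticity of \eqref{MAs2} translates into two-sided bounds $\lambda I\le D^2_{y'}\psi^*\le \Lambda I$ on the tangential Hessian, and the boundary condition $\psi_{x_n}=0$ on $\{x_n=0\}$ together with $y_n=x_n$ transfers to $\psi^*_{y_n}(y',0)=0$, so the singular term $\frac{n+2}{n}\frac{\psi^*_{y_n}}{y_n}$ is bounded up to the boundary. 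Thus $\psi^*$ is a global solution of a semilinear equation whose second derivatives are bounded, and the goal becomes: show $D^2_{y'}\psi^*$ is constant and $\psi^*$ is independent of $y_n$ (equivalently $\psi^*_{y_n}\equiv 0$).

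Next I would differentiate \eqref{MAs3} in a tangential direction $y_k$ ($1\le k\le n-1$). Writing $h=\psi^*_{y_k}$, and using that $\det D^2_{y'}\psi^*$ linearizes to $\mathrm{cof}(D^2_{y'}\psi^*)^{ij}\partial_{ij}$, we get
\beq\label{linC}
h_{y_ny_n}+\frac{n+2}{n}\frac{h_{y_n}}{y_n}+ a^{ij}(y) h_{y_iy_j}=0 \quad\text{in }\mathbb R^n_+,
\eeq
with $a^{ij}=\mathrm{cof}(D^2_{y'}\psi^*)^{ij}$ uniformly elliptic in the $y'$ variables and independent of $h$. This is precisely the degenerate elliptic operator of Keldysh type mentioned in the excerpt: after the substitution $t=y_n^{2/n}$ (or the natural power that symmetrizes the radial part), the operator $\partial_{y_ny_n}+\frac{n+2}{n}y_n^{-1}\partial_{y_n}$ becomes a uniformly elliptic operator in $t$ degenerating at $t=0$, for which interior and boundary Hölder and Harnack estimates are available. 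I would invoke the Hölder regularity of this linearized equation (asserted in the excerpt, presumably proved earlier in the paper) to upgrade $\psi^*\in C^{2,\alpha}$ up to $\{y_n=0\}$, and more importantly to run a rescaling/Liouville argument: since $D^2\psi^*$ is globally bounded, the blow-downs $\psi^*_R(y)=R^{-2}\psi^*(Ry)$ converge, and a Harnack inequality for \eqref{linC} forces the oscillation of $D^2\psi^*$ over dyadic scales to decay, so $D^2\psi^*$ is constant. Then $\psi^*(y)= \tfrac12 y'^\top B y' + b\cdot y' + c(y_n)$ with $B>0$ constant; feeding this into \eqref{MAs3} gives $c''(y_n)+\frac{n+2}{n}\frac{c'(y_n)}{y_n}=-\det B=$ const, whose bounded solutions with $c'(0)=0$ are $c(y_n)=-\frac{\det B}{2(n+1)/n}\cdot\frac{n}{n+2}\,y_n^2 +$ const; matching signs via ellipticity shows $\det B<0$ is impossible unless one reexamines orientation — in fact the correct reading is that $c$ is affine plus a negative multiple of $y_n^2$ forces $\det B=0$, contradicting $B>0$ unless the $y_n$-dependence is trivial, i.e. $\psi^*$ is a quadratic polynomial in $y'$ alone.

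Finally I would invert the partial Legendre transform. If $\psi^*(y)=q^*(y')$ is quadratic in $y'$ and independent of $y_n$, then \eqref{Par-Leg} gives that $\psi$ is quadratic in $x'$ for each fixed $x_n$, with $x_n$-independent Hessian $D^2_{x'}\psi=(D^2_{y'}q^*)^{-1}$; writing $\psi(x)=\tfrac12 x'^\top A x' + \beta(x_n)\cdot x' + \gamma(x_n)$ and plugging into \eqref{MAs2} yields $\det A\cdot\big(\gamma''+\tfrac{n+2}{n}\tfrac{\gamma'}{x_n} - \beta'^\top A^{-1}\beta'\big)=1$; the boundary condition $\psi_{x_n}(x',0)=0$ forces $\beta'(0)=0$ and $\gamma'(0)=0$, and solving the resulting ODE gives $\beta\equiv$ const (absorbed into $q$) and $\gamma(x_n)=a x_n^2$ for a positive constant $a$. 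This produces exactly \eqref{quad}. The main obstacle I anticipate is the Liouville step for \eqref{linC}: one must handle the degeneracy at $\{y_n=0\}$ carefully — establishing a Harnack inequality (or a boundary Hölder estimate with the right geometry) uniform in scale for the Keldysh-type operator, so that the blow-down argument actually yields constancy of the full Hessian rather than just of its tangential part; controlling the mixed derivatives $\psi^*_{y_n y_k}$ near the degenerate boundary, where the weight $y_n^{-1}$ interacts with the nonlinear coupling, is the delicate point.
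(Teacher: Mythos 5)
Your overall architecture (partial Legendre transform, H\"older theory for a Keldysh-type degenerate equation, blow-down Liouville argument) matches the paper's, but two of your steps have genuine gaps. First, the regularity step: you differentiate \eqref{MAs3} only in the \emph{tangential} directions $y_k$. The resulting divergence-form degenerate equation for $h=\psi^*_{y_k}$ has merely bounded measurable coefficients $\mathrm{cof}(D^2_{y'}\psi^*)$, so the available De Giorgi--Nash--Moser-type estimate yields only $h\in C^\alpha$, i.e.\ $\psi^*\in C^{1,\alpha}$ — which you already know from $\psi^*\in C^{1,1}$. It does not give $\psi^*\in C^{2,\alpha}$, and it says nothing about the singular quotient $\psi^*_{y_n}/y_n$, which is the whole difficulty. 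The paper instead differentiates in the \emph{normal} variable after the substitution $z_n=y_n^2/4$: the bounded function $\Psi=\psi^*_{z_n}=2\psi^*_{y_n}/y_n$ then solves a divergence-form degenerate equation (the cofactor matrix being divergence-free is what makes this legitimate with only $L^\infty$ coefficients), whence $\psi_{x_n}/x_n\in C^\alpha$ up to the boundary. Only then can one return to \eqref{MAs2}, treat $\psi_{x_n}/x_n$ as a known H\"older function, and invoke Caffarelli's $C^{2,\alpha}$ theory for the concave uniformly elliptic equation $\mathcal F(x,D^2\psi)=1$. Your Liouville step inherits this gap: "a Harnack inequality forces the oscillation of $D^2\psi^*$ over dyadic scales to decay" is not justified — for divergence-form equations with bounded measurable coefficients there is no $C^{1,\alpha}$ estimate for solutions, so applying the H\"older estimate to $h$ (which grows linearly) gives $[h]_{C^\alpha(B_{R/2})}\lesssim R^{1-\alpha}$ and no constancy of $Dh$. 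The paper's blow-down $\psi^m(x)=m^{-2}\psi(mx)$ works precisely because the scale-invariant $C^{2,\alpha}$ estimate for the \emph{original} concave equation is available uniformly in $m$.

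Second, your endgame is wrong on its own terms. Substituting $\psi^*=\tfrac12 y'^{T}By'+b\cdot y'+c(y_n)$ into \eqref{MAs3} gives $c''+\tfrac{n+2}{n}c'/y_n=-\det B$, whose solution with $c'(0)=0$ is $c(y_n)=-\tfrac{n\det B}{4(n+1)}y_n^2+\mathrm{const}$; there is no contradiction and no reason for $\det B=0$. Your conclusion that "$\psi^*$ is a quadratic polynomial in $y'$ alone" is false and in fact inconsistent with \eqref{MAs2}: if $\psi^*_{y_n}\equiv0$ then $\psi_{x_n}\equiv0$, the first row of the matrix in \eqref{MAs2} vanishes, and the determinant is $0$, not $1$. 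The correct conclusion is that $\psi^*$ is a genuine quadratic polynomial with a negative $y_n^2$ coefficient ($\psi^*$ is concave in $y_n$), and inverting the partial Legendre transform then produces \eqref{quad} with $a>0$; your final ODE computation for $\gamma(x_n)=ax_n^2$ is essentially right but contradicts the premise you feed into it.
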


To prove the $C^2$-continuity of $\zeta(\theta,s)$ at $\mathbb S^{n-1}\times \{s=0\}$, we use a blow-up argument. 
By Theorem \ref{thmC}, the limit of a blow-up sequence is a quadratic polynomial
of the form \eqref{quad}.
The most delicate part of the  paper is to prove the uniqueness of the limit, 
namely the limit is independent of the choice of the blow-up sequences.
The uniqueness of the limit implies that the second derivatives $D^2 \zeta$ are continuous.
Our proof of the uniqueness is by employing the maximum principle consecutively in an infinite sequence of domains.

\vskip5pt

\noindent {\bf {Step 3}:} 
We differentiate equation \eqref{MAs1} to obtain a linearized equation, of which a prototype is of the form
\begin{equation}\label{Lapb}
\Delta_{x'} u+u_{nn}+b\frac{u_n}{x_n} = f\quad \text{in}\quad \mathbb R^n_+  ,
\end{equation}
where  $b>1$, $u_n=u_{x_n}$.
Equation \eqref{Lapb} is a singular elliptic equation of Keldysh type.
In \cite{H95, H96}, Horiuchi introduced the Green function for \eqref{Lapb} and proved the $C^{2,\alpha}$ estimate for \eqref{Lapb}.
In this paper, we use his Green function to 
establish a weighted $W^{2,p}$ estimate for equation \eqref{Lapb}, 
extending the classical $W^{2,p}$ estimate for the Poisson equation.

We then use the freezing coefficient method and the weighted $W^{2,p}$ estimate
to obtain the $C^{2,\alpha}$ regularity of $\zeta$  in $\theta$.
By the weighted $W^{2,p}$ estimate and the bootstrap technique, 
we show that the solution $\zeta\in C^\infty$ up to the boundary.

 Finally we prove that the free boundary is analytic.
The analyticity of solutions has been extensively studied in literature 
\cite{F58, Mo58, KN77}.
A simple proof was found in \cite{Kato96, Bla}.
To prove the analyticity of our free boundary, 
we need to prove the analyticity of $\zeta$ in $\theta$.
We will adopt the method in  \cite{Kato96, Bla}.

The paper is organized as follows. 
In Section 2 we establish the asymptotic estimate \eqref{wx}, 
which is the first step of our proof.
The second step of the proof consists of Sections 3 and 4.
In Section 3 we prove the H\"older continuity for the singular term $\frac{\psi^*_{y_n}}{y_n}$ in equation \eqref{MAs3},
from which we obtain the Bernstein Theorem \ref{thmC}.
In Section 4, we use the Bernstein Theorem and construct auxiliary functions to prove the $C^2$-regularity of the free boundary.
Step 3 of the proof consists of Sections 5 and 6.
In Section 5 we establish a weighted $W^{2,p}$-estimate for \eqref{Lapb}, and use it 
to prove the $C^\infty$ regularity of the free boundary.
The analyticity of the free boundary will be proved in Section 6.

\section{\bf Asymptotic behaviour at the singularity}

Let $u$ be a generalized solution to
\begin{equation}\label{S2.1}
\det D^2 u=g(Du) +\delta_0\quad \text{in}\quad  B_1(0) .
\end{equation}
Assume that $u(0)=0$, $u\ge 0$ and $u>0$ on $\p B_1(0)$.
Assume also that $g$ is a smooth and positive function.
There is no loss of generality in assuming that the unit ball $B_1(0)\subset \Om^*$.
By extending $v$ to $\R^n$ such that $v$ is smooth and uniformly convex away from the free boundary $\Gamma$, 
we may also assume that $\Gamma\subset B_1(0)\subset \Om$. 
Therefore in the following we will consider problems \eqref{MAob} and \eqref{MAs0} in $B_1(0)$.
By the regularity of  the  Monge-Amp\`ere equation, we may also assume that 
$|D^4 u|\le M_0$, $|D^4 v|\le M_0$ near $\p B_1(0)$.
In the following  we will use $\Om$ to denote a general bounded convex domain.

Let 
\beq\label{uwp}
u=w+\phi
\eeq
where $\phi$ is the tangent cone of $u$ at $0$, defined in \eqref{tc}.
Then 
\begin{equation}
w(x)\ge 0,\quad w(x)=o(|x|) \quad \text{as}\quad x\rightarrow 0.
\end{equation}
Let  $\Gamma$ 
be the free boundary of the obstacle problem \eqref{MAob}.
In \cite{S05}, Savin proved that $\Gamma$ is uniformly convex  and $C^{1,1}$ smooth 
for the case $f\equiv 1$. 
In this section, we show that his argument also applies to general function $f(x)$ 
provided $f(x)$ is positive and smooth, and obtain the estimate \eqref{wx}.

\begin{lemma}\label{L2.1} (Pogorelov's estimate)
Let $\psi\in C^4(\Om)$ be a convex solution to 
\beq\label{DP}
{\begin{split}
\det D^2 \psi & =h(x, \psi, D\psi)\ \ \text{in}\ \Om,\\
\psi&=0\ \ \text{on}\ \pom.
 \end{split}}
 \eeq
Then there exists a constant $C>0$, depending only  on $n$, $\sup_\Om (|\psi|+|D\psi|)$, and $\|\log h\|_{C^2}$, such that
\beq\label{pog}
[-\psi(x)]  \left| D^2\psi(x)\right|\le C \ \ \forall\ x\in\Om.
\eeq
\end{lemma}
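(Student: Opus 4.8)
The plan is to follow Pogorelov's classical argument, adapting the auxiliary function to the setting where the right-hand side depends on $x$, $\psi$ and $D\psi$. First I would introduce the test function
\beq\label{aux}
P(x,\xi) = (-\psi(x))^{\beta}\, \me^{\frac{\mu}{2}|D\psi(x)|^2}\, \psi_{\xi\xi}(x),
\eeq
for a unit vector $\xi\in\R^n$, where $\beta>0$ and $\mu>0$ are constants to be fixed. Since $\psi=0$ on $\pom$ and $\psi<0$ inside, $P$ vanishes on the boundary and on the set where $\psi_{\xi\xi}\le 0$; hence $P$ attains its maximum over $\bom\times\mathbb S^{n-1}$ at some interior point $x_0$ and some direction $\xi_0$. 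By a rotation of coordinates we may assume $\xi_0=e_1$ and that $D^2\psi(x_0)$ is diagonal. The goal is to bound $P(x_0,e_1)$ by a constant depending only on the quantities listed; evaluating this bound at an arbitrary $x$ with the trivial estimate $\me^{\frac{\mu}{2}|D\psi|^2}\ge 1$ then yields \eqref{pog} (after noting that $\left|D^2\psi\right|$ at a point is comparable to its largest eigenvalue, which by the equation and the diagonalization is controlled once $\psi_{11}$ is).

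The core computation is to take $\log P$ and differentiate twice at $x_0$. Writing $w^{ij}$ for the inverse of $D^2\psi$ (equivalently the cofactor matrix divided by $\det D^2\psi = h$), one uses $w^{ij}(\log\det D^2\psi)_{ij}\ge$ (a term from differentiating $\log h$ twice) and the concavity of $\log\det$ on positive matrices. Applying the operator $L := w^{ij}\p_i\p_j$ to $\log P = \beta\log(-\psi) + \frac{\mu}{2}|D\psi|^2 + \log\psi_{11}$ and using $L(\log P)\le 0$ at the interior maximum, together with the first-order condition $D\log P(x_0)=0$, one collects terms. The $\beta\log(-\psi)$ term contributes $-\beta\, w^{ii}\frac{\psi_i^2}{\psi^2} + \beta\, w^{ii}\frac{\psi_{ii}}{(-\psi)} = -\beta w^{ii}\frac{\psi_i^2}{\psi^2} + \frac{\beta n}{(-\psi)}$ (using $w^{ii}\psi_{ii}=n$); the Gauss-curvature-type term $\frac{\mu}{2}|D\psi|^2$ contributes $\mu\, w^{ii}\psi_{ii}^2 + (\text{lower order}) = \mu\sum_i \psi_{ii} + \cdots$, which is the source of a positive term $\mu\,\Delta\psi$ that, after using the first-order conditions to absorb the cross terms, dominates; and the $\log\psi_{11}$ term produces the crucial good third-derivative term $-\,w^{ii}\psi_{11}^{-2}\psi_{11i}^2$ that is large negative, which we keep, plus a term involving $w^{ii}\psi_{11ii}$ which is handled by differentiating the equation twice in the $e_1$ direction. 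The dependence of $h$ on $\psi$ and $D\psi$ contributes, upon differentiating $\log h(x,\psi,D\psi)$ twice, terms involving $D\psi$, $D^2\psi$ and $\|\log h\|_{C^2}$; the $D^2\psi$-dependence is the new feature and gives a term $\le C\|\log h\|_{C^2}\sum_i\psi_{ii}$ plus lower order, which is absorbed by choosing $\mu$ large relative to $C\|\log h\|_{C^2}$.

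Balancing all terms: choosing $\mu$ large enough that $\mu\,\Delta\psi$ dominates the $h$-derivative terms and the cross terms, then choosing $\beta$ (e.g. $\beta=1$) and using that at $x_0$ we have $\psi_{11}$ large (otherwise we are already done), the inequality $L(\log P)\le 0$ forces $\psi_{11}(x_0)\cdot(-\psi(x_0))$ to be bounded by a constant $C=C(n,\sup(|\psi|+|D\psi|),\|\log h\|_{C^2})$, hence $P(x_0,e_1)\le C'$, which is the claim.

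The main obstacle I anticipate is the bookkeeping of the extra terms coming from the $D\psi$- and especially the $D^2\psi$-dependence of $h$ in $\log h(x,\psi,D\psi)$: unlike the classical case $h=h(x)$, differentiating $\log h$ twice produces terms of the same order as the ``good'' terms we rely on, so one must verify carefully that the coefficient $\mu$ in the exponential can be chosen — depending only on $\|\log h\|_{C^2}$ and $\sup(|\psi|+|D\psi|)$ — so that these terms are strictly absorbed, and that no term grows faster than linearly in $\max_i\psi_{ii}$ after using the first-order conditions. A secondary technical point is justifying the reduction that $\left|D^2\psi\right|$ is comparable to its maximal eigenvalue and that controlling $\psi_{11}$ at the maximum point controls $P$ everywhere, which is immediate from the definition of $P$ and the equation $\prod_i\psi_{ii}=h$ bounding the product of eigenvalues from below and above. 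Since $\psi\in C^4$, all differentiations above are legitimate classically; a density/approximation argument extends the estimate to the stated regularity if needed.
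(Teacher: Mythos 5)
The paper omits the proof of this lemma entirely, citing \cite{GT,Po72}, and your proposal is exactly the classical Pogorelov argument those references carry out: the maximum principle applied to $\log\big[(-\psi)^{\beta}e^{\frac{\mu}{2}|D\psi|^{2}}\psi_{\xi\xi}\big]$, the concavity of $\log\det$ on positive matrices, the first-order conditions at the interior maximum, and absorption of the terms produced by the $(x,\psi,D\psi)$-dependence of $h$ --- so the approach is correct and coincides with the intended one. The only slip worth noting is a sign: since $\psi<0$ in $\Om$, one has $w^{ij}\psi_{ij}/\psi=n/\psi=-n/(-\psi)$, so the contribution of $\beta\log(-\psi)$ is $-\beta n/(-\psi)$ (a bad term, and precisely the one responsible for the weight $(-\psi)^{\beta}$ in the conclusion), not $+\beta n/(-\psi)$; it is dominated by $\mu\Delta\psi$ exactly in the regime where $(-\psi)\psi_{11}$ exceeds the constant one is trying to exhibit, so the balancing scheme you describe is unaffected.
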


The proof for Pogorelov's estimate can be found in several papers. 
See for instance \cite{GT, Po72}. Here we omit the proof.

\begin{corollary}\label{C2.1} 
Let $u$ be the strictly convex solution to \eqref{S2.1}.
We have the estimate
\beq\label{xdp0}
|x| |D^2 u(x)| \le C \quad \forall \ x\in B_1\backslash\{0\}.
\eeq
where  $C$ depends only on $n, M_0$, $\sup_{B_1} (|u|+|Du|)$,  and $\|\log g\|_{C^2}$.
\end{corollary}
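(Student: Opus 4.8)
The plan is to deduce \eqref{xdp0} from Pogorelov's estimate (Lemma \ref{L2.1}) applied on well chosen sections of $u$, with the whole difficulty concentrated near the singular point $0$. The weight $|x|$ in \eqref{xdp0} is the one dictated by the rescaling $u(x)\mapsto\rho^{-1}u(\rho x)$, under which the principal part $\delta_0$ of \eqref{S2.1} is invariant. Away from $0$, say at a point $x_0$ with $|x_0|\ge\delta$ for a fixed small $\delta$, equation \eqref{S2.1} is just the non-singular Monge-Amp\`ere equation $\det D^2u=g(Du)$; letting $\ell_{x_0}$ be a supporting affine function of $u$ at $x_0$, the strict convexity of $u$ together with $u(0)=0$ lets us pick $h=h(\delta)>0$ so small that $\overline{S_h(x_0)}\subset B_1\setminus\{0\}$, where $S_h(x_0):=\{x:u(x)<\ell_{x_0}(x)+h\}$. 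Then Lemma \ref{L2.1} applied to $\psi:=u-\ell_{x_0}-h$ on $S_h(x_0)$ — whose right-hand side $g(D\psi+D\ell_{x_0})$ has the $C^2$-norm of its logarithm controlled by $\|\log g\|_{C^2}$ since $Du$ is bounded — gives $h\,|D^2u(x_0)|=[-\psi(x_0)]\,|D^2\psi(x_0)|\le C$, which is \eqref{xdp0} for $|x_0|\ge\delta$.

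For $|x_0|<\delta$ I would again use Lemma \ref{L2.1} on the section $\Sigma:=S_h(x_0)$ of $u$ at $x_0$, but now $h=h(x_0)$ may only be taken as large as the size of $w=u-\phi$ along the ray through $x_0$: one finds that $h$ must be of order $|x_0|^{1+n}$ for $\overline\Sigma$ to detach from $0$ while staying inside $B_1$. With this choice the bare estimate of Lemma \ref{L2.1} gives only $|D^2u(x_0)|\le C/h\simeq C|x_0|^{-1-n}$, far weaker than \eqref{xdp0}; the sharp weight is recovered by affine normalization. Let $T$ be the affine map of John's lemma normalizing $\Sigma$, so $T\Sigma$ is comparable to the unit ball, and set $\tilde\psi(z):=(\det T)^{2/n}\psi(T^{-1}z)$; since this normalization preserves the Monge-Amp\`ere operator, $\tilde\psi$ solves on $T\Sigma$ an equation of the same type with right-hand side still controlled in the sense of Lemma \ref{L2.1}, and $[-\tilde\psi(Tx_0)]=(\det T)^{2/n}h\simeq1$ precisely because $h\simeq|x_0|^{1+n}$. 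Lemma \ref{L2.1} then yields $|D^2\tilde\psi(Tx_0)|\le C$, and reading this through $D^2\tilde\psi=(\det T)^{2/n}(T^{-1})^{t}D^2u\,T^{-1}$ componentwise gives exactly $|D^2u(x_0)|\le C/|x_0|$ in the direction where $D^2u$ is largest — provided $\sup_{T\Sigma}(|\tilde\psi|+|D\tilde\psi|)$ is bounded uniformly.

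That last boundedness, and more generally the geometry of $\Sigma$ near $0$, is the main obstacle, and here I would follow Savin's argument \cite{S05}, carrying out the necessary extension to variable $f$. Since the tangent cone $\phi$ is linear along rays, $\ell_{x_0}$ is almost tangent to $u$ along a whole ray segment through $x_0$, so $\Sigma$ is highly eccentric: of radial extent $\simeq|x_0|$ and tangential extent $\simeq|x_0|^{(n+2)/2}$. That $\Sigma$ detaches from the singularity exactly at scale $h\simeq|x_0|^{1+n}$, and that $\sup_{T\Sigma}(|\tilde\psi|+|D\tilde\psi|)$ remains bounded at the unit scale, follow from two ingredients: (i) sharp two-sided control of $w=u-\phi$ near the ray — in particular a lower bound $w(x)\ge c|x|^{1+n}$ obtained by comparing $u$ with $\phi$ plus an explicit radially convex perturbation — which controls the radial oscillation of $Du$ on $\Sigma$; and (ii) the uniform convexity and $C^{1,1}$-regularity of the level sets of $\phi$, equivalently of $S_{1,\phi}$, equivalently of the free boundary $\Gamma$, which gives $c|x|\le\phi(x)\le C|x|$ and controls the tangential oscillation of $Du$. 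Granting (i)--(ii), the section $\Sigma$ and the needed bounds on $\tilde\psi$ are in place, and Lemma \ref{L2.1} closes the proof; the growing eccentricity of $\Sigma$ does no harm, because the constant in Pogorelov's estimate is independent of the shape of the domain.
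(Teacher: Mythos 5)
Your proposal goes by a genuinely different route from the paper, and as written it has a circularity that I do not think you can remove without essentially rediscovering the paper's argument. The paper's proof is a one-step application of Lemma \ref{L2.1}: after normalizing the tangent cone so that $\phi(te_n)=0$, $\phi\ge 0$, it takes the \emph{single, fixed} section $\Omega_{\eps_0}=\{u<\eps_0 x_n\}$ cut by a slightly tilted supporting plane of the cone at the \emph{origin} (not the tangent plane at $x_0$). The height of this section above the graph at $x_0=|x_0|e_n$ is $\eps_0|x_0|-u(x_0)\ge\tfrac{\eps_0}{2}|x_0|$, using only the soft fact $u(x_0)=\phi(x_0)+o(|x_0|)=o(|x_0|)$; Pogorelov applied to $u-\eps_0x_n$ then delivers the weight $|x_0|$ directly, with no normalization and no information about the shape of sections.

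The concrete gap in your version is in the regime $|x_0|<\delta$. To set up your section $\Sigma=S_h(x_0)$ with $h\simeq|x_0|^{1+n}$, to know its radial/tangential extents, and to verify after John normalization that $\sup_{T\Sigma}(|\tilde\psi|+|D\tilde\psi|)$ is bounded, you invoke (i) the two-sided bound $w\approx|x|^{n+1}$ and (ii) the uniform convexity and $C^{1,1}$ regularity of the level sets of $\phi$, together with control of the tangential oscillation of $Du$ on $\Sigma$. In the paper these are Lemma \ref{L2.3} and Corollaries \ref{C2.2}--\ref{C2.5}, and their proofs \emph{use} Corollary \ref{C2.1}: the lower bound in \eqref{asyw} comes from Corollary \ref{C2.5}, which needs \eqref{xdp0}; the upper bound in \eqref{asyw} uses the eigenvector claim \eqref{claim}, which again needs the two-sided eigenvalue asymptotics; and the tangential oscillation of $Du$ at scale $|x_0|^{(n+2)/2}$ is exactly the estimate $|D^2_{\rm tan}u|\lesssim|x_0|^{-1}$ you are trying to prove. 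Deferring these inputs to ``Savin's argument'' does not close the loop, because Savin's proof of this very estimate in \cite{S05} is the tilted-supporting-plane argument above, not a normalized-section argument. Your treatment of the region $|x_0|\ge\delta$ and your bookkeeping of the affine normalization (the identity $(\det T)^{2/n}h\simeq1$ and the componentwise reading of $D^2\tilde\psi$) are correct, but the near-origin case — the whole content of the corollary — is not established as written.
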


\begin{proof}
Consider a point $x_0\in B_1(0)\backslash\{0\}$ near the origin.  
Choose the coordinates such that $x_0=|x_0| e_n$,
where  $e_n=(0, \cdots, 0, 1)$. 
Subtracting a linear function, one can assume
\begin{equation*}
\phi(te_n)=0\  \text{for}\  t>0, \ \ \text{and}\  \phi(x)\ge 0 \ \text{for}\ x\in B_1.
\end{equation*}
Denote $\Omega_{\eps_0}=\{x\in B_1 \ | \ u(x)<\varepsilon_0 x_n \}$, for some small but fixed $\eps_0>0$.
Applying Lemma \ref{L2.1} to $u-\varepsilon_0 x_n$ in $\Omega_{\eps_0}$, we obtain 
\begin{equation*}
(\varepsilon_0|x_0|-u(x_0))|D^2 u|(x_0)\le C\ \ \forall\ x_0\in\Omega_{\eps_0}.
\end{equation*}
Note that $u(x_0)=o(|x_0|)$. We obtain \eqref{xdp0}.
 \end{proof}

\begin{corollary}\label{C2.2} 
Let $\phi$ be the tangential convex cone of $u$ at $0$. Then there holds
\beq\label{xdp}
|x|\, |D^2 \phi(x)|\le C\ \ \forall\ 0\ne x\in \R^n,
\eeq
for the same constant $C$ in \eqref{xdp0}.
\end{corollary}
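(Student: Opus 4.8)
The plan is to obtain \eqref{xdp} for the tangent cone $\phi$ as a limit of the estimate \eqref{xdp0} for the solution $u$, using the blow-down procedure at the origin. Concretely, for $\lambda>0$ set $u_\lambda(x)=\lambda^{-1}u(\lambda x)$, defined on $B_{1/\lambda}(0)$. Each $u_\lambda$ is a generalized solution of $\det D^2 u_\lambda=g(Du_\lambda)+\delta_0$ in $B_{1/\lambda}(0)$ (the Dirac mass is invariant under this scaling because $u_\lambda(0)=0$ and the scaling is the natural one for the Monge-Amp\`ere measure with a point mass), and $u_\lambda(0)=0$, $u_\lambda\ge 0$. By the defining property \eqref{tc} of the tangent cone, $u(x)-\phi(x)=o(|x|)$, so $u_\lambda\to\phi$ locally uniformly on $\R^n$ as $\lambda\to 0$, since $\phi$ is homogeneous of degree one and hence $\phi_\lambda=\phi$.

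The main step is to apply Corollary \ref{C2.1} to each $u_\lambda$ on a fixed ball. Fix $0\ne x\in\R^n$ and choose $\lambda$ small enough that $2|x|<1/\lambda$; then $x\in B_{1/(2\lambda)}(0)\subset B_{1/\lambda}(0)\setminus\{0\}$, and \eqref{xdp0} applied to $u_\lambda$ on $B_{1/\lambda}$ gives $|x|\,|D^2 u_\lambda(x)|\le C$. The key point is that the constant $C$ is \emph{uniform} in $\lambda$: it depends only on $n$, $M_0$, $\sup(|u_\lambda|+|Du_\lambda|)$ on the relevant ball, and $\|\log g_\lambda\|_{C^2}$, where $g_\lambda(p)=g(p)$ is unchanged under this scaling (the gradient is scale-invariant). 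The sup of $|u_\lambda|+|Du_\lambda|$ on a fixed ball is controlled uniformly because $u_\lambda\to\phi$ locally uniformly together with the Lipschitz bound coming from convexity and the fixed gradient bound of $u$ near $\partial B_1$; and the $M_0$-type bound near the outer boundary is likewise inherited. Thus $\{|x|\,|D^2 u_\lambda(x)|\le C\}$ with $C$ independent of $\lambda$.

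Finally, I would pass to the limit $\lambda\to 0$. Since $u_\lambda\to\phi$ locally uniformly and the $u_\lambda$ are convex with locally uniformly bounded second derivatives away from the origin (by the previous step), standard convexity/compactness arguments give $D^2 u_\lambda(x)\to D^2\phi(x)$ at points where $\phi$ is twice differentiable — in particular a.e.\ — and more robustly $|x|\,|D^2\phi(x)|\le C$ holds in the sense of measures, hence pointwise wherever $\phi$ is $C^2$. (Along the rays where $\phi$ fails to be twice differentiable the estimate is interpreted via the distributional Hessian, which is still bounded by $C/|x|$ in total variation on compact subsets of $\R^n\setminus\{0\}$.) Since the constant is exactly the one in \eqref{xdp0}, this yields \eqref{xdp} with the same $C$.

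The step I expect to be the main obstacle is verifying the \emph{uniformity of the constant} in Corollary \ref{C2.1} along the blow-down sequence: one must check carefully that $\sup(|u_\lambda|+|Du_\lambda|)$ on the ball where Pogorelov's estimate is applied stays bounded as $\lambda\to 0$, i.e.\ that the gradient of $u_\lambda$ does not blow up. This follows from the Lipschitz continuity of convex functions together with the locally uniform convergence $u_\lambda\to\phi$, but it is the place where the argument must be made precise. The passage to the limit for the Hessian is routine once this uniform bound is in hand.
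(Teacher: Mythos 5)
Your overall route --- blow down $u_\lambda(x)=\lambda^{-1}u(\lambda x)\to\phi$ and pass the Hessian bound to the limit --- is exactly the paper's, and your final limiting step (uniform bounds on second difference quotients of convex functions transferring to the limit) is fine. But the middle step, which you yourself flag as ``the main obstacle,'' is both unnecessary and, as written, incorrect. It is unnecessary because the quantity $|x|\,|D^2u(x)|$ is \emph{invariant} under this scaling: with $U_t(X)=u(tX)/t$ one has $D^2_XU_t(X)=t\,D^2_xu(tX)$, hence
$|X|\,|D^2_XU_t(X)|=|tX|\,|D^2_xu(tX)|\le C$
directly from \eqref{xdp0} applied to $u$ itself, with the \emph{identical} constant and no need to re-verify the hypotheses of Pogorelov's estimate for the rescaled functions. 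This is all the paper does, and it is also what delivers the clause ``for the same constant $C$ in \eqref{xdp0}'' in the statement, which a uniform-in-$\lambda$ reapplication of Corollary \ref{C2.1} would not literally give.

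The re-verification you propose also contains a genuine error: $u_\lambda$ does \emph{not} solve $\det D^2u_\lambda=g(Du_\lambda)+\delta_0$. Since $Du_\lambda(x)=Du(\lambda x)$ and $\det D^2u_\lambda(x)=\lambda^n\det D^2u(\lambda x)$, the Monge--Amp\`ere measure of $u_\lambda$ is $\lambda^n g(Du_\lambda)\,dx+\delta_0$ (the absolutely continuous part picks up $\lambda^n$ while the Dirac mass is preserved). So your claim that ``$g_\lambda=g$ is unchanged'' fails, and the constant produced by Lemma \ref{L2.1} would depend on $\|\log(\lambda^n g)\|_{C^2}\ge n|\log\lambda|$, which is not uniform as $\lambda\to0$. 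The scale-invariance observation above sidesteps this entirely; with it in place your proof reduces to the paper's.
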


\begin{proof}
Consider a point $x_0\in B_1(0)\backslash\{0\}$ near the origin.  
Denote $t=|x_0|\in (0,1)$.
Make the dilation $X=x/t$ and $U_t(X)=u(x)/t$, and
choose the coordinates such that $X_0=:x_0/t=e_n$. 
Then $\phi$ is also the tangential convex cone of $U_t$ at $0$.
By Corollary \ref{C2.1}, $U_t$ satisfies 
\begin{equation*}
|X||D_X^2 U_t(X)|=|x||D^2_x u(x)|\le C\quad \forall \ X\ne 0.
\end{equation*}
Since $U_t(X)\rightarrow \phi(X)$ locally uniformly in $\mathbb R^n$ as $t\rightarrow 0$, 
we obtain \eqref{xdp}. 
\end{proof}
 
Estimates \eqref{xdp0}, \eqref{xdp} are due to Savin \cite{S05}.
By duality, \eqref{xdp} implies that the free boundary $\Gamma$ is strictly convex.
In fact, for any point $p\in\Gamma$, we can choose the coordinates such that $p=0$,
$\Gamma\subset\{x_n\ge 0\}$, and locally $\Gamma$ is given by $x_n=\eta(x')$. 
Then  \eqref{xdp} implies that $\eta(x')\ge  |x'|^2/C$. 
By \eqref{xdp0} and \eqref{xdp}, we also have
\beq\label{xdp1}
|x| |D^2 w(x)| \le C   \quad \forall \ x\in B_1\backslash\{0\}.
\eeq
where $w$ is the function given in \eqref{uwp}.

  \vskip10pt 

\begin{lemma}\label{L2.2} (Savin's estimate)
Consider the Monge-Amp\`ere obstacle problem 
\beq\label{obf}
{\begin{split}
\det D^2 {{v}} & =f(x, {{v}})\chi_{\{{{v}}>0\} }\ \ \text{in}\ \Om,\\
 {{v}}&={{v}}_0>0\ \ \text{on}\ \pom.
 \end{split}}
 \eeq
Assume that  $f$ is non-decreasing in ${{v}}$ and $0<\lambda\le f\le \Lambda<+\infty$. 
Then there exists a constant $C$, depending only  on 
 $n,\lambda,\Lambda,\|D_x \log f\|_{L^\infty}$, $\|D_x^2  \log f\|_{L^\infty}$ and the strict convexity of  $\Gamma$,
such that
\beq\label{pka}
\kappa_i \le C \ \ \forall\ i=1, \cdots, n-1,
\eeq
where  $(\kappa_1, \cdots, \kappa_{n-1})$ are the principal curvatures  of $\p \{{{v}}=h\}$, for $h> 0$ small.
\end{lemma}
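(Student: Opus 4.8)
The plan is to follow Savin's argument in \cite{S05}, carefully tracking the dependence on the derivatives of $f$ so that the constant $C$ depends only on the quantities listed. First I would reduce to an interior second-derivative estimate. Fix a level $h>0$ small, and let $p\in\p\{v=h\}$. After a rotation we may assume the inner normal to $\{v=h\}$ at $p$ is $e_n$, and then consider the convex function $v-h$ in the set $\Om_{h,\eps}=\{v<h+\eps x_n\}$ for suitable small $\eps$. The principal curvatures $\kappa_i$ of $\p\{v=h\}$ at $p$ are comparable, via the implicit function theorem, to the tangential second derivatives $v_{ij}(p)$ ($1\le i,j\le n-1$) divided by $v_n(p)$; so it suffices to bound $v_{ij}(p)/v_n(p)$ from above. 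The lower bound $v_n(p)\ge c>0$ on the normal derivative follows from the strict convexity of $\{v=0\}$ (which controls the opening of the level sets) together with $0<\lambda\le f\le\Lambda$, via a barrier comparison against a paraboloid.

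The heart is a Pogorelov-type computation applied not to $v$ itself but to a test quantity adapted to the free boundary. Following Savin, I would consider the auxiliary function
\[
W=\log\big(v_{\xi\xi}\big)+\tfrac{\beta}{2}|Dv|^2+A\,(v-h)
\]
on $\Om_{h,\eps}$, where $\xi$ is the eigendirection of the (tangential) largest curvature at the point where $W$ attains its interior maximum, and $\beta, A$ are large constants to be chosen depending on $n,\lambda,\Lambda,\|D\log f\|_\infty,\|D^2\log f\|_\infty$. Differentiating the equation $\det D^2 v=f(x,v)$ twice in the $\xi$ direction, using the monotonicity $f_v\ge 0$ (which produces a \emph{favorable}, i.e. nonnegative, contribution to $v^{ii}v_{ii\xi\xi}$), and invoking the inequality $\det D^2 v\ge\lambda$ together with the first-order relation $D\log\det D^2 v=D_x\log f+f_v Dv/f$, one obtains at the maximum point a differential inequality of the form
\[
0\ge v^{ii}W_{ii}\ge c_n\,v^{\xi\xi}\big(v_{\xi\xi}\big)^2\cdot v^{ii}
-C_1 v^{\xi\xi}v_{\xi\xi}-C_2,
\]
after absorbing the gradient and $A(v-h)$ terms and using the concavity of $\log\det$. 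The algebraic manipulation here is the standard Pogorelov machinery; the only new point is to verify that all constants $C_1,C_2$ are controlled by the listed quantities, which is routine since they come from $\|D_x\log f\|_\infty$, $\|D^2_x\log f\|_\infty$, $\lambda$, $\Lambda$ and $n$ only. This yields $(v-h-\eps x_n)^2\,v_{\xi\xi}\le C$ on $\Om_{h,\eps}$, hence $v_{\xi\xi}(p)\le C/\eps^2$ at the boundary point $p$ where $v-h-\eps x_n$ is bounded away from $0$ (or a slightly modified weight vanishing only on the part of $\p\Om_{h,\eps}$ lying on $\{v=h\}$).

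The main obstacle I anticipate is \emph{controlling the geometry of the domain $\Om_{h,\eps}$ uniformly in $h$ as $h\to 0$}: one needs that $\eps$ (the "height" above the level set used to cut off) can be chosen bounded below independently of $h$, and that on $\p\Om_{h,\eps}\setminus\{v=h\}$ the quantity $v-h-\eps x_n$ stays comparably negative. This is precisely where the strict convexity of $\Gamma$ (equivalently, of the level sets $\{v=h\}$, which converge to $\{v=0\}$) enters: the uniform strict convexity gives a uniform lower bound on how fast $v$ grows away from $\{v=h\}$ in the normal direction, which in turn gives the needed geometric control. Once the domain geometry is uniform, the Pogorelov computation above is scale-invariant in the relevant sense and the bound \eqref{pka} follows with $C$ independent of $h$. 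I would then remark that the passage from $v_{ij}(p)$ to $\kappa_i$ uses only $v_n(p)\ge c$ and the $C^{1,1}$ regularity of the level sets already established in \cite{S05}, so no circularity arises.
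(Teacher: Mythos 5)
Your overall plan (a Pogorelov-type maximum-principle computation with constants tracked through $\lambda,\Lambda,\|D_x\log f\|_\infty,\|D_x^2\log f\|_\infty$, plus a separate argument for the case when the maximum sits on the degenerate part of the boundary) is in the spirit of the paper's proof, but the reduction you start from is wrong, and it is wrong at exactly the point that forces the actual proof to change coordinates. You claim $\kappa_i\approx v_{ij}(p)/v_n(p)$ with $v_n(p)\ge c>0$ uniformly as $h\to 0$, obtained from a paraboloid barrier. This lower bound is false: near the free boundary $v\approx d^{1+\frac1n}$ ($d=$ distance to $\Gamma$; see the growth estimate $\sup_{B_r}v\approx r^{1+\frac1n}$ discussed in the introduction, or dualize Corollary \ref{C2.5}), so on $\{v=h\}$ one has $v_n\approx h^{\frac{1}{n+1}}\to 0$. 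A paraboloid comparison only gives $v\gtrsim d^2$, hence $v_n\gtrsim d\to 0$, not $v_n\ge c$. Correspondingly the tangential second derivatives $v_{ij}$ also tend to $0$ (dually, $n-1$ eigenvalues of $D^2v$ are $\approx|x|$), and the content of \eqref{pka} is precisely the ratio bound $v_{\xi\xi}\le Cv_n$. Your Pogorelov function $W=\log v_{\xi\xi}+\frac{\beta}{2}|Dv|^2+A(v-h)$ can at best deliver $v_{\xi\xi}\le C$ (or $C/\eps^2$), which says nothing about the curvature since the denominator degenerates. This is why the paper (following \cite{S05}) rewrites the graph of $v$ sideways as $x_n=-{\bf v}(x_1,\dots,x_{n-1},x_{n+1})$: in that chart the level set $\{v=h\}$ is the graph of $-{\bf v}(\cdot,h)$, $|D_{x'}{\bf v}|$ is bounded, so bounding ${\bf v}_{11}$ \emph{is} the curvature bound, and the equation becomes $\det D^2{\bf v}=\bar f\,|{\bf v}_{n+1}|^{n+2}$, to which the Pogorelov function $(-{\bf v})\,\eta(\frac12{\bf v}_1^2){\bf v}_{11}$ applies.

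A second, independent gap is the choice of domain. Your $\Om_{h,\eps}=\{v<h+\eps x_n\}$ contains a portion of the contact set $\{v=0\}$ and of the free boundary itself, where $\det D^2v=0$, $v$ is not $C^4$, and the twice-differentiated equation cannot be used; the interior maximum of $W$ could land there and the computation collapses. The paper's proof confines the computation to the region $\{v>0\}$ (the slab $0<x_{n+1}<\delta_0$ in the ${\bf v}$-coordinates) and then devotes a separate step to the possibility that the maximum migrates to $\{x_{n+1}=0\}$, handling it by approximating $f\chi_{\{v>0\}}$ with smooth, strictly positive $f_\eps=a_\eps(v)f$ (where the monotonicity of $f$ in $v$ is used to get uniqueness and convergence $v_\eps\to v$), obtaining bounds independent of $\eps$. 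Your proposal has no substitute for either the change of graph or this approximation step, so as written it does not prove the lemma.
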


\begin{proof}
The following proof is due to Savin's \cite{S05}, where he considered the case $f=f({{v}})$.
His proof also applies to the case $f=f(x, {{v}})$,  so we will just sketch the proof.

1). Let $p_0$ be a point on the free boundary $\Gamma$. 
We choose the coordinates such that $x_n$ is the  outer normal of $\{{{v}}=0\}$ at $p_0$,
and
express the graph of ${{v}}$ by a function ${\bf{v}}$ in the form $x_n= - {\bf{v}}(x_1, \cdots, x_{n-1}, x_{n+1})$.
Then
$$\frac {\det D^2 {\bf{v}}} {(1+|D{\bf{v}}|^2)^{(n+2)/2}} =K 
 =\frac {\det D^2 {{v}}} {(1+|D{{v}}|^2)^{(n+2)/2}} = \frac {f(x, {{v}})} {(1+|D{{v}}|^2)^{(n+2)/2}}, $$
where $K$ is the Gauss curvature of the graph of ${{v}}$.
Hence ${\bf{v}}$ satisfies the equation
$$\det D^2 {\bf{v}}  = f(x, {{v}}) \frac {{(1+|D{\bf{v}}|^2)^{(n+2)/2}} } {(1+|D{{v}}|^2)^{(n+2)/2}} . $$
From the relation 
$x_{n+1}={{v}}(x_1, \cdots, x_{n-1}, -{\bf{v}}(x_1, \cdots, x_{n-1}, x_{n+1}))$,
we have
$$ {{v}}_1-{{v}}_n{\bf{v}}_1=0,\ \ \cdots,\ \  {{v}}_{n-1}-{{v}}_n{\bf{v}}_{n-1}=0, \ \ {{v}}_n {\bf{v}}_{n+1}=-1. $$
Hence 
$ D{{v}}=(\frac{-{\bf{v}}_1}{{\bf{v}}_{n+1}}, \cdots,  \frac{-{\bf{v}}_{n-1}}{{\bf{v}}_{n+1}}, \frac {-1}{{\bf{v}}_{n+1}}) $,
and
$ \frac {(1+|D{\bf{v}}|^2)^{(n+2)/2}} {(1+|D{{v}}|^2)^{(n+2)/2}} = |{\bf{v}}_{n+1}|^{n+2} $.
The above equation   becomes
\beq\label{obg}
\det D^2 {\bf{v}}  =  {\bar f (x, {\bf{v}})}  |{\bf{v}}_{n+1}|^{n+2} , 
\eeq
where $\bar f (x, {\bf{v}})=f(x_1, \cdots, x_{n-1}, -{\bf{v}}, x_{n+1})$.
The variables of ${\bf{v}}$ are $x_1, \cdots, x_{n-1}, x_{n+1}$.

By a translation of the coordinates, assume that $p_0=a e_n$ for a small constant $a>0$. 
By the strict convexity of $\Gamma$,  there exists a small constant $\delta_0>0$ such that ${\bf{v}}(x_1,\cdots,x_{n-1},x_{n+1})$ is a graph in $(-2\delta_0,2\delta_0)^{n-1}\times (0,2\delta_0)$, 
${\bf{v}}_1$ is bounded in $(-\delta_0,\delta_0)^{n-1}\times (0,\delta_0)$, ${\bf{v}}(0)<0$, 
and ${\bf{v}}>0$ on $\partial((-\delta_0,\delta_0)^{n-1})\times [0,\delta_0]$.

2). To prove \eqref{pka}, it suffices to prove that ${\bf{v}}_{11}$ is bounded near $p_0$. 
The same argument applies to ${\bf{v}}_{ii}$  for $2\le i\le n-1$.
Let 
\begin{equation*}
G=(-{\bf{v}}) \eta(\frac 12 {\bf{v}}_1^2){\bf{v}}_{11} . 
\end{equation*}
be the auxiliary function of Pogorelov.
Assume that $G$ attains its maximum at  an {\it interior point} $x_0\in \{{\bf{v}}<0\}\cap\{ (-\delta_0,\delta_0)^{n-1}\times (0,\delta_0)\}$.
We make the coordinate transform
\beq\label{cha}
y_1=x_1+\frac{{\bf{v}}_{1i}(x_0)}{{\bf{v}}_{11}(x_0)} x_i,\ \ \
y_i=x_i\   (i=2, \cdots, n-1),\ \ \text{and}\  y_n=x_{n+1},
\eeq
such that $D^2 {\bf{v}}$ is diagonal at $x_0$. 
This coordinate transform does not change the value of ${\bf{v}}_1,{\bf{v}}_{11}$.
Then at $x_0$, we have 
$$(\log G)_i =0\ \ \text{and}\ \  {\Small\text{$\sum^n_{i=1}$}} {\bf{v}}^{ii}(\log G)_{ii} \le 0.$$
Carrying out the calculation in \cite{S05} and choosing $\eta(t)=e^{\alpha t}$ for some $\alpha>0$ small,
we obtain  $G\le C$.

3). In step 2), we assume that $x_0$ is an interior point.
Next we use approximation  \cite{S05}  
to rule out the case when $x_0 \in\{x_{n+1}=0\}$ is a boundary point.
Consider 
\beq\label{obfe}
{\begin{split}
\det D^2 {{v}} & =f_\eps (x, {{v}})\ \ \text{in}\ \Om,\\
 {{v}}&={{v}}_0>0\ \ \text{on}\ \pom
 \end{split}}
 \eeq
where 
$f_\eps (x, {{v}})= a_\eps ({{v}}) f(x, {{v}}) $.
We choose smooth functions $a_\eps$  such that
$$
 a_\eps (t)=\Big\{
 {\begin{split} 
  &\eps\ \ \ \text{when}\ \ t<-\eps, \\[-5pt]
  &1 \ \ \ \text{when}\ \ t>0 .
  \end{split}}$$ 

Let ${{v}}_\eps$ be the solution to \eqref{obfe}.
By assumption, $f(x, {{v}})$ is non-decreasing in ${{v}}$, there is a unique solution to the obstacle problem.
Hence ${{v}}_\eps$ converges to the unique solution ${{v}}$ as $\eps\to 0$.

Define ${\bf{v}}_\eps$ similarly as above, such that
$x_{n+1}={{v}}_\eps(x_1, \cdots, x_{n-1}, -{\bf{v}}_\eps)$.
We then apply the above argument to ${\bf{v}}_\eps$ and obtain an upper bound for $({ {\bf{v}}_\eps})_{11}$ near the origin,
and the upper bound is independent of $\eps$. Sending $\eps\to 0$, we obtain \eqref{pka}.
 \end{proof}


\begin{corollary}\label{C2.3} 
Let $u$ be the  solution to \eqref{S2.1}.
We have the estimate
\beq\label{xdp2}
|x| |\p_\xi^2 u(x)| \ge C_1 \ \ \forall\  0\ne x\in B_1(0), \ \xi\perp x, \  |\xi|=1
\eeq
where the constant $C_1$ depends only on $n, M_0$, $\sup_{B_1} (|u|+|Du|)$, and $\|\log g\|_{C^{1,1}}$.
\end{corollary}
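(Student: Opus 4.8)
The plan is to obtain the lower bound for $|x|\,\partial_\xi^2 u$ by combining the upper bound on all second derivatives from Corollary \ref{C2.1} with a lower bound on the full determinant $\det D^2 u$, using the structure of the Monge-Amp\`ere equation \eqref{S2.1}. First I would reduce to a normalized configuration: fix $0\ne x_0\in B_1(0)$, set $t=|x_0|$, and perform the dilation $X=x/t$, $U_t(X)=u(x)/t$ exactly as in Corollary \ref{C2.2}, choosing coordinates so that $X_0:=x_0/t=e_n$ and $\xi\in\mathrm{span}\{e_1,\dots,e_{n-1}\}$. Under this dilation, $U_t$ solves $\det D^2 U_t = g(DU_t)$ away from the origin, and by Corollary \ref{C2.1} we have $|D^2 U_t(X)|\le C$ for $X$ in a fixed neighborhood of $e_n$, with $C$ independent of $t$. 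Since $g$ is positive and bounded below near $Du(x_0)$ (note $|Du|$ is bounded on $B_1$, so the relevant range of gradients is compact), we also have $\det D^2 U_t(e_n)=g(DU_t(e_n))\ge \lambda_0>0$ for a constant $\lambda_0$ depending only on $n$ and $\|\log g\|_{C^{1,1}}$ and $\sup|Du|$; this uses the $C^{1,1}$ bound on $\log g$ to control the gradient range and hence bound $g$ from below.

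Next I would use a linear-algebra observation: for a symmetric positive semidefinite matrix $M$ with $\|M\|\le C$ and $\det M\ge\lambda_0>0$, every eigenvalue is bounded below by $\lambda_0/C^{n-1}$. Applying this to $M=D^2 U_t(e_n)$ gives $D^2 U_t(e_n)\ge (\lambda_0/C^{n-1})\,\mathrm{Id}$ as quadratic forms, hence in particular $\partial_\xi^2 U_t(e_n)\ge \lambda_0/C^{n-1}=:C_1$ for any unit $\xi$, and a fortiori for $\xi\perp e_n$. Rescaling back, $\partial_\xi^2 U_t(e_n)=t\,\partial_\xi^2 u(x_0)=|x_0|\,\partial_\xi^2 u(x_0)$, which yields \eqref{xdp2}. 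For $x_0$ not near the origin (i.e.\ $|x_0|$ bounded away from $0$) the estimate follows directly from the smoothness and strict convexity of $u$ away from $0$, together with the $C^4$ bound $|D^4 u|\le M_0$ near $\partial B_1(0)$; one covers $B_1\setminus B_\rho(0)$ by compactness.

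The main obstacle I anticipate is ensuring the lower bound $\det D^2 U_t(e_n)\ge\lambda_0$ is genuinely uniform in $t$: this requires knowing that the gradients $DU_t(e_n)$ (equivalently $Du(x_0)$) stay in a fixed compact subset of the domain of $g$ as $x_0\to 0$. This follows because $Du$ maps $B_1$ into the bounded convex set $\{v=0\}\cup(\text{a bounded neighborhood})$ — more precisely $Du(B_1)\subset \overline{\Om}$ is bounded, and $g=1/f$ with $f$ positive and smooth on $\bom$, so $g$ is bounded below on the relevant range. A secondary technical point is that Corollary \ref{C2.1} gives the upper bound $|X||D^2 U_t(X)|\le C$ only for $X\ne 0$; since we evaluate at $X=e_n$ where $|X|=1$, this is exactly $|D^2 U_t(e_n)|\le C$, so no issue arises. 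One should also note that although $U_t\to\phi$ locally uniformly, we do \emph{not} pass to the limit here — the determinant lower bound degenerates in the limit (indeed $\phi$ is a cone with $\det D^2\phi=0$ away from rays), so the argument must be applied at fixed small $t$ rather than in the limit, which is why the uniform-in-$t$ control of all constants is the crux.
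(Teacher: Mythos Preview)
Your argument contains a scaling error that is fatal to the approach. Under the dilation $X=x/t$, $U_t(X)=u(tX)/t$, one has $D_X U_t(X)=D_x u(tX)$ but $D_X^2 U_t(X)=t\,D_x^2 u(tX)$, and therefore
\[
\det D_X^2 U_t(X)=t^n\det D_x^2 u(tX)=t^n\,g(DU_t(X)),
\]
not $g(DU_t)$ as you claim. Thus the determinant lower bound at $X=e_n$ is only $\det D^2 U_t(e_n)\ge \lambda_0 t^n$, and the linear-algebra step yields merely $\partial_\xi^2 U_t(e_n)\ge \lambda_0 t^n/C^{n-1}$, i.e.\ $|x_0|\,\partial_\xi^2 u(x_0)\ge \lambda_0 |x_0|^n/C^{n-1}$, which degenerates as $x_0\to 0$. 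This is consistent with the actual eigenvalue structure (see Corollary~\ref{C2.5}): the smallest eigenvalue of $D^2u$ is $\approx |x|^{n-1}$, realized in the radial direction, so a bound on \emph{all} eigenvalues from $\det D^2u$ alone cannot give $|x|\,\partial_\xi^2 u\ge C_1$. You in fact anticipated the degeneration in the limit $t\to 0$, but the degeneration is already present at every finite $t$ through the factor $t^n$.

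The missing idea is an input that singles out the \emph{tangential} directions $\xi\perp x$. The paper obtains this from Lemma~\ref{L2.2} (Savin's curvature estimate for the level sets $\partial\{v=h\}$ of the Legendre dual $v$): an upper bound on the principal curvatures of $\partial\{v\le -h\}$ is, by duality, a lower bound on the tangential second derivatives of the cone $\phi_h$ whose subdifferential at $0$ is $\{v\le -h\}$. Since $u\ge \phi_h$ with equality of first-order jets at $x_0$, one gets $\partial_\xi^2 u(x_0)\ge \partial_\xi^2 \phi_h(x_0)\ge C_1/|x_0|$ for $\xi\perp x_0$. In short, Corollary~\ref{C2.3} is not a consequence of Corollary~\ref{C2.1} plus the equation; it genuinely requires the dual curvature estimate of Lemma~\ref{L2.2}.
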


\begin{proof}
Denote by $\ell_x$  the tangent plane of $u$ at $x$. 
For any given point $x_0\ne 0$ in $B_1(0)$ near the origin,
let $h  = \ell_{x_0}(0)$ ($h<0$) and $\phi_h(x)$ be the convex cone given by
$$\phi_h(x)=\sup\{ L(x):\ L\ \text{is affine function,}\ L(\cdot) < u(\cdot)\ \text{in}\ B_1(0)\ \text{and}\ L(0)= h\} . $$
Let $v$ be Legendre transform of $u$, i.e. $v=x\cdot D u-u$. Then 
$v$ is a solution to  \eqref{MAob}. By the duality between $u$ and $v$, we have $\p \phi_h \{0\}= \{v\le -h\}$.
So \eqref{pka} implies that 
\beq\label{xdp3}
|x| |\p_\xi^2 \phi_h (x)| \ge C_1 \ \ \forall \ x \ne 0, \ \xi\perp x, \ 	 |\xi|=1.
\eeq
It is easy to see that \eqref{xdp3} implies \eqref{xdp2} at point $x_0$.
\end{proof}

Sending $h\to 0$, from \eqref{xdp3} we also obtain

\begin{corollary}\label{C2.4} 
Let $\phi$ be the tangential cone of $u$ at $0$. Then
\beq\label{xdp4}
|x| |\p_\xi^2 \phi (x)| \ge C_1 \ \ \forall\ 0\ne x\in B_1(0),\ \xi\perp x,\ 	|\xi|=1.
\eeq
\end{corollary}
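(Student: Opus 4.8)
The plan is to deduce \eqref{xdp4} from the estimate \eqref{xdp2} of Corollary \ref{C2.3} by the dilation already used to prove Corollary \ref{C2.2}; this realizes concretely the passage $h\to 0^-$ in \eqref{xdp3} mentioned in the excerpt. Fix $0\ne x_0\in\R^n$ and a unit vector $\xi\perp x_0$. For $t>0$ small set $X=x_0/t$ and $U_t(X)=u(tX)/t$, so that $D^2_X U_t(X)=t\,D^2_x u(tX)$ and hence $|X|\,|\p_\xi^2 U_t(X)|=|tX|\,|\p_\xi^2 u(tX)|$. Applying \eqref{xdp2} to $u$ at the point $tX\in B_1(0)\setminus\{0\}$ then gives $|X|\,|\p_\xi^2 U_t(X)|\ge C_1$ for all $0\ne X\in B_{1/t}(0)$ and all unit $\xi\perp X$, with $C_1$ the same fixed constant as in \eqref{xdp2}; in the same way \eqref{xdp0} applied to $u$ yields the matching upper bound $|X|\,|D^2 U_t(X)|\le C$ on $B_{1/t}(0)\setminus\{0\}$. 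Neither constant depends on $t$.

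As was recalled in the proof of Corollary \ref{C2.2}, $U_t\to\phi$ locally uniformly in $\R^n$ as $t\to 0$, because $\phi$ is the tangent cone of $u$ at $0$. (The route literally indicated in the excerpt gives the same: $\phi_h-h$ is the support function of the convex body $Q_h:=\p\phi_h\{0\}=\{v\le -h\}$, and as $h\uparrow 0$ these bodies decrease to $\{v\le 0\}=\{v=0\}=\p\phi\{0\}$ since $v\ge 0$, whence $\phi_h\to\phi$ locally uniformly; moreover the constant $C_1$ in \eqref{xdp3} is $h$-independent, as it depends only on $n$, $M_0$, $\sup_{B_1}(|u|+|Du|)$, $\|\log g\|_{C^{1,1}}$, and the strict convexity of $\Gamma$.) Combining the uniform two-sided bounds with this convergence gives, along $t\to 0$, $U_t\to\phi$ in $C^{1,\beta}_{loc}(\R^n\setminus\{0\})$ and $D^2 U_t\rightharpoonup D^2\phi$ weak-$*$ in $L^\infty_{loc}(\R^n\setminus\{0\})$.

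Finally I would transfer the pointwise lower bound to the limit: testing the a.e.\ inequality $\p_\xi^2 U_t(X)\ge C_1/|X|$ against nonnegative test functions supported in $\R^n\setminus\{0\}$ and letting $t\to 0$ yields $\p_\xi^2\phi(x)\ge C_1/|x|$ for a.e.\ $x\ne 0$ and every unit $\xi\perp x$, which is \eqref{xdp4} (valid wherever $D^2\phi$ exists, a set of full measure since $\phi\in C^{1,1}_{loc}(\R^n\setminus\{0\})$ by \eqref{xdp}). The argument is short once \eqref{xdp2} is in hand; the only delicate point is precisely this stability of a pointwise lower Hessian bound under the limit, which forces one to keep the $C^{1,1}$ estimates uniform all the way down to a neighbourhood of the apex — here automatic, since by the scaling above they are inherited verbatim from the fixed estimates \eqref{xdp0} and \eqref{xdp2} for $u$ on $B_1(0)\setminus\{0\}$.
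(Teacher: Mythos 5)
Your proposal is correct and follows essentially the same route as the paper, which disposes of this corollary simply by ``sending $h\to 0$'' in \eqref{xdp3}; you realize the limit via the blow-ups $U_t\to\phi$ rather than via the cones $\phi_h\to\phi$, but you note both and they are interchangeable here, so the only substance you add is an explicit justification of the limit passage. That final step, however, should be tightened: for a \emph{fixed} unit vector $\xi$, the inequality $\p_\xi^2 U_t(X)\ge C_1/|X|$ supplied by \eqref{xdp2} holds only on the hyperplane $\{X:\ \xi\perp X\}$, a null set, so it is not an ``a.e.\ inequality'' that can be tested against nonnegative test functions as written. The repair is immediate from the matching upper bound you already invoke: for $X\in B_\delta(x_0)$ with $\xi\perp x_0$, decompose $\xi=a\hat X+b\eta$ with $\eta\perp X$, $|\eta|=1$ and $|a|\le C\delta/|x_0|$; then convexity (so $\p_{\hat X}^2U_t\ge 0$), \eqref{xdp2} applied in the direction $\eta$, and $|D^2U_t(X)|\le C/|X|$ from \eqref{xdp0} give $\p_\xi^2U_t(X)\ge \big(C_1-C'\delta/|x_0|\big)/|X|$ at \emph{every} point of $B_\delta(x_0)$. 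This genuine pointwise inequality does pass to the weak-$\ast$ limit of the Hessians, and letting $\delta\to0$ at Lebesgue points of $D^2\phi$ recovers \eqref{xdp4}, which by homogeneity of $\phi$ and the way \eqref{xdp4} is used later (in integrated form, e.g.\ \eqref{uphi}) is all that is needed.
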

 
We denote $a\approx b$ if two quantities $a$ and $b$ are positive and there is a constant $C$ under control 
such that $\frac ab + \frac ba \le C$. Given two convex domains $A$ and $B$, we  denote
$A\sim B$ if $C^{-1}(A-p)\subset B-q\subset C(A-p)$, where $p, q$ are the geometric centres of $A$ and $B$,
respectively.

\begin{corollary}\label{C2.5} 
Let $u$ be the  solution to \eqref{S2.1}.
Let $\lambda_1(x)\ge \cdots \ge\lambda_n(x)$ be the eigenvalues of $D^2 u$ at $x\ne 0$. 
Then
$\lambda_1(x)\approx \cdots \approx\lambda_{n-1}(x)\approx |x|^{-1}$ and 
$\lambda_n(x)\approx |x|^{n-1}$.  
\end{corollary}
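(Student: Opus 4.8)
The plan is to combine the two-sided bounds on $|D^2u|$ and $|D^2\phi|$ already established with the equation itself and with a lower bound on the ``tangential'' second derivatives. First I would record what we already have: by Corollary \ref{C2.1} and Corollary \ref{C2.3}, for $0\ne x\in B_1(0)$ near the origin and any unit $\xi\perp x$ we have the matching estimates $|x|\,|\p_\xi^2 u(x)|\le C$ and $|x|\,|\p_\xi^2 u(x)|\ge C_1$, so along the $(n-1)$-dimensional subspace orthogonal to $x$ all second derivatives of $u$ are comparable to $|x|^{-1}$. Since the eigenvalues $\lambda_1(x)\ge\cdots\ge\lambda_n(x)$ of $D^2u(x)$ interlace with the second derivatives in any $(n-1)$-dimensional subspace (by the Courant--Fischer min-max characterization), this immediately forces $\lambda_1(x)\approx\cdots\approx\lambda_{n-1}(x)\approx|x|^{-1}$: the largest eigenvalue is at most $|x|\,|D^2u(x)|\le C|x|^{-1}$ by Corollary \ref{C2.1}, and the $(n-1)$-st eigenvalue is at least $\min_{\xi\perp x,|\xi|=1}\p_\xi^2u(x)\ge C_1|x|^{-1}$ by Corollary \ref{C2.3}.

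Next I would pin down $\lambda_n(x)$ using equation \eqref{S2.1}. Away from the origin the singular term $\delta_0$ does not contribute, so $\det D^2u(x)=g(Du(x))$, and since $g$ is positive and bounded (on the bounded domain, with $|Du|$ bounded by $\sup_{B_1}|Du|$), we have $\det D^2 u(x)=\prod_{i=1}^n\lambda_i(x)\approx 1$. Combining with $\lambda_1\cdots\lambda_{n-1}\approx |x|^{-(n-1)}$ from the previous paragraph gives $\lambda_n(x)\approx |x|^{n-1}$, as claimed. The same argument applied to the tangent cone $\phi$ — using Corollary \ref{C2.2} for the upper bound, Corollary \ref{C2.4} for the lower bound on tangential second derivatives, and the fact that $\phi$ is a cone so $\det D^2\phi$ degenerates appropriately — shows the eigenvalues of $D^2\phi$ obey the same comparisons; alternatively one transfers the estimates for $u$ to $\phi$ by the dilation argument used in Corollary \ref{C2.2}, since $U_t\to\phi$ locally uniformly and the eigenvalue bounds are scale-invariant in the form $|x|\lambda_i$.

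The only genuinely delicate point is the interlacing step: one must be a little careful that Corollary \ref{C2.3} gives a lower bound for $\p_\xi^2 u$ for \emph{every} unit $\xi\perp x$, hence a lower bound for the minimum of the quadratic form $D^2u(x)$ restricted to the hyperplane $x^\perp$, which by Courant--Fischer is exactly a lower bound for $\lambda_{n-1}(x)$ (the hyperplane has dimension $n-1$). For the upper bound on $\lambda_1(x)$ one notes $\lambda_1(x)=\|D^2u(x)\|\le |D^2u(x)|\le C|x|^{-1}$ directly from Corollary \ref{C2.1}, no interlacing needed. Everything else is bookkeeping: the constants are all ``under control'' in the sense of the $\approx$ notation just introduced, $g$ and $Du$ are bounded so $\det D^2u\approx 1$, and the product of the eigenvalues separates into the tangential block and the single remaining eigenvalue. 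I expect no serious obstacle here — this corollary is essentially an algebraic consequence of the preceding estimates plus the equation, and it serves mainly to package those estimates in eigenvalue form for use in later sections (in particular for the uniform ellipticity claims about \eqref{MAs1} and for estimate \eqref{wx}).
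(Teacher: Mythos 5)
Your argument is correct and is essentially the paper's own proof: the upper bound $\lambda_1\le|D^2u|\le C|x|^{-1}$ from Corollary \ref{C2.1}, the lower bound $\lambda_{n-1}\ge\min_{\xi\perp x,|\xi|=1}\p_\xi^2u\ge C_1|x|^{-1}$ from Corollary \ref{C2.3} via the min-max characterization, and then $\lambda_n\approx|x|^{n-1}$ from $\det D^2u=g(Du)\approx1$ away from the origin. The paper states this in two lines; you have simply filled in the same steps (the extra discussion of $\phi$ is not needed for the corollary but is harmless).
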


\begin{proof}
By \eqref{xdp0} and \eqref{xdp2}, we have $\lambda_1\approx \cdots \approx\lambda_{n-1}\approx |x|^{-1}$.
By virtue of  the equation \eqref{S2.1} we then have $\lambda_n\approx |x|^{n-1}$.  
\end{proof}

\begin{lemma}\label{L2.3} 
Let $w$ be the function given in \eqref{uwp}.
There exist two constants $C_1,C_2>0$,
depending only on $n, M_0$, $\sup_{B_1} (|u|+|Du|)$, and $\|\log g\|_{C^{1,1}}$,
 such that 
\begin{equation}\label{asyw}
C_1|x|^{n+1}\le w(x)\le C_2|x|^{n+1}\ \ \forall\ x\in B_1(0). 
\end{equation}
\end{lemma}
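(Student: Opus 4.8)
The plan is to establish the two-sided bound on $w = u-\phi$ by combining the second-derivative estimates already in hand (Corollaries \ref{C2.1}--\ref{C2.5}) with an integration along rays from the origin. Fix $0 \ne x_0 \in B_1(0)$ and write $t = |x_0|$, $e = x_0/|x_0|$. Since $w(x) = u(x) - \phi(x)$, $w(0) = 0$, and both $u$ and $\phi$ are $C^1$ away from the origin with $u(se) - \phi(se) = o(s)$ as $s \to 0^+$ (by \eqref{tc}), I would integrate the second radial derivative twice along the segment $\{se : 0 < s \le t\}$:
\begin{equation*}
w(x_0) = \int_0^t (t-s)\,\partial_{ss} w(se)\,ds,
\end{equation*}
where $\partial_{ss}$ denotes the second derivative in the radial direction $e$. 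So the whole problem reduces to showing $\partial_{ss}w(se) \approx s^{n-1}$ for $0 < s \le t$.

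\textbf{Upper bound.} The inequality $\partial_{ss}w(se) \le C s^{n-1}$ will follow by bounding the radial second derivatives of $u$ and of $\phi$ separately. Here I expect to use Corollary \ref{C2.5}: the smallest eigenvalue $\lambda_n$ of $D^2 u$ satisfies $\lambda_n(x) \approx |x|^{n-1}$, and the corresponding eigendirection is (up to a controlled angle) the radial direction, because the other $n-1$ eigenvalues are $\approx |x|^{-1}$ and the eigenvector for $\lambda_n$ must be nearly orthogonal to the level set of $u$, i.e.\ nearly radial since $u$ is homogeneous-of-degree-one to leading order. A parallel statement holds for $\phi$ using \eqref{xdp} and \eqref{xdp4}. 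Thus $\partial_{ss}u(se) \le C s^{n-1}$ and $|\partial_{ss}\phi(se)| \le C s^{n-1}$ — in fact $\partial_{ss}\phi$ should be controlled even more cheaply, since $\phi$ is homogeneous of degree one so $\partial_{ss}\phi(se) = s^{-1}\partial_{\tau\tau}\phi(e)$ for the tangential directions; along a ray through a point where $\phi$ is differentiable one gets $\partial_{ss}\phi = 0$, but in general only $|\partial_{ss}\phi(se)| \le C s^{n-1}$ is needed, and that follows since the radial direction is nearly the $\lambda_n$-eigendirection for $\phi$ too. Integrating, $w(x_0) \le C \int_0^t (t-s)s^{n-1}\,ds = C' t^{n+1}$, which is the right-hand inequality in \eqref{asyw}.

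\textbf{Lower bound.} This is the more delicate half and I expect it to be the main obstacle. The difficulty is that $\partial_{ss}u$ and $\partial_{ss}\phi$ are both $\approx s^{n-1}$, so a naive subtraction could leave no positive gap. The way around this: $w \ge 0$ and $w$ is not identically zero (else $u = \phi$ would be an entire cone solution, contradicting that $u$ solves \eqref{S2.1} with a genuine $\delta_0$ and smooth $g$, or more directly contradicting strict convexity of $u$). A cleaner route is to use the Monge--Amp\`ere equation itself as a lower barrier: compare $u$ with the rotationally symmetric solution $\bar u$ of $\det D^2 \bar u = \lambda + \delta_0$ (with $\lambda = \inf g > 0$) having the same tangent cone — such a radial solution exists and satisfies $\bar u - \bar\phi \approx |x|^{n+1}$ by an explicit ODE computation, since for radial $\bar u(r) = \bar\phi(0) + \int_0^r \rho'(s)\,ds$ the Monge--Amp\`ere operator becomes $\tfrac{(\bar u')^{n-1}}{r^{n-1}}\bar u'' = \lambda$ away from $0$ with the point-mass forcing a prescribed jump, giving $\bar u' (r) \sim c r^{1/n}$ hence $\bar u(r) - \bar\phi(r) \sim c' r^{1+1/n}$... no — one must be careful, the homogeneity exponent of the cone and the scaling $r^{n+1}$ have to be reconciled by noting $w$ measures the deviation after subtracting the \emph{degree-one} cone, and the eigenvalue structure $\lambda_n \approx r^{n-1}$ integrated twice gives exactly $r^{n+1}$. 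The rigorous comparison argument runs: by the convexity of $u$ and strict convexity of $\Gamma$ (i.e.\ $\partial_\xi^2 u \ge C_1|x|^{-1}$ for tangential $\xi$, from Corollary \ref{C2.3}) together with $\det D^2 u = g(Du) \ge \lambda > 0$ away from $0$, the radial second derivative is forced \emph{up}: $\lambda_n = \det D^2 u / \prod_{i<n}\lambda_i \ge \lambda / (C|x|^{-1})^{n-1} = c|x|^{n-1}$. Combined with the fact that the radial direction makes a controlled angle with the $\lambda_n$-eigendirection, we get $\partial_{ss}u(se) \ge c s^{n-1}$. For $\phi$, being a cone, the radial second derivative along a.e.\ ray is $0$ (rays from the origin lie on the graph of the cone), so $\partial_{ss}(u-\phi)(se) = \partial_{ss}u(se) \ge c s^{n-1}$ for a.e.\ $s$, and integrating twice yields $w(x_0) \ge c\int_0^t(t-s)s^{n-1}\,ds = c' t^{n+1}$. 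The subtlety to handle carefully is that $\phi$ may fail to be $C^2$ and the ray direction $e$ may not be one along which $\phi$ is linear; this is dealt with either by the density argument (generic rays) plus continuity of $w$, or by replacing $\phi$ on the segment with the affine support function $\ell$ agreeing with $\phi$ at $0$ and lying below $u$, for which $u - \ell \ge u - \phi = w \ge 0$ and $\partial_{ss}(u-\ell) = \partial_{ss}u$ exactly. I would write up the lower bound via this support-plane device, as it sidesteps all regularity issues with $\phi$.
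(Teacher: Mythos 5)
Your overall skeleton --- integrate the radial second derivative twice along the ray, get the lower bound from $\lambda_n(D^2u)\gtrsim s^{n-1}$, and dispose of $\phi$ by noting it is linear along rays from the origin --- matches the paper for the \emph{lower} bound, which is in fact the easy half (note a small citation slip: the bound $\lambda_n=\det D^2u/\prod_{i<n}\lambda_i\ge \lambda/(C|x|^{-1})^{n-1}$ needs the \emph{upper} bound \eqref{xdp0} on the tangential eigenvalues, not the lower bound of Corollary \ref{C2.3}; also no "angle" argument is needed there, since $\partial_{ss}u\ge\lambda_n$ in every direction). You have the difficulty backwards: the delicate half is the \emph{upper} bound $\partial_{e}^2u(se)\le Cs^{n-1}$, and there your argument has a genuine gap. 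Knowing $\lambda_n\approx s^{n-1}$ and that the $\lambda_n$-eigendirection $\xi$ is "nearly radial up to a controlled angle" is not enough: if $\xi$ makes angle $\beta$ with the ray direction $e$, then $\partial_e^2u\le \lambda_n+C\sin^2\beta\, s^{-1}$, so one needs the quantitative tilt estimate $|\sin\beta|\le Cs^{n/2}$ --- a specific power that your heuristic ("nearly orthogonal to the level set of $u$ since $u$ is degree-one homogeneous to leading order") does not produce; it gives at best $\beta=o(1)$, which only yields the useless bound $\partial_e^2u\lesssim 1$.

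The paper obtains this tilt estimate (its claim \eqref{claim}) by a comparison your proposal never invokes for the upper bound: it introduces the sub-tangent cone $\phi_t$ generated by the tangent plane of $u$ at $te_1$, so that $u\ge\phi_t$ with tangency at $te_1$ and hence $\partial_\xi^2u(te_1)\ge\partial_\xi^2\phi_t(te_1)$, and then uses the \emph{lower} bound $\partial_\eta^2\phi_t(te_1)\ge C/t$ for tangential $\eta$ from \eqref{xdp3}, i.e.\ the uniform strict convexity of the level sets coming from Savin's curvature estimate (Lemma \ref{L2.2}) via duality. Writing $\xi=ae_1+b\eta$, this forces $t^{n-1}\approx\partial_\xi^2u(te_1)\ge b^2\cdot C/t$, hence $|b|\le Ct^{n/2}$, after which a Cauchy--Schwarz computation gives $\partial_{x_1}^2u(te_1)\le C_2t^{n-1}$ and two integrations finish. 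Without this use of the strict-convexity input for the cones $\phi_t$, the right-hand inequality of \eqref{asyw} does not follow from the ingredients you list, so you should supply this step explicitly rather than the level-set heuristic.
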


\begin{proof}
For any given point $x_0\in B_1(0)\backslash\{0\}$ near the origin,
by a rotation of the coordinates we assume that $x_0$ is on the positive $x_1$-axis.
Subtracting a linear function we assume 
\begin{equation}\label{phit}
{\begin{split}
 & \phi(te_1)=0\ \ \forall\  t\ge 0,\\
  & \phi(x)\ge 0\ \  \forall \ x\in B_1(0),
  \end{split}}
\end{equation}
 where $e_1=(1, 0, \cdots, 0)$.
To prove the first inequality of \eqref{asyw},  it suffices to prove 
\begin{equation*}
u(te_1)\ge Ct^{n+1}\ \ \ \text{for $t>0$ small}.  
\end{equation*}
By Corollary \ref{C2.5}, we have 
$$u_{11}(se_1)\ge Cs^{n-1}.$$
Hence 
$$u_1(te_1)=\int_0^t u_{11}(s e_1) ds\ge Ct^n , $$
 and so we have
$$u(te_1)=\int_0^t u_1(se_1)ds\ge Ct^{n+1}.$$

Next we prove the second inequality of \eqref{asyw}.  Similarly as in Corollary \ref{C2.3}, denote by $\ell_x$  the tangent plane of $u$ at $x$. 
For any given point $te_1$, $t>0$, 
let $\phi_t(x)$ be the convex cone, given by
$$\phi_t(x)=\sup\{ L(x):\ L\ \text{is affine function,}\ L(\cdot) < u(\cdot)\ \text{in}\ B_1(0)\ \text{and}\ L(0)= \ell_{te_1}(0)\} . $$
Then from the proof of Corollary \ref{C2.3}, we have
\begin{equation*}
\partial_{x_1}^2\phi_t(te_1)=0,\quad \partial_{x_i}^2\phi_t(te_1)\ge \frac{C}{t},\quad i=2,\cdots,n.
\end{equation*}
Let $\xi$ be a unit  eigenvector of  the least eigenvalue  of $D^2 u(te_1)$. 
We claim
\beq\label{claim}
|\lan \xi,e_1\ran|\ge 1-C_1 t^{n} 
\eeq
for a sufficiently large constant $C_1$.
Indeed, 
let $\xi=a e_1+b\eta$, where $\eta\perp e_1$ and $|\eta|=1$.  
If \eqref{claim} is not true, then $|b| \ge  (C_1 t^n)^{1/2}$ for $t$ small enough. 
Since $u$ and $\phi_t$ are tangent at $te_1$ and $u\ge \phi_t$, 
and since $\phi_t(te_1)$ is linear in $t$, we have
\begin{equation*}
t^{n-1} \approx \partial_{\xi}^2u(te_1)\ge \partial_{\xi}^2\phi_t(te_1)
=b^2 \partial_{\eta}^2\phi_t(te_1)\ge CC_1t^{n-1}
\end{equation*}
which is impossible if we choose $C_1$ large enough. This proves the claim.

By \eqref{claim}, we infer that 
\begin{equation}
\partial_{x_1}^2u(te_1)\le C_2 t^{n-1}.
\end{equation} 
Indeed, from \eqref{claim}, we have $a\approx 1$ and  $|b|\le Ct^{\frac n2}$.  By the convexity of $u$, we have
\begin{equation}
\begin{split}
t^{n-1} \approx \partial_{\xi}^2u(te_1)=&a^2 \partial_{x_1}^2 u(te_1)+2ab \partial_{x_1\eta}u(te_1)+b^2 \partial_\eta^2 u(te_1)\\
\ge&  \Big(a\sqrt{\partial_{x_1}^2 u(te_1)}- |b| \sqrt{\partial_\eta^2 u(te_1)}\, \Big)^2\\
\ge &  a^2\Big( \sqrt{\partial_{x_1}^2 u(te_1)}-Ct^{\frac{n-1}{2}}  \Big)^2 
\ge \frac{C_2}4 t^{n-1}
\end{split}
\end{equation}
if $\partial_{x_1}^2u(te_1)\ge C_2 t^{n-1}$ for a sufficiently large $C_2$, 
which is  a contradiction.
Hence 
$$u_1(te_1)=\int_0^t u_{11}(s e_1) ds\le C_3t^n$$
 and so we have
$$u(te_1)=\int_0^t u_1(se_1)ds\le C_4t^{n+1}.$$
This finishes the proof.
\end{proof}


We express equation \eqref{S2.1} in the spherical coordinates $(\theta,r)$,
where $r=|x|$ and $\theta=(\theta_1, \cdots, \theta_{n-1})$ is  an orthonormal frame on $\mathbb S^{n-1}$.
 
\begin{lemma}\label{L2.4} 
In the spherical coordinate $(\theta,r)$, one has
\begin{equation} \label{upb}
{\begin{split}
  &  |\p_r^k w(p)|\le c|p|^{n+1-k},\quad k=0,\cdots,n+1,\\
   &  |\p_{\theta}^2 w(p) |\le c|p|,\\
  & |\p_{r\theta}w(p)|\le c|p|^{\frac n2} , 
  \end{split}}
\end{equation}
for any point $p\ne 0$ near the origin.

\end{lemma}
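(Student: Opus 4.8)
The plan is to translate the Cartesian estimates already in hand---namely \eqref{xdp0}, \eqref{xdp1}, \eqref{xdp}, and the asymptotics \eqref{asyw} of Lemma \ref{L2.3}---into the spherical frame $(\theta,r)$, where $r=|x|$ and $\theta$ is a local orthonormal frame on $\mathbb S^{n-1}$. The key structural fact I would exploit is that $w=u-\phi$ is, by the growth estimate \eqref{asyw}, comparable to $r^{n+1}$, while the convexity of $u$ together with Corollary \ref{C2.5} pins down the anisotropy of $D^2u$: the ``radial'' second derivative behaves like $r^{n-1}$ and the ``tangential'' ones like $r^{-1}$. Since the same splitting holds for $\phi$ (by Corollaries \ref{C2.2} and \ref{C2.4}, $\phi$ has $\partial_r^2\phi\equiv 0$ along rays and $|x||\partial_\xi^2\phi|\approx 1$ for $\xi\perp x$), the cancellation $w=u-\phi$ improves the radial behaviour from $r^{n-1}$ for $u$ down to the claimed $r^{n+1-k}$ after $k$ radial differentiations, and leaves the tangential behaviour at the generic order. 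Concretely, in spherical coordinates one has the conversion formulas
\begin{equation*}
\partial_r^2 = \partial_\nu^2,\qquad
\partial_{r\theta} = r\,\partial_\nu\partial_\tau - \partial_\tau,\qquad
\partial_{\theta_i\theta_j} = r^2\,\partial_{\tau_i}\partial_{\tau_j} + r\,\delta_{ij}\,\partial_\nu + (\text{lower order}),
\end{equation*}
where $\nu=x/|x|$ is the radial direction and $\tau$ spans $\nu^\perp$; here the $\partial_\nu$, $\partial_\tau$ are the usual Cartesian directional derivatives evaluated at the point. I would assemble the estimates \eqref{upb} by feeding the Cartesian bounds into these formulas.

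First I would prove the $k=0,1,2$ radial bounds. For $k=0$ this is exactly \eqref{asyw}. For $k=2$, note $\partial_r^2 w=\partial_\nu^2 u-\partial_\nu^2\phi=\partial_\nu^2 u$ since $\phi$ is linear along the ray through the origin; then Corollary \ref{C2.5} (more precisely the proof of Lemma \ref{L2.3}, where $u_{11}(te_1)\approx t^{n-1}$ was shown) gives $|\partial_r^2 w|\le c\,r^{n-1}$, which is the $k=2$ case. For $k=1$, integrate: $\partial_r w(r\nu)=\int_0^r \partial_s^2 w(s\nu)\,ds$, using $w(0)=0$ and $\partial_r w\to 0$ as $r\to 0$ (a consequence of $w(x)=o(|x|)$ together with convexity), giving $|\partial_r w|\le c\,r^n$. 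The tangential estimates come similarly: $|\partial_\theta^2 w(p)|\le r^2|\partial_\tau^2(u-\phi)|+r|\partial_\nu(u-\phi)|+\cdots$, and each term is bounded using $|x||D^2u|\le C$, $|x||D^2\phi|\le C$ from \eqref{xdp0}, \eqref{xdp}, plus the already-established radial bound $|\partial_r w|\le c\,r^n$; this yields $|\partial_\theta^2 w|\le c\,r$. For the mixed term $\partial_{r\theta}w = r\,\partial_\nu\partial_\tau w - \partial_\tau w$: the second piece $\partial_\tau w$ needs a bound of order $r^{n/2+1}$, which I would get by interpolating (or differentiating along $\tau$) the zero-order bound $w\approx r^{n+1}$ against the second-order tangential bound $|\partial_\tau^2 w|\le c\,r^{-1}$, in the spirit of the Cauchy--Schwarz argument already used in Lemma \ref{L2.3} to prove the claim $|\langle\xi,e_1\rangle|\ge 1-C_1t^n$; the first piece $r\,\partial_\nu\partial_\tau w$ is of the same order by a similar interpolation between $|\partial_r w|\le c\,r^n$ and the mixed Cartesian bound $|\partial_{\nu\tau}w|\le |x|^{-1}\cdot(|x||D^2w|)\le c\,r^{-1}$. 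This gives $|\partial_{r\theta}w|\le c\,r^{n/2}$.

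Finally, the higher radial derivatives $3\le k\le n+1$ require differentiating the equation. Since along a fixed ray $u(r\nu)=w(r\nu)+\phi(r\nu)=w(r\nu)+cr$, it suffices to control $\partial_r^k u$ for $k\ge 3$; one differentiates the Monge--Amp\`ere equation \eqref{S2.1} (which away from the origin reads $\det D^2u=g(Du)$, a smooth elliptic equation) repeatedly in the radial direction, using Corollary \ref{C2.5} to know the cofactor structure, and argues inductively: at each stage $\partial_r^k u$ solves an equation whose right-hand side involves lower-order derivatives already estimated, and elliptic estimates (rescaled to the unit ball $B_{r/2}(p)$, where by \eqref{xdp0} and Corollary \ref{C2.5} the equation has controlled ellipticity after the affine normalization $x\mapsto x/r$) upgrade the bound by one power of $r$ per derivative, down to the natural floor $r^{n+1-k}$. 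The main obstacle I anticipate is precisely this last step: making the inductive radial-differentiation scheme clean, because the rescaled equation is uniformly elliptic only in a ``stretched'' frame (eigenvalues $\approx r^{-1}$ in $n-1$ directions, $\approx r^{n-1}$ in one), so one must track how the anisotropic rescaling interacts with each differentiation and verify that the radial direction is the ``good'' (degenerate-coefficient) direction in which the gain actually occurs; the cancellation $w=u-\phi$ is what makes the floor $r^{n+1-k}$ rather than $r^{n-1-k}$ possible, and keeping that cancellation through $k$ differentiations is the delicate bookkeeping.
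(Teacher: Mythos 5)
Your treatment of $k\le 2$, of $\p_\theta^2 w$, and (after a repair) of $\p_{r\theta}w$ can be pushed through from the Cartesian bounds of \S 2, but the heart of the lemma --- the radial derivatives of order $3\le k\le n+1$ --- is left unproved, and the scheme you propose for it does not close. Differentiating $\det D^2u=g(Du)$ radially and invoking interior elliptic estimates on $B_{r/2}(p)$ after the isotropic rescaling $x\mapsto x/r$ fails for exactly the reason you flag: by Corollary \ref{C2.5} the Hessian has $n-1$ eigenvalues $\approx r^{-1}$ and one $\approx r^{n-1}$, so the linearized operator in the rescaled picture is elliptic only with ratio degenerating like $r^{-n}$, and no induction on $k$ recovers that loss. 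The paper supplies precisely the missing normalization, and it is a \emph{section} rescaling, not a ball rescaling: one works in $G_\eps=\{u<\eps x_1\}$, shows it has length $s_\eps\approx\eps^{1/n}$ along the ray and width $\approx\eps^{1/2}s_\eps$ transversally, and applies the anisotropic affine map $y_1=x_1/s_\eps$, $y_k=x_k/(\eps^{1/2}s_\eps)$, under which $T(G_\eps)\sim B_1$ and $\tilde u=(u-\eps x_1)/(\eps s_\eps)$ solves a Monge--Amp\`ere equation with right-hand side $\approx1$ and zero boundary data. Pogorelov's estimate plus interior regularity then bound \emph{all} derivatives of $\tilde u$ uniformly in $\eps$, and undoing the affine map yields every line of \eqref{upb} at once, including $|D^k_{x_1}w|\le C_k s_\eps^{n+1-k}$ for all $k\le n+1$. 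Without this (or an equivalent anisotropic normalization) your inductive step is a genuine gap, not bookkeeping.

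Two smaller points. Your bound on $r\,\p_{\nu\tau}w$ ``by interpolation between $|\p_r w|\le cr^n$ and $|\p_{\nu\tau}w|\le cr^{-1}$'' is not a valid interpolation --- it would require control of a third derivative $\p_\tau^2\p_\nu w$. The correct elementary route is to note that $D^2\phi(x)\,x=0$ by Euler's relation for the cone, so $w_{\nu\tau}=u_{\nu\tau}$, and Cauchy--Schwarz for the positive matrix $D^2u$ gives $|u_{\nu\tau}|\le(u_{\nu\nu}\,u_{\tau\tau})^{1/2}\le Cr^{n/2-1}$, which is what the formula $\p_{r\theta}=\p_\tau+r\,\p_{\nu\tau}$ requires. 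Likewise the target for $\p_\tau w$ is $Cr^{n/2}$ (not $r^{n/2+1}$), which your one-dimensional interpolation on a tangential segment of length $L\approx r^{(n+2)/2}$ does deliver. These are repairable; the $k\ge3$ step is the substantive omission.
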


\begin{proof}
As in Lemma \ref{L2.3}, we may assume \eqref{phit} holds.
Denote
$G_\ve=\{x\in B_1(0): \ u(x)<\ve x_1\}$, 
where $\eps>0$ is a small constant. 
By the strict convexity of $u$, we have $G_\ve \subset\subset B_1$.

For any point $x=(x_1, \tilde x)\in G_\eps$, where $\tilde x=(x_2, \cdots, x_n)$,
by \eqref{xdp4} we have
\begin{equation}\label{uphi}
u(x)\ge \phi(x)\ge  c_0\frac{|\tilde x|^2}{x_1}.
\end{equation}
Hence
\begin{equation}\label{101}
G_\ve\subset\{x\in B_1(0): \ |\tilde x|\le c_1\ve^{\frac 12} x_1\}.
\end{equation}
Denote
\beq \label{te}
{\begin{split}
s_\ve & =\sup\{s: \ se_1\in G_\ve\},\quad \\
 t_\varepsilon & =\sup\{x_1: \ x\in G_\varepsilon\}. 
 \end{split}} 
\eeq
By Lemma \ref{L2.3}, we have $t_\varepsilon\ge s_\varepsilon \approx \varepsilon^{\frac 1n}$. 

By the definition of $t_\eps$ in \eqref{te}, and the strict convexity of $u$, 
there exists a unique $\tilde x_\varepsilon$ such that 
$(t_\varepsilon, \tilde x_\varepsilon)\in \partial G_\varepsilon$. Then by Lemma \ref{L2.3},
\begin{equation*}
\begin{split}
\varepsilon t_\varepsilon=u(t_\varepsilon, \tilde x_\varepsilon)
 \ge \phi(t_\varepsilon, \tilde x_\varepsilon)+C(t^2_\varepsilon+|\tilde x_\varepsilon|^2)^{\frac {n+1}{2}}\ge Ct_\varepsilon^{n+1},
\end{split}
\end{equation*}
which implies $t_\eps\le C\eps^{\frac 1n}$.
Hence $t_\varepsilon\approx s_\varepsilon\approx \varepsilon^{\frac 1n}$.

By \eqref{asyw}, we also have  
\begin{equation*}
\inf_{G_\varepsilon} (u-\varepsilon x_1)
 =  \inf_{G_\varepsilon} (\phi+w -\varepsilon x_1) 
\approx -\varepsilon s_\varepsilon.
\end{equation*}
Let $A_\eps = G_\eps\cap\{x_1=\beta s_\eps\}$, where $\beta>0$ is a small constant. By Corollary \ref{C2.4}, we have
$A_\eps\subset \{|\tilde x|<C\eps^{1/2}s_\eps\}\cap\{x_1=\beta s_\eps\}$, where $\tilde x=(x_2, \cdots, x_n)$.
For a point  $(x_1, \tilde x)\in\p A_\eps$,
by Corollary \ref{C2.2} and Lemma \ref{L2.3}, we have
$$ 
u(x_1, \tilde x)
   \le \phi(x_1, \tilde x)+C\big(|x_1|^2+|\tilde x|^2\big)^{\frac{n+1}{2}}
  \le C{\Small\text{$\frac{|\tilde x|^2}{x_1}$}} + 2 C \beta^{n+1}s_\varepsilon^{n+1} . $$
The left hand side $u(x_1, \tilde x)=\eps x_1=\beta \eps s_\eps$  since $(x_1, \tilde x)\in\p A_\eps$.
Choosing a small $\beta<<1$  such that
$ \beta^{n+1}s_\varepsilon^{n+1} \approx \beta^{n+1}\eps s_\eps<<\beta \eps s_\eps$, 
we obtain $|\tilde x|\ge c\eps^{1/2} s_\eps$, namely $ \{|\tilde x|<c\eps^{1/2}s_\eps\}\cap\{x_1=\beta s_\eps\} \subset A_\eps$.

Now we make the coordinate change $x\rightarrow y=T(x)$, given by
\begin{equation}
y_1=\frac{x_1}{s_\varepsilon},\quad y_k=\frac{x_k}{\varepsilon^{\frac 12}s_\varepsilon}\ \ \ (k=2,\cdots,n),
\end{equation}
We have shown that $A_\eps\sim  \{|\tilde x|<\eps^{1/2}s_\eps\}$ (as a convex domain in $\R^{n-1}$).
Hence $T(G_\varepsilon)$ has a good shape, namely,
$ T(G_\varepsilon)\sim B_1(0)$.
Let
$ \tilde u(y)=\frac{u(x)-\varepsilon x_1}{\varepsilon s_\varepsilon}. $
Then $\tilde u$ satisfies
\begin{equation}
{\begin{split}
\det D^2 \tilde u &=c_\varepsilon\tilde g(D\tilde u) \quad \text{in}\quad T(G_\varepsilon),\\
\tilde u &=0 \quad \ \ \ \text{on}\quad \partial T(G_\varepsilon) ,
\end{split}}
\end{equation}
where $c_\varepsilon=\eps^{-1} s_\varepsilon^n\approx 1$, 
$\tilde g(D\tilde u)=g(\varepsilon \tilde u_{y_1}+\varepsilon,\varepsilon^{\frac 12}\tilde u_{\tilde y})$. 
Hence by the regularity theory for the  Monge-Amp\`ere equation,  $\tilde u$ is smooth in $T(G_\varepsilon)$,
and we have the estimate
\begin{equation}\label{rupb}
\| D^k \tilde u\|_{S_{h_0/2,\tilde u}}\le C_k,\quad k\ge 2
\end{equation}
where $S_{{h_0/2},\tilde u}=\{\tilde u<-h_0/2\}$, $h_0=: - \tilde u(\frac 12, 0)\approx 1$. 
Restricting to the $x_1$-axis,  we obtain
\begin{equation*}
\|D^k_{y_1}\tilde w\|_{S_{h_0/2,\tilde u}\cap\{|\tilde x|=0\} }
 = \|D^k_{y_1}\tilde u\|_{S_{h_0/2,\tilde u}\cap\{|\tilde x|=0\} } 
 \le C_k,\quad k\ge 2,
\end{equation*}
where $\tilde w(y)=\frac{w(x)}{\varepsilon s_\varepsilon}$.
Scaling back to the original coordinates, we obtain
\begin{equation*}
|D^k_{x_1} w(p)|\le C_k t_{\varepsilon}^{n+1-k},\quad k\ge 2 , 
\end{equation*}
where $p=te_1$ with $t\approx t_\varepsilon$. 
We obtain the first estimate in \eqref{upb}.
The second and third estimates in \eqref{upb}, for which $k=2$, also follows from \eqref{rupb} by rescaling.
\end{proof}

 \vskip10pt

\section{\bf Bernstein theorem for a singular Monge-Amp\`ere equation}

In this section we prove a Bernstein theorem for the singular Monge-Amp\`ere type equation in half space,
\begin{equation}\label{blow1}
\det \begin{pmatrix}
   \psi_{x_nx_n}+b\frac{\psi_{x_n}}{x_n}&    \psi_{x_nx_1}  & \cdots &   \psi_{x_nx_{n-1}}\\[3pt] 
   \psi_{x_nx_1}& \psi_{x_1x_1} & \cdots&\psi_{x_1x_{n-1}}\\[3pt] 
   \cdots &\cdots &\cdots &\cdots \\[3pt]
  \psi_{x_nx_{n-1}} &\psi_{x_1x_{n-1}} & \cdots&\psi_{x_{n-1}x_{n-1}}
 \end{pmatrix}=1\quad \text{in}\quad \mathbb R^n_+=\Bbb R^n\cap\{x_n>0\} .
\end{equation}
Equation \eqref{blow1} is the limit of equation \eqref{MAs1} in a blow-up argument.
We have the following Bernstein theorem.

\begin{theorem}\label{thmbern}
Let $\psi\in C^{1,1}(\overline{\mathbb R^n_+})$ be a solution to \eqref{blow1}   with constant $b>-1$.
Assume that $D \psi(0)=0$, $\psi_{x_n}(x',0)=0\ \forall\ x'\in\R^{n-1}$, 
and equation \eqref{blow1} is uniformly elliptic.
Then $\psi$ is a quadratic polynomial of the form
\begin{equation}\label{qp1}
\psi(x)=\frac 12 {\Small\text{$ \sum_{i,j=1}^{n-1} $}} c_{ij} x_ix_j+\frac{1}{2}c_{nn}x_n^2
\end{equation}
where $\{c_{ij}\}_{i,j=1}^{n-1}$ is positive definite and $c_{nn}>0$.
\end{theorem}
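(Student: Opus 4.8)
The plan is to reduce \eqref{blow1} to the non-singular equation \eqref{MAs3} via the partial Legendre transform \eqref{Par-Leg}, prove a Liouville-type statement there, and transfer it back. First I would record the structural consequences of the partial Legendre transform: if $\psi$ is uniformly elliptic and $\psi\in C^{1,1}$, then $\psi^*$ solves
\begin{equation*}
\psi^*_{y_ny_n}+b\,\frac{\psi^*_{y_n}}{y_n}+\det D^2_{y'}\psi^*=0\quad\text{in}\quad\mathbb R^n_+,
\end{equation*}
the matrix $D^2_{y'}\psi^*$ is positive definite and (by uniform ellipticity of \eqref{blow1}) bounded above and below, and the condition $\psi_{x_n}(x',0)=0$ becomes $\psi^*_{y_n}(y',0)=0$, so the singular term $b\,\psi^*_{y_n}/y_n$ extends continuously to $\{y_n=0\}$ with value $b\,\psi^*_{y_ny_n}(y',0)$. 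A mild technical point to check is that the partial Legendre transform is a global diffeomorphism of $\overline{\mathbb R^n_+}$ onto itself here, which follows from the lower bound on $D^2_{x'}\psi$ together with $D\psi(0)=0$; the normalization $D\psi(0)=0$ also fixes $D\psi^*(0)=0$.

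Next I would establish the $C^{2,\alpha}$ interior regularity of $\psi^*$ up to $\{y_n=0\}$. The point is the one emphasized after \eqref{MAs3}: the singular part $b\,\psi^*_{y_n}/y_n$ is additively separated from the nonlinear part $\det D^2_{y'}\psi^*$, so differentiating the equation and freezing coefficients produces, as the model operator, a Keldysh-type equation of the form \eqref{Lapb}, i.e. $\Delta_{y'}\cdot + \partial_{y_ny_n}\cdot + b\,\partial_{y_n}\cdot/y_n$ acting on derivatives of $\psi^*$. Using Horiuchi's Schauder theory for such operators (cited just before Theorem \ref{thmC}) together with the a priori $C^{1,1}$ bound and uniform ellipticity, I would bootstrap to get $\psi^*\in C^{2,\alpha}_{loc}(\overline{\mathbb R^n_+})$ with estimates depending only on $n,b$ and the ellipticity constants. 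This is exactly the "$C^{2,\alpha}$ regularity for \eqref{MAs3}" asserted in the paragraph containing \eqref{MAs3}; I would either invoke it directly if proved later in the paper, or sketch the freezing-coefficient argument.

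Now comes the Bernstein step proper, and this is where I expect the main work. With $\psi\in C^{1,1}(\overline{\mathbb R^n_+})$, a global bound $|D^2\psi|\le M$ holds, hence $|D^2\psi^*|\le M'$ globally. Apply the rescaling $\psi^*_R(y)=R^{-2}\psi^*(Ry)$: each $\psi^*_R$ solves the same equation (the operator is invariant under this parabolic-type scaling because of the $1/y_n$ homogeneity), satisfies the same $C^{1,1}$ and ellipticity bounds, so by the scale-invariant $C^{2,\alpha}$ estimate $\|\psi^*_R\|_{C^{2,\alpha}(B_1^+)}\le C$ uniformly in $R$. Letting $R\to\infty$ gives, after passing to a subsequence, $D^2\psi^*_R\to D^2\psi^*(0)$ in $C^\alpha_{loc}$, which forces $D^2\psi^*$ to be constant, i.e. $\psi^*$ is a quadratic polynomial; the equation then says this quadratic has a diagonal-block structure with positive definite $y'$-block and $c^*_{nn}\ge 0$, and uniform ellipticity upgrades this to $c^*_{nn}>0$. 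Transferring back through the (affine, since $\psi^*$ is quadratic) inverse partial Legendre transform, $\psi$ is also a quadratic polynomial, and reading off \eqref{blow1} for a quadratic gives precisely the block form \eqref{qp1} with $\{c_{ij}\}_{i,j<n}$ positive definite and $c_{nn}>0$; the hypotheses $D\psi(0)=0$, $\psi_{x_n}(x',0)=0$ kill the linear terms and any $x_ix_n$ cross terms.

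The main obstacle, and the place where care is needed, is the boundary $C^{2,\alpha}$ estimate for the Keldysh-type linearized operator near $\{y_n=0\}$: the coefficient $b/y_n$ blows up, classical Schauder theory does not apply directly, and one must use the weighted/Green-function machinery of Horiuchi to show that solutions are in fact $C^{2,\alpha}$ up to the degenerate boundary with a scale-invariant estimate — and crucially that the estimate is uniform under the rescaling $\psi^*\mapsto\psi^*_R$. A secondary subtlety is justifying the compactness of $\{\psi^*_R\}$ and that the limit still solves the equation (the singular term passes to the limit because $\psi^*_{R,y_n}(y',0)=0$ uniformly), and confirming that the partial Legendre transform behaves well globally rather than only locally. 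Everything else is routine linear algebra and rescaling.
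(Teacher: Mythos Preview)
Your overall architecture---partial Legendre transform, a scale-invariant $C^{2,\alpha}$ estimate, then blow-down to force $D^2\psi$ constant---is exactly the paper's. The paper rescales $\psi$ rather than $\psi^*$ (setting $\psi^m(x)=m^{-2}\psi(mx)$ and using $|D^2\psi^m(x/m)-D^2\psi^m(0)|\le C|x/m|^\alpha\to 0$), but this is cosmetic.

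The genuine gap is in how you obtain the boundary $C^{2,\alpha}$ estimate. You propose to differentiate \eqref{MAs3}, freeze coefficients, and invoke Horiuchi's Schauder theory for the Keldysh operator. But Schauder estimates---Horiuchi's included---require the coefficients to be $C^\alpha$. At this stage you only know $\psi^*\in C^{1,1}$, so the coefficients of the linearized operator (the cofactors of $D^2_{y'}\psi^*$) are merely $L^\infty$. The freezing-coefficient argument cannot start, and there is no bootstrap to initiate. The paper uses Horiuchi's theory only in Section~5, \emph{after} $C^2$ continuity has already been secured by other means.

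What the paper does instead (Lemmas~\ref{lemholder} and~\ref{lemconti}) is exploit the divergence structure of the Monge--Amp\`ere part. After the substitution $z_n=\tfrac14 y_n^2$, differentiating \eqref{002} in $z_n$ gives, for $\Psi:=\psi^*_{z_n}=2\psi^*_{y_n}/y_n$, a degenerate divergence-form equation
\[
\partial_{z_n}(z_n\Psi_{z_n})+\sum_{i,j<n}\partial_{z_i}(a^{ij}\Psi_{z_j})+\tfrac{b+1}{2}\Psi_{z_n}=0,
\]
with $a^{ij}$ the cofactors of $D^2_{z'}\psi^*$, which are only bounded measurable. Now a De Giorgi--Nash--Moser type H\"older estimate for such degenerate divergence-form equations (Proposition~\ref{amoser2b}, from \cite{FP,HHH}) yields $\psi^*_{y_n}/y_n\in C^\alpha$, hence $\psi_{x_n}/x_n\in C^\alpha$. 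Treating this as a known H\"older function in \eqref{blow1} turns the equation into a uniformly elliptic, concave, $C^\alpha$-in-$x$ fully nonlinear equation for the even extension of $\psi$, and Caffarelli's interior $C^{2,\alpha}$ theory \cite{Ca89} finishes. The point you are missing is that the first regularity gain must come from a measurable-coefficient (Nash--Moser) argument, not from Schauder.
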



In Theorem \ref{thmbern}, we denote $\overline{\mathbb R^n_+}=\mathbb R^n_+\cup \{x_n=0\}$.
To prove the Bernstein theorem, we make use of the H\"older continuity for the following degenerate elliptic equation.
\begin{equation}\label{aeq-Deg0}
\partial_n(x_n\partial_n u)+{\Small\text{$\sum_{i,j=1}^{n-1}$}}\p_i(a_{ij}(x)\p_ju)
         +{\Small\text{$\sum_{i=1}^n$}}b_i(x)\p_iu=f(x)\quad\text{in }\mathbb R^n_+.
\end{equation}
We assume that the coefficients $a_{ij}$ and $b_i$ satisfy the following conditions.
\begin{itemize}
\item[(i)]   $a_{ij}$ are measurable and satisfy
\begin{equation*} 
C_*^{-1} |\xi|^2\le a_{ij}\xi_i\xi_j\le C_* |\xi|^2
\quad\forall\ \xi \in\mathbb R^{n-1}, 
\end{equation*}
where $C_*$ is a positive constant. 

\item[(ii)]  $b_1=\cdots=b_{n-1}=0$ and   $b_n$ is a positive constant.
\end{itemize}

By a change of variables,  a model of equation \eqref{aeq-Deg0} is
\begin{equation}\label{aeq-Deg1} 
\Delta u+\frac {b}{x_n} u_{x_n} =f(x)\quad\text{in }\mathbb R^n_+.
\end{equation}
This is the classical Keldysh equation.

Denote $B^+_R(0) = B_R(0)\cap \{x_n>0\}$.
We have the following H\"older continuity.

\begin{proposition}\label{amoser2b}
Let $u\in C^2(B_1^+)\cap L^\infty(B_1^+)$ be a solution to \eqref{aeq-Deg0}.
Assume conditions (i), (ii), and $f\in L^q(B_1^+)$ for some $q>(n+1)/2$. 
Then $u$ is continuous up to $x_n=0$, and there exists $\alpha\in (0,1)$ such that
\begin{equation}\label{a418z}
     |u(x)-u(\tilde x)|
     \le C\big(\sup_{B_1^+}|u|+\|f\|_{L^q(B_1^+)}\big) |x-\tilde x|^{\alpha}\ \ \ \forall\ x,\tilde x\in  B_{1/2}^+ ,
\end{equation}
where $\alpha$ and $C$ are positive constants depending only on
$n, b, q, C_*$.
\end{proposition}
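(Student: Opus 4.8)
The plan is to prove Proposition~\ref{amoser2b} by a De Giorgi--Nash--Moser iteration adapted to the degenerate weight $x_n$. First I would fix the weak formulation: since no boundary condition is (or can be) imposed on the degenerate face $\{x_n=0\}$ and $x_n\,\partial_n u$ vanishes there, for test functions $\varphi$ compactly supported in $B_1^+\cup(B_1\cap\{x_n=0\})$ (not required to vanish on $\{x_n=0\}$) equation \eqref{aeq-Deg0} reads
\[
\int_{B_1^+}\Big(x_n\,\partial_n u\,\partial_n\varphi+\sum_{i,j=1}^{n-1}a_{ij}\,\partial_j u\,\partial_i\varphi\Big)\,dx
 =\int_{B_1^+}\big(b_n\,\partial_n u\,\varphi-f\,\varphi\big)\,dx ,
\]
so the natural energy is $\mathcal E_B(v)=\int_B(x_n|\partial_n v|^2+|\nabla_{x'}v|^2)\,dx$. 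The substitution $s=2\sqrt{x_n}$ turns $\partial_n(x_n\partial_n\,\cdot)+b_n\partial_n\,\cdot$ into the Keldysh form $\partial_s^2\,\cdot+\tfrac{1+2b_n}{s}\partial_s\,\cdot$ (cf.\ \eqref{aeq-Deg1}) and the energy into $\tfrac12\int s\,|\nabla_{x',s}v|^2$, i.e.\ a Fabes--Kenig--Serapioni type degenerate energy with weight $s=\mathrm{dist}(\cdot,\{s=0\})$ on a half-ball; equivalently, back in the $x$ variables one has the degenerate Sobolev inequality
\[
\Big(\int_{B_1^+}|v|^{\frac{2(n+1)}{n-1}}\,dx\Big)^{\frac{n-1}{n+1}}
\le C\int_{B_1^+}\big(x_n|\partial_n v|^2+|\nabla_{x'}v|^2+v^2\big)\,dx ,
\]
the exponent reflecting that the homogeneous dimension of the degenerate geometry is $n+1$ (the source of the threshold $q>(n+1)/2$). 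Together with its companion weighted Poincar\'e inequality, this would be established on the anisotropic cylinders $Q_r=\{|x'|<r\}\times(0,r^2)$ attached to the scaling $x'\mapsto\lambda x'$, $x_n\mapsto\lambda^2 x_n$ that leaves \eqref{aeq-Deg0} invariant.

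With these two functional inequalities the iteration is standard. Testing with $\varphi=\eta^2|u|^{2\beta}u$, and with De Giorgi truncations $\varphi=\eta^2(u-k)_\pm$, one notes that the drift term is harmless: since $b_n$ is constant and the test function is a function of $u$ times $\eta^2$, integration by parts turns $\int b_n\,\partial_n u\,\varphi$ into $-\int b_n\,\eta\,\partial_n\eta\,G(u)$ for an antiderivative $G$, a term of the type already present. Moser's iteration then yields the local bound
\[
\sup_{Q_{r/2}}|u|\le C\Big(\tfrac{1}{|Q_r|}\int_{Q_r}u^2\,dx\Big)^{1/2}
 +C\,r^{\,2-\frac{n+1}{q}}\,\|f\|_{L^q(Q_r)} ,
\]
and De Giorgi's isoperimetric lemma (or Moser's logarithmic estimate with a weighted John--Nirenberg inequality) applied on $Q_r$ gives an oscillation decay
\[
\operatorname{osc}_{Q_{r/2}}u\le\gamma\,\operatorname{osc}_{Q_r}u+C\,r^{\sigma}\,\|f\|_{L^q(Q_r)}
\]
with $\gamma\in(0,1)$ and $\sigma>0$ depending only on $n,b,q,C_*$. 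Iterating over dyadic scales $r=2^{-k}$ gives $\operatorname{osc}_{Q_r}u\le C r^{\alpha}(\sup_{B_1^+}|u|+\|f\|_{L^q(B_1^+)})$, i.e.\ H\"older continuity of $u$ in the anisotropic distance $d(x,\tilde x)=|x'-\tilde x'|+|x_n-\tilde x_n|^{1/2}$; since $d\le C|x-\tilde x|^{1/2}$ on bounded sets, this yields \eqref{a418z} (with a possibly smaller $\alpha$), continuity up to $\{x_n=0\}$ being immediate. For points away from the degenerate face the equation is uniformly elliptic at the appropriate scale, so the conclusion there follows from the classical De Giorgi--Nash theorem, and the two regimes are patched as usual.

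I expect the main obstacle to be the second step: proving the weighted Sobolev and Poincar\'e inequalities with the correct homogeneous dimension and, more to the point, checking that the degenerate (Neumann-type) face $\{x_n=0\}$ causes no loss in the iteration --- that the measure $x_n\,dx$ is doubling on the cylinders $Q_r$, including those centred on the face; that cut-off functions can be chosen with $x_n|\partial_n\eta|^2$ under control; and that the level-set lemma survives the anisotropic scaling. When $1+2b_n$ is an integer $m$ one can avoid the degenerate theory altogether via the reflection $U(x',z)=u(x',|z|)$, $z\in\mathbb R^m$, which turns \eqref{aeq-Deg0} into a genuinely uniformly elliptic equation with measurable coefficients on a subdomain of $\mathbb R^{\,n-1+m}$ and lets De Giorgi--Nash apply directly; the general case requires the weighted machinery. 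A secondary point is the bookkeeping that pins the integrability threshold for $f$ exactly at $q>(n+1)/2$, which comes from pairing the $f$-term against the Sobolev conjugate exponent of the homogeneous dimension $n+1$.
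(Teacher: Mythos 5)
The paper does not actually prove Proposition \ref{amoser2b}: it defers to \cite{FP} and \cite{HHH}, both of which establish the H\"older continuity for \eqref{aeq-Deg0} by precisely the weighted De Giorgi--Nash--Moser scheme you outline (anisotropic cylinders adapted to the scaling $x'\mapsto\lambda x'$, $x_n\mapsto\lambda^2x_n$, weighted Sobolev/Poincar\'e inequalities of homogeneous dimension $n+1$ explaining the threshold $q>(n+1)/2$, Caccioppoli estimates, and oscillation decay), so your proposal is essentially the same approach as the paper's cited proof. The one detail worth recording is that integrating the drift term by parts also produces a boundary integral $-b_n\int_{\{x_n=0\}}\eta^2\,G(u)\,dx'$ on the degenerate face (with your antiderivative $G\ge 0$), and it is exactly here that the hypothesis $b_n>0$ of condition (ii) enters with the favourable sign.
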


The H\"older continuity of solutions for degenerate elliptic equations has been studied by many authors.
For proofs of Proposition \ref{amoser2b}, we refer the readers to  \cite{FP, HHH}.

In Theorem 1.11 of \cite{FP}, 
the authors proved the H\"older continuity for weak solutions to the variational equation (1.19) in \cite{FP},
where the bilinear $a(u, v)$ was given in (1.13) and the coefficients satisfy Assumption 1.11 of the paper.
In Section 5 of \cite{HHH}, the authors studied equation \eqref{aeq-Deg0} 
and proved the H\"older continuity in dimension two, with application to a geometric problem in $\R^3$, 
but the proof in \cite{HHH} is valid in all dimensions. 
In fact, the proofs in \cite{FP} and \cite{HHH} are similar,
both of them use the Nash-Moser iteration. 

In  \cite{FP, HHH} the coefficients $b_i$ can be more general,
here we assume condition (ii), which suffices for our purpose.
In Proposition \ref{amoser2b} we also assume that $u\in C^2(B_1^+)\cap L^\infty(B_1^+)$,
which is stronger than the assumption that $u$ is a weak solution in  \cite{FP, HHH}.

\vskip10pt

To apply Proposition \ref{amoser2b} to the singular Monge-Amp\`ere equation \eqref{blow1},
we make a  partial Legendre transform \cite{LS17}, 
to change equation  \eqref{blow1} to the form \eqref{aeq-Deg0}.

Let 
\begin{equation} \label{plt}
{\begin{split}
  & y_n=x_n,\quad \\
  & y'=D_{x'}\psi,\hskip30pt\\
  & \psi^*=x'\cdot D_{x'}\psi-\psi.
  \end{split}}
\end{equation}
Then by direct computation, we have
\begin{equation*}
{\begin{split}
 &\frac{\p y_n}{\p x_n}=1,\quad \frac{\p y_n}{\p x'}=0,\\
 &\frac{\p y'}{\p x_n}=D_{x'} \psi_{x_n},\quad \frac{\p y'}{\p x'}=D^2_{x'}\psi,
   \end{split}}
\end{equation*}
and
\begin{equation*}
{\begin{split}
  & \frac{\p x_n}{\p y_n}=1,\quad \frac{\p x_n}{\p y'}=0,\\
  &  \frac{\p x'}{\p y'}=(D^2_{x'}\psi)^{-1},\\
 \hskip80pt &  \frac{\p x_i}{\p y_n}=\Psi^{in}(\det (D^2_{x'} \psi))^{-1}, \quad i=1,\cdots,n-1
     \end{split}}
\end{equation*}
where $\{\Psi^{ij}\}$ is the cofactor matrix of $D^2 \psi$. 
Hence $\psi^*$ satisfies
\begin{equation*}
{\begin{split}
   & \psi_{y_n}^*=-\psi_{x_n},\quad D_{y'} \psi^*= x',\\
   & D^2_{y'} \psi^*=(D^2_{x'} \psi)^{-1} ,
        \end{split}}
\end{equation*}
and 
\begin{equation*}
{\begin{split}
\psi^{*}_{y_ny_n} 
   &=-\frac{\p \psi_{x_n}}{\p y_n}\\
   & =-\psi_{x_nx_n} - {\Small\text{$ \sum_{i=1}^{n-1} $}} \, \frac{\p \psi_{x_n}}{\p x_i}\frac{\p x_i}{\p y_n}\\
   & =-\psi_{x_nx_n} - {\Small\text{$ \sum_{i=1}^{n-1} $}} \, \psi_{x_ix_n}\Psi^{in}(\det D^2_{x'} \psi )^{-1} .
           \end{split}}
\end{equation*}
We obtain
\begin{equation*}
-\frac{\psi^{*}_{y_ny_n}+b\frac{\psi^*_{y_n}}{y_n}}{\det D^2_{y'} \psi^*}=\psi_{x_ix_n}\Psi^{in}+(\psi_{x_nx_n}+b\frac{\psi_{x_n}}{x_n})\det D^2_{x'}\psi=1.
\end{equation*}
Hence $\psi^*$ satisfies 
\begin{equation}\label{002}
\psi^*_{y_ny_n}+b\frac{\psi^*_{y_n}}{y_n}+\det D^2_{y'} \psi^*=0 \quad \text{in}\quad \mathbb R^{n}_+.
\end{equation}

In equation \eqref{002}, the singular term $\frac{\psi^*_{y_n}}{y_n}$ is  separate from the nonlinear part $\det D^2_{y'} \psi^*$. 
This is a very helpful property. 
Moreover, the Monge-Amp\`ere operator $\det D^2_{y'} \psi^*$ is of divergence form.
Hence equation \eqref{002} is of the same form as \eqref{aeq-Deg1}.
Moreover, we assume that $\psi\in C^{1,1}$ such that \eqref{blow1} is uniformly elliptic.
We have the following key estimate.

\begin{lemma}\label{lemholder}
Let $\psi^*\in C^{1,1}(\overline{\mathbb R^{n}_+})$ be a solution to  \eqref{002} with $b>-1$.
Assume $\psi_{y_n}^*(y',0)=0\ \forall\ y'\in\R^{n-1}$, and $D_{y'}^2 \psi^*$ is positive definite. 
Then $\frac{\psi^*_{y_n}}{y_n}\in C^\alpha(\overline{\mathbb R^n_+})$ for some $\alpha\in (0,1)$, and 
we have the estimate
\beq \label{HE}
\Big\|\frac{\psi^*_{y_n}}{y_n}\Big\|_{C^\alpha(\mathbb R^{n-1}\times [0,1])} \le C
\eeq
for a constant $C$  depending only on $b, n$, 
$\|D_y^2 \psi^*\|_{L^\infty(\mathbb R^n_+)}$ and $\|(D^2_{y'}\psi^*)^{-1}\|_{L^\infty(\mathbb R^n_+)}$. 
\end{lemma}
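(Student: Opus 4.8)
Write $v=\psi^*_{y_n}/y_n$. The first observation is that $v$ is bounded: since $\psi^*_{y_n}(y',0)=0$ and $\psi^*\in C^{1,1}$, for $y_n>0$ we have
\[
 |\psi^*_{y_n}(y',y_n)|=|\psi^*_{y_n}(y',y_n)-\psi^*_{y_n}(y',0)|\le \|D^2\psi^*\|_{L^\infty}\, y_n ,
\]
so that $\|v\|_{L^\infty(\mathbb R^n_+)}\le \|D^2\psi^*\|_{L^\infty}$. Moreover, in $\{y_n>0\}$ equation \eqref{002} is uniformly elliptic: its linearization is $\phi_{y_ny_n}+\sum_{i,j=1}^{n-1}\Psi^{*ij}\phi_{y_iy_j}+\tfrac{b}{y_n}\phi_{y_n}$, where $\Psi^{*ij}$ denotes the cofactor matrix of $D^2_{y'}\psi^*$, and the principal part is block diagonal and positive definite with bounds controlled by $\|D^2\psi^*\|_{L^\infty}$ and $\|(D^2_{y'}\psi^*)^{-1}\|_{L^\infty}$. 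Hence standard interior regularity gives $\psi^*\in C^\infty(\{y_n>0\})$, and all the manipulations below are classical there.

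The heart of the matter is to find the equation satisfied by $v$. Differentiating \eqref{002} in $y_n$ and using that the cofactor matrix is divergence free in the $y'$-variables, one has $\partial_{y_n}\det D^2_{y'}\psi^*=\sum_{i,j=1}^{n-1}\Psi^{*ij}\,\partial_{y_iy_j}\psi^*_{y_n}=\sum_{i,j=1}^{n-1}\partial_{y_i}\!\big(\Psi^{*ij}\partial_{y_j}\psi^*_{y_n}\big)$, so $w:=\psi^*_{y_n}$ satisfies
\[
 w_{y_ny_n}+b\frac{w_{y_n}}{y_n}-b\frac{w}{y_n^{2}}+\sum_{i,j=1}^{n-1}\partial_{y_i}\!\big(\Psi^{*ij}\partial_{y_j}w\big)=0 .
\]
Substituting $w=y_n v$, the two singular zeroth-order terms $b\,v/y_n$ and $-b\,v/y_n$ cancel exactly, leaving the Keldysh-type equation
\[
 v_{y_ny_n}+\frac{b+2}{y_n}\,v_{y_n}+\sum_{i,j=1}^{n-1}\partial_{y_i}\!\big(\Psi^{*ij}\partial_{y_j}v\big)=0\qquad\text{in }\{y_n>0\},
\]
in which the singular term is completely decoupled from the linear, divergence-form elliptic part. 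Here $\{\Psi^{*ij}\}$ is uniformly elliptic with constant depending only on $n$, $\|D^2\psi^*\|_{L^\infty}$ and $\|(D^2_{y'}\psi^*)^{-1}\|_{L^\infty}$, since for a symmetric positive definite matrix the eigenvalues of its cofactor matrix are the products of $n-2$ of its $n-1$ eigenvalues.

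To land exactly in the setting of Proposition \ref{amoser2b} I would make the parabolic change of variable $t=y_n^{2}$, $V(y',t):=v(y',\sqrt t)$. A direct computation gives $\partial_t\!\big(t\,\partial_t V\big)=\tfrac14 v_{y_ny_n}+\tfrac1{4y_n}v_{y_n}$ and $V_t=\tfrac1{2y_n}v_{y_n}$, so the $v$-equation becomes
\[
 \partial_t\!\big(t\,\partial_t V\big)+\tfrac14\sum_{i,j=1}^{n-1}\partial_{y_i}\!\big(\widetilde\Psi^{*ij}\,\partial_{y_j}V\big)+\frac{b+1}{2}\,V_t=0 ,
\]
where $\widetilde\Psi^{*ij}(y',t)=\Psi^{*ij}(y',\sqrt t)$ carries the same ellipticity bounds. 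This is precisely equation \eqref{aeq-Deg0}, with $a_{ij}=\tfrac14\widetilde\Psi^{*ij}$ satisfying (i), with $b_1=\cdots=b_{n-1}=0$ and $b_n=\tfrac{b+1}{2}>0$ a positive constant satisfying (ii) (this is where $b>-1$ is used), and with $f\equiv0$; moreover $V\in C^2(\{t>0\})\cap L^\infty$ with $\|V\|_{L^\infty}=\|v\|_{L^\infty}\le\|D^2\psi^*\|_{L^\infty}$. Applying Proposition \ref{amoser2b} on half-balls of a fixed radius centered on $\{t=0\}$ and covering $\mathbb R^{n-1}\times[0,1]$ (using translation invariance in $y'$, which keeps the estimate uniform), we obtain $\|V\|_{C^\alpha(\mathbb R^{n-1}\times[0,1])}\le C\|V\|_{L^\infty}$ with $C,\alpha$ depending only on $n$, $b$ and the ellipticity constant of $\widetilde\Psi^{*ij}$, hence on $n$, $b$, $\|D^2\psi^*\|_{L^\infty}$ and $\|(D^2_{y'}\psi^*)^{-1}\|_{L^\infty}$. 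Undoing the substitution, $v(y',y_n)=V(y',y_n^{2})$ and $y_n\mapsto y_n^{2}$ is smooth on $[0,\infty)$, so on the bounded slab the composition is again Hölder continuous; this gives \eqref{HE} (with a possibly smaller $\alpha$).

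The main obstacle is the computation in the second paragraph: rewriting $\partial_{y_n}\det D^2_{y'}\psi^*$ in divergence form and observing the exact cancellation of the $\pm b\,v/y_n$ terms after the substitution $w=y_n v$ — this is what makes the singular drift separate from the nonlinear part, as emphasized in the discussion of \eqref{MAs3}. Once the equation for $v$ is in hand, the only remaining point requiring care is recognizing that the parabolic scaling $t=y_n^{2}$ converts it into the precise divergence form \eqref{aeq-Deg0} with a \emph{constant} singular drift $b_n=\tfrac{b+1}{2}$, so that Proposition \ref{amoser2b} (equivalently the results of \cite{FP,HHH}) applies verbatim; a minor technical matter is the preliminary interior regularity for \eqref{002} in $\{y_n>0\}$ needed to justify the classical computations, together with the covering argument upgrading a local $C^\alpha$ bound to one uniform over $\mathbb R^{n-1}\times[0,1]$.
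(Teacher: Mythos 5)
Your proposal is correct and follows essentially the same route as the paper: the paper first substitutes $z_n=\tfrac14 y_n^2$ and then differentiates the transformed equation in $z_n$, observing that $\Psi=\psi^*_{z_n}=2\psi^*_{y_n}/y_n$ solves exactly \eqref{aeq-Deg0} with drift constant $\tfrac{b+1}{2}$ and is bounded, and then invokes Proposition \ref{amoser2b}; you differentiate first and substitute second, arriving at the identical equation. The extra details you supply (the $L^\infty$ bound via the boundary condition, the ellipticity of the cofactor matrix, and the covering argument) are all consistent with the paper's argument.
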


\begin{proof}
Let $z_n=\frac 14 {y_n^2}$, $z'=y'$. Then equation \eqref{002} is changed to 
\begin{equation*}
z_n\psi^*_{z_nz_n}+\frac{b+1}{2} \psi^*_{z_n}+\det D^2_{z'} \psi^*=0 \quad \text{in}\quad \mathbb R^n_+.
\end{equation*}
Denote $ \Psi=\psi^*_{z_n}$. Differentiating the above equation in $z_n$ gives
\begin{equation*}
\p_{z_n}(z_n\Psi_{z_n})+  {\Small\text{$  \sum_{i,j=1}^{n-1} $}} \p_{z_i} (a^{ij} \Psi_{z_j})+\frac{b+1}{2} \Psi_{z_n}=0
 \quad \text{in}\quad \mathbb R^n_+.
\end{equation*}
Here $\{a^{ij}\}_{i, j=1}^{n-1}$ is the the cofactor matrix of $D^2_{z'} \psi^*$. 
By assumption,  $D_{y'}^2 \psi^*$ is positive definite. Hence  
$
\lambda I\le \{a^{ij}\} \le \Lambda I
$
for two positive constants $\lambda,\Lambda$ 
depending only on $\|D_y^2 \psi^*\|_{L^\infty(\mathbb R^n_+)}$ and $\|(D^2_{y'}\psi^*)^{-1}\|_{L^\infty(\mathbb R^n_+)}$. 
Moreover. 
\begin{equation}\label{ppsi}
\begin{split}
\Psi(z)=\psi^*_{z_n}=\frac{2\psi^*_{y_n}}{y_n}=2\int_{0}^{1}\psi^*_{y_ny_n}(y', ty_n)dt\in L^\infty(\mathbb R^n_+) .
\end{split}
\end{equation}
Therefore all the conditions in Proposition \ref{amoser2b} are satisfied. 

By Proposition \ref{amoser2b}, we obtain the H\"older continuity of $\Psi$.
By \eqref{ppsi},  $\Psi(z)=\frac{2\psi^*_{y_n}}{y_n}$.
Hence we obtain \eqref{HE}.
  	\end{proof}

\vskip10pt

\begin{lemma}\label{lemconti}
Let $\psi\in C^{1,1}(\overline{\mathbb R^n_+})$ be a solution to  \eqref{blow1} with $b>-1$.
Assume  $\psi_{x_n}(x', 0)=0\ \forall\ x'\in\R^{n-1}$,  and equation \eqref{blow1} is  uniformly elliptic.
Then $\psi\in C^{2,\alpha}(\overline{\mathbb R^n_+})$ for some $\alpha\in (0, 1)$.
\end{lemma}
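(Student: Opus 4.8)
The plan is to transfer regularity back and forth through the partial Legendre transform \eqref{plt}, using Lemma \ref{lemholder} as the engine. First I would observe that since $\psi\in C^{1,1}(\overline{\mathbb R^n_+})$ and \eqref{blow1} is uniformly elliptic, the map $x\mapsto y$ in \eqref{plt} is bi-Lipschitz, so $\psi^*\in C^{1,1}(\overline{\mathbb R^{n}_+})$ as well, $D^2_{y'}\psi^*=(D^2_{x'}\psi)^{-1}$ is bounded and uniformly positive definite, and the hypothesis $\psi_{x_n}(x',0)=0$ translates (via $\psi^*_{y_n}=-\psi_{x_n}$) into $\psi^*_{y_n}(y',0)=0$. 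Thus $\psi^*$ satisfies all the hypotheses of Lemma \ref{lemholder}, and we conclude $\frac{\psi^*_{y_n}}{y_n}\in C^\alpha(\overline{\mathbb R^n_+})$.

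Next I would rewrite \eqref{002} as
\begin{equation*}
\det D^2_{y'}\psi^* = -\,\psi^*_{y_ny_n}-b\,\frac{\psi^*_{y_n}}{y_n}.
\end{equation*}
The key is that the right-hand side is now controlled: the term $b\frac{\psi^*_{y_n}}{y_n}$ is $C^\alpha$ by the previous step, and $\psi^*_{y_ny_n}$ is merely $L^\infty$, but we only need the equation in the $y'$-variables. For each fixed $y_n$, $\psi^*(\cdot,y_n)$ solves a Monge–Amp\`ere equation $\det D^2_{y'}\psi^*=F(y',y_n)$ in $\mathbb R^{n-1}$ with $F$ bounded and bounded below. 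Since $D^2_{y'}\psi^*$ is already uniformly elliptic (so the sections are equivalent to balls, with no strict-convexity degeneration), Caffarelli's interior estimates give $\psi^*(\cdot,y_n)\in C^{1,\beta}_{\mathrm{loc}}$ in $y'$, with estimates uniform in $y_n$; equivalently $D^2_{y'}\psi^*\in C^\beta$ in $y'$. One then feeds the $C^\beta$ regularity of the coefficients $a^{ij}$ (the cofactors of $D^2_{z'}\psi^*$) in the $z'=y'$ variables back into the linearized equation for $\Psi=\psi^*_{z_n}$ from the proof of Lemma \ref{lemholder}; now that equation has H\"older coefficients in $z'$, and combining with the Keldysh-type structure in $z_n$ one upgrades $\Psi$, hence $\frac{\psi^*_{y_n}}{y_n}$, hence $\psi^*_{y_ny_n}$ and then $D^2_{y'}\psi^*$ via the equation, to $C^\alpha(\overline{\mathbb R^n_+})$. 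Finally I would also differentiate \eqref{002} in a tangential direction $y_i$ ($i\le n-1$) to control the mixed derivatives $\psi^*_{y_iy_n}$, obtaining $\psi^*\in C^{2,\alpha}(\overline{\mathbb R^n_+})$; transferring back through the (now $C^{1,\alpha}$, hence $C^{2,\alpha}$-compatible) change of variables yields $\psi\in C^{2,\alpha}(\overline{\mathbb R^n_+})$.

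The main obstacle will be the anisotropy: the Monge–Amp\`ere structure sits only in the $y'$-block, while the singular Keldysh behavior sits in $y_n$, and the two regularity mechanisms must be interleaved without circularity. Concretely, the delicate point is bootstrapping from the $C^\alpha$ bound on $\frac{\psi^*_{y_n}}{y_n}$ (which at first only gives a bounded right-hand side for the $y'$-Monge–Amp\`ere equation) up to joint $C^\alpha$ control of the full Hessian $D^2\psi^*$ near $\{y_n=0\}$; this requires that the H\"older exponents produced by Caffarelli's theory and by Proposition \ref{amoser2b} be reconciled and that the estimates be genuinely uniform as $y_n\to 0$, where the coefficient $\frac{b+1}{2}$ in the transformed equation degenerates the ellipticity in the normal direction. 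One must also verify that $D^2_{y'}\psi^*$ does not approach the boundary of the positive cone as $y_n\to 0$, which follows from the uniform ellipticity of \eqref{blow1} together with the relation $D^2_{y'}\psi^*=(D^2_{x'}\psi)^{-1}$.
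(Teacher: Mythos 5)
Your first half is exactly the paper's: pass to the partial Legendre transform, check the hypotheses of Lemma \ref{lemholder}, and conclude $\frac{\psi^*_{y_n}}{y_n}\in C^\alpha(\overline{\mathbb R^n_+})$. But your second half has a genuine gap. You write the equation slice-wise as $\det D^2_{y'}\psi^*=F(y',y_n)$ with $F=-\psi^*_{y_ny_n}-b\frac{\psi^*_{y_n}}{y_n}$ merely bounded, and then claim that Caffarelli's interior estimates give ``$\psi^*(\cdot,y_n)\in C^{1,\beta}_{\mathrm{loc}}$ in $y'$; equivalently $D^2_{y'}\psi^*\in C^\beta$ in $y'$.'' Those are not equivalent: with a right-hand side that is only $L^\infty$ (and bounded below), Caffarelli's theory yields $C^{1,\beta}$ and $W^{2,p}$ interior regularity of the solution, not H\"older continuity of its Hessian; for $D^2_{y'}\psi^*\in C^\beta$ you would need $F\in C^\beta$, which requires $\psi^*_{y_ny_n}\in C^\beta$ --- precisely what you are trying to prove. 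So the bootstrap you describe is circular and does not get started. (Even if it did, you would still face the joint-in-$(y',y_n)$ continuity up to $\{y_n=0\}$, which you flag but do not resolve.)

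The missing idea is to leave the $y$-variables at this point. The paper transfers the H\"older estimate back via $\frac{\psi_{x_n}(x)}{x_n}=-\frac{\psi^*_{y_n}(y)}{y_n}$ and the Lipschitz bound $|y-\tilde y|\le C|x-\tilde x|$ (which you already have), obtaining $\frac{\psi_{x_n}}{x_n}\in C^\alpha(\overline{\mathbb R^n_+})$. It then extends $\psi$ evenly across $\{x_n=0\}$ and regards $\frac{\psi_{x_n}}{x_n}$ as a \emph{known} $C^\alpha$ coefficient, so that \eqref{blow1} becomes a fully nonlinear, uniformly elliptic equation $\mathcal F(x,D^2\psi)=1$ on all of $\mathbb R^n$ with $C^\alpha$ dependence on $x$ and with $\mathcal F^{1/n}$ concave in $D^2\psi$. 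Caffarelli's interior $C^{2,\alpha}$ estimate for such equations then gives the full Hessian regularity in one stroke, with no interleaving of the tangential Monge--Amp\`ere structure and the normal Keldysh structure. You should replace your slice-wise bootstrap with this reflection-plus-Evans--Krylov/Caffarelli step.
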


\begin{proof}
Let $\psi^*$ be the partial Legendre transform of $\psi$.
Then $\psi^*$ satisfies equation \eqref{002} and  the assumptions of Lemma \ref{lemholder}. 
Hence by Lemma \ref{lemholder}, $\frac{\psi^*_{y_n}}{y_n}\in C^\alpha(\overline{\mathbb R^n_+})$. 
Recall that 
$\frac{\psi_{x_n}(x)}{x_n}=-\frac{\psi^*_{y_n}(y)}{y_n}$.
We therefore have
\begin{equation*}
\begin{split}
\Big|\frac{\psi_{x_n}(x)}{x_n}-\frac{\psi_{x_n}(\tilde x)}{\tilde x_n}\Big| 
        =\Big|\frac{\psi^*_{y_n}(y)}{y_n}-\frac{\psi^*_{y_n}(\tilde y)}{\tilde y_n}\Big|
\le C|y-\tilde y|^\alpha.
\end{split}
\end{equation*}
By the partial Legendre transform, $y_n=x_n,$ $y'= D_{x'}\psi$.
It follows that
\begin{equation*}
|y'-\tilde y'|=| D_{x'} \psi(x)- D_{x'}\psi(\tilde x)|\le \|D^2\psi\|_{L^\infty(\mathbb R^n_+)}|x-\tilde x|.\end{equation*}
Hence $\frac{\psi_{x_n}}{x_n}\in C^\alpha(\overline{\mathbb R^n_+})$ and we have the estimate
\begin{equation}\label{dptc}
\Big\|\frac{\psi_{x_n}}{x_n}\Big\|_{C^\alpha(\mathbb R^{n-1}\times[0,1])}\le C  
\end{equation}
for a constant $C$ depending only on  $b, n$, and $\|D_x^2 \psi\|_{L^\infty(\mathbb R^n_+)}$.

We make an even extension of $\psi(x)$ with respect to the variable $x_n$ and
still denote it by $\psi(x)$. 
Regard $\frac{\psi_{x_n}}{x_n}$ as a known function, which is H\"older continuous. 
Then we can write equation \eqref{blow1} in the form
\begin{equation*}
\mathcal F(x,  D^2 \psi)=1.
\end{equation*}
By our assumption,  $\mathcal F$ is  fully nonlinear, uniformly elliptic,
and is $C^\alpha$ smooth in $x$. Since $\mathcal F^{\frac 1n}$ is concave in $ D^2 \psi$,
by the $C^{2,\alpha}$ regularity \cite {Ca89}, we also conclude that $\psi\in C^{2,\alpha}(\R^n)$.
\end{proof}

\vskip10pt

In Lemma \ref{lemconti}, we assume that $\psi$ is $C^{1,1}$ and the equation \eqref{blow1} is uniformly elliptic.
Without these conditions, Lemma \ref{lemconti} does not hold. Here is an example.

\begin{example}
The function
$$u(x, y)=\frac12 x^2+\frac{|y|^{1+\eps}}{(1+\eps)\eps} . $$
is strictly convex and satisfies the equation
\beq\label{R3.1}
 (u_{xx}-1+|y|^{1-\eps}) u_{yy}-u_{xy}^2 =1. 
\eeq
But $u$ is not $C^{1,1}$ smooth.
\end{example}
 
With the aid of Lemma \ref{lemconti}, we can now prove Theorem \ref{thmbern}. 
 
\vskip10pt

\noindent
{\bf Proof of Theorem \ref{thmbern}:}\ 
Let $\psi$ be the solution in Theorem \ref{thmbern}.   Let
\begin{equation}
\psi^{m}(x)=\frac{\psi(mx)}{m^2},\quad m=1,2,\cdots
\end{equation}
be a blow-down sequence of $\psi$. 
Since \eqref{blow1} is uniformly elliptic for $\psi$, it is also uniformly elliptic for $\psi^m$ 
with the same ellipticity constants. 
The uniform ellipticity  implies that there is a constant $\hat C>0$, 
independent of $m$, such that  
\beq \label{cmc}
\hat C^{-1} \mathcal I\le \mathcal M_{\psi^m} \le \hat C\mathcal I, 
\eeq
where $\mathcal I$ is the unit matrix and $\mathcal M_{\psi}$ denotes matrix in equation \eqref{blow1}.
Hence the first entry in the matrix $\mathcal M_{\psi^m}$ satisfies 
$$\psi^m_{x_nx_n} +b \frac{\psi^m_{x_n}}{x_n}= \hat f , $$
for a function $\hat f$ satisfying $\hat C^{-1}  \le \hat f  \le \hat C$.
We can solve the above equation, regarding it as an ode with variable $x_n$,
\beq\label{psimb}
\psi^m(\cdot, x_n)=\psi^m(\cdot, 0)+\int_0^{x_n}  r^{-b}\int_0^r s^{b} \hat f(\cdot, s)ds.
\eeq
In \eqref{psimb} we have used the initial condition $\psi^m_{x_n} (\cdot, 0)=0$.
Note that \eqref{cmc} implies that $\psi^m(x', 0)=O(|x'|^2)$. Hence from \eqref{psimb} we have
$\psi^m(x)=O(|x|^2)$ near $0$. 

Hence by the assumptions in Theorem \ref{thmbern}, 
$\psi^{m}$ satisfies the conditions in Lemma \ref{lemconti}, uniformly in $m$.
Therefore,  by Lemma \ref{lemconti} we have
\begin{equation}
|D^2 \psi(x)-D^2 \psi(0)|=\lim_{m\rightarrow +\infty}\big|D^2 \psi^m\big( {\Small\text{$\frac xm$}} \big)-D^2 \psi^m(0)\big|=0 
\end{equation}
for any given point $x\in \R^n_+$.
That is, $D^2 \psi(x)=D^2 \psi(0)$ $\forall\ x\in \R^n_+$.
Hence $\psi$ is a quadratic polynomial.
By the assumption $\psi_{x_n}(x',0)=0\ \forall\ x'\in\R^{n-1}$, 
we have $c_{in}=0$ in the polynomial \eqref{qp1}. 
\hfill$\square$

 \vskip10pt

The following example shows that the Bernstein theorem \ref{thmbern} is not unconditionally true.

\begin{example} 
Let
$$\psi(x)={\Small\text{$\frac 12$}} (x_2^2+\cdots+x_{n-1}^2)
             +{\Small\text{$\frac 12$}} x_1^2x_n^{b-1}+{\Small\text{$\frac {x_n^{3-b}}{2(3-b)}$}} , $$
where $b>1$.
By direct computation, $\psi$ satisfies equation \eqref{blow1}.

\end{example}

It is well known that the classical Bernstein theorem for the Monge-Amp\`ere equation
was proved by J\"orgens for dimension $n=2$, Calabi for $3\le n\le 5$, and Pogorelov for all dimensions \cite{Po72}.
In \cite{JW13},  a Bernstein type theorem on a different singular Monge-Amp\`ere type equation in half space was proved,

\begin{example} 
[\cite{JW13}]
 Let $u$ be a convex solution to 
\beq\label{MA31} 
{\begin{split}
\det D^2 u & =\big( {\Small\text{$\frac {u_{x_n}}{x_n} $}} \big)^{n+2}\ \ \ \text{in}\ \R^n_+, \\[-3pt]
u(x', 0) &={\Small\text{$\frac 12$}} |x'|^2.
\end{split}}
\eeq
Then either $u= \frac 12 |x|^2$, or $u(x)=\frac 12|x'|^2$.
\end{example}

In contrast to the above example, it is also interesting to mention the following  counter-example by Savin  \cite{S14}.

\begin{example}  [\cite{S14}]
Let 
$$
u(x) ={\Small\text{$ \frac {x_1^2} {2(1+x_n)}$}} +
         {\Small\text{$\frac 12$}} \big(x_2^2+\cdots +x_n^2\big)
          +{\Small\text{$\frac 16$}} x_n^3.
$$
Then $u$ satisfies 
\beq\label{MA41}
{\begin{split}
\det D^2 u & =1\ \ \ \text{in}\ \ \R^n_+, \\
u(x', 0) &={\Small\text{$\frac 12$}} |x'|^2.
\end{split}}
\eeq
\end{example}
 

\vskip10pt

\section{\bf $C^2$ regularity of free boundary}

In this section, we prove the $C^2$ regularity of the free boundary, which is equivalent to 
the $C^2$ regularity of  the tangent cone of  the following singular Monge-Amp\`ere equation  
\begin{equation}\label{MA-1}
\det D^2 u= {g(Du)} +\delta_0\quad \text{in}\quad B_1(0) .
\end{equation}
Assume that $g$ is a smooth function and satisfies
\beq \label{lgL}
0<\lambda \le g\le \Lambda<+\infty 
\eeq
for positive constants $\Lambda\ge \lambda>0$.
Let $u$ be a strictly convex solution to \eqref{MA-1}.
Assume that $u(0)=0$, $u\ge 0$ and $u>0$ on $\p B_1(0)$.

\begin{theorem}\label{T4.1}
Let $\phi$ be the tangential cone of $u$ at $0$.
Then  the section $S_{1,\phi}=\{x\in \R^n:\ \phi(x)<1\}$ is uniformly convex and $C^2$ smooth 
provided $g$ is $C^2$ smooth and satisfies \eqref{lgL}.
\end{theorem}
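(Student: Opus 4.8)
The plan is to prove that $D^2\phi$ is continuous on $S_{1,\phi}\setminus\{0\}$ and that the eigenvalues of $D^2_{\theta}$ (the tangential Hessian of $\phi$ restricted to a level set) are bounded above and below; by the duality $L=\p S_{1,\phi}$ this gives the $C^2$ regularity and uniform convexity of the free boundary. Since $\phi$ is homogeneous of degree one, everything is encoded in the restriction of $\phi$ to $\mathbb S^{n-1}$, so I will work with $\zeta(\theta,r)=u(\theta,r)/r$ and its tangent-cone limit $\zeta(\theta,0)=\phi(\theta,1)$. By Corollary \ref{C2.4} and Corollary \ref{C2.2} we already have the two-sided bound $|x|\,|\p_\xi^2\phi(x)|\approx 1$ for $\xi\perp x$, so $\phi$ restricted to $\mathbb S^{n-1}$ is bounded in $C^{1,1}$ and uniformly convex in the weak sense; the missing ingredient is genuine $C^2$ (i.e.\ second-derivative) continuity, which is what fails for a mere $C^{1,1}$ function.

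The key steps, in order, are as follows. First, pass to the function $w=u-\phi$ and use the asymptotic estimate \eqref{asyw}--\eqref{wx} from Lemma \ref{L2.3}, together with the derivative estimates \eqref{upb} of Lemma \ref{L2.4}, to control $\zeta(\theta,s)-\phi/r$ by $C_1 s^2\le \zeta-\phi/r\le C_2 s^2$, where $s=r^{n/2}$ as in \eqref{Trans2}; this is exactly \eqref{zeta} and it shows the equation \eqref{MAs1} satisfied by $\zeta$ is uniformly elliptic up to $\{s=0\}$. Second, perform a blow-up at a boundary point $\theta_0\in\mathbb S^{n-1}$: rescale $\zeta$ parabolically in $(\theta-\theta_0,s)$ around $\theta_0$, using the $C^{1,1}$ bound to extract a locally uniform limit $\psi$ on $\R^n_+$ that solves the limiting equation \eqref{MAs2} (equivalently \eqref{blow1} with $b=(n+2)/n$), is globally $C^{1,1}$, uniformly elliptic, and satisfies $\psi_{x_n}(x',0)=0$ because $\zeta_s\to 0$ like $s$ at $\{s=0\}$. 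Third, invoke the Bernstein Theorem \ref{thmbern}/\ref{thmC}: the blow-up limit $\psi$ must be a quadratic polynomial $q(x')+a x_n^2$ with $a>0$ and $\{c_{ij}\}$ positive definite. The positivity of $a$ and of the tangential part, which come with quantitative bounds from the uniform ellipticity constants $\lambda,\Lambda$ and the $C^{1,1}$ bound, already yield that every blow-up limit is uniformly convex with controlled constants.

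The main obstacle—and the technically hardest part of the whole program, as the introduction itself flags—is the fourth step: proving uniqueness of the blow-up limit, i.e.\ that the quadratic polynomial obtained does not depend on the chosen rescaling sequence $m_j\to\infty$. Without uniqueness one only gets that $D^2\zeta$ has its oscillation controlled, not that it converges, so $S_{1,\phi}$ would only be $C^{1,\text{Dini-ish}}$ rather than $C^2$. I would establish uniqueness by the maximum-principle argument announced in the introduction: comparing the solution with the candidate limiting quadratic in a dyadic sequence of shrinking half-annuli around $\theta_0$, using the uniformly elliptic linearization of \eqref{MAs1} (a Keldysh-type operator after the change of variables, to which the Hölder estimate Proposition \ref{amoser2b} applies) to show the deviation contracts geometrically from one scale to the next; summing the geometric series forces a unique limit and gives a modulus of continuity for $D^2\zeta$ at $\{s=0\}$. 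Once $D^2\zeta(\theta,0)$ is continuous and uniformly positive definite in the tangential directions, $\phi|_{\mathbb S^{n-1}}\in C^2$ and is uniformly convex, hence $S_{1,\phi}$ is $C^2$ and uniformly convex, which is the assertion of Theorem \ref{T4.1}.
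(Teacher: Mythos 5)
Your overall strategy coincides with the paper's: reduce to $\zeta=u/r$ with $s=r^{n/2}$, use the asymptotic estimate \eqref{wx} to get uniform ellipticity of \eqref{po2} up to $\{s=0\}$, blow up at a boundary point, classify the limits by the Bernstein Theorem \ref{thmbern}, and then prove uniqueness of the limit by a maximum-principle iteration over an infinite sequence of shrinking domains. Steps 1--3 of your plan are exactly Lemmas \ref{L2.3}, \ref{L2.4}, \ref{unif-ellip} and the setup of Lemmas \ref{lemconti1}--\ref{lemconti3}.

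There is, however, a genuine gap in your step 4. Your two-sided contraction argument (``the deviation contracts geometrically from one scale to the next'') works for $\zeta_{ss}$, because differentiating \eqref{po2} \emph{once} in $t=s^2/4$ yields a genuine linear Keldysh-type \emph{equation} \eqref{max3} for $V=\zeta_t$, to which barriers $\sigma_{k,m}^{\pm}$ can be applied from both sides. But for the tangential second derivatives $\zeta_{\gamma\gamma}$ one must differentiate the fully nonlinear equation \emph{twice}, and the concavity of $\log\det$ only produces the one-sided differential \emph{inequality} \eqref{linear-1}, $\mathcal L(V)\ge h$. Hence the barrier argument yields only $\varlimsup_{\theta\to0,\,s\to0^+}\zeta_{\gamma\gamma}\le c_{\gamma\gamma}$ (see \eqref{add2} and the remark following it: the reverse inequality cannot be obtained from $\sigma^-$). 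Your plan, which treats all of $D^2\zeta$ symmetrically, would stall here: you would control the oscillation of $\zeta_{\gamma\gamma}$ from one side only and could not conclude convergence. The paper closes this gap by a separate algebraic argument: defining $c_{\gamma\gamma}$ as an infimum over blow-up limits \eqref{blow-inf}, and then using the equation itself together with the representation $\det W=\min_{SO}\prod_i\nu_i^TW\nu_i$ in \eqref{Ha1} to show that the product $\prod_i(c_{\gamma^{(i)}\gamma^{(i)}}+\zeta(0))$ must equal $\det A=\frac{2\bar g(0)}{n(n+1)c_{00}}$, which forces the one-sided inequalities to be equalities and pins down the full limit \eqref{ts00}. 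Without this determinant-rigidity step your argument does not deliver continuity of $D^2_\theta\zeta$, and hence not the $C^2$ regularity of $S_{1,\phi}$. A secondary, smaller inaccuracy: the H\"older estimate of Proposition \ref{amoser2b} enters only through the Bernstein theorem for the blow-up limit (via the partial Legendre transform of \eqref{blow1}); it is not applied to the linearization of \eqref{MAs1} itself, since the term $s\zeta_s$ makes the partial Legendre transform of \eqref{po2} unmanageable, and the uniqueness step instead uses explicit barriers and the classical maximum principle.
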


We have shown in \S2 that $S_{1,\phi}$ is uniformly convex and $C^{1,1}$ smooth.
In this section we raise the regularity of $S_{1,\phi}$  from $C^{1,1}$ to $C^2$.
This is the most delicate part in the proof of Theorem \ref{thmA}.
Our proof uses a blow-up argument, 
and uses the maximum principle in an infinite sequence of specially chosen domains.

Denote 
\beq\label{zeta0}
\zeta=\frac ur,
\eeq
where $r=|x| \in (0, 1]$.
We can extend $\zeta$ continuously to $r=0$.
By \eqref{asyw}, $\zeta(\theta, 0)=\phi(\theta, r)/r$.
To prove Theorem \ref{T4.1}, we will prove that $\zeta(\theta, r) \in C^2(\mathbb S^{n-1}\times [0, 1])$,
namely  the second derivatives of $\zeta$ can continuously extend to $r=0$ (Theorem \ref{thm2}).
First we derive the equation for $\zeta$ in $(\theta, r)$.

\begin{lemma}\label{L4.1}
Let $\theta$ be an orthonormal frame on $\mathbb S^{n-1}$. 
Then  $\zeta$ satisfies 
 the Monge-Amp\`ere type equation
\begin{equation}\label{po1}
\det \begin{pmatrix}
\displaystyle\frac{\zeta_{rr}}{r^{n-2}}+\frac{2\zeta_r}{r^{n-1}} & \displaystyle\frac{\zeta_{r\theta_1}}{r^{\frac{n-2}{2}}} &\cdots& 
                    \displaystyle\frac{\zeta_{r\theta_{n-1}}}{r^{\frac{n-2}{2}}}\\ 
\displaystyle\frac{\zeta_{r\theta_1}}{r^{\frac{n-2}{2}}}& \zeta_{\theta_1\theta_1}+\zeta+r\zeta_r & \cdots&\zeta_{\theta_1\theta_{n-1}}\\ 
\cdots &\cdots &\cdots &\cdots  \\
 \displaystyle\frac{\zeta_{r\theta_{n-1}}}{r^{\frac{n-2}{2}}} &\zeta_{\theta_1\theta_{n-1}} & \cdots&\zeta_{\theta_{n-1}\theta_{n-1}}+\zeta+r\zeta_r
\end{pmatrix} =\bar g , 
\end{equation}
where  $\bar g=: g(Du)$ is a smooth function of $\zeta, r\zeta_r, \zeta_\theta$. 
The subscript $\theta$ means covariant derivatives on the sphere $\mathbb S^{n-1}$. 
\end{lemma}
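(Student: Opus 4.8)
The plan is to derive equation \eqref{po1} by a direct but carefully organized computation of the Hessian $D^2u$ in spherical coordinates, using the substitution $u = r\zeta(\theta, r)$. First I would set up an orthonormal frame $\{e_1, \dots, e_{n-1}, e_n\}$ at a point $x \ne 0$, where $e_n = x/r$ is the radial direction and $e_1, \dots, e_{n-1}$ are tangent to the sphere $\mathbb S^{n-1}_r = \{|x| = r\}$, obtained by parallel-transporting an orthonormal frame $\{\theta_1, \dots, \theta_{n-1}\}$ on the unit sphere. The key is to express the second derivatives $u_{e_ie_j}$ in terms of $\zeta$ and its derivatives $\zeta_r, \zeta_{rr}, \zeta_{\theta_i}, \zeta_{r\theta_i}, \zeta_{\theta_i\theta_j}$. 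I would carefully track the fact that the metric on $\mathbb S^{n-1}_r$ scales like $r^2$ times the metric on $\mathbb S^{n-1}$, which is the source of the powers of $r$ appearing in \eqref{po1}; and that the Hessian of $u$ in the tangential directions picks up extra terms from the second fundamental form of the sphere (the $u_r$ contribution, which in our normalization gives the $\zeta + r\zeta_r$ terms since $u_r = \zeta + r\zeta_r$).

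Concretely, the computation proceeds as follows. From $u = r\zeta$ we get $u_r = \zeta + r\zeta_r$, $u_{rr} = 2\zeta_r + r\zeta_{rr}$, and for tangential covariant derivatives on $\mathbb S^{n-1}$, $u_{\theta_i} = r\zeta_{\theta_i}$, $u_{r\theta_i} = \zeta_{\theta_i} + r\zeta_{r\theta_i}$, $u_{\theta_i\theta_j} = r\zeta_{\theta_i\theta_j}$. Now I convert these to the physical orthonormal frame. In the radial direction, $u_{e_ne_n} = u_{rr} = 2\zeta_r + r\zeta_{rr}$; after dividing through the matrix appropriately the $(1,1)$-entry of \eqref{po1} will be $\frac{\zeta_{rr}}{r^{n-2}} + \frac{2\zeta_r}{r^{n-1}}$, which is $\frac{1}{r^{n-1}}u_{rr}$ — so the rows and columns of the displayed matrix are the genuine Hessian entries rescaled by suitable powers of $r$, with the determinant rescaled by $r^{-(n-1)}\cdot r^{-(n-1)} = r^{-2(n-1)}$ compared to $\det D^2 u$ in the orthonormal frame. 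For the mixed radial-tangential entries, $u_{e_ne_j} = \frac{1}{r}u_{r\theta_j} = \frac{1}{r}(\zeta_{\theta_j} + r\zeta_{r\theta_j})$; but here one must be careful — the physical radial derivative of a tangential unit vector field involves a $-\frac{1}{r}e_j$ correction, which precisely cancels the $\zeta_{\theta_j}/r$ term, leaving $u_{e_ne_j} = \zeta_{r\theta_j}$, and after rescaling by $r^{-(n-2)/2}$ one gets the entries $\zeta_{r\theta_j}/r^{(n-2)/2}$. For the purely tangential entries, $u_{e_ie_j} = \frac{1}{r^2}\,(\text{Hess}_{\mathbb S^{n-1}_r}u)_{ij} = \frac{1}{r^2}\big(u_{\theta_i\theta_j}^{\mathbb S^{n-1}} + r\,u_r\,\delta_{ij}\big)$, where the second term is the contribution of the second fundamental form of the sphere of radius $r$ (whose principal curvatures are all $1/r$); substituting $u_{\theta_i\theta_j} = r\zeta_{\theta_i\theta_j}$ and $u_r = \zeta + r\zeta_r$ gives $u_{e_ie_j} = \frac{1}{r}\big(\zeta_{\theta_i\theta_j} + \zeta + r\zeta_r\big)\delta_{ij}$-type structure, which rescales to the lower-right block of \eqref{po1}. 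Finally, $\det D^2 u = g(Du)$ becomes, after collecting the $r$-powers from the rescaling, exactly equation \eqref{po1} with $\bar g = g(Du)$; and since $Du = u_r e_n + \frac{1}{r}\sum u_{\theta_i}e_i = (\zeta + r\zeta_r)e_n + \sum \zeta_{\theta_i}e_i$, the right-hand side $\bar g$ is a smooth function of $\zeta$, $r\zeta_r$, and $\zeta_\theta$ as claimed.

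The step I expect to be the main obstacle — or at least the one requiring the most care — is the correct bookkeeping of the connection terms when passing from covariant derivatives on the unit sphere $\mathbb S^{n-1}$ to the physical Hessian of $u$ as a function on $\R^n$. The subtlety is threefold: (a) the radial derivative of a sphere-tangent frame field has a component in the $-e_i/r$ direction (so naive differentiation double-counts and one must subtract it off, which is what produces the clean $\zeta_{r\theta_j}$ in the mixed entries rather than $\zeta_{r\theta_j} + \zeta_{\theta_j}/r$); (b) the tangential Hessian of $u$ in $\R^n$ equals the intrinsic Hessian on the sphere of radius $r$ plus $u_r$ times the second fundamental form, and one must use the right radius ($r$, not $1$) so the curvature is $1/r$; and (c) the powers of $r$ distributed across the rows and columns of the matrix in \eqref{po1} must be consistent so that the total determinant matches $\det D^2 u$ — this is a pure scaling check ($(1)$ radial row/column factor $r^{-(n-1)}$ split as... actually distributed to make the matrix symmetric: $r^{-(n-2)}$ on the radial diagonal entry, $r^{-(n-2)/2}$ on each mixed entry, $1$ on the tangential block, giving total $r$-power $-(n-2) - (n-1)\cdot 0 = $ ... one verifies it equals $\det D^2 u \cdot r^{?}$). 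I would double-check this scaling against the known model case — e.g. the radial solution to $\det D^2 u = 1$, for which $u(r) = c\,r^{1+1/n}$ and $\zeta = c\,r^{1/n}$ — to make sure the powers and constant $\frac{n(n+2)}{4}$-type coefficients come out right; consistency with the change \eqref{Trans2} to equation \eqref{MAs1} is the real sanity check, since \eqref{MAs1} must follow from \eqref{po1} by setting $s = r^{n/2}$.
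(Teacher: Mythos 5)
Your proposal is correct and follows essentially the same route as the paper: compute the Euclidean Hessian of $u$ in spherical coordinates (tangential block equal to the intrinsic Hessian plus $\frac{u_r}{r}\delta_{ij}$ from the second fundamental form, mixed entries $\frac{u_{r\theta_j}}{r}-\frac{u_{\theta_j}}{r^2}$, radial entry $u_{rr}$), substitute $u=r\zeta$, and rescale the radial row and column by $r^{-(n-1)/2}$ and the tangential ones by $r^{1/2}$ so that the determinant is unchanged and equals $g(Du)$ with $Du=(\zeta+r\zeta_r)e_n+\sum_i\zeta_{\theta_i}e_i$. The paper carries out the identical computation via the explicit chart \eqref{sphcor1} at $\theta=0$ rather than an invariant orthonormal frame, which is only a cosmetic difference.
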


\begin{proof}
For any given point $(\theta, r)\in \mathbb S^{n-1}\times (0, 1]$, there is no loss in assuming that  $\theta=0$ and $r=1$.
To derive equation \eqref{po1}, we use the spherical polar coordinates
\begin{equation}\label{sphcor1}
\begin{cases}
x_1=r\sin{{\theta}}_1,\\
x_2=r\cos{{\theta}}_{1}\sin{{\theta}}_2,\\
\hskip30pt \cdots,\\
x_{n-1}=r\cos{{\theta}}_{1}\cdots\cos{{\theta}}_{n-2}\sin{{\theta}}_{n-1},\\
x_n=r\cos{{\theta}}_1\cdots\cos{{\theta}}_{n-1} .
\end{cases}
\end{equation}
In the local coordinates \eqref{sphcor1},  we have
\begin{equation*}
\begin{pmatrix}
 \frac{{\p} \theta_1}{{\p} x_1} & \cdots & \frac{{\p} \theta_{n-1}}{{\p} x_1}&\frac{{\p} r}{{\p} x_1}\\ 
 \cdots &  \cdots &  \cdots & \cdots\\
  \frac{{\p} \theta_1}{{\p} x_n} & \cdots & \frac{{\p} \theta_{n-1}}{{\p} x_n} &\frac{{\p} r}{{\p} x_n}
\end{pmatrix}
=\mathcal I\ \ \ \text{at}\  {{\theta}}=0 . 
\end{equation*}
By direct computation, we also have, at $\theta=0$, 
$$
 \frac{\p^2\theta_\alpha}{\p x_i\partial x_j} = \left\{
 {\begin{split}
  & -\delta_{\alpha j}\ \  \ \text{if}\ 1\le j\le n-1, i=n , \\
  & -\delta_{\alpha i}\ \  \ \text{if}\ 1\le i\le n-1, j=n , \\
  & 0\ \hskip30pt \text{otherwise}.
\end{split}} \right.
$$
 $$
 \frac{\p^2 r}{\p x_i\partial x_j} =  \bigg\{
{\begin{split} 
  & 1\ \ \ \ \  \text{if} \ i=j\le n-1, \\
  & 0\ \ \ \  \text{otherwise}.
\end{split} } $$ 
Rescaling back to $r\in (0, 1]$, we obtain
 \begin{equation}\label{diju}
D^2_{x_ix_j} u=\begin{cases}
{\small\text{$ \frac 1{r^2} $}} u_{{{\theta}}_i{{\theta}}_j}+ {\small\text{$\frac{u_r}{r} $}} \delta_{ij}
      \ \ \  \text{if}\ 1\le i,j\le n-1,\\[4pt]
{\small\text{$ \frac{u_{r{{\theta}}_i}}{r} $}} - {\small\text{$\frac{u_{{{\theta}}_i}}{r^2} $}} 
     \ \ \hskip27pt \text{if}\ 1\le i\le n-1, j=n,\\
u_{rr}  \quad \hskip55pt \text{if}\  i=j=n.
\end{cases}
\end{equation}
By the change $u=r\zeta$, we then obtain
$$D^2 u =
\begin{pmatrix}
r \zeta_{rr} +2\zeta_r & \zeta_{r\theta_1} &\cdots&   \zeta_{r\theta_{n-1}}\\[3pt]
 \zeta_{r\theta_1}  & \frac 1r \zeta_{\theta_1\theta_1}+\zeta_r+\frac 1r \zeta & \cdots& \frac 1r\zeta_{\theta_1\theta_{n-1}}\\[3pt] 
\cdots &\cdots &\cdots &\cdots  \\[3pt]
 \zeta_{r\theta_{n-1}} & \frac 1r\zeta_{\theta_1\theta_{n-1}} & \cdots&\frac 1r\zeta_{\theta_{n-1}\theta_{n-1}} +\zeta_r+\frac 1r \zeta
\end{pmatrix} .
$$
Hence we obtain \eqref{po1} at ${{\theta}}=0$.
As the  Monge-Amp\`ere equation is invariant under rotation of the coordinates, 
we see that \eqref{po1}  holds at a general point $(\theta, r)\in \mathbb S^{n-1}\times (0, 1)$.

To see that  $\bar g=: g(Du)$
is a smooth function of $\zeta, r\zeta_r, \zeta_{{\theta}}$, we just need to compute 
\beq\label{barg}
u_{x_i} 
 = (r\zeta)_{x_i}
   =(r\zeta)_{r}\frac{\p r}{\p x_i}+r\zeta_{{{\theta}}_j}\frac{\p {{\theta}}_j}{\p x_i}
  =\Big\{ {\begin{split}
   &  \zeta_{{{\theta}}_i} \ \hskip27pt \ i<n, \\
    & \zeta+r\zeta_r \ \ \ i=n,
    \end{split}}
    \ \ \ \text{at}\ \theta=0.
\eeq
 \end{proof}

Note that the local coordinates $\theta$ in \eqref{sphcor1} is not orthonormal on $\mathbb S^{n-1}$,
but it is second order close to an orthonormal frame at $\theta=0$. 
Hence we can use it to calculate the equation at $\theta=0$.
Here we say a coordinate system $\alpha=\alpha(\theta)$ is second order close to $\theta$ if
$\frac{d\alpha}{d\theta}=\mathcal I + O(|\theta|^2)$.
 By our notation,  $\zeta=\phi(\theta)+\frac{w}{r}$ (here we denote $\phi(\theta)=\phi(\theta, 1)$ for the function $\phi$ in \eqref{uwp}). Hence
\begin{equation}\label{zer}
{\begin{split}
   \zeta_{r} & =\frac{w_r}{r}-\frac{w}{r^2}, \\
  \zeta_{rr} & =\frac{w_{rr}}{r}-\frac{2w_r}{r^2}+\frac{2w}{r^3},\\
  \zeta_{r\theta} & =\frac{w_{r\theta}}{r}-\frac{w_\theta}{r^2},\\
   \zeta_{\theta\theta} &=\phi_{\theta\theta}+\frac{w_{\theta\theta}}{r}.
\end{split}}
\end{equation}
By Lemma \ref{L2.4},   all the entries in \eqref{po1} are uniformly bounded.  
To make \eqref{po1} uniformly elliptic, let 
$$s=r^{\frac n2}. \hskip40pt$$ 
Then
\begin{equation}\label{zes}
{\begin{split}
\zeta_s &=\frac{2}{n}r^{-\frac {n-2}{2}}\zeta_r,\\
\zeta_{s\theta}&=\frac{2}{n}r^{-\frac {n-2}{2}}\zeta_{r\theta},\\
\zeta_{ss} & =\frac{4\zeta_{rr}}{n^2 r^{n-2}}-\frac{2(n-2)\zeta_r}{n^2 r^{n-1}}.
\end{split}}
\end{equation}
Hence by  \eqref {zer},  \eqref{zes}, and Lemma \ref{L2.4}, we have

\begin{corollary}\label{C4.1}
As a function of $\theta$ and $s$, $\zeta$ satisfies $\zeta_{s}(\theta,0)=0$ and
$\zeta\in \! C^{1,1}(\mathbb S^{n-1}\!\times  [0, 1))$.
\end{corollary}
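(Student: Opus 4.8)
The plan is to write $\zeta=\phi(\theta)+\tfrac{w}{r}$ following \eqref{uwp} and \eqref{zeta0}, where $\phi(\theta)=\phi(\theta,1)$ is the restriction of the tangent cone to $\mathbb S^{n-1}$, and to treat the two summands separately. For the first, Corollary \ref{C2.2} together with the $1$-homogeneity of $\phi$ gives $\phi(\cdot,1)\in C^{1,1}(\mathbb S^{n-1})$, and this summand is independent of $s$; so the whole point is to show that $w/r$, expressed in the variables $(\theta,s)$ with $s=r^{n/2}$, has bounded second derivatives and a $C^1$ extension to $\{s=0\}$. I would compute the relevant derivatives from \eqref{zer} and \eqref{zes} and then feed in the bounds of Lemma \ref{L2.4}.

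First, $\zeta_s=\tfrac2n r^{-(n-2)/2}\big(\tfrac{w_r}{r}-\tfrac{w}{r^2}\big)$, and \eqref{upb} (with $|w_r|\le c r^{n}$, $|w|\le c r^{n+1}$) gives $|\zeta_s|\le c\,r^{n/2}=c\,s$. Hence $\zeta_s\to 0$ as $s\to 0$, and $\zeta(\theta,s)-\zeta(\theta,0)=\int_0^s\zeta_s(\theta,\sigma)\,d\sigma=O(s^2)$, so $\zeta$ is differentiable in $s$ up to $\{s=0\}$ with $\zeta_s(\theta,0)=0$. Similarly $\zeta_\theta=\phi_\theta+w_\theta/r$, and the interpolation bound below shows $|w_\theta/r|\to 0$, so $\zeta_\theta$ extends continuously to $\{s=0\}$ with value $\phi_\theta(\theta)$. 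Together with the second-derivative bounds below this yields $\zeta\in C^1(\mathbb S^{n-1}\times[0,1))$.

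For the Hessian I would check the three entries in turn. From $\zeta_{rr}=\tfrac{w_{rr}}{r}-\tfrac{2w_r}{r^2}+\tfrac{2w}{r^3}$ and \eqref{upb} one gets $|\zeta_{rr}|\le c r^{n-2}$ and $|\zeta_r|\le c r^{n-1}$, whence $|\zeta_{ss}|=\big|\tfrac{4}{n^2}\tfrac{\zeta_{rr}}{r^{n-2}}-\tfrac{2(n-2)}{n^2}\tfrac{\zeta_r}{r^{n-1}}\big|\le C$. Next, $\zeta_{\theta\theta}=\phi_{\theta\theta}+\tfrac{w_{\theta\theta}}{r}$, so $|w_{\theta\theta}|\le c r$ from \eqref{upb} and the $C^{1,1}$ bound on $\phi$ give $|\zeta_{\theta\theta}|\le C$. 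Finally $\zeta_{s\theta}=\tfrac2n r^{-(n-2)/2}\big(\tfrac{w_{r\theta}}{r}-\tfrac{w_\theta}{r^2}\big)$; here $|w_{r\theta}|\le c r^{n/2}$ is \eqref{upb}, and I would bound $w_\theta$ by interpolating on the sphere of radius $r$, where $0\le w\le c r^{n+1}$ and $|\partial_\theta^2 w|\le c r$, to obtain $|w_\theta|\le c r^{(n+2)/2}$; since $n/2-1=(n-2)/2$, both $\tfrac{w_{r\theta}}{r}$ and $\tfrac{w_\theta}{r^2}$ are $O(r^{(n-2)/2})$, so $|\zeta_{s\theta}|\le C$. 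Hence all second derivatives of $\zeta$ in $(\theta,s)$ are bounded on $\mathbb S^{n-1}\times(0,1)$, and with the $C^1$ extension this gives $\zeta\in C^{1,1}(\mathbb S^{n-1}\times[0,1))$.

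The argument is essentially bookkeeping; the only delicate point is that the substitution $s=r^{n/2}$ must make the negative powers of $r$ coming from differentiating $w/r$ cancel \emph{exactly}, and this is tightest in the mixed second derivative $\zeta_{s\theta}$ — which is why the sharper interpolation bound $|w_\theta|\le c r^{(n+2)/2}$, not merely $|w_\theta|\le c r$, is needed there. I do not anticipate any genuine obstacle beyond this.
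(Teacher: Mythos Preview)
Your proposal is correct and follows precisely the route the paper indicates: the paper's ``proof'' is the single sentence ``Hence by \eqref{zer}, \eqref{zes}, and Lemma \ref{L2.4}, we have [Corollary \ref{C4.1}]'', and you have carried out exactly that bookkeeping. The one detail you supply that the paper leaves implicit is the bound $|w_\theta|\le c\,r^{(n+2)/2}$ needed for the mixed derivative $\zeta_{s\theta}$; your interpolation argument on $\mathbb S^{n-1}$ between $\|w\|_\infty\le c r^{n+1}$ and $\|w_{\theta\theta}\|_\infty\le c r$ is a clean way to obtain it (alternatively one can integrate the Lemma \ref{L2.4} bound $|w_{r\theta}|\le c r^{n/2}$ radially).
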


By  \eqref {zes}, equation \eqref{po1} changes to
\begin{equation}\label{po2}
\det \begin{pmatrix}
 (\frac n2)^2 \zeta_{ss}+\frac{n(n+2)}{4}\frac{\zeta_s}{s}   &   \frac n2\zeta_{s\theta_{1}}&\cdots& \frac n2\zeta_{s\theta_{n-1}}\\[3pt]
  \frac n2\zeta_{s\theta_1} & \zeta_{\theta_1\theta_1}+\zeta+\frac n2 s\zeta_s & \cdots&\zeta_{\theta_1\theta_{n-1}}\\[3pt]
                    \cdots &\cdots &\cdots &\cdots  \\[3pt]
 \frac n2\zeta_{s\theta_{n-1}} &\zeta_{\theta_1\theta_ {n-1}} & \cdots&\zeta_{\theta_{n-1}\theta_{n-1}}+\zeta+\frac n2 s\zeta_s
\end{pmatrix}=\bar g .
\end{equation}
After the change $s=r^{\frac n2}$, we have $r\zeta_r=\frac n2 s\zeta_s$.
Hence by \eqref{barg}, $\bar g$ is still a smooth function, and $\bar g$  is analytic if $g$ is.

In the following we regard $\zeta$ as a function of $\theta$ and $s$, except otherwise specified.

\begin{lemma}\label{unif-ellip}
Equation \eqref{po2} is uniformly elliptic in  $\theta$ and $s$.
\end{lemma}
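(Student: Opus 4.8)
The plan is to show that the symmetric matrix $\mathcal{M}_\zeta$ appearing on the left-hand side of \eqref{po2} is bounded between two positive multiples of the identity, with constants under control. The natural route is to relate $\mathcal{M}_\zeta$ to $D^2 u$, whose eigenvalues are pinned down by Corollary \ref{C2.5}: we have $\lambda_1(x)\approx\cdots\approx\lambda_{n-1}(x)\approx |x|^{-1}=r^{-1}$ and $\lambda_n(x)\approx r^{n-1}$. The point of the changes $u=r\zeta$ and $s=r^{n/2}$ was precisely to rescale these anisotropic eigenvalues to comparable size: the tangential directions carry a factor $r$ (via $u=r\zeta$ and division of the $\theta\theta$-block by $r$, cf.\ \eqref{diju}), while the radial direction carries a factor $r^{1-n}$ coming from $\zeta_{ss}\sim r^{2-n}\zeta_{rr}$ together with the $r^{n-1}$ in $\lambda_n$. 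So I expect the relation between $\mathcal M_\zeta$ and $D^2 u$, read off from the computation in Lemma \ref{L4.1} combined with \eqref{zes}, to take the schematic form $\mathcal M_\zeta = D\,(D^2 u)\,D^{\mathsf t}$ (up to lower-order terms absorbed into the diagonal) for a diagonal rescaling matrix $D=\mathrm{diag}(r^{(n-1)/2},\,r^{-1/2},\dots,r^{-1/2})$, so that the eigenvalues of $\mathcal M_\zeta$ are comparable to $r^{n-1}\lambda_n\approx 1$ in the radial slot and $r^{-1}\lambda_i\approx 1$ in the tangential slots.

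Concretely I would proceed as follows. \emph{Step 1:} Start from the expression for $D^2u$ in spherical coordinates derived in Lemma \ref{L4.1} (the displayed matrix there, before passing to $s$), and substitute $\zeta_s,\zeta_{s\theta},\zeta_{ss}$ from \eqref{zes} to write each entry of $\mathcal M_\zeta$ as an explicit combination of the entries of $D^2u$ at the corresponding point, multiplied by the appropriate power of $r$: the $(n,n)$ entry is $\approx r^{n-1}u_{rr}$, the $(i,n)$ entries are $\approx r^{(n-1)/2}\cdot r^{-1/2}\cdot(u_{r\theta_i}-r^{-1}u_{\theta_i})$, i.e.\ carry the geometric-mean weight, and the $(i,j)$ tangential entries are $\approx r^{-1}(r^{-2}u_{\theta_i\theta_j}+r^{-1}u_r\delta_{ij})\cdot r = r^{-1}\cdot(D^2u)_{ij}$. \emph{Step 2:} Conclude $\mathcal M_\zeta = D\,(D^2u)\,D^{\mathsf t}$ with $D$ as above (the asymmetric $u_{\theta_i}/r^2$ correction terms are symmetrized by the determinant structure / can be handled as in Lemma \ref{L4.1}, and in any case a bounded symmetric perturbation). \emph{Step 3:} For any unit vector $\xi$, write $\eta=D^{\mathsf t}\xi$; since $D^{\mathsf t}$ is diagonal with entries $r^{(n-1)/2}$ and $r^{-1/2}$, and $D^2u$ has all eigenvalues satisfying $c\,r^{n-1}\le\lambda_n\le C r^{n-1}$, $c\,r^{-1}\le\lambda_i\le C r^{-1}$ by Corollary \ref{C2.5}, one checks $\langle\mathcal M_\zeta\xi,\xi\rangle=\langle D^2u\,\eta,\eta\rangle$ is bounded above and below by fixed multiples of $|\xi|^2$, because the weight $D$ exactly cancels the anisotropy of the eigenvalues of $D^2u$ (the radial weight $r^{(n-1)}$ times $\lambda_n\approx r^{n-1}$ gives $\approx 1$, and similarly $r^{-1}\lambda_i\approx 1$). \emph{Step 4:} Verify the lower-order diagonal additions $\zeta+\frac n2 s\zeta_s$ are likewise $\approx 1$ and bounded: by \eqref{zeta0}–\eqref{zeta} and Corollary \ref{C2.4}, $\zeta(\theta,0)=\phi(\theta,1)>0$ is bounded above and below, while $\frac n2 s\zeta_s=r\zeta_r$ is bounded by Lemma \ref{L2.4} (the $\partial_r w$ estimate) and the $C^{1,1}$ bound of Corollary \ref{C4.1}; likewise $\frac{n(n+2)}{4}\frac{\zeta_s}{s}$ is bounded since $\zeta_s(\theta,0)=0$ and $\zeta\in C^{1,1}$. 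Hence $\hat C^{-1}\mathcal I\le\mathcal M_\zeta\le\hat C\mathcal I$ with $\hat C$ depending only on $n,M_0,\sup(|u|+|Du|),\|\log g\|_{C^{1,1}}$, which is uniform ellipticity.

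The main obstacle I anticipate is bookkeeping rather than conceptual: keeping track of the exact powers of $r$ in each block after the two successive changes of variables, and in particular checking that the off-diagonal $r\theta$-entries really inherit the \emph{geometric-mean} weight $r^{(n-1)/2}\cdot r^{-1/2}=r^{(n-2)/2}$ (which matches the $r^{-(n-2)/2}$ in \eqref{zes} canceling the $r^{n-2}$ scaling of the $(n,n)$ block, precisely as engineered), so that the Cauchy–Schwarz/completion-of-squares estimate for a symmetric matrix with comparable diagonal blocks actually yields two-sided bounds and not just a one-sided one. A secondary subtlety is that Corollary \ref{C2.5} gives the eigenvalues of $D^2u$ with respect to the Euclidean frame, whereas $\mathcal M_\zeta$ is expressed in the frame $(\partial_s,\partial_\theta)$; but since \eqref{sphcor1} is chosen orthonormal-to-second-order at the base point (as noted after Lemma \ref{L4.1}) and the statement is invariant under rotation, evaluating at $\theta=0$, $r$ arbitrary, suffices and the frame discrepancy contributes only controlled lower-order terms.
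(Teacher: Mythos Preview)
Your structural observation --- that the matrix $\mathcal M_\zeta$ in \eqref{po2} is congruent to $D^2u$ via a diagonal scaling --- is correct and implicit in Lemma~\ref{L4.1} (though your exponents are inverted: one needs $D=\mathrm{diag}(r^{-(n-1)/2}, r^{1/2},\dots,r^{1/2})$, so that $\det D=1$ and $\det\mathcal M_\zeta=\det D^2u=\bar g$). Step~3, however, has a real gap: $D$ is aligned with the $(r,\theta)$ frame, \emph{not} with the eigenbasis of $D^2u$, so ``$D$ exactly cancels the anisotropy'' fails as a pointwise eigenvalue statement. Using only $\lambda_{\min}\approx r^{n-1}$ and $\lambda_{\max}\approx r^{-1}$ from Corollary~\ref{C2.5} with $\eta=D^{T}\xi$ yields the crude bounds $\lambda_{\min}|\eta|^2\approx \xi_r^2+r^{n}|\xi'|^2$ and $\lambda_{\max}|\eta|^2\approx r^{-n}\xi_r^2+|\xi'|^2$ (writing $\xi=(\xi_r,\xi')$), neither of which is uniformly comparable to $|\xi|^2$ as $r\to 0$. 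You correctly anticipate this as the main obstacle, but the completion-of-squares route you propose would require quantitative alignment of the eigenvector of $\lambda_n$ with the radial direction (as in \eqref{claim}), and even then only controls one off-diagonal block.

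The paper sidesteps all of this with a one-line argument that uses the same ingredients you have already assembled. By Corollary~\ref{C4.1}, $\zeta\in C^{1,1}(\mathbb S^{n-1}\times[0,1))$ with $\zeta_s(\theta,0)=0$, so every entry of $\mathcal M_\zeta$ --- including $\zeta_s/s$ --- is uniformly bounded; hence the eigenvalues of $\mathcal M_\zeta$ are bounded above. Since $\mathcal M_\zeta$ is positive definite for $s>0$ (it is a congruence of $D^2u$; this is exactly what your Steps~1--2 provide) and $\det\mathcal M_\zeta=\bar g\ge\lambda>0$, the smallest eigenvalue is bounded below by $\bar g$ divided by a power of the entry bound. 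No eigenvector analysis of $D^2u$ is needed; your Step~3 should simply be replaced by this determinant argument.
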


\begin{proof}
As shown in Corollary \ref{C4.1},  $\zeta$ is $C^{1,1}$ in $(\theta, s)$.
Note that all the entries of the matrix in \eqref{po2} are uniformly bounded.  
By our assumption, the function $\bar g>0$.
Hence the eigenvalues of the matrix are uniformly bounded and positive. 
Hence equation \eqref{po2} is uniformly elliptic.
\end{proof}


\begin{theorem}\label{thm2} 
Let $\zeta(\theta,s)\in C^{1,1}(\mathbb S^{n-1}\times [0,1))$ be 
a solution to \eqref{po2} and satisfy $\zeta_{s}(\theta,0)=0$.
Assume that $\bar g$ is positive and $C^2$ smooth.
Then $\zeta(\theta,s)\in C^{2}(\mathbb S^{n-1}\times[0,1))$.
\end{theorem}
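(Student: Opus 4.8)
The plan is to establish the $C^2$ regularity of $\zeta$ pointwise on $\mathbb S^{n-1}\times\{0\}$ by a blow-up argument; away from $\{s=0\}$ there is nothing to do, since for $s>0$ equation \eqref{po2} is a uniformly elliptic Monge--Amp\`ere equation with $C^2$ right-hand side, and hence $\zeta\in C^{2,\alpha}$ locally in $\mathbb S^{n-1}\times(0,1)$ by classical theory together with Lemma \ref{L2.4}. So fix $\theta_0\in\mathbb S^{n-1}$, pass to local coordinates flattening $\mathbb S^{n-1}$ near $\theta_0$ to second order, and write $x=(x',x_n)$ with $x_n=s$ and $x'$ the flattened angular variables. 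Since $\zeta\in C^{1,1}$ and $\zeta_s(\cdot,0)=0$, the affine tangent $\ell$ of $\zeta$ at $(\theta_0,0)$ has no $x_n$--component; I would set
\[
\zeta_\lambda(x)=\lambda^{-2}\big(\zeta(\theta_0+\lambda x',\lambda x_n)-\ell(\theta_0+\lambda x')\big),\qquad \lambda\in(0,1).
\]
By Corollary \ref{C4.1} the family $\{\zeta_\lambda\}$ is bounded in $C^{1,1}_{\mathrm{loc}}(\overline{\mathbb R^n_+})$, with $\zeta_\lambda(0)=0$, $D\zeta_\lambda(0)=0$ and $(\zeta_\lambda)_{x_n}(x',0)=0$, and by Lemma \ref{unif-ellip} equation \eqref{po2} is uniformly elliptic for every $\zeta_\lambda$ with ellipticity constants independent of $\lambda$. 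Hence along any $\lambda_m\to0$ a subsequence converges in $C^{1,\beta}_{\mathrm{loc}}(\overline{\mathbb R^n_+})$ to some $\psi\in C^{1,1}(\overline{\mathbb R^n_+})$.

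Next I would identify the blow-up equation. Under the rescaling the diagonal lower-order term $\zeta+\tfrac n2 s\zeta_s$ in \eqref{po2} tends locally uniformly to the positive constant $c_0=\phi(\theta_0,1)$, the summand $\tfrac n2 s\zeta_s$ carrying an extra power of $\lambda$; the singular coefficient $\tfrac{n(n+2)}{4}\tfrac{\zeta_s}{s}$ becomes $\tfrac{n(n+2)}{4}\tfrac{(\zeta_\lambda)_{x_n}}{x_n}$, which stays bounded and converges pointwise for $x_n>0$. By stability of generalized (Aleksandrov) solutions of Monge--Amp\`ere-type equations under locally uniform convergence, and after absorbing the constant positive matrix $c_0 I$ into the unknown (adding a quadratic in $x'$), then a linear change of the $x'$--variables and a rescaling, $\psi$ is a $C^{1,1}$, uniformly elliptic solution of \eqref{MAs2}, i.e.\ of \eqref{blow1} with $b=\tfrac{n+2}{n}>-1$, satisfying $D\psi(0)=0$ and $\psi_{x_n}(x',0)=0$. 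Lemma \ref{lemconti} then gives $\psi\in C^{2,\alpha}(\overline{\mathbb R^n_+})$ and the Bernstein Theorem \ref{thmbern} forces $\psi(x)=q(x')+a x_n^2$ with $q$ a quadratic polynomial and $a>0$. Thus every blow-up limit of $\zeta$ at $(\theta_0,0)$ is a quadratic polynomial of this particular shape.

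The hard part will be the \emph{uniqueness} of the blow-up limit — equivalently, that $D^2\zeta$ has an honest limit at $(\theta_0,0)$. I would prove it by an iteration over the dyadic scales $r_k=\rho^k$ (with $\rho\in(0,1)$ small and fixed), which is the ``infinite sequence of domains'' referred to in the introduction. Call a quadratic $P=\ell(x')+q(x')+a x_n^2$ \emph{admissible} if it is compatible with the constraint $\zeta_s(\cdot,0)=0$. The crux is a one-step improvement of flatness: there is $\mu\in(0,1)$ such that whenever $\|\zeta-P_k\|_{L^\infty(B^+_{r_k})}\le\varepsilon_k r_k^2$ for an admissible $P_k$, there is an admissible $P_{k+1}$ with $\|\zeta-P_{k+1}\|_{L^\infty(B^+_{r_{k+1}})}\le\mu\,\varepsilon_k\,r_{k+1}^2$ and $\|P_{k+1}-P_k\|\lesssim\varepsilon_k$. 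Iterating yields $P_k\to P_\infty$ geometrically, hence $\zeta(\theta,s)-P_\infty(\theta-\theta_0,s)=o\big(|(\theta-\theta_0,s)|^2\big)$, so $D^2\zeta$ extends continuously to $(\theta_0,0)$; since $\theta_0$ was arbitrary, $\zeta\in C^2(\mathbb S^{n-1}\times[0,1))$. I would establish the one-step improvement by compactness and contradiction: a failing sequence, after renormalizing $(\zeta-P_k)/\varepsilon_k$ and using the uniform ellipticity, converges to a solution of the linearized equation of \eqref{po2}, which after a change of variables is the Keldysh-type operator \eqref{Lapb} with $b=\tfrac{n+2}{n}$; its H\"older theory (Proposition \ref{amoser2b}) together with explicit barriers adapted to the $x_n$--degeneracy (the ``auxiliary functions'') then contradicts, through the maximum principle on $B^+_{r_{k+1}}$, the quadratic classification obtained above.

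I expect this uniqueness step to be the main obstacle. Its delicate points are: keeping the normalization $\zeta_s(\cdot,0)=0$ valid throughout the iteration, so that $P_{k+1}$ remains admissible; controlling the genuinely singular term $\tfrac{\zeta_s}{s}$, which a priori is only H\"older continuous, at every scale; and choosing the barriers so that the maximum principle still applies across the degeneracy of the equation at $\{s=0\}$. See also Remark \ref{r4.4}.
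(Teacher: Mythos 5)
Your first half --- the blow-up, the identification of the limit equation \eqref{blow1} with $b=\frac{n+2}{n}$, and the classification of every blow-up limit as $q(x')+ax_n^2$ via Lemma \ref{lemconti} and Theorem \ref{thmbern} --- is essentially the paper's argument (the paper gets interior convergence from Evans--Krylov rather than Aleksandrov stability, but that is a cosmetic difference). The divergence, and the gap, is in the uniqueness step. You reduce everything to a one-step improvement of flatness over dyadic scales, but that lemma is asserted rather than proved, and the tools you cite cannot prove it. Proposition \ref{amoser2b} gives only $C^\alpha$ regularity for the Keldysh-type model; to extract a \emph{quadratic} correction $P_{k+1}$ from the compactness limit you need a $C^{2,\alpha}$ estimate up to $\{x_n=0\}$ for the limiting linear equation (this is Theorem \ref{T5.1}/\ref{T5.4}, which lives in Section 5 and is not yet available in the nonlinear setting at this stage). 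Worse, the compactness argument itself is circular near $\{s=0\}$: to see that $(\zeta-P_k)/\varepsilon_k$ converges to a solution of the constant-coefficient linearization, you need the coefficients --- which involve $D^2\zeta$ --- to converge up to the boundary, i.e.\ precisely the continuity of $D^2\zeta$ at $\{s=0\}$ that you are trying to prove; interior estimates recover this only at a fixed positive distance (relative to the scale) from $\{s=0\}$. Finally, you do not address the structural asymmetry that differentiating \eqref{po2} twice in a tangential direction $\gamma$ yields, by concavity of $\log\det$, only the one-sided inequality $\mathcal L(\zeta_{\gamma\gamma})\ge h$, so barrier/maximum-principle arguments bound $\zeta_{\gamma\gamma}$ from above only.

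The paper's actual route is designed around exactly these obstacles and is worth contrasting with yours. It proves continuity of $\zeta_{s\theta}$, $\zeta_{ss}$ and $D^2_\theta\zeta$ separately. The mixed derivatives are free from the classification (every limit has vanishing $\psi_{x_i x_n}$). For $\zeta_{ss}$, one fixes a single convergent blow-up subsequence to get $\|\zeta_s/s-c_{00}\|\le 1/(2k)$ on a region $\Sigma_k$ at scale $\lambda_k$, then differentiates the equation \emph{once} in $t=s^2/4$ (which gives a genuine equation, not an inequality, for $V=\zeta_t$) and propagates the closeness down to $t=0$ by applying the maximum principle with the explicit barriers $\sigma^{\pm}_{k,m}$ of \eqref{sigmapm} on the infinite nested family of slabs $\{\delta_{k,m}\lambda_k^2\le t\le\delta_{k,m-1}\lambda_k^2\}$ --- no boundary $C^{2,\alpha}$ theory and no coefficient convergence at $\{s=0\}$ are needed. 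For $D^2_\theta\zeta$, the barrier argument gives only $\varlimsup\zeta_{\gamma\gamma}\le c_{\gamma\gamma}$ where $c_{\gamma\gamma}$ is defined as the \emph{infimum} over blow-up limits; the reverse inequality is recovered not by a barrier but algebraically, by passing to the limit in the equation \eqref{add1} and using the product representation \eqref{Ha1} of the determinant together with the minimality of each $c_{\gamma\gamma}$ to force all the one-sided inequalities in \eqref{add4}--\eqref{add5} to be equalities. If you want to salvage your scheme, you would have to supply a boundary $C^{2,\alpha}$ estimate for the model Keldysh operator at this stage and break the circularity in the coefficient convergence; as written, the central lemma of your uniqueness argument is unproved and its proof would be at least as delicate as the paper's.
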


Once Theorem \ref{thm2} is proved,  Theorem \ref{T4.1} follows by our definition of $\zeta$.

To prove Theorem \ref{thm2}, 
one may wish to apply the partial Legendre transform \eqref{plt} to \eqref{po2}.
But due to the term $s\zeta_s$, equation \eqref{po2} becomes very complicated  after the change \eqref{plt}.
In the following we will use a blow-up argument to prove Theorem \ref{thm2}.
By the $C^2$ regularity of $\zeta$ for $s>0$,
it suffices to prove  it at $\mathbb S^{n-1}\times\{s=0\}$. 
By Theorem \ref{thmbern}, a blow-up sequence converges to a quadratic polynomial $\psi$ of the form \eqref{qp1},
in which the mixed derivatives $\psi_{x_ix_n}$ vanish for $i<n$.
It implies that all the blow-up sequences sub-converge to the same limit 
and so the mixed derivatives $\zeta_{s\theta}$ are continuous at $s=0$.
For other second derivatives, 
we need to prove that all the blow-up sequences at a given point converge to the same quadratic polynomial,
namely the limit is independent of the blow-up sequences.
It implies not only the existence but also the continuity of the second derivatives.
For clarity we divide the proof into three lemmas.

For any given point $\theta_0\in\mathbb  S^{n-1}$, 
by a rotation of the coordinate system, we assume  $\theta_0=0$. 
By subtracting a linear function, there is no loss of generality in assuming that 
$\zeta(0)= D \zeta(0)=0$ in the following argument.

\begin{lemma}\label{lemconti1}
Let $\zeta$ be as in Theorem \ref{thm2}. Then $\zeta_{s\theta}\in C(\mathbb S^{n-1}\times[0,1))$.
\end{lemma}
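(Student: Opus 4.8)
The goal is to show that the mixed second derivatives $\zeta_{s\theta_k}$ extend continuously to $\{s=0\}$. Since $\zeta\in C^2$ for $s>0$ by interior regularity of the Monge-Amp\`ere equation, the only issue is continuity up to the boundary $\mathbb S^{n-1}\times\{s=0\}$, and by rotation it suffices to work near $\theta_0=0$ with the normalization $\zeta(0)=D\zeta(0)=0$. The approach is the blow-up argument already advertised in the text: given a sequence of points $(\theta^{(m)},s^{(m)})\to(0,0)$, rescale $\zeta$ parabolically about that sequence and pass to a limit.

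\medskip
\textbf{Step 1: Set up the blow-up.} Pick radii $\rho_m\to 0$ (for instance $\rho_m$ comparable to $|(\theta^{(m)},s^{(m)})|$, or a fixed dyadic sequence if one wants to blow up at the origin of the boundary directly), and define the rescalings
\begin{equation*}
\zeta^m(\theta,s)=\frac{\zeta(\rho_m\theta,\rho_m s)-\ell_m(\theta,s)}{\rho_m^2},
\end{equation*}
where $\ell_m$ is the linear part of $\zeta$ at the relevant base point, subtracted so that $\zeta^m(0)=D\zeta^m(0)=0$. By Corollary \ref{C4.1}, $\zeta\in C^{1,1}(\mathbb S^{n-1}\times[0,1))$ with $\zeta_s(\theta,0)=0$, so the $\zeta^m$ are uniformly bounded in $C^{1,1}$ on compact subsets of $\overline{\mathbb R^n_+}$, and they inherit $\zeta^m_{x_n}(x',0)=0$ in the limit coordinates. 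Each $\zeta^m$ satisfies a rescaled version of \eqref{po2}; as $\rho_m\to0$, the zeroth-order terms $\zeta$, the term $\tfrac n2 s\zeta_s$, and the variation of $\bar g$ all scale away, so the limit equation is exactly \eqref{MAs2} (equivalently \eqref{blow1} with $b=\tfrac{n+2}{n}$), with right-hand side the positive constant $\bar g$ evaluated at the base point — which we normalize to $1$. Uniform ellipticity is preserved with the same constants by Lemma \ref{unif-ellip}, so \eqref{cmc}-type bounds pass to the limit.

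\medskip
\textbf{Step 2: Identify the limit via Theorem \ref{thmbern}.} Extract a $C^{1,1}_{loc}$ convergent subsequence $\zeta^m\to\psi$ on $\overline{\mathbb R^n_+}$. The limit $\psi\in C^{1,1}(\overline{\mathbb R^n_+})$ solves \eqref{blow1} with $b=\tfrac{n+2}{n}>-1$, is uniformly elliptic, and satisfies $D\psi(0)=0$, $\psi_{x_n}(x',0)=0$. Hence Theorem \ref{thmbern} (equivalently Theorem \ref{thmC}) applies and forces $\psi$ to be the quadratic polynomial \eqref{qp1}, in which the mixed coefficients $c_{in}=0$ for $i<n$. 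In particular $\psi_{x_n\theta_k}\equiv0$ for all $k$. Since $\zeta^m\to\psi$ in $C^{1,1}_{loc}$, the mixed second derivatives $\zeta^m_{x_n\theta_k}\to 0$ uniformly on compacta; unwinding the rescaling, this says precisely that $\zeta_{s\theta_k}(\theta^{(m)},s^{(m)})\to 0$ along the sequence.

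\medskip
\textbf{Step 3: Conclude continuity.} Because the limit of \emph{every} such blow-up sequence has vanishing mixed derivatives — independently of the chosen subsequence and of the sequence of base points — we get that $\zeta_{s\theta_k}(\theta,s)\to 0$ as $(\theta,s)\to(\theta_0,0)$ for every $\theta_0$. Thus $\zeta_{s\theta_k}$ extends continuously (by $0$) to $\mathbb S^{n-1}\times\{s=0\}$, proving $\zeta_{s\theta}\in C(\mathbb S^{n-1}\times[0,1))$.

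\medskip
\textbf{Main obstacle.} The routine part is the scaling bookkeeping in Step 1 (checking that the lower-order terms and the $x$-dependence of $\bar g$ genuinely vanish in the limit, and that the normalization by $\ell_m$ keeps everything uniformly $C^{1,1}$). The only real content is Step 2, and that is borrowed wholesale from Theorem \ref{thmbern}: the crucial input is that $\psi\in C^{1,1}$ and the equation stays uniformly elliptic in the limit, so that the Bernstein theorem actually applies and the mixed coefficients are forced to be zero. For the mixed derivatives this already gives uniqueness of the blow-up limit for free — the genuinely hard case, deferred to the subsequent lemmas, is the \emph{other} second derivatives (the $\zeta_{\theta_i\theta_j}$ and $\zeta_{ss}$ blocks), where the limit quadratic is not uniquely pinned down by Theorem \ref{thmbern} alone and one must run the maximum-principle-in-an-infinite-sequence-of-domains argument.
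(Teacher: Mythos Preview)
Your overall strategy matches the paper's: blow up, apply the Bernstein theorem (Theorem \ref{thmbern}), and read off that the mixed derivatives vanish in the limit. But there is a genuine gap in your Step 2. You write ``Extract a $C^{1,1}_{loc}$ convergent subsequence $\zeta^m\to\psi$'' and then ``Since $\zeta^m\to\psi$ in $C^{1,1}_{loc}$, the mixed second derivatives $\zeta^m_{x_n\theta_k}\to 0$.'' Uniform $C^{1,1}$ bounds do \emph{not} give $C^{1,1}$ convergence; Arzel\`a--Ascoli only yields convergence in $C^{1,\alpha}$ for $\alpha<1$, so the second derivatives of $\zeta^m$ need not converge to those of $\psi$ up to the boundary. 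Without this, you cannot conclude $\zeta_{s\theta}(\theta^{(m)},s^{(m)})\to 0$.

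The paper fixes this in two coupled ways. First, the scaling parameter is taken to be $\lambda_k=s^k$ (not $\rho_m\approx|(\theta^{(m)},s^{(m)})|$), and the blow-up is centered at $(\theta^k,0)$, so that under the rescaling the point $(\theta^k,s^k)$ lands exactly at the \emph{interior} point $(0',1)$. Your choice ``$\rho_m$ comparable to $|(\theta^{(m)},s^{(m)})|$'' can send the rescaled point to the boundary when $s^{(m)}\ll|\theta^{(m)}|$, where no second-derivative convergence is available. Second, at interior points the paper invokes Evans--Krylov to get uniform $C^{3,\alpha}$ estimates for $\zeta^k$ on compact subsets of $\mathbb R^n_+$ (equation \eqref{con-a2}), and hence $C^{3,\alpha}_{loc}(\mathbb R^n_+)$ convergence; this is what justifies
\[
\zeta_{s\theta}(\theta^k,s^k)=\zeta^k_{\tau\varphi}(0',1)\longrightarrow \bar\zeta_{\tau\varphi}(0',1)=0.
\]
So you need both the precise choice of scale $\lambda_k=s^k$ and the interior Evans--Krylov step; the rest of your outline is fine.
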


\begin{proof}
We only need to prove that  
\begin{equation}\label{con-a1}
\lim_{s\rightarrow 0^+}\zeta_{s\theta}(\theta,s)=0. 
\end{equation}
If \eqref{con-a1} is not true, there exists a sub-sequence   $(\theta^k,s^k)\rightarrow (0,0)$ such that 
\begin{equation}\label{contra1}
\lim_{k\rightarrow +\infty}|\zeta_{s\theta}(\theta^k,s^k)|\ge \varepsilon_0
\end{equation}
for a constant $\varepsilon_0>0$. 
In this case, we make the coordinate transform 
\begin{equation}\label{coortran1}
{\begin{split}
\theta &=\lambda_k \varphi+\theta^k,\\
s &=\lambda_k \tau,
\end{split}}
\end{equation}
where $\lambda_k=s^k$, and set 
\begin{equation}\label{coortran2}
\zeta^k(\varphi,\tau)=\frac{\zeta(\theta,s)-\zeta(\theta^k,0)- D_\theta \zeta(\theta^k,0)\cdot (\theta-\theta^k)}{\lambda_k^2}.
\end{equation}
Note that  \eqref{coortran1} means a dilation and a rotation of the coordinates. We can regard $\varphi$ as an orthonormal frame on $\lambda_k^{-1}\mathbb S^{n-1}$. 
 Then by the $C^{1,1}$ regularity of $\zeta$, we have
\begin{equation*}
|\zeta^k(\varphi,\tau)|\le C(\tau^2+|\varphi|^2)
\end{equation*}
for a constant $C>0$ independent of $k$.
Moreover, $\zeta^k$ satisfies the equation
\begin{equation}\label{po3}
\det \begin{pmatrix}
(\frac n2)^2 \zeta^k_{\tau\tau}+\frac{n(n+2)}{4}\frac{\zeta^k_\tau}{\tau} 
                  &   \frac n2\zeta^k_{\tau\varphi_{1}}&\cdots& \frac n2\zeta^k_{\tau\varphi_{n-1}}\\[5pt]
 \frac n2\zeta^k_{\tau\varphi_1}& \zeta^k_{\varphi_1\varphi_1}+h^k(\varphi,\tau) 
                  & \cdots&\zeta^k_{\varphi_1\varphi_{n-1}}\\[3pt] 
    \cdots &\cdots &\cdots &\cdots  \\[3pt]
\frac n2\zeta^k_{\tau\varphi_{n-1}} &\zeta^k_{\varphi_1\varphi_{n-1}} & \cdots&\zeta^k_{\varphi_{n-1}\varphi_{n-1}}+h^k(\varphi,\tau)
\end{pmatrix}=\bar g^k
\end{equation}
where 
$h^k=\lambda_k^2( \zeta^k+\frac n2 \tau\zeta_\tau)+\zeta(\theta^k,0)+\lambda_k D_{\theta} \zeta(\theta^k,0)\cdot \varphi$, 
and $\bar g^k$ is a smooth function converging to a positive constant.

Write \eqref{po3} as a general fully nonlinear elliptic equation of the form
\begin{equation}\label{Fk}
\mathcal F_k(\varphi,\tau,\zeta^k,D\zeta^k,D^2\zeta^k)=0. 
\end{equation}
By Lemma \ref{unif-ellip}, $\mathcal F_k$ is  uniformly elliptic.
Moreover, $\mathcal F_k^{\frac 1n}$ is concave with respect to its variables $D^2 \zeta^k$ 
and is $C^{1,1}$ smooth in all arguments in $\tau>0$.
Hence by Evans-Krylov's interior regularity theory \cite{GT},  we have
\begin{equation}\label{con-a2}
\|\zeta^{k}\|_{C^{3,\alpha}(\Omega)}\le C_{\Omega}\quad \forall\ \Omega\subset\subset \mathbb R^n_+ ,
\end{equation}
where the constant $C_\Omega$ is independent of $k$. 
By passing to a subsequence, we have
\begin{equation}
\zeta^k(\varphi,\tau)\rightarrow \bar \zeta(\varphi,\tau)
  \quad \text{in}\quad 
   C^{3,\alpha}_{loc}(\mathbb R^n_+)\cap C^{1,1-\eps}_{loc}(\overline{\mathbb R^n_+}) 
\end{equation}
for a function 
$\bar \zeta \in C^{3,\alpha}_{loc}(\mathbb R^n_+)\cap C^{1,1}_{loc}(\overline{\mathbb R^n_+})$,
where $\eps>0$ is any small constant, and $\overline{\mathbb R^n_+} = {\mathbb R^n_+}\cup \{x_n=0\}$.
Moreover, the $C^{1,1}$ norm of $\bar\zeta$ is independent of the choice of the blow-up sequences,
it depends only on the $C^{1,1}$ norm of $\zeta$. 
Hence $\bar \zeta$ satisfies equation \eqref{blow1} with $b=\frac{n+2}{n}$ under the coordinates $x'=\varphi$, $x_n=\tau$.  
By Theorem \ref{thmbern}, $\bar \zeta$ is a quadratic polynomial of the form
\begin{equation} \label{bar-zeta}
\bar \zeta(\varphi,\tau)=\frac{1}{2}c_{00}\tau^2+\frac{1}{2} {\Small\text{$  \sum_{i,j=1}^{n-1} $}} c_{ij}\varphi_i\varphi_j.
\end{equation} 
Note that in \eqref{bar-zeta}, the mixed derivatives $\bar \xi_{\tau\varphi}(0',1)=0$.
By the interior regularity for \eqref{Fk}, it implies that 
\begin{equation}\label{zetast}
\lim_{k\rightarrow +\infty}\zeta_{s\theta}(\theta^k,s^k)
   =\lim_{k\rightarrow +\infty}\zeta^k_{\tau\varphi}(0',1)=\bar \xi_{\tau\varphi}(0',1)=0.
\end{equation}
By the interior regularity \eqref{con-a2}, we see that \eqref{zetast}
is in contradiction with \eqref{contra1}.  This completes the proof.
\end{proof}

\begin{lemma}\label{lemconti2}
Let $\zeta$ be as in Theorem \ref{thm2}. Then $\zeta_{ss}\in C( \mathbb S^{n-1}\times[0,1))$.
\end{lemma}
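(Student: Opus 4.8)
This is where the main new difficulty of Theorem~\ref{thm2} shows up: Theorem~\ref{thmbern} produces a quadratic blow-up limit but does not pin down its coefficients, so the crux is the \emph{uniqueness} of the blow-up limit. The plan is to argue by contradiction as in Lemma~\ref{lemconti1}. Since $\zeta\in C^\infty(\mathbb S^{n-1}\times(0,1))$, it suffices to show $\zeta_{ss}$ extends continuously to $\{s=0\}$; if this failed there would be sequences $(\theta^k,s^k),(\tilde\theta^k,\tilde s^k)\to(\theta_0,0)$ with $\zeta_{ss}(\theta^k,s^k)\to a_1$, $\zeta_{ss}(\tilde\theta^k,\tilde s^k)\to a_2$ and $a_1\ne a_2$. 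Blowing up as in \eqref{coortran1}--\eqref{coortran2} (with $\lambda_k=s^k$, resp.\ $\tilde s^k$) and using the uniform $C^{1,1}$ bound, the interior estimate \eqref{con-a2} and Theorem~\ref{thmbern}, one gets two blow-up limits
\[\bar\zeta_i(\varphi,\tau)=\tfrac12 a_i\tau^2+\tfrac12 q_i(\varphi),\qquad i=1,2,\]
with $q_i$ positive definite (no mixed terms, since $\zeta_{s\theta}(\theta_0,0)=0$ by Lemma~\ref{lemconti1}), $a_i>0$, and $\tfrac{n(n+1)}{2}a_i\det q_i=\bar g(\theta_0,0)$ from the limiting equation \eqref{blow1} with $b=\tfrac{n+2}{n}$. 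So everything reduces to showing that the blow-up limit of $\zeta$ at $\theta_0$ is unique, independent of the scale sequence, and depends continuously on the center $\theta_0$; granting that, $a_1=a_2$, a contradiction, exactly as in Lemma~\ref{lemconti1}.

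To obtain uniqueness I would iterate over the dyadic scales $\lambda_j=2^{-j}$. For each $j$ let $P_j$ be the quadratic solution of \eqref{blow1} (with $b=\tfrac{n+2}{n}$ and constant right-hand side $\bar g(\theta_0,0)$) best fitting $\zeta^{\lambda_j}$ on $B_1^+$, and set $\omega_j=\|\zeta^{\lambda_j}-P_j\|_{L^\infty(B_1^+)}$; compactness and Theorem~\ref{thmbern} give $\omega_j\to0$, but a definite rate is needed. The core step I would prove is an improvement of flatness: there exist $\rho\in(0,1)$, $\vartheta\in(0,1)$ such that if $\zeta^\lambda$ is $\varepsilon$-close on $B_1^+$ to some quadratic solution $P$ (for $\varepsilon,\lambda$ small), then $\zeta^\lambda$ is $\vartheta\rho^2\varepsilon$-close on $B_\rho^+$ to another quadratic solution. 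Rescaling gives $\omega(\rho\lambda)\le\vartheta\,\omega(\lambda)$, hence geometric decay of $\omega_j$; combined with the dilation identity $\zeta^{\lambda_{j+1}}(\cdot)=4\,\zeta^{\lambda_j}(\cdot/2)$ — under which $4P_j(\cdot/2)$ has the same quadratic part as $P_j$ — this forces $|P_{j+1}-P_j|\le C\omega_j$, so $\sum_j|P_{j+1}-P_j|<\infty$ and $P_j\to P_\infty$. Since all constants depend only on $n,b$, the ellipticity constants of \eqref{blow1} (Lemma~\ref{unif-ellip}, which is scale invariant, hence uniform over every $\zeta^\lambda$) and $\|\bar g\|_{C^2}$, the limit $P_\infty=P_\infty(\theta_0)$ is continuous in $\theta_0$. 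This is what it means to apply the maximum principle consecutively in an infinite sequence of domains: the improvement step is performed, and compounded, in the nested half-balls $B_{\rho^j}^+$ (cf.\ Remark~\ref{r4.4}).

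The improvement-of-flatness step is the part I expect to be the main obstacle. With $w=\zeta^\lambda-P$, $w$ satisfies a uniformly elliptic equation whose linearisation at $P$ is a Keldysh operator of the form \eqref{aeq-Deg1} with $b=\tfrac{n+2}{n}$ — the operator of \S3 and its partial-Legendre form \eqref{002} — with right-hand side of size $O(\varepsilon^2)$ (the quadratic Monge-Amp\`ere remainder) plus $O(\lambda\|\bar g\|_{C^1})$ (the oscillation of $\bar g$). The natural competitors for $w$ are the quadratics annihilated by this operator and compatible with $w_{x_n}(x',0)=0$ — such as $\varphi_i\varphi_j$ for $i,j<n$ and $(1+b)|\varphi|^2-(n-1)\tau^2$ — together with the one-parameter family of quadratic solutions tied by $\tfrac{n(n+1)}{2}a\det q=\mathrm{const}$; subtracting the best such fit from $w$ and applying the H\"older estimate of Proposition~\ref{amoser2b} to the remainder (after differentiating in $x_n$ as in Lemma~\ref{lemholder}) should produce the gain $O(\rho^{2+\alpha}\varepsilon)$ on $B_\rho^+$, the maximum principle for the singular operator carrying the $L^\infty$ bound for $w$ from $\partial B_1^+$ inward. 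The real work will be to close this uniformly: to choose the quadratic competitors respecting both the degeneracy at $\{x_n=0\}$ and the scalar constraint on quadratic solutions, and to absorb the errors $O(\varepsilon^2)$ and $O(\lambda)$ into the gain $\vartheta\rho^2\varepsilon$, which forces $\varepsilon$ and $\lambda$ to be taken small together and fixes the bookkeeping of the iteration. Once this is done, $\zeta^k_{\tau\tau}(0',1)\to\bar\zeta_{\tau\tau}(0',1)=a$ with $a$ independent of the sequence, so $a_1=a_2$, a contradiction, and $\zeta_{ss}\in C(\mathbb S^{n-1}\times[0,1))$.
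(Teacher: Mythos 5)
You have correctly identified the crux of the lemma --- uniqueness of the blow-up limit --- and your reduction of the continuity of $\zeta_{ss}$ to that uniqueness (plus continuity in the center) is sound. But the route you propose, a Caffarelli-type improvement-of-flatness iteration yielding $\omega(\rho\lambda)\le\vartheta\,\omega(\lambda)$ and hence $P_j\to P_\infty$, is left unproved at exactly the point where all of the difficulty sits: you say yourself that the improvement step ``is the part I expect to be the main obstacle'' and that ``the real work will be to close this uniformly.'' As written this is a program, not a proof. The obstacles are genuine: the admissible quadratic profiles form a nonlinear manifold constrained by $\tfrac{n(n+1)}{2}a\det q=\bar g$, the linearization is a Keldysh operator degenerating on $\{x_n=0\}$, and Proposition~\ref{amoser2b} only supplies an interior H\"older modulus with an exponent $\alpha$ not at your disposal, so extracting a clean gain $\vartheta\rho^{2}\varepsilon$ toward a \emph{drifting} quadratic, uniformly in the center and the scale, would require a compactness/contradiction scheme that you have not set up. Note also that you are attempting more than the lemma requires: Lemma~\ref{lemconti2} only needs the single coefficient $c_{00}$ of $\tau^2$; the tangential Hessian is treated separately (Lemma~\ref{lemconti3}) by a different and genuinely asymmetric argument using the concavity of $\log\det$, precisely because a two-sided flatness improvement in the $\theta$-directions is not available.

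The paper avoids improvement of flatness altogether, and the device is worth recording. One fixes a \emph{single} blow-up sequence centered on the line $\{\theta=0\}$ (Remark after the proof explains why the centers must be taken there) to define $c_{00}$, and obtains $\bigl|\tfrac{\zeta_s}{s}-c_{00}\bigr|\le\tfrac1{2k}$ on the slab $\Sigma_k$ of \eqref{aym-aa2}. Then, after the substitution $t=s^2/4$, the equation is differentiated \emph{once} in $t$, so that $V=\zeta_t=2\zeta_s/s$ satisfies the linear singular equation \eqref{max3} with bounded coefficients. The bound $|V-2c_{00}|\le\tfrac1k$ is then propagated from $\{t\approx\lambda_k^2\}$ all the way down to $t\to0^+$ by applying the maximum principle to the explicit barriers $\sigma^{\pm}_{k,m}$ of \eqref{sigmapm}, which carry the singular correction $\varepsilon_{k,m}t^{-1/n}$, consecutively in the infinite nested sequence of slabs $\{\delta_{k,m+1}\lambda_k^2\le t\le\delta_{k,m}\lambda_k^2\}$; the accumulated error $C_1\sum_l\delta_{k,l}^{1/4}$ is summable and tends to $0$ as $k\to\infty$ (this is the ``maximum principle in an infinite sequence of domains'' of Remark~\ref{r4.4}). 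This yields the full limit \eqref{zss}, $\zeta_s/s\to c_{00}$ as $(\theta,s)\to(0,0)$, which forces every blow-up limit to have the same $\tau^2$-coefficient, and the continuity of $\zeta_{ss}$ then follows from the blow-up convergence. The advantage over your scheme is that only a scalar quantity, already known to converge along one sequence, needs to be controlled, and the barrier computation replaces the delicate classification of quadratic competitors.
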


\begin{proof}
To prove the continuity of $\zeta_{ss}$ on $\mathbb S^{n-1}\times\{s=0\}$,  we need to prove the limit
$\lim_{s\rightarrow 0^+, \theta\to 0}\zeta_{s s}(\theta,s)$ exists.
By Lemma \ref{unif-ellip}, $ \zeta_{s s}(\theta, s)$ is uniformly bounded.
Hence there is a sub-sequence  $s^k\rightarrow 0$ such that 
$\zeta_{ss}(0, s^k)$ is convergent.
We introduce the coordinates $(\varphi,\tau)$ and function $\zeta^k$ 
as in \eqref{coortran1} and \eqref{coortran2}, with $\theta^k=0$.

By the proof of Lemma \ref{lemconti1}, we have
\begin{equation} \label{zeta12}
\zeta^k(\varphi,\tau)\rightarrow \frac{1}{2}c_{00}\tau^2+\frac{1}{2} {\Small\text{$  \sum_{i,j=1}^{n-1} $}} c_{ij}\varphi_i\varphi_j
  \quad \text{in}\quad  C^{3,\alpha}_{loc}(\mathbb R^n_+)\cap C^{1,1-\eps}_{loc}(\overline{\mathbb R^n_+}),
\end{equation}
by passing to a subsequence if necessary.
To prove Lemma \ref{lemconti2}, we need to prove that 
\beq\label{zeta00}
\lim_{s\rightarrow 0^+, \theta\to 0}\zeta_{s s}(\theta,s)=c_{00}.
\eeq

By the convergence \eqref{zeta12} and the interior regularity of equation \eqref{po3}, 
we can choose a subsequence, such that
\begin{equation}\label{zeta-c00}
\Big\|\frac{\zeta^k_\tau(\varphi,\tau)}{\tau}-c_{00}\Big\|_{L^\infty(Q_k)}\le \frac 1{2k}
      \quad \text{in}\ Q_k=:\big\{(\varphi,\tau):\  |\varphi|\le 1,\	\frac 1k\le \tau\le 1\big\}. 
\end{equation}
Scaling back to $\zeta(\theta,s)$, we obtain
\begin{equation}\label{aym-aa2}
\Big\|\frac{\zeta_s(\theta,s)}{s}-c_{00}\Big\|_{L^\infty(\Sigma_k)}\le \frac 1{2k}
 \quad\ \text{in}\  \Sigma_k=:\big\{(\theta,s):\ |\theta|\le \lambda_k,\	\frac {\lambda_k}k\le s\le \lambda_k\big\}. 
\end{equation}
Let $t=\frac{s^2}{4}$.  The above estimate implies that
\begin{equation}\label{aym-a2}
\left\|\zeta_t-2c_{00}\right\|_{L^\infty(\widehat\Sigma_k)}\le \frac 1k
\quad \text{in}\ \widehat\Sigma_k=\big\{(\theta,t):\ |\theta|\le \lambda_k,\	\frac {\lambda^2_k}{4k^2}\le t\le \frac{\lambda_k^2}{4}\big\}. 
\end{equation}
Moreover, equation \eqref{po2} changes to 
\begin{equation}\label{max2}
\det \begin{pmatrix}
  t\zeta_{tt}+\frac{n+1}{n}\zeta_t&    \zeta_{t\theta_{1}}&\cdots&    \zeta_{t\theta_{n-1}}\\[5pt] 
    t\zeta_{t\theta_1}& \zeta_{\theta_1\theta_1}+\zeta+ t\zeta_t & \cdots&\zeta_{\theta_1\theta_{n-1}}\\[3pt] 
      \cdots &\cdots &\cdots &\cdots   \\[3pt]
   t\zeta_{t\theta_{n-1}} &\zeta_{\theta_1\theta_{n-1}} & \cdots&\zeta_{\theta_{n-1}\theta_{n-1}}+\zeta+ t\zeta_t
\end{pmatrix}=\frac{4\bar g}{n^2}.
\end{equation}
Note that in the matrix \eqref{max2},  there is a coefficient $t$ in the first column.
This is due to $\zeta_{s\theta} = t^{1/2} \zeta_{t\theta}$.
After the change $t=\frac{s^2}{4}$, we have $s\zeta_s=2 t\zeta_t$.
Hence by \eqref{barg}, $\bar g$ is still a positive and smooth function of its arguments.

Writing  equation \eqref{max2} as $\log \det W_{ij} =\log \frac{4\bar g}{n^2}$ and differentiating in $t$, we have
$$W^{ij} \p_t W_{ij} = \p_t \log \frac{4\bar g}{n^2}, $$
where $\{W^{ij}\}$ is the inverse of $\{W_{ij}\}$.
Dividing the above equation by $W^{11}$, and denoting $V=\zeta_t$, 
we can write the above equation  as
\begin{equation}\label{max3}
\begin{split}
\mathcal L(V)=:\ &tV_{tt}+  {\small\text{$  \frac{2n+1}{n} $}} V_{t}+  {\Small\text{$ \sum_{i,j=1}^{n-1} $}} a^{ij}V_{\theta_i\theta_j}\\
 & +  {\Small\text{$ \sum_{i,j=1}^{n-1}$}} t\zeta_{t\theta_j}  \tilde a^{ij} V_{t\theta_i}
 +  {\Small\text{$ \sum_{i,j=1}^{n-1} $}}  \left(\zeta_{t\theta_i}b^{ij}+ b^j\right)V_{\theta_j}=h
\end{split}
\end{equation}
where $a^{ij}, \tilde a^{ij},b^{ij} $, $b^j$ and $h$ are  continuous functions of the elements in the matrix in \eqref{max2},
namely $t,\zeta, \zeta_t, \zeta_{\theta_k\theta_l}, t\zeta_{tt},t\zeta_{t\theta_k}\zeta_{t\theta_l}$. 
From the assumptions in Theorem \ref{thm2}, all the elements are uniformly bounded, namely   
\begin{equation}\label{aym-a1}
|\zeta_t|+|t\zeta_{tt}|+|t\zeta_{t\theta_i}\zeta_{t\theta_j}|+| D^2_{\theta}\zeta|\le C
\quad \forall\ (\theta,t)\in \mathbb S^{n-1}\times [0,   {\Small\text{$\frac 1{16}$}} ] .
\end{equation}
It implies that $a^{ij}, \tilde a^{ij},b^{ij}, b^j,h$ are uniformly bounded.

Let $\eta$ be a cut-off function of $\theta$ such that
\begin{equation*}
\eta(\theta)\equiv 1\ \ \text{when} \  |\theta|\le  {\Small\text{$ \frac{1}{2} $}} ,\quad 
\eta \equiv 0\ \ \text{when}\ |\theta|>1,\ \ \ 
\text{and}  \ 0\le \eta\le 1.
\end{equation*}
Denote $\eta_k(\theta)=\eta\big(\frac{\theta}{\lambda_k}\big)$ and  $V_k=\eta_k V$. 
Then   $V_k$ satisfies 
\begin{equation*}
\mathcal L(V_k)=\eta_k h-[\eta_k,\mathcal L] V=: h_k,
\end{equation*}
where 
$ [\eta_k,\mathcal L]  V =\eta_k\mathcal LV-\mathcal L(\eta_k V) $.  
By \eqref{aym-a1}, we have 
\begin{equation}\label{hkc}
|h_k|\le C(1+\lambda_k^{-2}+\lambda_k^{-1}t^{-\frac 12})\le C\lambda_k^{-1}t^{-\frac 12}
          \ \ \  \text{when}\ \ 0<t<\lambda_k^2 ,
\end{equation}
 where $C$ is a positive constant independent of $k$.

Denote  $\delta_{k, -1}  = 1$,  $ \delta_{k,0}  =\frac{1}{4k^2}$. Let
\begin{equation}\label{dekl}
\Big\{ {\begin{split}
 & \delta_{k,l+1} = (\delta_{k,l})^{1+\frac n4},\\ 
 & \varepsilon_{k,l} =C_1 \delta_{k,l}^{\frac12+\frac 1n}\lambda_k^{\frac 2n},
  \end{split}} \ \ \ \quad l=0,1,2,\cdots ,
\end{equation}
where $C_1$ is a fixed large constant.  

{\it Claim}: For any given $m\ge 1$, we have
\begin{equation}\label{claim-conti1}
|V_k-2\eta_k c_{00}|\le  {\small\text{$ \frac 1k $}} +C_1  {\Small\text{$ \sum_{l=0}^{m-1} $}}    \delta_{k,l}^{\frac 14}
     \ \ \text{when}\ |\theta|\le \lambda_k,\  \delta_{k,m}\lambda_k^2\le t\le \delta_{k,m-1}\lambda_k^2.
\end{equation}

We  prove \eqref{claim-conti1} by induction. 
First note that by  \eqref{aym-a2},  
$$
|V_k-2\eta_k c_{00}|\le  {\small\text{$ \frac 1k  $}}
     \ \ \text{when}\ |\theta|\le \lambda_k,\  \delta_{k,m}\lambda_k^2\le t\le \delta_{k,m-1}\lambda_k^2,
$$
which is exactly \eqref{claim-conti1} for  $m=0$.
Assuming that \eqref{claim-conti1} holds for $m$, we prove that it holds for $m+1$. 
We introduce the auxiliary functions
\begin{equation}\label{sigmapm}
\sigma_{k,m}^{\pm}(\theta,t)
  =2\eta_k c_{00}\pm\Big(\frac 1k+ C_1{\Small\text{$ \sum_{l=0}^{m-1} $}} \delta_{k,l}^{\frac 14}+\varepsilon_{k,m} t^{-\frac 1n}\Big).
\end{equation}
By our choice of $\eta_k$ and \eqref{hkc},  we have
\begin{equation}
\begin{split}
\mathcal L(V_k-\sigma_{k,m}^{+})
  = & h_k-2c_{00}\mathcal L(\eta_k)+\frac{\varepsilon_{k,m}}{n}t^{-1-\frac 1n}\\
 \ge & \frac{t^{-\frac 12}}{n}(\varepsilon_{k,m} t^{-\frac 12-\frac 1n}-C\lambda_k^{-1}) \\
   > &0 \ \ \ \ \ 
     \quad \text{when}\ 0< t\le \delta_{k,m}\lambda_k^2
\end{split} 
\end{equation}
if the constant $C_1$ in \eqref{dekl} is chosen large.
By our induction assumptions, we have
\begin{equation*}
\begin{split}
     &V_k-\sigma_{k,m}^{+}\le 0\quad \text{if}\  |\theta|\le \lambda_k,\  t=\delta_{k,m}\lambda_k^2,\\
     &V_k-\sigma_{k,m}^{+}=-\Big(\frac 1k+ C_1{\Small\text{$  \sum_{l=0}^{m-1} $}}  \delta_{k,l}^{\frac 14}+\varepsilon_{k,m} t^{-\frac 1n}\Big)<0
           \quad \text{if}\  |\theta|=\lambda_k,\\
\ \hskip15pt \ & \lim\sup_{t\rightarrow 0^+}(V_k-\sigma_{k,m}^{+})<0.
\end{split}
\end{equation*}
By the maximum principle, it follows that 
\begin{equation*}
      V_k-\sigma_{k,m}^+\le 0
  \quad \text{if}\  |\theta|\le \lambda_k,\    0< t\le \delta_{k,m}\lambda_k^2.
\end{equation*}
Similarly, we have
\begin{equation*}
    V_k-\sigma_{k,m}^-\ge  0
   \quad \text{if}\  |\theta|\le \lambda_k,\  0< t\le \delta_{k,m}\lambda_k^2.
\end{equation*}
Therefore we obtain
\begin{equation*}
\begin{split}
|V_k-2c_{00}\eta_k|
     \le &\frac 1k+C_1 {\Small\text{$ \sum_{l=0}^{m-1} $}} \delta_{k,l}^{\frac 14}+\varepsilon_{k,m}t^{-\frac 1n}\\
     \le & \frac 1k+C_1 {\Small\text{$ \sum_{l=0}^{m-1} $}} \delta_{k,l}^{\frac 14}+\varepsilon_{k,m}\delta_{k,m+1}^{-\frac 1n}\lambda_k^{-\frac 2n}\\
     \le & \frac 1k+C_1{\Small\text{$ \sum_{l=0}^{m} $}} \delta_{k,l}^{\frac 14}
          \quad \text{if}\ \  |\theta|\le \lambda_k,\ \delta_{k,m+1}\lambda_k^2\le t\le \delta_{k,m} \lambda_k^2 .
\end{split}
\end{equation*}
The claim \eqref{claim-conti1} is proved.

For any  point $(\hat \theta,\hat t)$ near $(0, 0)$ with $\hat t>0$,
we can choose $k>0$ such that 
$$(\hat \theta,\hat t)\in \big\{(\theta,t):\ |\theta|\le\frac{\lambda_k}{2}, 0<t\le \frac{\lambda_k^2}{4}\big\} .$$
We then choose $m\ge 0$ such that $\delta_{k,m+1}\lambda_k^2\le \hat t\le \delta_{k,m}\lambda_k^2$. 
Hence we have
\begin{equation}\label{est1}
{\begin{split}
|V_k-2\eta_k c_{00}|
  & \le \frac 1k+C_1 {\Small\text{$ \sum_{l=0}^{m} $}} \delta_{k,l}^{\frac 14}\\
  &  \le  \frac 1k+C_1 {\Small\text{$ \sum_{l=0}^{\infty} $}} \Big(\frac 1{4k^2}\Big)^{{(1+n/4)^l}/{4}}   \\
  & \le \frac {C_1}{\sqrt k}\ \ \ \text{at}\ (\hat \theta,\hat t).
  \end{split}}
\end{equation}

Because $(\hat \theta,\hat t)$ is an arbitrary point near $(0, 0)$ with $\hat t>0$. Hence from \eqref{est1}
we conclude that (recall that $V=\zeta_t=2\frac{\zeta_s(\theta, s)}{s}$)
\begin{equation}\label{zss}
\lim_{\theta\to 0,  s\to 0^+}\frac{\zeta_s(\theta, s)}{s}=\frac 12\lim_{\theta\to 0,  s\to 0^+}V(\theta, s)=c_{00}. 
\end{equation}

The convergence \eqref{zss} implies that the constant $c_{00}$ in the blow-up limit \eqref{bar-zeta}
is independent of the choice of the blow-up sequence. 
Hence by the blow-up argument in the proof of Lemma \ref{lemconti1}, we infer that
\begin{equation}\label{ss00}
\lim_{\theta\to 0,  s\to 0^+}\zeta_{ss}(\theta, s)=c_{00}.
\end{equation}
By the convergence \eqref{ss00},  
we can define $\zeta_{ss}$ on $\mathbb S^{n-1}\times\{s=0\}$ 
as the limit $\lim_{ s\to 0^+}\zeta_{ss}(\theta, s)$.
The above proof also implies that $\zeta_{ss}$ is continuous on $\{s=0\}$.
For if not, let us assume that $\zeta_{ss}$ is dis-continuous at $(\theta, s)=(0,0)$.
Then there exist two sequences $(\theta^k_1, s^k_1)\to 0$ and $(\theta^k_2, s^k_2)\to 0$ 
such that $\zeta_{ss}(\theta^k_1, s^k_1)$ and $\zeta_{ss}(\theta^k_2, s^k_2)$ 
converge to different limits,
which is in contradiction with \eqref{ss00}.
This completes the proof.
\end{proof}

\vskip5pt

 \begin{remark}
In the first paragraph of the above proof, 
we must choose the sequence $(\theta^k, s^k)$ on the line $\{\theta=0\}$,
namely we must assume $\theta^k=0$.
Otherwise if $\frac{|\theta^k|}{s^k}$ is too large,
by the changes \eqref{coortran1}, \eqref{coortran2}, and the definition of the cube $Q_k$,
the set $\Sigma_k$ and $\widehat\Sigma_k$ may not intersect with the line $\{\theta=0\}$.
In this case, we can only obtain the convergence $\zeta_{ss}(\theta, s)$ for $\theta, s$ near $(\theta^k, s^k)$,
but not the full convergence  \eqref{zss} and \eqref{ss00}.
 \end{remark}
  
\vskip5pt

\begin{lemma}\label{lemconti3}
Let $\zeta$ be as in Theorem \ref{thm2}. Then $ D_{\theta}^2\zeta\in C(\mathbb S^{n-1}\times[0,1))$.
\end{lemma}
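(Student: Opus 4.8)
The strategy is to combine the continuity of the mixed derivatives $\zeta_{s\theta}$ (Lemma \ref{lemconti1}) and of $\zeta_{ss}$ (Lemma \ref{lemconti2}) with another blow-up argument, and then to promote uniqueness of the blow-up limit from the particular entry $c_{00}$ to all of the $c_{ij}$, $1\le i,j\le n-1$. First I would fix $\theta_0\in\mathbb S^{n-1}$, rotate so $\theta_0=0$, and subtract the affine part so $\zeta(0)=D\zeta(0)=0$. Since $D^2_\theta\zeta$ is bounded (Lemma \ref{unif-ellip}), along any sequence $s^k\to 0$ a subsequence of $\zeta_{\theta_i\theta_j}(0,s^k)$ converges; the content of the lemma is that the limit does not depend on the sequence, which gives both existence of $\lim_{s\to 0^+}D^2_\theta\zeta(\theta,s)$ and its continuity in $\theta$ (by the same sequence-splitting argument used at the end of Lemma \ref{lemconti2}).

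**Key steps.** I would introduce the same rescaling \eqref{coortran1}–\eqref{coortran2} with $\theta^k=0$, $\lambda_k=s^k$, so $\zeta^k\to\bar\zeta$ in $C^{3,\alpha}_{loc}(\mathbb R^n_+)\cap C^{1,1-\eps}_{loc}(\overline{\mathbb R^n_+})$, and by Theorem \ref{thmbern} the limit is the quadratic polynomial \eqref{bar-zeta}. By Lemma \ref{lemconti2} we already know $c_{00}=\lim_{s\to0^+}\zeta_{ss}(0,s)$ is the same for every blow-up sequence. It then remains to show the same for $c_{ij}$, $i,j<n$. To do this I would differentiate equation \eqref{max2} (or \eqref{po2}) tangentially: setting $V=\zeta_{\theta_i\theta_j}$, the linearized operator applied to $V$ has exactly the same principal part $\mathcal L$ as in \eqref{max3} — a degenerate operator of Keldysh type in the $t=s^2/4$ variable with bounded coefficients — plus bounded lower-order and commutator terms. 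Writing $V_k=\eta_k V$ as before, $V_k$ solves $\mathcal L(V_k)=h_k$ with the same bound $|h_k|\le C\lambda_k^{-1}t^{-1/2}$ for $0<t<\lambda_k^2$ as in \eqref{hkc}. Then I would run the identical barrier iteration: with the sequences $\delta_{k,l}$, $\varepsilon_{k,l}$ from \eqref{dekl} and the auxiliary functions $\sigma^{\pm}_{k,m}$ from \eqref{sigmapm}, the maximum principle applied consecutively in the shrinking annuli $\{\delta_{k,m+1}\lambda_k^2\le t\le\delta_{k,m}\lambda_k^2\}$ yields $|V_k-2c_{ij}^{*}\eta_k|\le \frac{C_1}{\sqrt k}$ at any fixed point near the origin, where $c_{ij}^{*}$ is the limiting value of $\zeta_{\theta_i\theta_j}(0,s^k)$ read off from the initial data at $t\approx\lambda_k^2$ via \eqref{aym-a2}-type estimate. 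Hence $\lim_{\theta\to0,s\to0^+}\zeta_{\theta_i\theta_j}(\theta,s)=c_{ij}^{*}$, so this limit is independent of the blow-up sequence; combined with Lemma \ref{lemconti1}, Lemma \ref{lemconti2}, and the $C^2$ regularity of $\zeta$ for $s>0$, this gives $D^2\zeta\in C(\mathbb S^{n-1}\times[0,1))$.

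**The main obstacle.** The technical heart is the same as in Lemma \ref{lemconti2}: one must check that the tangential derivative $V=\zeta_{\theta_i\theta_j}$ solves an equation whose principal part is genuinely of the form \eqref{max3} — i.e. that after dividing by $W^{11}$ the singular coefficient in front of $V_t$ is the constant $\frac{2n+1}{n}>1$ needed for the Keldysh-type barriers to close, and that all remaining coefficients (in particular the terms $t\zeta_{t\theta_j}\tilde a^{ij}$ and $\zeta_{t\theta_i}b^{ij}$) are bounded. Boundedness of these is guaranteed by \eqref{aym-a1} together with the $C^{1,1}$ bound on $\zeta$; the delicate point is that, unlike the purely $t$-differentiated equation, tangentially differentiating \eqref{max2} produces extra terms in which the covariant derivatives on $\mathbb S^{n-1}$ do not commute and in which the smooth dependence of $\bar g$ on $(\zeta,t\zeta_t,\zeta_\theta)$ contributes lower-order pieces. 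I would absorb all of these into $h$ and into the $V_{\theta_j}$-coefficients, noting they are bounded and hence do not affect the barrier construction, which only uses the structure of $\mathcal L$ near $t=0$ and the bound \eqref{hkc}. One further bookkeeping point, exactly as flagged in Remark \ref{r4.4} above, is that the base point of each blow-up must be taken on the line $\{\theta=0\}$ (i.e. $\theta^k=0$), so that the annuli $\widehat\Sigma_k$ meet $\{\theta=0\}$ and the iteration delivers the full limit rather than merely a local one.
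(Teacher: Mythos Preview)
Your proposal contains a genuine gap. You claim that by differentiating \eqref{max2} tangentially and setting $V=\zeta_{\theta_i\theta_j}$, the function $V$ satisfies an equation $\mathcal L(V)=h$ with the same principal part as \eqref{max3} and with $h$ bounded as in \eqref{hkc}. But to obtain $V=\zeta_{\theta_i\theta_j}$ as the unknown you must differentiate the fully nonlinear equation \emph{twice} in $\theta$, and this produces the Hessian term
\[
\mathcal F_{r_{ij},r_{kl}}\,\p_\gamma r_{ij}\,\p_\gamma r_{kl},
\]
which is quadratic in \emph{third} derivatives of $\zeta$ (e.g.\ $\zeta_{\theta\theta\theta}$, $\zeta_{\theta\theta s}$). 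These are not a priori bounded, so $h$ is not bounded and the two-sided barrier iteration with $\sigma^\pm$ cannot be run. In Lemma \ref{lemconti2} the target $V=\zeta_t$ is a \emph{first} derivative, so a single differentiation suffices and no such term appears --- that is precisely why the argument there closes and why it does not simply transplant here.

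The paper handles this obstruction differently. For a unit direction $\gamma$ it differentiates $\log\det R=\log\bar g$ twice in $\gamma$ and uses the \emph{concavity} of $\log\det$ to discard the bad quadratic term, obtaining only the differential inequality $\mathcal L(\zeta_{\gamma\gamma})\ge h$ (see \eqref{linear-1}). Consequently only the upper barrier $\sigma^+$ applies, giving the one-sided bound $\varlimsup_{(\theta,s)\to(0,0)}\zeta_{\gamma\gamma}\le c_{\gamma\gamma}$, where $c_{\gamma\gamma}$ is taken to be the \emph{infimum} over all blow-up limits. The argument is then closed not by a lower barrier but by the equation itself: Lemmas \ref{lemconti1}--\ref{lemconti2} reduce \eqref{po2} near $s=0$ to $\det(D^2_\theta\zeta+\zeta I)=\frac{2\bar g(0)}{n(n+1)c_{00}}+o(1)$, and the identity $\det W=\min_{O\in SO(n-1)}\prod_i (O^TWO)_{ii}$ together with the one-sided upper bounds forces the inequalities to become equalities in each eigendirection, pinning down all of $D^2_\theta\zeta$. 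Your plan misses both the concavity step (which is the reason only one barrier is available) and this determinant-minimization squeeze that replaces the missing lower barrier.
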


\begin{proof} 
To prove the continuity of $  D_{\theta}^2\zeta$ on $\mathbb S^{n-1}\times\{s=0\}$,  we need to prove the limit
$\lim_{s\rightarrow 0^+, \theta\to 0}   D_{\theta}^2\zeta (\theta,s)$ exists.
By Lemma \ref{unif-ellip}, $   D_{\theta}^2 \zeta (\theta, s)$ is uniformly bounded.
Hence there is a sub-sequence  $s^k\rightarrow 0^+$ such that 
$  D_{\theta}^2 \zeta (0, s^k)$ is convergent.
We introduce the coordinates $(\varphi,\tau)$ and function $\zeta^k$ 
as in \eqref{coortran1} and \eqref{coortran2}, with $\theta^k=0$.
  

Let $\mathbb B$ denote the set of all convergent blow up sequences $\{\zeta^k\}$ given by \eqref{coortran2} (with $\theta^k\equiv 0$).
For any fixed unit vector $\gamma\in \mathbb R^{n-1}$, define
\begin{equation}\label{blow-inf}
c_{\gamma\gamma}=\inf_{\{\zeta^k\} \in \mathbb B}\lim_{k\rightarrow +\infty} \zeta^k_{\gamma\gamma}(0',1).
\end{equation} 
where $\zeta^k_{\gamma\gamma}=\zeta^k_{\theta_i\theta_j}\gamma_i\gamma_j$.
By a diagonal process, we can extract a subsequence in $\mathbb B$,
which for simplicity we still denote as  $\{\zeta^k\} $, such that
\begin{equation}
c_{\gamma\gamma}=\lim_{k\rightarrow +\infty}  \zeta^k_{\gamma\gamma}(0',1).
\end{equation}\label{cgg}
We {\it claim}
\begin{equation}\label{add2}
 \varlimsup_{\theta\to 0, s\to 0^+} \zeta_{\gamma\gamma}(\theta, s) \le c_{\gamma\gamma}.
\end{equation}

Indeed,
by the convergence \eqref{zeta12} and the interior regularity of equation \eqref{po3},
similarly to \eqref{zeta-c00} we can pass to a subsequence such that
\begin{equation*}
\big\| \zeta^k_{\gamma\gamma}(\varphi,\tau)-c_{\gamma\gamma}\big\|_{L^\infty(Q_k)}\le \frac 1k\ \   \text{in}\  Q_k.
\end{equation*}
Scaling back to $\zeta(\theta,s)$, this implies 
\begin{equation}\label{sigmak}
\big\|\zeta_{\gamma\gamma}(\theta,s)-c_{\gamma\gamma}\big\|_{L^\infty(\Sigma_k)}\le \frac 1k \ 
 \ \text{in}\ \Sigma_k,
\end{equation}
Here the domains $Q_k, \Sigma_k$  are the same as in
\eqref{zeta-c00} and \eqref{aym-aa2}.

To simplify the notation, let us denote the matrix in \eqref{max2} as $R=(r_{ij})_{i,j=1}^n$, 
and rewrite equation \eqref{po2} as 
\begin{equation}\label{po2-v1}
\mathcal F(r_{ij})=: \log (\det R)=\log \bar g. 
\end{equation}
Then  $\mathcal F$ is concave in its variables $r_{ij}$. 
Differentiating \eqref{po2-v1} in direction $\gamma$ twice and by the concavity, we have 
\begin{equation*}
\mathcal F_{r_{ij}}r_{ij,\gamma\gamma}\ge (\log \bar g)_{\gamma\gamma}.
\end{equation*}
Denote $V=\zeta_{\gamma\gamma}$.
Similarly to \eqref{max3}, one obtains
\begin{equation}\label{linear-1}
\begin{split}
\mathcal L(V)=: 
V_{ss}+\frac{n+2}{n}\frac{V_{s}}{s}+ {\Small\text{$ \sum_{i,j=1}^{n-1} $}} a^{ij}V_{\theta_i\theta_j}
  + {\Small\text{$ \sum_{i=1}^{n-1} $}} a^{i}V_{\theta_is}
+ {\Small\text{$ \sum_{i=1}^{n-1} $}} b^i V_i+b^0 V_s    \ge h ,
\end{split}
\end{equation}
where $a^{ij},a^{i},b^i,b^0,h$ are continuous functions of
$
\theta, s,\zeta, D \zeta, D^{2} \zeta,\frac{\zeta_s}{s}.
$
Here $D \zeta$ and $ D^{2}\zeta$ denotes derivatives of $\zeta$ with respect to both $s$ and $\theta$.
Note that the operator in \eqref{linear-1} is different from that in \eqref{max3} but we use the same notation $\mathcal L$.
When differentiating \eqref{po2-v1} in $\gamma$ twice to obtain \eqref{linear-1},
we need to exchange the derivatives in $\gamma$ and $\theta$.
By the Ricci identity,  it arises some second derivatives of $\zeta$. 
They are all merged to the RHS $h$.

We now introduce the auxiliary function $\sigma_{k,m}^{\pm}$ as in \eqref{sigmapm},
and apply the same argument thereafter to obtain \eqref{add2}.

\begin{remark}
 In the proof of Lemma \ref{lemconti2},  we differentiate equation \eqref{max2} in $t$ once
to obtain equation \eqref{max3}.  Here
we differentiate equation \eqref{max2} in $\theta$ twice and use the concavity of $\mathcal F$ 
to obtain the inequality \eqref{linear-1}. 
Hence by the auxiliary function $\sigma^+$, 
we obtain  the inequality \eqref{add2}.
We cannot obtain the reverse of the inequality by the auxiliary function $\sigma^-$.
 \end{remark}

\begin{remark} 
Combining the definition of $c_{\gamma\gamma}$ in \eqref{blow-inf} and the estimate \eqref{add2}, 
we  obtain
\begin{equation}\label{add2+}
 \lim_{s\to 0^+} \zeta_{\gamma\gamma}(0, s) = c_{\gamma\gamma}.
\end{equation}
Note that the convergence in \eqref{add2+} is on the line $\theta=0$.
 \end{remark}

To prove the convergence $ \lim_{\theta\to 0, s\to 0^+} \zeta_{\gamma\gamma}(\theta, s)=c_{\gamma\gamma}$, 
we make use of the equation \eqref{po2}.
By Lemmas \ref{lemconti1} and \ref{lemconti2}, and noting that $ s \zeta_s = o(1)$ near $s=0$, 
we can write \eqref{po2} as 
\begin{equation}\label{add1}
\det( D^2_{\theta} \zeta+\zeta I) 
=\frac{2\bar g(0)}{n(n+1)c_{00}}+o(1) \ \text{ for }  (\theta,s)\text{ near }(0',0).
\end{equation}
The left hand side is the standard Monge-Amp\`ere operator on the sphere $ \mathbb S^{n-1}$.
For a $k\times k$ positive definite matrix $W$,
its determinant $\det W$ can be written as \cite{FO11}
\begin{equation}\label{Ha1}
\det W=\min_{(\nu_1,\cdots,\nu_k)\in SO(k)} {\scriptsize\text{$\prod$}}_{i=1}^k \nu_i^T W \nu_i
\end{equation} 
where $SO(k)$ is the set of orthogonal matrices and $\nu_i$ are column vectors of the matrix.

Consider a blow up sequence $\{ \zeta^k\}\in\mathbb B$. 
By the convergence \eqref{zeta12} and the interior regularity we may assume that, after passing to a subsequence,
\begin{equation}\label{zta}
\lim_{k\to \infty, (\theta,s)\in \Sigma_k}( D_\theta^2 \zeta+\zeta I)\rightarrow A
\end{equation}
for a positive definite matrix $A$, where $\Sigma_k$ is given in \eqref{aym-aa2}.

 Let $O$ be an orthogonal matrix such that $O^T A O$ is diagonal. Then we have
\begin{equation}
\begin{split}
\det ( D_\theta^2 \zeta+\zeta I)
   &=\det  (O^T( D_\theta^2 \zeta+\zeta I)O) \\
   &= {\scriptsize\text{$\prod$}}_{i=1}^{n-1} (\zeta_{\gamma^{(i)}\gamma^{(i)}}+\zeta)+o(1)
   \quad \text{in}\ \Sigma_k
\end{split}
\end{equation}
where $\{\gamma^{(1)}, \cdots, \gamma^{(n-1)}\}$ is an orthonormal basis of $\mathbb R^{n-1}$. 
By \eqref{add2} it then follows 
\begin{equation*}
{\begin{split}
\det A 
 & = \lim_{\Sigma_k\ni (\theta,s)\rightarrow (0',0)} {\scriptsize\text{$\prod$}}_{i=1}^{n-1} (\zeta_{\gamma^{(i)}\gamma^{(i)}}+\zeta) \\
 & \le  {\scriptsize\text{$\prod$}}_{i=1}^{n-1}(c_{\gamma^{(i)}\gamma^{(i)}}+\zeta(0)).
 \end{split}}
\end{equation*}
By the definition of $c_{\gamma\gamma}$ in \eqref{blow-inf} and the convergence \eqref{zta}, we have
\begin{equation*}
 {\scriptsize\text{$\prod$}}_{i=1}^{n-1}(c_{\gamma^{(i)}\gamma^{(i)}}+\zeta(0))
  \le \lim_{\Sigma_k\ni (\theta,s)\rightarrow (0',0)} {\scriptsize\text{$\prod$}}_{i=1}^{n-1} (\zeta_{\gamma^{(i)}\gamma^{(i)}}+\zeta)
 = \det A.
\end{equation*}
Combining the above two inequalities, we obtain
\begin{equation}\label{add3}
 {\scriptsize\text{$\prod$}}_{i=1}^{n-1}(c_{\gamma^{(i)}\gamma^{(i)}}+\zeta(0))
    =\det A=\frac{2\bar g(0)}{n(n+1)c_{00}}.
\end{equation}  

We are ready to conclude the continuity of $ D_\theta^2 \zeta$.
Indeed, by \eqref{add1} and \eqref{Ha1}, we have
\begin{equation}\label{add4}
\begin{split}
   \frac{2\bar g(0)}{n(n+1)c_{00}} 
   &=  \lim_{\theta\to 0, s\to 0^+}\det ( D^2_\theta \zeta+\zeta I) \\
   & \le  \varlimsup_{\theta\to 0, s\to 0^+} {\scriptsize\text{$\prod$}}_{i=1}^{n-1} (\zeta_{\gamma^{(i)}\gamma^{(i)}}+\zeta) .
   \end{split}
\end{equation}
By \eqref{add2} and \eqref{add3}, the RHS of \eqref{add4}
\begin{equation}\label{add5}
 \hskip70pt  \le  {\scriptsize\text{$\prod$}}_{i=1}^{n-1} (c_{\gamma^{(i)}\gamma^{(i)}}+\zeta(0))=\det A.
\end{equation}
Therefore the inequalities in \eqref{add4}  and \eqref{add5} becomes equalities. 
Choosing the coordinates such that $\gamma^{(1)}, \cdots, \gamma^{(n-1)}$
are the axial directions. 
As the inequality in \eqref{add4} becomes equality, we see that
$$  \lim_{\theta\to 0, s\to 0^+}  D^2_{\theta_i\theta_j} \zeta (\theta, s)=0 \ \ \text{for all}\  i\ne j. $$
By \eqref{add2} and since the inequality \eqref{add5} becomes equality,
we infer that 
\beq\label{ts00}
   \lim_{\theta\to 0, s\to 0^+}  \zeta_{\gamma^{(i)}\gamma^{(i)}} = c_{\gamma^{(i)}\gamma^{(i)}}. 
  \eeq
This completes the proof.
\end{proof}


\begin{remark} \label{r4.4} 
In \S 3 and \S4, we employ the blow-up argument to prove the continuity of the second derivatives. 
The blow-up technique has been frequently used in geometric and analysis problems.
It usually contains two steps, one is to classify the blow-up limits and the other one is to show that
the limit is independent of choice of the blow-up sequences.
In the first step, one proves that a blow-up sequence  at a fixed point
converges along a sub-sequence to a nice limit. 
In the second step, one needs to prove that all blow-up sequences around a given point
converges to the same limit.
The second step is usually rather difficult. See \cite{Ca77, Ca98} for the classical obstacle problem \eqref{Lapob}.
There are many examples, for which one can prove the first step but the second one becomes extremely complicated, 
such as singularity profiles for the Ricci flow and the mean curvature flow,
and the $C^1$ regularity of infinity harmonic functions.
For infinity harmonic functions, 
the blow-up at a fixed point is an affine function  \cite{ES}, but the $C^1$ regularity in high dimensions is still open.




\end{remark}

\vskip10pt

\section{\bf  Higher regularity of free boundary}

In this section we first establish a weighted $W^{2,p}$ estimate
for a linear singular elliptic equation of Keldysh type.
Then we prove the higher regularity of the free boundary.

\subsection{\bf  Linear singular elliptic equations of Keldysh type}

First we consider the Neumann problem of the singular elliptic equation with constant coefficients:
\begin{equation}\label{as1}
\begin{split}
&\mathcal L_b(u)=-\Delta u-b\frac{u_{x_n}}{x_n}=f \quad \text{in} \quad \mathbb R^n_+,\\
& u_{x_n}(x',0)=0 .
\end{split}
\end{equation}
We assume that   $b>0$ is a constant,
$f\in L^\infty({\mathbb R^n_+})$.

Horiuchi  \cite{H95, H96} 
introduced the Green function for \eqref{as1} and proved a weighted Schauder type estimate.
In \cite{H95}, he found the following representation formula for the solution to \eqref{as1},
\begin{equation}\label{as2}
u(x)=\int_{\mathbb R^n_+} K_b(x,y)y_n^b f(y)dy=:T_b(f)(x) , 
\end{equation}
where $K_b(x,y)$ is the  Green function, given by 
\begin{equation}\label{G1}
\begin{split}
K_b(x,y) & =D_b \int_0^1 (|x-y|^2(1-\tau)+|x-y^*|^2\tau)^{-\frac{n-2+b}{2}}[\tau(1-\tau)]^{\frac b 2-1}d\tau, \\
       D_b & =2^{b-2}\pi^{-\frac n2}\frac{\Gamma(\frac{n+b-2}{2})}{\Gamma(\frac b2)},\quad y^*=(y_1,\cdots,y_{n-1},-y_n).
\end{split}
\end{equation}
Moreover, for  $b>0$, 
the following estimates hold, 
\begin{equation}\label{G2}
|\p_x^{\gamma_1}\p_y^{\gamma_2} K_b(x,y)|\le
\begin{cases}
C\, |x-y^*|^{-b}\ln \left(2+\frac{|x-y^*|}{|x-y|}\right)\ \ \  \text{if}\ n=2\ \text{and}\ |\gamma_1|=|\gamma_2|=0,\\ 
C \, |x-y^*|^{-b}|x-y|^{2-n-|\gamma_1|-|\gamma_2|},\quad \text{otherwise}
\end{cases}
\end{equation}
for any indexes $\gamma_1,\gamma_2\in \mathbb N^n$, where $C$ depends on $n, b, \gamma_1,\gamma_2$ (see Lemma 5-3 in \cite{H95}).

Horiuchi  \cite{H95} proved that  the function $u=T_b(f)$, given in \eqref{as2}, is a solution to \eqref{as1}.
By the representation formula \eqref{as2},
he also proved the following $C^{2, \alpha}$ estimate by careful computation as in \cite {GT}.
 
\begin{theorem}\label{T5.1} [Theorem 2-1,\cite{H95}]
Assume that $f\in C^\alpha(\overline{\mathbb R^n_+})$, supp$\, f\subset B_1$, and $b>0$.
Then 
\begin{equation} \label{e5a}
\sum_{i,j=1}^n\|\p_{ij}T_b(f)(x)\|_{C^{\alpha}(\overline{B_1^+})}
   +\Big \|\frac{\p_n T_b(f)(x)}{x_n}\Big\|_{C^{\alpha}(\overline{B_1^+})}\le C\|f\|_{C^{\alpha}(\overline{B_1^+})} , 
\end{equation}
where  $\alpha\in (0,1)$, and $C$ depends only on $n, b, \alpha$.
\end{theorem}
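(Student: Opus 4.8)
The plan is to follow the classical potential-theoretic proof of the interior Schauder estimate for the Newtonian potential (as in \cite{GT}), with the Green function $K_b$ of \eqref{G1} and the weighted measure $y_n^b\,dy$ playing the roles of the fundamental solution and of Lebesgue measure. The one structural input needed beyond the kernel bounds \eqref{G2} is the elementary inequality $0\le y_n\le x_n+y_n\le|x-y^*|$ valid for $x,y\in\mathbb R^n_+$ (since $|x-y^*|^2=|x'-y'|^2+(x_n+y_n)^2$), which cancels the weight against the reflected singularity: combined with \eqref{G2} it gives
\begin{equation*}
y_n^{\,b}\,\big|\partial_x^{\gamma_1}\partial_y^{\gamma_2}K_b(x,y)\big|\le C\,|x-y|^{\,2-n-|\gamma_1|-|\gamma_2|}\qquad(|\gamma_1|+|\gamma_2|\ge 1),
\end{equation*}
uniformly up to $\{x_n=0\}$ — these are exactly the pointwise bounds satisfied by the Newtonian kernel $\Gamma(x-y)$ and its derivatives (the logarithmic factor in \eqref{G2} occurs only for the undifferentiated kernel in dimension two and plays no role here). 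Thus $y_n^bK_b(x,y)$ behaves like a standard Calder\'on--Zygmund kernel, and all constants will be independent of $\operatorname{dist}(x,\{x_n=0\})$.

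First I would prove $T_b(f)\in C^{1,\alpha}(\overline{B_1^+})$. Since $|\partial_{x_i}K_b(x,y)|\,y_n^b\le C|x-y|^{1-n}$ is locally integrable, one may differentiate \eqref{as2} once under the integral to obtain $\partial_{x_i}T_b(f)(x)=\int\partial_{x_i}K_b(x,y)\,y_n^bf(y)\,dy$, and the H\"older modulus of this integral is estimated in the usual way, splitting the domain into $B_{2|x-\tilde x|}(x)$ and its complement and using $|\partial_{x_i}K_b|\,y_n^b\le C|x-y|^{1-n}$ and $|\partial_{x_ix_j}K_b|\,y_n^b\le C|x-y|^{-n}$. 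For the second derivatives — the core of the proof — the weighted kernel $|\partial_{x_ix_j}K_b|\,y_n^b\lesssim|x-y|^{-n}$ is no longer integrable, so one cannot differentiate twice directly. Instead one uses the classical device of subtracting the value of $f$ at the base point before differentiating a second time: for a fixed ball $D\supset\operatorname{supp}f$ this yields a representation of the form
\begin{equation*}
\partial_{x_ix_j}T_b(f)(x)=\int_{D}\partial_{x_ix_j}K_b(x,y)\,y_n^{\,b}\big(f(y)-f(x)\big)\,dy+f(x)\,\Phi_{ij}(x),
\end{equation*}
where $\Phi_{ij}$ collects boundary integrals of $\partial_{x_i}K_b\,y_n^b$ over $\partial D$ together with the lower-order contributions arising because $\partial_xK_b$ and $-\partial_yK_b$ differ (the $y^*$-term changes sign under $\partial_{x_n}\leftrightarrow\partial_{y_n}$), and $\Phi_{ij}$ is smooth on $B_1^+$. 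In the first integral $|f(y)-f(x)|\le[f]_{C^\alpha}|x-y|^\alpha$, so the integrand is $\le C[f]_{C^\alpha}|x-y|^{\alpha-n}$, which is integrable; one then verifies that this expression is genuinely $\partial_{x_ix_j}T_b(f)$, and the $C^\alpha$ bound is obtained by applying the same subtraction trick to the difference $\partial_{x_ix_j}T_b(f)(x)-\partial_{x_ix_j}T_b(f)(\tilde x)$, splitting the domain at the scale $|x-\tilde x|$. This is the routine but lengthy Calder\'on--Zygmund computation referred to in \cite{H95} as ``careful computation as in \cite{GT}''.

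The weighted term is then immediate: Horiuchi \cite{H95} showed that $u=T_b(f)$ solves \eqref{as1} classically, so $\dfrac{\partial_nu}{x_n}=-\dfrac1b\big(\Delta u+f\big)$, whose right-hand side lies in $C^\alpha(\overline{B_1^+})$ with norm $\le C\|f\|_{C^\alpha}$ by the second-derivative estimate just established, and the trace at $x_n=0$ is consistent with the Neumann condition $\partial_nu(x',0)=0$. I expect the main obstacle to be exactly the second-derivative step: setting up the principal-value-plus-boundary representation rigorously and, above all, keeping every estimate uniform up to $\{x_n=0\}$, which forces one to track the reflected factor $|x-y^*|^{-b}$ throughout and to invoke the cancellation $y_n\le|x-y^*|$ at each of its occurrences.
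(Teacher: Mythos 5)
The paper offers no proof of this statement: Theorem \ref{T5.1} is quoted directly from Horiuchi [Theorem 2-1, \cite{H95}], the only indication of method being that he obtained it from the representation formula \eqref{as2} ``by careful computation as in \cite{GT}''. Your proposal reconstructs precisely that argument --- the classical Schauder computation for the Newtonian potential transplanted to the weighted kernel $y_n^b K_b(x,y)$, with the cancellation $y_n\le|x-y^*|$ turning the bounds \eqref{G2} into Newtonian-type bounds uniform up to $\{x_n=0\}$, and the term $\partial_n T_b(f)/x_n$ recovered from equation \eqref{as1} --- so it takes essentially the same route as the cited proof; the one step genuinely requiring extra care, which you correctly flag, is the integration by parts in $y_n$ underlying the second-derivative representation, where both the weight $y_n^b$ and the sign change of the reflected term under $\partial_{x_n}\leftrightarrow\partial_{y_n}$ produce additional, less singular contributions to $\Phi_{ij}$ that must still be estimated in $C^\alpha$ uniformly up to the boundary.
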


Since $u=T_b(f)$ and $f$ has compact support, from \eqref{e5a}  we also have
\begin{equation} \label{e5d}
\|u\|_{C^{2,\alpha}(\overline{\mathbb R^n_+})}+\|\frac{u_n}{x_n}\|_{C^\alpha(\overline{\mathbb R^n_+})}
            \le C\|f\|_{C^{\alpha}(\overline{R^n_+})} .
\end{equation}

Denote
\begin{equation*}
\|f\|_{L^p_{\mu_b}(\mathbb R^n_+)}=\Big(\int_{\mathbb R^n_+}|f|^p(x) x_n^bdx\Big)^{\frac 1p}
\end{equation*}
\begin{equation*}
\|f\|_{W^{k,p}_{\mu_b}(\mathbb R^n_+)}=\sum_{|\alpha|\le k}\|D^\alpha f\|_{L^p_{\mu_b}(\mathbb R^n_+)}.
\end{equation*}
We have the following weighted $W^{2,p}$ estimate.

\begin{theorem}\label{T5.2}
Let $u=T_b(f)$ be given by \eqref{as2}.   Assume that $f$ has compact support. 
Then  for any $p>1$ and  $b>1$, we have the estimate
\begin{equation} \label{e5b}
 {\Small\text{$ \sum_{i, j=1}^n $}} \|u_{x_ix_j} \|_{L^p_{\mu_b}(\mathbb R^n_+)}
      +\|\frac{u_n}{x_n}\|_{L^p_{\mu_b}(\mathbb R^n_+)}\le C\|f\|_{L^p_{\mu_b}(\mathbb R^n_+)}
\end{equation}
for a constant $C>0$ depending only on $p,n,b$.  
\end{theorem}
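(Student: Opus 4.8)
The plan is to recognize the maps $f\mapsto \partial_{x_ix_j}T_b(f)$ and $f\mapsto x_n^{-1}\partial_{x_n}T_b(f)$ as Calder\'on--Zygmund singular integral operators on the space of homogeneous type $(\mathbb R^n_+,|\cdot|,d\mu_b)$, $d\mu_b=x_n^b\,dx$, and then to run the classical Calder\'on--Zygmund program exactly as one does for the second derivatives of the Newtonian potential. The measure $\mu_b$ is doubling, since $b>-1$: a direct computation gives $\mu_b(B_r(x))\approx x_n^b r^n$ when $r\le x_n$, and $\mu_b(B_r(x)\cap\mathbb R^n_+)\approx r^{n+b}$ when $r\ge x_n$. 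By the representation \eqref{as2}, $\partial_{x_ix_j}u(x)=\int_{\mathbb R^n_+}\partial_{x_ix_j}K_b(x,y)\,f(y)\,d\mu_b(y)$ and $x_n^{-1}\partial_{x_n}u(x)=\int_{\mathbb R^n_+}x_n^{-1}\partial_{x_n}K_b(x,y)\,f(y)\,d\mu_b(y)$, so it suffices to prove that each of these operators, whose kernel depends only on $x$ and $y$, is bounded on $L^p(\mu_b)$ for $1<p<\infty$.

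\emph{Kernel estimates.} Using Horiuchi's bounds \eqref{G2}, the elementary identity $|x-y^*|^2=|x-y|^2+4x_ny_n\ (\ge|x-y|^2)$, and the fact that $K_b(\,\cdot\,,y)$ is even in $x_n$ --- so that $\partial_{x_n}K_b$ vanishes to first order on $\{x_n=0\}$ and $x_n^{-1}\partial_{x_n}K_b$ obeys the same bound as $\partial_x^2K_b$ --- one verifies the size estimate $|\partial_{x_ix_j}K_b(x,y)|\le C\,\mu_b\big(B_{|x-y|}(x)\big)^{-1}$ and the regularity (H\"ormander) estimate $|\nabla_x\partial_{x_ix_j}K_b(x,y)|\le C\,|x-y|^{-1}\mu_b\big(B_{|x-y|}(x)\big)^{-1}$ from the $|\gamma_1|=1$ case of \eqref{G2}, and similarly for $x_n^{-1}\partial_{x_n}K_b$. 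Thus all the kernels in question are standard Calder\'on--Zygmund kernels relative to $(\mathbb R^n_+,|\cdot|,\mu_b)$.

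\emph{The $L^2(\mu_b)$ bound --- the main obstacle.} Apply the Fourier transform in $x'$ and the Hankel transform of order $\nu=\frac{b-1}{2}$ in $x_n$; the latter is the spectral transform of the Neumann realization of $-\partial_{x_n}^2-\frac{b}{x_n}\partial_{x_n}$ (eigenfunctions $x_n^{-\nu}J_\nu(\lambda x_n)$, eigenvalue $\lambda^2$, which satisfy $\partial_{x_n}u(x',0)=0$) and is an isometry of $L^2\big((0,\infty),x_n^b\,dx_n\big)$ for $b>0$. In this picture $\mathcal L_b$ is multiplication by $|\xi|^2+\lambda^2$, so for $i,j\le n-1$ the operator $\partial_{x_ix_j}T_b$ is the bounded multiplier $-\xi_i\xi_j/(|\xi|^2+\lambda^2)$, hence bounded on $L^2(\mu_b)$ by Plancherel. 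The normal derivatives are reduced to this case by using the equation, $\partial_{x_n}^2u=-\Delta_{x'}u-b\,x_n^{-1}\partial_{x_n}u-f$, together with the Bessel recurrence $\frac1t\partial_t\big(t^{-\nu}J_\nu(\lambda t)\big)=-\lambda\,t^{-(\nu+1)}J_{\nu+1}(\lambda t)$, which identifies $x_n^{-1}\partial_{x_n}T_b$ with a bounded operator between the relevant weighted $L^2$-spaces; this is where the hypothesis $b>1$ is used essentially, since it keeps the Hankel orders and weight exponents in the admissible range. Equivalently, an integration-by-parts identity for $\int x_n^b(\mathcal L_bu)^2$ produces $\|D^2u\|_{L^2(\mu_b)}^2$ up to lower-order terms of the form $\int x_n^{b-2}|\nabla u|^2$, which are absorbed by a one-dimensional weighted Hardy inequality available precisely for $b>1$.

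\emph{From $L^2$ to $L^p$.} With the two steps above we are in the classical setting of Calder\'on--Zygmund theory on a space of homogeneous type: a Calder\'on--Zygmund decomposition of $f$ relative to $\mu_b$, combined with the kernel regularity, yields the weak-type estimate $\mu_b\{\,|\partial_{x_ix_j}T_bf|>\alpha\,\}\le C\alpha^{-1}\|f\|_{L^1(\mu_b)}$ (and its analogue for $x_n^{-1}\partial_{x_n}T_b$), and Marcinkiewicz interpolation with the $L^2(\mu_b)$ bound gives boundedness on $L^p(\mu_b)$ for $1<p\le2$. For $2<p<\infty$ one dualizes: $\mathcal L_b=-\mu_b^{-1}\mathrm{div}(\mu_b\nabla\,\cdot\,)$ is formally self-adjoint on $L^2(\mu_b)$, and the $L^2(\mu_b)$-adjoints of $\partial_{x_ix_j}$ and $x_n^{-1}\partial_{x_n}$ have Calder\'on--Zygmund kernels of the same type, so the case $1<p<2$ applied to the adjoints yields the case $2<p<\infty$. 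Summing over $i,j$ gives \eqref{e5b}. The crux throughout is the $L^2(\mu_b)$ estimate for the mixed and singular operators $\partial_{x_i}\partial_{x_n}T_b$ and $x_n^{-1}\partial_{x_n}T_b$, which --- unlike the $C^{2,\alpha}$ estimate \eqref{e5a}, valid for all $b>0$ --- genuinely requires $b>1$.
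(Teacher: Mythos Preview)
Your overall framework --- Calder\'on--Zygmund theory on the homogeneous space $(\mathbb R^n_+,|\cdot|,\mu_b)$, with an $L^2$ anchor, weak $(1,1)$, Marcinkiewicz interpolation, and duality --- is exactly the paper's skeleton, but the execution differs in two places, and you misplace the role of the hypothesis $b>1$.

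For the $L^2$ bound the paper does something much cleaner than your Hankel/Hardy sketch: a direct expansion of $\int_{\mathbb R^n_+}(\Delta u+bx_n^{-1}u_n)^2x_n^b\,dx$ followed by integration by parts yields the \emph{exact identity}
\[
\|D^2u\|_{L^2_{\mu_b}}^2+b\,\Big\|\frac{u_n}{x_n}\Big\|_{L^2_{\mu_b}}^2=\|f\|_{L^2_{\mu_b}}^2,
\]
with all cross terms cancelling on the nose. This needs only $b>0$, so your assertion that the $L^2$ step ``genuinely requires $b>1$'' is incorrect.

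The genuine difference is in the passage to $p>2$. You invoke abstract Calder\'on--Zygmund duality; the paper instead first exploits the symmetry of $K_b(x,y)$ in $x',y'$ to obtain the dual bound for the purely tangential operators $\partial_{ij}T_b$, $i,j\le n-1$, and then handles the normal derivatives $\partial_{in}T_b$ by a dimension-raising trick: setting $\tilde u(x,x_{n+1})=u\big(x',\sqrt{x_n^2+x_{n+1}^2}\,\big)$ converts the $n$-dimensional problem with parameter $b$ into an $(n{+}1)$-dimensional problem with parameter $b-1$, in which the former normal direction has become tangential. It is precisely here --- to keep $b-1>0$ --- that the hypothesis $b>1$ enters the paper's argument. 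Your abstract duality route may well avoid this restriction if the H\"ormander condition in the $x$-variable can be checked from Horiuchi's bounds with $|\gamma_1|=3$; note, however, that your claimed pointwise bound for $x_n^{-1}\partial_{x_n}K_b$ via evenness is not quite right, since the mean-value representation $\int_0^1\partial_{x_n}^2K_b(x',tx_n,y)\,dt$ can blow up when the segment $(x',tx_n)$ passes through $y$. That difficulty is harmless in the end, because $x_n^{-1}\partial_nT_bf=-b^{-1}(f+\Delta T_bf)$ recovers the singular term from the second derivatives and the equation itself --- which is also how the paper closes the argument.
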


Note that \eqref{e5b} is invariant under dilation of coordinates. Hence to prove Theorem \ref{T5.2},
we may assume that $\text{supp} f\subset B_1(0)$. 
By Theorem \ref{T5.2}, we have

\begin{corollary}\label{C5.1}
Let $u=T_b(f)$ be given by \eqref{as2}.   Assume that $\text{supp} f\subset B_1(0)$.
Then  for any $p>\frac{n+b}{n-2+b}$ and  $b>1$, we have the estimate
\begin{equation} \label{e5c}
\|u\|_{W^{2,p}_{\mu_b}(\mathbb R^n_+)}+\|\frac{u_n}{x_n}\|_{L^p_{\mu_b}(\mathbb R^n_+)}\le C\|f\|_{L^p_{\mu_b}(\mathbb R^n_+)},
\end{equation}
where the constant $C$ depends on $p, n, b$.
\end{corollary}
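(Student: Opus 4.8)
The plan is to combine the second-order estimate of Theorem~\ref{T5.2} with a direct analysis of the lower-order terms via the representation formula \eqref{as2} and the pointwise kernel bounds \eqref{G2}. Since Theorem~\ref{T5.2} already furnishes $\sum_{i,j}\|u_{x_ix_j}\|_{L^p_{\mu_b}(\mathbb R^n_+)}+\|u_n/x_n\|_{L^p_{\mu_b}(\mathbb R^n_+)}\le C\|f\|_{L^p_{\mu_b}(\mathbb R^n_+)}$, the estimate \eqref{e5c} will follow once we show $\|u\|_{L^p_{\mu_b}(\mathbb R^n_+)}+\|Du\|_{L^p_{\mu_b}(\mathbb R^n_+)}\le C\|f\|_{L^p_{\mu_b}(\mathbb R^n_+)}$. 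For this I would split $\mathbb R^n_+$ into the bounded region $B_2^+$, which contains $\operatorname{supp}f\subset B_1$, and the exterior region $\{\,|x|\ge 2,\ x_n>0\,\}$, and estimate $u$ (and then $Du$) on each piece separately, the bounded piece being a weighted Riesz-potential bound and the exterior piece a pointwise decay estimate.

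On the bounded region the key device is the elementary inequality $x_n^{b/p}\,y_n^{b/p'}\le |x-y^*|^{b}$ (with $\tfrac1p+\tfrac1{p'}=1$), which holds because $x_n\le|x-y^*|$ and $y_n\le|x-y^*|$, since $|x-y^*|^2=|x'-y'|^2+(x_n+y_n)^2$. Writing $f=y_n^{-b/p}F$ and multiplying \eqref{as2} by $x_n^{b/p}$, the operator $F\mapsto x_n^{b/p}u$ has kernel $\tilde K(x,y)=x_n^{b/p}y_n^{b/p'}K_b(x,y)$, which by \eqref{G2} and the above inequality is dominated by $C|x-y|^{2-n}$ (by $C|x-y|^{2-n}\log(2+|x-y^*|/|x-y|)$ when $n=2$). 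As a function of $x-y$ this is locally integrable, so on the bounded set $B_2^+$ Young's inequality gives $\|u\|_{L^p_{\mu_b}(B_2^+)}\le C\|f\|_{L^p_{\mu_b}(\mathbb R^n_+)}$; the identical computation with the kernel $|x-y|^{1-n}$, coming from \eqref{G2} with $|\gamma_1|=1$, $|\gamma_2|=0$, yields $\|Du\|_{L^p_{\mu_b}(B_2^+)}\le C\|f\|_{L^p_{\mu_b}(\mathbb R^n_+)}$.

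On the exterior region I would use $\operatorname{supp}f\subset B_1$: for $|x|\ge 2$ and $y\in B_1^+$ one has $|x-y|\approx|x-y^*|\approx|x|$, so \eqref{G2} together with H\"older's inequality against $\mu_b$ (which is finite on $B_1^+$ since $b>1>-1$) gives $|u(x)|\le C|x|^{2-n-b}\|f\|_{L^p_{\mu_b}(\mathbb R^n_+)}$ and $|Du(x)|\le C|x|^{1-n-b}\|f\|_{L^p_{\mu_b}(\mathbb R^n_+)}$. Raising these to the $p$-th power and integrating against $x_n^b\,dx\le|x|^b\,dx$ over $\{\,|x|\ge 2,\ x_n>0\,\}$ reduces matters to the convergence of $\int_2^\infty \rho^{(2-n-b)p+b+n-1}\,d\rho$, which holds exactly when $p>\frac{n+b}{n-2+b}$; for $Du$ the analogous integral converges under the weaker restriction $p>\frac{n+b}{n-1+b}$, hence automatically. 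Adding the near- and far-region bounds to the estimate of Theorem~\ref{T5.2} yields \eqref{e5c}. The only genuinely sharp point — and thus the main obstacle, in the sense of pinning down the hypothesis — is this last integrability computation: the exponent $\frac{n+b}{n-2+b}$ is precisely the threshold for $K_b(\cdot,y)$ to lie in $L^p_{\mu_b}$ near infinity, balancing the decay $|x|^{2-n-b}$ of the Green function against the growth $x_n^b$ of the weight; everything else is the elementary kernel domination above combined with Young's and H\"older's inequalities.
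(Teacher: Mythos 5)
Your proof is correct, and it reaches \eqref{e5c} by a partly different route from the paper's. The shared skeleton: both reduce to the lower-order terms via Theorem~\ref{T5.2} and split $\mathbb R^n_+$ at $|x|=2$, and your exterior argument (pointwise decay $|u(x)|\le C|x|^{2-n-b}\|f\|_{L^1_{\mu_b}}$ from the kernel bounds, then integration of $|x|^{(2-n-b)p}x_n^b$, which is exactly where $p>\frac{n+b}{n-2+b}$ enters) is the same as the paper's, which quotes Lemma~\ref{lemasy} for the decay. The divergence is on the bounded region and on $Du$: the paper bounds $\|u\|_{L^p_{\mu_b}}$ near the support by applying a weighted Poincar\'e inequality $\int|\xi u|^pd\mu_b\le C\int|D^2(\xi u)|^pd\mu_b$ to a cut-off of $u$ and then recovers $\|Du\|_{L^p_{\mu_b}}$ by interpolation between $u$ and $D^2u$, whereas you estimate $u$ and $Du$ directly from the representation formula \eqref{as2}, absorbing the weight into the kernel via $x_n^{b/p}y_n^{b/p'}\le|x-y^*|^b$ and finishing with Young's inequality against the locally integrable majorants $|x-y|^{2-n}$ and $|x-y|^{1-n}$. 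Your route is more self-contained --- it avoids having to justify the weighted Poincar\'e and interpolation inequalities, which the paper uses without proof --- at the cost of one extra kernel computation; and you correctly flag the $n=2$ logarithmic case of \eqref{G2}, which is harmless on the bounded region since $|x-y^*|\le 3$ there makes the logarithm $O(1+|\log|x-y||)$.
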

 
\begin{remark}
(i)  The condition $b>1$ is used only in \eqref{ras1}, otherwise it suffices to assume $b>0$.
The condition $p>\frac{n+b}{n-2+b}$ in Corollary \ref{C5.1} is for the  estimate $\|u\|_{L^p_{\mu_b}}$.
\\
(ii) A different version of the $C^{2, \alpha}$ estimate in Theorem \ref{T5.1} was also obtained in \cite{HH12}.
The paper \cite{HH12} has also established a weighted $W^{2,p}$ estimate 
by the method of Fourier transformation and  oscillatory integral method
\cite [Theorem 1.2]{HH12}.  
However, the coefficient $a>\frac 32$ is needed in \cite{HH12} which corresponds to $b>2$ in our present case.  
Hence we can't apply \cite{HH12} to the problem in this paper.
 \end{remark}

In the following we prove Theorem \ref{T5.2}.
Recall that the $W^{2,p}$ estimate for the Laplace equation can be obtained
by the Calderon-Zygmund decomposition and the Marcinkiewicz interpolation \cite{GT}.
Our proof will adopt a similar idea. 
But due to the singular term $\frac{u_{x_n}}{x_n}$, we need to introduce a new variable in the proof
(see function $\tilde u$ in \eqref{utilde}).
We include the details of the proof for convenience of the reader.

First,  we notice the following asymptotic estimate for $T_b(f)$ near $\infty$.

\begin{lemma}\label{lemasy}
Let $u=T_{b}f$ be given by \eqref{as2}. Assume that $f\in L^\infty({\mathbb R^n_+})$,
supp$f\subset B_1(0)$, and  $b>0$.
Then we have the following asymptotic estimate,
\begin{equation}
|D^\gamma_x u(x)|\le \frac{C \|f\|_{L^1_{\mu_b}(\mathbb R^n_+)}}{|x|^{n-2+b+|\gamma|}}\ \ \forall\  |x|>2, 
\end{equation}
where  the constant $C$ depends only on $n, \gamma, b$. 
\end{lemma}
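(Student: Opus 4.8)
The plan is to read the estimate off the representation formula \eqref{as2} together with the pointwise kernel bounds \eqref{G2}. Since $\text{supp}\,f\subset B_1(0)$, the integral in \eqref{as2} runs over $y\in B_1^+$ only, and for such $y$ we have $|y|\le 1$ and $|y^*|=|y|\le 1$. Hence for $|x|>2$,
\[
  \tfrac12|x|\le |x|-1\le |x-y|\le |x|+1\le \tfrac32|x|,
  \qquad \tfrac12|x|\le |x-y^*|\le \tfrac32|x| ,
\]
so that on the support of $f$ both $|x-y|$ and $|x-y^*|$ are comparable to $|x|$, and in particular $K_b(x,\cdot)$ is smooth (no singularity) on that support when $|x|>2$. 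This justifies differentiation under the integral sign: for $x$ in a small neighbourhood $U$ with $\overline U\cap \overline{B_1}=\emptyset$, the derivatives $\partial_x^\gamma K_b(x',y)$ are, by \eqref{G2}, bounded uniformly for $x'\in U$, hence dominated by $C\,y_n^b|f(y)|\in L^1(B_1^+)$.

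First I would treat the generic case, namely $n\ge 3$, or $n=2$ with $|\gamma|\ge 1$. In this case \eqref{G2} gives
\[
  |\partial_x^\gamma K_b(x,y)|\le C\,|x-y^*|^{-b}\,|x-y|^{2-n-|\gamma|}
  \le C\,|x|^{-b}\,|x|^{2-n-|\gamma|} = C\,|x|^{2-n-b-|\gamma|},
\]
where to pass to the second line one uses the two-sided comparison above (note that $2-n-|\gamma|$ may be negative, which is precisely why the lower bound $|x-y|\ge\tfrac12|x|$ is needed, while for the factor $|x-y^*|^{-b}$ one uses $|x-y^*|\ge \tfrac12|x|$). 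Substituting into \eqref{as2},
\[
  |D_x^\gamma u(x)|
  \le \int_{B_1^+}|\partial_x^\gamma K_b(x,y)|\,y_n^b\,|f(y)|\,dy
  \le C\,|x|^{2-n-b-|\gamma|}\int_{B_1^+} y_n^b|f(y)|\,dy
  = \frac{C\,\|f\|_{L^1_{\mu_b}(\mathbb R^n_+)}}{|x|^{\,n-2+b+|\gamma|}},
\]
which is the desired bound.

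It then remains to handle the single borderline case $n=2$, $|\gamma|=0$, in which \eqref{G2} carries the extra factor $\ln\!\bigl(2+|x-y^*|/|x-y|\bigr)$. But $|x-y^*|/|x-y|\le 3$ for $|x|>2$ and $y\in B_1(0)$ by the comparison above, so this logarithm is bounded by $\ln 5$ and is absorbed into the constant; the computation proceeds exactly as before, with $n-2+b+|\gamma|=b$. I do not anticipate a genuine obstacle: the lemma is a routine consequence of \eqref{G2} once one notes that $|x-y|\approx |x-y^*|\approx |x|$ on $\text{supp}\,f$. The only points requiring a little care are keeping track of the sign of the exponent $2-n-|\gamma|$ so as to invoke the correct one-sided bound on $|x-y|$, and checking that the two-dimensional logarithmic factor is harmless — both of which are immediate.
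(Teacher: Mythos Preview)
Your proof is correct and follows essentially the same approach as the paper: both arguments substitute the kernel bound \eqref{G2} into the representation \eqref{as2} and use $|x-y|\approx|x-y^*|\approx|x|$ for $|x|>2$, $|y|\le1$. Your write-up is in fact more careful than the paper's, which skips the justification of differentiation under the integral and does not comment separately on the $n=2$ logarithmic case.
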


\begin{proof}
From \eqref{G2}, one has
\begin{equation}
\begin{split}
|D^\gamma_x u(x)|=&\Big|\int_{\mathbb R^n_+}\p_x^\gamma K_b(x,y)y_n^b f(y)dy\Big|\\
\le & C(\gamma,b)\int_{\mathbb R^n_+} |x-y^*|^{-b}|x-y|^{2-n-|\gamma|} f(y) y_n^b dy\\
\le &\frac{C_{\gamma,b}\|f\|_{L^1_{\mu_b}(\mathbb R^n_+)}}{|x|^{n-2+b+|\gamma|}}
\end{split}
\end{equation}
provided $|x|>2$.
\end{proof}
 
\begin{lemma}\label{leml2}
Assume that  $f\in L^\infty(\mathbb R^n_+)$,
supp\,$f\subset B_1$, and  $b>0$.
Then $u=T_{b}f$ satisfies the weighted $W^{2,2}$ estimate
\begin{equation}\label{int5}
\|D^2 u\|^2_{L^2_{\mu_b}(\mathbb R^n_+)}+b\Big \|\frac{u_n}{x_n}\Big\|^2_{L^2_{\mu_b}(\mathbb R^n_+)}= \|f\|^2_{L^2_{\mu_b}(\mathbb R^n_+)}.
\end{equation}
\end{lemma}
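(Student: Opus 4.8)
The plan is to obtain \eqref{int5} as an energy identity, exploiting that $u = T_b(f)$ solves the constant-coefficient Keldysh equation $\mathcal L_b(u) = -\Delta u - b\, u_{x_n}/x_n = f$ with the Neumann condition $u_{x_n}(x',0)=0$, and that $-\mathcal L_b$ is formally self-adjoint with respect to the weight $x_n^b\,dx$: indeed $x_n^b\mathcal L_b(u) = -\operatorname{div}(x_n^b\, Du)$. First I would multiply the equation $-\operatorname{div}(x_n^b Du) = x_n^b f$ by $-\Delta u$ and integrate over $\mathbb R^n_+$. On the left one gets $\int x_n^b (\Delta u)\,\Delta u\,dx + b\int x_n^{b-1} u_{x_n}\Delta u\,dx$; on the right $-\int x_n^b f\,\Delta u\,dx$. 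A cleaner route, which I would actually follow, is to multiply $x_n^b f = -\operatorname{div}(x_n^b Du)$ by $f$ itself and rewrite $f = -\Delta u - b u_{x_n}/x_n$, giving $\|f\|_{L^2_{\mu_b}}^2 = \int_{\mathbb R^n_+} x_n^b f\,(-\Delta u)\,dx - b\int_{\mathbb R^n_+} x_n^{b-1} f\, u_{x_n}\,dx$, and then integrate by parts in each term, using the decay from Lemma \ref{lemasy} (which gives $|D^\gamma u(x)| \lesssim |x|^{-(n-2+b+|\gamma|)}$, enough to kill all boundary terms at infinity since the weight $x_n^b$ grows only polynomially and $f$ has compact support) and the Neumann condition $u_{x_n}(x',0)=0$ to kill the boundary term on $\{x_n=0\}$.

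The key computation is the integration by parts of $\int x_n^b f\,(-\Delta u)\,dx$. Substituting $f = \mathcal L_b(u)$ and integrating by parts twice, the principal term produces $\int x_n^b |D^2 u|^2\,dx$ (this is where the Neumann condition is essential: differentiating $u_{x_n}(x',0)=0$ tangentially shows the cross terms $u_{x_n x_i}$, $i<n$, vanish on the boundary, so the boundary integrals from the double integration by parts drop out). The lower-order term $-b\int x_n^{b-1} f u_{x_n}\,dx$, after substituting $f$ again and integrating by parts carefully in $x_n$ — here is exactly where the hypothesis $b>0$ (and, as the remark after Corollary \ref{C5.1} indicates, eventually $b>1$ for related estimates) enters to control $\int x_n^{b-1}$ near $\{x_n = 0\}$ — produces the term $b\|u_{x_n}/x_n\|_{L^2_{\mu_b}}^2$ together with cancellations of the mixed terms $\int x_n^{b-1} u_{x_n}\Delta u$. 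Collecting everything yields precisely
\[
\|f\|^2_{L^2_{\mu_b}(\mathbb R^n_+)} = \|D^2 u\|^2_{L^2_{\mu_b}(\mathbb R^n_+)} + b\Big\|\frac{u_n}{x_n}\Big\|^2_{L^2_{\mu_b}(\mathbb R^n_+)}.
\]

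I expect the main obstacle to be the rigorous justification of the integrations by parts near the singular boundary $\{x_n = 0\}$: the weight $x_n^b$ and its derivative $x_n^{b-1}$ are borderline integrable only when $b>0$, and one must verify that the boundary contributions genuinely vanish rather than merely being formally dropped. The clean way to handle this is to integrate over $\{x_n > \delta\} \cap B_R$ first, track the boundary terms on $\{x_n = \delta\}$ and $\partial B_R$, and then let $\delta \to 0$ and $R \to \infty$; the $\partial B_R$ terms die by Lemma \ref{lemasy}, and the $\{x_n=\delta\}$ terms die because $u \in C^{2,\alpha}(\overline{\mathbb R^n_+})$ with $u_{x_n}(x',0)=0$ (from \eqref{e5d}), so $u_{x_n}(x',\delta) = O(\delta)$, which combined with the factor $x_n^{b-1}$ or $x_n^b$ and $b>0$ forces the limit to be zero. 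A secondary technical point is confirming, via \eqref{e5d} and Lemma \ref{lemasy}, that all the integrands ($x_n^b|D^2u|^2$, $x_n^{b-2}|u_{x_n}|^2$, $x_n^b f^2$) are genuinely integrable, so that the identity is between finite quantities; the estimate $|u_{x_n}/x_n| \le \|u_{x_n}/x_n\|_{C^\alpha} < \infty$ near the boundary handles the delicate weighted term there, while Lemma \ref{lemasy} handles it near infinity.
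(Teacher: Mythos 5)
Your proposal is correct and follows essentially the same route as the paper: square the identity $f=-\Delta u-b\,u_{x_n}/x_n$ against the weight $x_n^b$, integrate by parts using the decay from Lemma \ref{lemasy} at infinity and the Neumann condition together with the factor $x_n^b$ (resp. $x_n^{b-1}$) at $\{x_n=0\}$, and observe that the mixed terms $b\int x_n^{b-1}u_iu_{ni}$ cancel while $2b\int x_n^{b-1}u_{nn}u_n$ combines with $b^2\int x_n^{b-2}u_n^2$ to leave exactly $b\|u_n/x_n\|^2_{L^2_{\mu_b}}$. The only cosmetic difference is that you justify the vanishing of the $\{x_n=\delta\}$ boundary terms via the $C^{2,\alpha}$ estimate \eqref{e5d} (which strictly requires $f\in C^\alpha$ rather than the $L^\infty$ assumed here; the Green-function bounds \eqref{G2} give the needed pointwise control directly), but this does not affect the argument.
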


\begin{proof}
Since $u$ satisfies equation \eqref{as1}, we have 
\begin{equation}\label{int5a}
\int_{\mathbb R^n_+}\Big(\Delta u+b\frac{u_n}{x_n}\Big)^2 x_n^b dx=\int_{\mathbb R^n_+}f^2 x_n^b dx.
\end{equation}
By Lemma \ref{lemasy}, we can carry out integration by parts and obtain
$${\begin{split}
 \int_{\mathbb R^n_+}(\Delta_{x'} u)^2 x_n^b dx
      & = {\Small\text{$ \sum_{i,j=1}^{n-1} $}} \int_{\mathbb R^n_+}u^2_{x_ix_j} x_n^b dx, \\
 2\int_{\mathbb R^n_+} u_{ii} u_{nn} x_n^b dx
      & = -2\int_{\mathbb R^n_+} u_i u_{nni}x_n^b dx\\
      & = 2\int_{\mathbb R^n_+}u_{ni}^2 x_n^b dx+2b\int_{\mathbb R^n_+}u_iu_{ni} x_n^{b-1}dx,\ \  i=1,\cdots,n-1,
      \end{split}}
$$
and
 $${\begin{split}
 2b\int_{\mathbb R^n_+} u_{ii} u_{n} x_n^{b-1} dx
     & =-2b\int_{\mathbb R^n_+} u_i u_{ni}x_n^{b-1} dx,\ \ i=1,\cdots,n-1,\\ 
2b\int_{\mathbb R^n_+} u_{nn} u_{n} x_n^{b-1} dx
   & =-b(b-1)\int_{\mathbb R^n_+} u_n^2 x_n^{b-2} dx .
\end{split}}
$$
Summing up, and noticing that the two integrals $2b\int_{\mathbb R^n_+} u_i u_{ni}x_n^{b-1} dx$ are cancelled each other, 
and the last integral $ -b(b-1)\int_{\mathbb R^n_+} u_n^2 x_n^{b-2} dx$ is partly cancelled by
the left hand side of \eqref{int5a}, we obtain \eqref{int5}.
\end{proof}

By Lemma \ref{leml2}, we see that $\p_{ij} T_b$, $\frac{\p_n T_b}{x_n}$ are bounded linear operators 
from $L^2_{\mu_b}(\mathbb R^n_+)$ to  $L^2_{\mu_b}(\mathbb R^n_+)$. 
Next we show that 
they are bounded linear operators
from $L^1_{\mu_b}(\mathbb R^n_+)$ to $L^1_{\mu_b,\omega}(\mathbb R^n_+)$,
where 
\begin{equation}
L^1_{\mu_b,\omega}(\mathbb R^n_+) =\big \{v \text{ is measurable}:\ 
{\mu_b}\{x\in \mathbb R^n_+:\ |v(x)|>\alpha\}\le C\alpha^{-1}\ \forall \ \alpha>0 \big\} ,
\end{equation}
and the measure $\mu_b$ is defined as $\mu_b(E)=\int_{E}x_n^b dx$ for any measurable set $E$.

As a first step, we give the following weighted Calderon-Zygmund decomposition in $\mathbb R^n_+$.

\begin{lemma}\label{lemcz}
Suppose $f\in L^1_{{\mu_b}}(\mathbb R^n_+)$ and $b>0$. 
Then for any given constant $\alpha>0$, 
there is a sequence of disjoint   cubes $\{Q_k\}_{k=1}^\infty$ and a decomposition 
$$f=g+h=g+{\sum}_{k\ge 1}h_k$$ 
\vskip-15pt
\noindent
such that 
\begin{itemize}
\item[(i)]  $|g(x)|\le c\alpha$ for $\mu_b$-a.e. $x\in \mathbb R^n_+$, and 
\beq\label{p-i}
\int_{\mathbb R^n_+}|g(x)|d\mu_b\le c\int_{\mathbb R^n_+}|f(x)|d\mu_b.
\eeq
\item [(ii)] For each $k\ge 1$, $h_k$ is supported in $Q_k$ and satisfies
\beq\label{p-ii}
	\int_{\mathbb R^n_+} |h_k|d{\mu_b}\le c\alpha{\mu_b}(Q_k)\ \ \text{and}\ \  \int_{\mathbb R^n_+} h_k d{\mu_b}=0 .
\eeq
\item [(iii)] 
\beq\label{p-iii}
 {\sum}_k\,  {\mu_b}(Q_k)\le \frac{c}{\alpha}\int_{\mathbb R^n_+} |f|d{\mu_b} .
\eeq
Here the constant $c$ depends only on $n$ and $b$.
\end{itemize}
\end{lemma}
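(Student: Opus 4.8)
The plan is to prove this by running the classical Calder\'on--Zygmund stopping-time construction with respect to the measure $\mu_b$ and a dyadic grid adapted to the half-space, the only point requiring genuine work being that $d\mu_b=x_n^b\,dx$ is \emph{dyadically doubling} on $\overline{\mathbb R^n_+}$. By a dyadic cube $Q\subset\overline{\mathbb R^n_+}$ I will mean a half-open cube of the grid $2^{-j}\bigl(k+[0,1)^n\bigr)$ with $k\in\mathbb Z^n$, $k_n\ge0$; for each fixed $j$ these cubes partition $\mathbb R^n_+$ up to the $\mu_b$-null set $\{x_n=0\}$, they refine as $j$ increases, and the family is closed under passing to dyadic parents. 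I will first show there is $C_d=C_d(n,b)$ such that $\mu_b(\widehat Q)\le C_d\,\mu_b(Q)$ for every such $Q$ and its parent $\widehat Q$. Indeed, if $Q$ has side $2^{-j}$ and $x_n$-range $[m2^{-j},(m+1)2^{-j}]$ with integer $m\ge0$, then
\[
\mu_b(Q)=\frac{2^{-j(n-1)}}{b+1}\Bigl(\bigl((m+1)2^{-j}\bigr)^{b+1}-\bigl(m2^{-j}\bigr)^{b+1}\Bigr),
\]
and likewise for $\widehat Q$; applying the mean value theorem to $t\mapsto t^{b+1}$ (legitimate since $b>0$, indeed $b>-1$ would suffice) to both expressions reduces the estimate to elementary inequalities, the only case needing attention being the cubes abutting $\{x_n=0\}$. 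This dyadic doubling is the one computational ingredient, and it is the place where the geometry of the weight near the boundary enters; everything afterwards is the textbook scheme. Note no hypothesis of compact support of $f$ is needed.

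Granting this, I fix $\alpha>0$ and call a dyadic cube $Q\subset\overline{\mathbb R^n_+}$ \emph{heavy} if $\mu_b(Q)^{-1}\int_Q|f|\,d\mu_b>\alpha$. For any $x\in\mathbb R^n_+$, once $2^j>x_n$ the dyadic cube of side $2^j$ through $x$ abuts $\{x_n=0\}$, so its $\mu_b$-mass is $\gtrsim 2^{j(n+b)}\to\infty$; since $f\in L^1_{\mu_b}(\mathbb R^n_+)$, the averages of $|f|$ over these cubes tend to $0$, so every heavy cube is contained in a largest heavy cube. I let $\{Q_k\}$ be the collection of these maximal heavy cubes; being maximal dyadic cubes they are pairwise disjoint, and I set $\Omega=\bigcup_kQ_k$. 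Each dyadic parent $\widehat Q_k$ is non-heavy, so the doubling estimate gives $\mu_b(Q_k)^{-1}\int_{Q_k}|f|\,d\mu_b\le C_d\alpha$. I then define $g:=f$ on $\mathbb R^n_+\setminus\Omega$ and $g:=\mu_b(Q_k)^{-1}\int_{Q_k}f\,d\mu_b$ on each $Q_k$, $h_k:=\bigl(f-\mu_b(Q_k)^{-1}\int_{Q_k}f\,d\mu_b\bigr)\chi_{Q_k}$, and $h:=\sum_kh_k$, so that $f=g+h=g+\sum_kh_k$.

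It will then remain to verify (i)--(iii), which is routine. For (i): on each $Q_k$ the value of $g$ is an average of $f$, so $|g|\le C_d\alpha$ there; on $\mathbb R^n_+\setminus\Omega$ every dyadic cube through $x$ is non-heavy, so the dyadic (martingale) Lebesgue differentiation theorem, valid for any $f\in L^1(\mu_b)$, yields $|f(x)|=\lim_{Q\downarrow x}\bigl|\mu_b(Q)^{-1}\int_Qf\,d\mu_b\bigr|\le\alpha$ for $\mu_b$-a.e.\ such $x$; moreover $\int|g|\,d\mu_b\le\int_{\mathbb R^n_+\setminus\Omega}|f|\,d\mu_b+\sum_k\int_{Q_k}|f|\,d\mu_b=\int_{\mathbb R^n_+}|f|\,d\mu_b$. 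For (ii): $h_k$ is supported in $Q_k$, has $\int h_k\,d\mu_b=0$ by construction, and $\int|h_k|\,d\mu_b\le2\int_{Q_k}|f|\,d\mu_b\le2C_d\alpha\,\mu_b(Q_k)$. For (iii): the defining inequality of a heavy cube gives $\sum_k\mu_b(Q_k)<\alpha^{-1}\sum_k\int_{Q_k}|f|\,d\mu_b=\alpha^{-1}\int_\Omega|f|\,d\mu_b\le\alpha^{-1}\|f\|_{L^1_{\mu_b}(\mathbb R^n_+)}$. Taking $c=\max\{2C_d(n,b),1\}$ finishes the proof. Thus the main (and essentially only) obstacle is establishing the dyadic doubling of $\mu_b$ near $\{x_n=0\}$; once that geometric fact is secured, the decomposition is produced by the standard stopping-time argument.
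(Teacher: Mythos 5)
Your proof is correct and follows essentially the same route as the paper: a Calder\'on--Zygmund stopping-time argument for the measure $\mu_b$, whose only non-classical ingredient is the dyadic doubling bound $\mu_b(\widehat Q)\le C(n,b)\,\mu_b(Q)$ near $\{x_n=0\}$ (the paper uses the constant $2^{n+2b+1}$ where you use your $C_d$). The only cosmetic differences are that the paper initializes with a coarse partition at a single scale $d$ chosen so that all starting cubes are non-heavy, while you work with the full dyadic grid and extract maximal heavy cubes, and that you invoke the dyadic martingale differentiation theorem where the paper argues via Lebesgue points; both are standard and equivalent here.
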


\begin{proof} 
The Calderon-Zygmund decomposition for the Lebesgue measure is well-known \cite{Ca89, GT}.
Here we replace the Lebesgue measure by the measure $\mu_b$ but the idea is the same.
Consider a  partition of $\mathbb R^n_+$ by cubes $\mathcal K_0=:\{Q\}$ whose side length $d$ is chosen such that
\begin{equation*}
\alpha>\frac 1{{\mu_b}([0,d]^n)}\int_{\mathbb R^n_+} |f|d{\mu_b}.
\end{equation*}
Then for any cube $Q\in\mathcal K_0$, we have $|f|_Q\le \alpha$, 
where $|f|_Q=\frac{1}{\mu_b(Q)}\int_Q |f| d\mu_b$.

To obtain the sequence of disjoint   cubes $\{Q_k\}_{k=1}^\infty$ stated in the lemma,
we first consider an arbitrary cube $Q\in\mathcal K_0$, 
by bisecting the edges of $Q$, we subdivide $Q$ into 
$2^n$ congruent sub-cubes $\{Q'_i\}_{i=1}^{2^n}$ with disjoint interiors.
For any sub-cube $Q'_i$, we have either $|f|_{Q'_i}\ge \alpha$ or $|f|_{Q'_i} < \alpha$.
Denote by $\mathcal K_1$ the set of all sub-cubes of side-length $d/2$.

\begin{itemize}
\item [(a)] If $|f|_{Q'_i}\ge \alpha$, set
$$h_{Q'_i}  =\chi_{{Q'_i}} (f-f_{Q'_i})\ \ \text{and}\ \ g_{Q'_i}=\chi_{{Q'_i}} f_{Q'_i}, $$
where $\chi$ is the characteristic function,
$f_{Q'_i}=\frac{1}{\mu_b(Q'_i)}\int_{Q'_i} f d\mu_b$ is the $\mu_b$-average of $f$ in $Q'_i$.
In this case we will not divide $Q'_i$ any more, 
and this $Q'_i$ will be counted as an element in the family $\{Q_k\}$.

\item [(b)]  If $|f|_{Q'_i} < \alpha$, we divide $Q'_i$ into $2^n$ equal sub-cubes $\{Q''_i\}$ as above,
and denote by $\mathcal K_2$ the set of all the sub-cubes of side-length $d/4$.
For any sub-cube $Q''_i\in \mathcal K_1$, 
if $|f|_{Q''_i}\ge \alpha$, we define the functions $h_{Q''_i}$ and $g_{Q''_i}$ as in (a) above,
and count $Q''_i$ as an element in the family  $\{Q_k\}$.
If $|f|_{Q''_i} < \alpha$, we divide $Q''_i$ into $2^n$ equal sub-cubes again. 
\end{itemize}

Repeating the above procedure indefinitely,  
we obtain a sequence of  disjoint  cubes $\{Q_k\}$ such that
$|f|_{Q_k}\ge \alpha$ for all $k\ge 1$. 
Now we define
	\begin{equation}\label{hig}
	{\begin{split}
	h_i & = h_{Q_i}\ \ i=1,2 \cdots, \\[4pt]
	g(x) &=\bigg \{\begin{split}
	& f_{Q_k}\ \ \ \text{if}\  x\in Q_k,\quad k=1,2,\cdots,\\
        & f(x)\ \ \text{if}\  x\in \mathbb R^n_+\backslash (\cup_{k=1}^{\infty} Q_k).
	\end{split}
	\end{split}}
	\end{equation}

For any cube $Q_k$ in the sequence, let $\tilde Q_k$ be its predecessor,
namely  $Q_k$ is one of the $2^n$ sub-cubes obtained from $\tilde Q_k$.
By the above decomposition, it is easy to see 
\beq\label{fq}
|f|_{Q_k}\le \frac{\mu_b(\tilde Q_k)}{\mu_b(Q_k)} |f|_{\tilde Q_k} \le 2^{n+2b+1} |f|_{\tilde Q_k} \le 2^{n+2b+1} \alpha. 
\eeq
One can verify that \eqref{p-i} - \eqref{p-iii} hold with $c=2^{n+2b+1}$. 
To finish the proof, it remains to show that $|g(x)|\le C\alpha \ \forall\ x\in \R^n_+$. 
Since the set $\cup_k \p Q_k$ has measure zero,
there is no loss of generality in assuming that $x$ is a Lebesgue point and 
$x\not\in \cup_k \p Q_k$.
Then in the procedure described in (a) and (b) above, 
eventually there will be a cube $Q_k$ such that  $x\in Q_k$ 
and $|f|_{Q_k}\ge \alpha$ if $g(x)>c\alpha$.
Hence by  our definition of $g$ in \eqref{hig}, we have 
$|g(x)| = |f_{Q_k}| \le C\alpha$ for $x\in Q_k$ by \eqref{fq}.
This completes the proof.
 \end{proof}

Next we show that 
$\p_{x_ix_j} T_b$ and $\frac{\p_n T_b}{x_n}$ are  bounded linear operators from $L^1_{\mu_b}(\mathbb R^n_+)$ 
to $L^1_{\mu_b,\omega}(\mathbb R^n_+)$, for any $b>0$.

\begin{lemma}\label{leml1}
Assume $f\in L^1_{\mu_b} (\mathbb R^n_+)$ and supp$\,f\subset B_1(0)$.
Then for any constant $\alpha>0$, $u=T_{b}(f) $ satisfies
\begin{equation} \label{mub}
{\begin{split}
{\mu_b} \{x\in \mathbb R^n_+:\ |u_{ij}(x)|>\alpha\}
  & \le C\alpha^{-1}\|f\|_{L^1_{\mu_b}(\mathbb R^n_+)} ,\\
 {\mu_b} \big\{x\in \mathbb R^n_+:\ \big|\frac{u_n}{x_n}(x)\big|>\alpha\big\}
  & \le C\alpha^{-1}\|f\|_{L^1_{\mu_b}(\mathbb R^n_+)} ,
   \end{split}}
\end{equation}
for a positive constant $C$ depending only on $n, b$.
\end{lemma}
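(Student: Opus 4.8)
The plan is to prove Lemma \ref{leml1} by a weighted Calder\'on--Zygmund argument, mimicking the classical weak-$(1,1)$ bound for singular integral operators but carried out with respect to the measure $\mu_b$. First I would fix $\alpha>0$ and apply the weighted Calder\'on--Zygmund decomposition of Lemma \ref{lemcz} to $f$, writing $f=g+h$ with $h=\sum_k h_k$, each $h_k$ supported in a cube $Q_k$, having mean zero with respect to $\mu_b$, and with $\sum_k\mu_b(Q_k)\le c\alpha^{-1}\|f\|_{L^1_{\mu_b}}$. Accordingly split $u=T_b(f)=T_b(g)+T_b(h)$ and estimate the two pieces separately. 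For the ``good'' part $T_b(g)$: since $|g|\le c\alpha$ and $\|g\|_{L^1_{\mu_b}}\le c\|f\|_{L^1_{\mu_b}}$, interpolating these bounds gives $\|g\|_{L^2_{\mu_b}}^2\le c\alpha\|f\|_{L^1_{\mu_b}}$, and then the weighted $L^2$ bound of Lemma \ref{leml2} (applied to $\p_{ij}T_b$ and $\frac{\p_n T_b}{x_n}$) combined with Chebyshev's inequality yields $\mu_b\{|\p_{ij}T_b(g)|>\alpha/2\}\le c\alpha^{-2}\|g\|_{L^2_{\mu_b}}^2\le c\alpha^{-1}\|f\|_{L^1_{\mu_b}}$, and similarly for $\frac{\p_n T_b(g)}{x_n}$.

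For the ``bad'' part $T_b(h)$ I would introduce, for each cube $Q_k$ with center $y^{(k)}$, the dilated cube $Q_k^*$ (say the cube concentric with $Q_k$ and side length a fixed multiple of that of $Q_k$, chosen large enough that $Q_k^*$ contains a neighborhood of $Q_k$ and of its reflection in $\{x_n=0\}$ when $Q_k$ is close to the boundary). Put $E=\bigcup_k Q_k^*$; then $\mu_b(E)\le c\sum_k\mu_b(Q_k)\le c\alpha^{-1}\|f\|_{L^1_{\mu_b}}$ because $\mu_b$ is a doubling measure (the weight $x_n^b$ is an $A_\infty$ weight, and in fact the ratio $\mu_b(Q_k^*)/\mu_b(Q_k)$ is bounded by a dimensional constant depending on $b$, whether or not $Q_k$ touches the boundary). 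It therefore suffices to control $\mu_b\big(\{x\notin E:\ |D^2 T_b(h)(x)|>\alpha/2\}\big)$, and for this I would use Chebyshev with the $L^1_{\mu_b}$-norm of the kernel away from the cubes:
\begin{equation*}
\int_{\mathbb R^n_+\setminus E}\big|\p_{ij}T_b(h)(x)\big|\,x_n^b\,dx
\le \sum_k \int_{\mathbb R^n_+\setminus Q_k^*}\Big|\int_{Q_k}\p_{ij}K_b(x,y)\,h_k(y)\,y_n^b\,dy\Big|x_n^b\,dx.
\end{equation*}
Using the mean-zero property $\int h_k\,d\mu_b=0$ I subtract $\p_{ij}K_b(x,y^{(k)})$ inside the inner integral, reducing the estimate to the kernel difference $|\p_{ij}K_b(x,y)-\p_{ij}K_b(x,y^{(k)})|$ for $y\in Q_k$ and $x\notin Q_k^*$; the bound \eqref{G2} on $\p_x^{\gamma_1}\p_y^{\gamma_2}K_b$ gives the H\"ormander-type estimate $|\p_{ij}K_b(x,y)-\p_{ij}K_b(x,y^{(k)})|\le C\,\mathrm{diam}(Q_k)\,|x-y^{(k)*}|^{-b}\,|x-y^{(k)}|^{-n+1-|\cdots|}$ (integrating the $y$-derivative along the segment from $y^{(k)}$ to $y$), which when integrated in $x$ over $\mathbb R^n_+\setminus Q_k^*$ against $x_n^b\,dx$ produces a constant times $\|h_k\|_{L^1_{\mu_b}}\le c\alpha\,\mu_b(Q_k)$; summing over $k$ and using \eqref{p-iii} gives the bound $c\|f\|_{L^1_{\mu_b}}$, hence $\mu_b\{x\notin E:|\p_{ij}T_b(h)|>\alpha/2\}\le c\alpha^{-1}\|f\|_{L^1_{\mu_b}}$. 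The same computation applies verbatim to $\frac{\p_n T_b(h)}{x_n}$, since $\p_n K_b(x,y)/x_n$ obeys the same type of kernel and gradient bounds from \eqref{G2} (note $\p_n K_b$ vanishes on $\{x_n=0\}$, so dividing by $x_n$ costs only one extra $x$-derivative's worth of decay). Combining the good and bad parts yields \eqref{mub}.

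The main obstacle I anticipate is the careful verification of the weighted H\"ormander condition, i.e.\ that the integrals $\int_{\mathbb R^n_+\setminus Q_k^*}|\p_{ij}K_b(x,y)-\p_{ij}K_b(x,y^{(k)})|\,x_n^b\,dx$ are bounded uniformly in $k$ by a constant independent of how close $Q_k$ is to $\{x_n=0\}$. This requires splitting the region of integration according to whether $|x-y^{(k)}|$ is comparable to, or much larger than, $|x-y^{(k)*}|$ (i.e.\ whether $x$ is near the reflected cube), handling the weight $x_n^b$ and the reflected-distance factor $|x-y^{(k)*}|^{-b}$ together; the doubling property of $\mu_b$ and the explicit form of $K_b$ in \eqref{G1}--\eqref{G2} make this a routine but somewhat delicate computation, and it is where the hypothesis $b>0$ (rather than $b>1$) is used. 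Everything else — the decomposition, the $L^2$-to-weak-$(1,1)$ interpolation of the good part, and the bookkeeping of dilated cubes — is standard once the measure is understood to be doubling.
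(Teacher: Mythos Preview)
Your proposal is correct and follows essentially the same Calder\'on--Zygmund argument as the paper: decompose $f=g+h$ via Lemma \ref{lemcz}, handle $T_b(g)$ by the $L^2_{\mu_b}$ bound plus Chebyshev, and handle $T_b(h)$ by the mean-zero cancellation and the kernel gradient estimate \eqref{G2}. Two points are worth noting. First, the weighted H\"ormander condition you flag as the main obstacle is in fact immediate: since $y^*=(y',-y_n)$ with $y_n>0$, one has $x_n\le |x-y^*|$ for every $x\in\mathbb R^n_+$, so the weight $x_n^b$ is absorbed by the factor $|x-y^*|^{-b}$ in \eqref{G2}, reducing the integral over $(Q_k^*)^c$ to the standard unweighted one $\int_{|x-y|\ge c\,\mathrm{diam}(Q_k)}\mathrm{diam}(Q_k)\,|x-y|^{-n-1}\,dx\le C$; no case splitting or reflection analysis is needed, and the ordinary doubled cube $Q_k^*=(q+2(Q_k-q))\cap\mathbb R^n_+$ suffices. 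Second, for the singular term $u_n/x_n$ the paper takes a shorter route than your direct kernel analysis: once the weak-$(1,1)$ bound for all $\p_{ij}T_b$ is established, the equation $b\,u_n/x_n=-\Delta u-f$ immediately gives the same bound for $u_n/x_n$, since both $\Delta u$ and $f$ are already controlled.
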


\begin{proof}
Let $ f=g+h=g+\sum_{k}h_k$ be the decomposition given in Lemma \ref{lemcz}. 
Since $T_b$ is a linear operator, we have
\begin{equation}
\begin{split}
&{\mu_b}(\{x\in \mathbb R^n_+:\ |\p_{x_ix_j}T_b f(x)|>\alpha\})\\
    \le & {\mu_b}(\{x\in \mathbb R^n_+:\ |\p_{x_ix_j}T_b g(x)|>\frac{\alpha}{2}\})
       +  {\mu_b}(\{x\in \mathbb R^n_+:\|\p_{x_ix_j}T_b h(x)|>\frac{\alpha}{2}\}).
\end{split}
\end{equation}
By Lemma \ref{lemcz}, we have $|g|\le C\alpha$,
and  $g$ is compactly supported since $f$ is. 
By Lemma \ref{leml2}, one has 
\begin{equation}
\|\p_{x_ix_j}T_b g(x)\|_{L_{\mu_b}^2(\mathbb R^n_+)}^2\le  \|g\|_{L_{\mu_b}^2(\mathbb R^n_+)}^2\le C\alpha\|g\|_{L_{\mu_b}^1(\mathbb R^n_+)}\le C\alpha\|f\|_{L_{\mu_b}^1(\mathbb R^n_+)}.
\end{equation}
This implies
\begin{equation}
 {\mu_b} \big(\{x\in \mathbb R^n_+:\ |\p_{x_ix_j}Tg(x)|>\frac{\alpha}{2}\} \big)\le C\alpha^{-1}\|f\|_{L_{\mu_b}^1(\mathbb R^n_+)}.
\end{equation}

Now for any cube $Q_k$ in Lemma \ref{lemcz}, let $Q_k^*=(q+2(Q_k-q))\cap \mathbb R^n_+$,
where $q$ is the center of $Q_k$.
Then for any $x\in (Q_k^*)^c$,  there holds
\begin{equation}\label{Thk}
\begin{split}
|(\p_{x_ix_j}Th_k)(x)|
              &=  \Big|\int_{\mathbb R^n_+}\p_{x_ix_j}K_b(x,y) h_k(y)d{\mu_b}(y)\Big|\\
             &=  \Big|\int_{Q_k}\left[\p_{x_ix_j}K_b(x,y)-\p_{x_ix_j}K_b(x,q)\right] h_k(y)d{\mu_b}(y) \Big| \\ 
             &\le  \int_{Q_k}\left|\p_{x_ix_j}K_b(x,y)-\p_{x_ix_j}K_b(x,q)\right| | h_k(y)|d{\mu_b}(y) \\ 
             &\le C \int_{Q_k}\frac{|y_n-q_n||h_k(y)|}{|x-y^*|^{b}|x-y|^{n+1}}d{\mu_b}(y) ,
\end{split}
\end{equation}
where in the second equality, we used (ii) in Lemma \ref{lemcz}. 
By \eqref{Thk} we then have  
\begin{equation}
\begin{split}
&\int_{(Q_k^*)^c}|(\p_{x_ix_j}Th_k)(x)|d{\mu_b}(x)\\
 &\le  C \int_{Q_k}|h_k(y)|d{\mu_b}(y) \int_{(Q_k^*)^c}\frac{|y_n-q_n|\, d{\mu_b}(x)}{|x-y^*|^b|x-y|^{n+1}}\\
 &\le  C\int_{Q_k}|h_k(y)|d{\mu_b}(y)\le C\alpha {\mu_b}(Q_k) ,
\end{split}
\end{equation}
where $y^*$ is reflection of $y$ in $\{x_n=0\}$.
Then
\begin{equation*}
\begin{split}
&{\mu_b}(\{x\in \mathbb R^n_+:\ |\p_{x_ix_j}Th(x)|>\frac{\alpha}{2}\})\\
\le & {\sum}_k\, {\mu_b}(Q_k^*)+\frac{2}{\alpha}{\sum}_{k}\int_{(Q_k^*)^c}|(\p_{x_ix_j}Th_k)(x)|d{\mu_b}(x)\le \frac{C\|f\|_{L_{\mu_b}^1(\mathbb R^n_+)}}{\alpha}.
\end{split}
\end{equation*}
We obtain the first inequality in \eqref{mub}.

For the second inequality in \eqref{mub}, 
recall that $T_bf$ satisfies equation \eqref{as1}. 
Hence by \eqref{as1}, we infer that $\frac{\p_n T_b}{x_n}$ 
is also a bounded linear operator from $L^1_{\mu_b}(\mathbb R^n_+)$ 
to $L^1_{\mu_b,\omega}(\mathbb R^n_+)$.
\end{proof}

With the above preparation, we are ready to prove Theorem \ref{T5.2}

\vskip10pt

\noindent \textit{Proof of Theorem \ref{T5.2}.}
By Lemma \ref{leml2} and Lemma \ref{leml1}, the operator $\p_{x_ix_j}T_b$  satisfies
\begin{equation}
{\begin{split}
  & \|\p_{x_ix_j}T_b(f)\|_{L^2_{\mu_b}(\mathbb R^n_+)} \le C_{n,b}\|f\|_{L^2_{\mu_b}(\mathbb R^n_+)},\\
  &  \|\p_{x_ix_j}T_b(f)\|_{L^1_{{\mu_b},\omega}(\mathbb R^n_+)} \le C_{n,b}\|f\|_{L^1_{\mu_b}(\mathbb R^n_+)} 
\end{split}}
\end{equation}
for any $b>0$, where 
$$ \|v\|_{L^1_{{\mu_b},\omega}(\mathbb R^n_+)}
    =: \sup_{\alpha>0}\, \alpha\mu_b\{x\in \R^n_+:\ |v|>\alpha\}. $$
   Hence by the Marcinkiewicz Interpolation   [Theorem 1.3.1,\cite{BL12}], we infer that
\begin{equation}
\|\p_{x_ix_j}T_b(f)\|_{L^p_{\mu_b}(\mathbb R^n_+)}\le C_{n,b,p}\|f\|_{L^p_{\mu_b}(\mathbb R^n_+)}
\end{equation}
for any $p\in (1,  2]$, and for all $1\le i, j\le n$.
In applying the Marcinkiewicz Interpolation,
we regard $\p_{x_ix_j}T_b$ as the mapping in \cite{BL12}.

When $p>2$, let $p'=\frac{p}{p-1}<2$.   
For any $f, g\in C_c^\infty(\overline{\mathbb R^n_+})$, 
noting that the kernel $K_b(x, y)$ in \eqref{G1} is symmetric in $x'$ and $y'$, 
we have
\begin{equation}\label{301}
\int_{\mathbb R^n_+}  g \p_{ij}T_b (f)d{\mu_b}(x)=\int_{\mathbb R^n_+} f\p_{ij}T_b (g)d{\mu_b}(x) \ \ 
\forall\ 1\le i,j\le n-1 .
\end{equation}
Here we denote by $C_c^\infty(\overline{\mathbb R^n_+})$ smooth functions in $\overline{\mathbb R^n_+}$ with compact support. 
Hence we have
\begin{equation}\label{estlp1}
\begin{split}
\|\p_{ij}T_b (g)\|_{L^p_{\mu_b}(\mathbb R^n_+)}
  &=\sup_{\|f\|_{L^{p'}_{\mu_b}(\mathbb R^n_+)}=1}\int_{\mathbb R^n_+}f\p_{ij}T_b (g)d{\mu_b}(x)\\
  &\le \sup_{\|f\|_{L^{p'}_{\mu_b}(\mathbb R^n_+)}=1}
             \|g\|_{L^p_{\mu_b}(\mathbb R^n_+)}\|\p_{ij}T_b (f)\|_{L^{p'}_{\mu_b}(\mathbb R^n_+)}\\
  & \le C_{p'}\|g\|_{L^p_{\mu_b}(\mathbb R^n_+)} \ \ \forall\ 1\le i, j\le n-1.
\end{split}
\end{equation}
By approximation, we see that \eqref{estlp1} holds for non-smooth function $g$.

To prove the estimate \eqref{estlp1} for $\p_{in} T_b(g)$, $i=1,\cdots,n$, 
we need to introduce a new function $\tilde u$ defined in $\R^{n+1}$ (raising the dimension to $n+1$).   
Denote $u(x)=T_b(g)$ and  
\beq\label{utilde}
\tilde u(x,x_{n+1})=u\Big(x',\sqrt{x_n^2+x_{n+1}^2}\, \Big) = u(x', r),
\eeq
where $(x,x_{n+1})\in \mathbb R^{n+1}_+$, $r=\sqrt{x_n^2+x_{n+1}^2}$.
By direct computation, we have
\begin{equation}
{\begin{split}
  & \tilde u_{x_i}(x', x_n, x_{n+1})=u_{x_i}(x', r),\quad i=1,\cdots,n-1, \\
  & \frac{ \tilde u_{x_n}(x', x_n, x_{n+1})}{x_n} = \frac{u_r(x', r)}{r},\\
  & \frac{\tilde u_{x_{n+1}}(x', x_n, x_{n+1})}{x_{n+1}}=\frac{u_r(x', r)}{r},
  \end{split}}
\end{equation}
Hence $\tilde u$ satisfies
\begin{equation}\label{ras1}
\begin{cases}
\Delta_{x,x_{n+1}}  \tilde u+  \frac{b-1}{x_{n+1}}\tilde u_{n+1} =\tilde g\ \ \text{in}\  \mathbb R^{n+1}_+,\\
\tilde u_{n+1}(x,0)=0 \ \ \forall\  x\in \mathbb R^n  , 
\end{cases}
\end{equation}    
where $\tilde g(x,x_{n+1})=g(x', r)$, $\tilde u_{n+1} :=\tilde u_{x_{n+1}}$.
The boundary condition $\tilde u_{n+1}(x,0)=0 $ is due to that $u$ is even in $x_{n+1}$.
For equation \eqref{ras1}, we need to assume that $b>1$ for the $W^{2,p}$ estimate.

As $\tilde u$ and $T_{b-1}(\tilde g)$ satisfy the same equation \eqref{ras1}, 
by the Neumann boundary condition and the decay at $\infty$, we infer that
$\tilde u=T_{b-1}(\tilde g)$.
Hence for $b>1$ and $i=1,\cdots,n$, similarly to estimate \eqref{estlp1}, we have
\begin{equation}
{\begin{split} 
\int_{\mathbb R^{n+1}_+}|\tilde u_{ni}|^p x_{n+1}^{b-1} dxdx_{n+1}
  & \le C_{p,b} \int_{\mathbb R^{n+1}_+}|\tilde g|^p x_{n+1}^{b-1} dxdx_{n+1} \\
  & =\tilde C_{p,b} \int_{\mathbb R^{n}_+}|g|^p x_{n}^{b} dx.
  \end{split}}
\end{equation}
For the equality above, we use the polar coordinates for the variables $x_n, x_{n+1}$, i.e. 
$x_n=r\cos\theta$, $x_{n+1}=r\sin\theta$, $\theta\in [0,\pi]$. 
Notice that
\begin{equation*}
\begin{split}
&\int_{\mathbb R^{n+1}_+}|\tilde u_{ni}|^p x_{n+1}^{b-1} dxdx_{n+1}\\
=&\int_{\mathbb R^{n+1}_+}\left|u_{ri}\cdot \frac{x_n}{r}\right|^p x_{n+1}^{b-1}dxdx_{n+1}\\
=& \int_{\mathbb R^n_+}|u_{ri}|^p r^bdx'dr\cdot\int_0^\pi |\cos\theta|^p(\sin\theta)^{b-1} d\theta\\
=&\overline{C}_{p,b}\int_{\mathbb R^n_+} |u_{ni}|^p x_{n}^{b} dx.
\end{split}
\end{equation*}
We obtain
\begin{equation}\label{Tbij}
\|\p_{ij}T_{b}g\|_{L^p_{\mu_b}(\mathbb R^n_+)} \le C_{b,n,p}\|g\|_{L^p_{\mu_b}(\mathbb R^n_+)},\quad p\in (1,+\infty)
\end{equation}
for all $1\le i,j\le n$.  

\vskip10pt

By equation \eqref{as1} and  estimate \eqref{Tbij}, we also have
\begin{equation} 
\|\frac{u_n}{x_n}\|_{L^p_{\mu_b}(\mathbb R^n_+)}\le C\|f\|_{L^p_{\mu_b}(\mathbb R^n_+)} .
\end{equation}
This completes the proof of Theorem \ref{T5.2}. \hfill$\square$

 \vskip10pt

\noindent \textit{Proof of Corollary \ref{C5.1}.}
By Theorem \ref{T5.2} and the interpolation inequality, it suffices to prove
\begin{equation}\label{uLp}
\|u\|_{L^p_{\mu_b}(\mathbb R^n_+)}\le C\|f\|_{L^p_{\mu_b}(\mathbb R^n_+)}.
\end{equation}
By a dilation of the coordinates,  we assume that $\text{supp}\, f\subset B_1(0)$.
By Lemma \ref{lemasy}, we have
\begin{equation}
|u(x)|\le \frac{C\|f\|_{L^1_{\mu_b}(\mathbb R^n_+)}}{|x|^{n-2+b}}
        \le \frac{C\|f\|_{L^p_{\mu_b}(\mathbb R^n_+)}}{|x|^{n-2+b}},\quad |x|\ge 2.
\end{equation}
It follows that
\begin{equation}\label{uLp1}
{\begin{split}
\int_{\mathbb R^n_+\backslash B_2}|u|^p d\mu_b
        & \le C^p\|f\|^p_{L^p_{\mu_b}(\mathbb R^n_+)}\int_{\mathbb R^n_+\backslash B_2}\frac{x_n^b}{|x|^{p(n-2+b)}}dx \\
        & \le C_1C^p\|f\|^p_{L^p_{\mu_b}(\mathbb R^n_+)}
        \end{split}}
\end{equation}
provided $p(n-2+b)>n+b$.  

Let $\xi \in C^\infty(\mathbb R^n)$ be a  cut-off function  satisfying
$0\le \xi\le 1$ in $\R^n$, $\xi = 1$  in $B_2$, and $\xi=0$ outside $B_4$. 
Then by Poincare's inequality, we have
\begin{equation}
\begin{split}
\int_{\mathbb R^n_+}|\xi u|^pd\mu_b
   & \le  C\int_{\mathbb R^n_+}|D^2(\xi u)|^pd\mu_b\\ 
   & \le C\int_{B_4\cap \mathbb R^n_+}\left(|D^2 u|^p+|D^2\xi|^p |u|^p \right)d\mu_b\\
   & \le C\|f\|^p_{L^p_{\mu_b}(\mathbb R^n_+)}.
\end{split}
\end{equation}
Combining the above two estimates yields \eqref{uLp}.
\hfill$\square$

We have obtained the $C^{2,\alpha}$ and $W^{2,p}$ estimates for the special solution $u=T_b(f)$.
Next we prove that these two a priori estimates hold for any other solutions to \eqref{as1}.
First we prove

\begin{lemma}\label{L5.5}
Let $u\in W^{2,p}_{loc}(\R^n_+)\cap  C^1 (\overline{\R^n_+})$ be a solution to
\beq\label{as1a}
-\Delta u-b\frac{u_{x_n}}{x_n} = f \ \ \ \text{in}\ \ \R^n_+ .
\eeq
Assume  $b>1$, $p>n+b$,  and  $f\in L^p_{\mu_b} (B_1^+)$. 
 Then $u$ satisfies 
 \beq\label{as1b}
 u_{x_n} =0\ \ \ \text{on}\ \ x_n=0 .  
\eeq
\end{lemma}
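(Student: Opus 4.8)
The plan is to argue by contradiction: if $u_{x_n}(x_0',0)\neq 0$ at some boundary point $x_0'$, then near $x_0'$ the singular term $b\,u_{x_n}/x_n$ behaves like $c/x_n$ with $c\neq 0$, which forces $f=-\Delta u - b\,u_{x_n}/x_n$ to blow up at least like $1/x_n$ near the boundary, and this is incompatible with the hypothesis $f\in L^p_{\mu_b}(B_1^+)$ once $p>n+b$ (more precisely, once $p>b+1$, since $\int_0^1 x_n^{b}\,x_n^{-p}\,dx_n=\infty$ exactly when $p\ge b+1$). So the heart of the matter is to show that $\Delta u$ does \emph{not} also blow up like $1/x_n$ in a way that cancels the singularity — i.e.\ that the singular contributions cannot conspire to keep $f$ integrable against the weight $x_n^b$.

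First I would localize: fix a point which after translation we take to be $0\in\{x_n=0\}$, and work in a small half-ball $B_\rho^+$. Since $u\in C^1(\overline{\R^n_+})$, write $u_{x_n}(x)=u_{x_n}(x',0)+O(|x_n|^{?})$; but $C^1$ only gives $u_{x_n}(x)=u_{x_n}(x',0)+o(1)$ as $x_n\to 0$, so if $a(x'):=u_{x_n}(x',0)$ is continuous and $a(0)\neq 0$, then on a small half-cylinder $Q=\{|x'|<\rho\}\times\{0<x_n<\rho\}$ we have $|u_{x_n}(x)|\ge \tfrac12|a(0)|>0$. Hence $|b\,u_{x_n}/x_n|\ge c_0/x_n$ on $Q$ with $c_0>0$. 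Now I would test the equation: multiply \eqref{as1a} by a nonnegative cutoff $\eta(x')\zeta(x_n)$ supported in $Q$ and integrate against $x_n^b\,dx$. The key computation is that $\int_Q \Delta u \cdot \eta\zeta\, x_n^b\,dx$, after integration by parts moving derivatives onto $\eta\zeta x_n^b$, produces only terms of the form $\int u_{x_j}\partial_j(\eta\zeta x_n^b)$; since $u\in C^1$ these are controlled by $\|Du\|_{L^\infty}\int_Q x_n^{b-1}\,dx<\infty$ (this is where $b>1$ — actually $b>0$ — is used for local integrability of $x_n^{b-1}$). On the other hand $\int_Q b\,\frac{u_{x_n}}{x_n}\eta\zeta\,x_n^b\,dx$ has a definite sign if $a(0)\neq 0$ (choose the sign of the cutoff) and, because the integrand is $\gtrsim c_0\, x_n^{b-1}$, this integral is also finite — so integration by parts alone does not immediately yield a contradiction. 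The correct move is instead to let the $x_n$-cutoff degenerate: take $\zeta=\zeta_\delta$ with $\zeta_\delta\equiv 1$ on $\{x_n>2\delta\}$, $\zeta_\delta\equiv 0$ on $\{x_n<\delta\}$, $|\zeta_\delta'|\le C/\delta$, and examine the boundary term $\int u_{x_n}\,\zeta_\delta'(x_n)\,\eta\, x_n^b\,dx$ arising from integrating $\int u_{x_nx_n}\eta\zeta_\delta x_n^b$ by parts in $x_n$; this term converges to $-b\int_{\{x_n=0\}}$-type contribution plus $\int u_{x_n}\,\eta\, x_n^b\,(\text{from }\zeta_\delta')$. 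Carefully, the distributional computation shows $\partial_n(x_n^b u_{x_n})=x_n^b(u_{x_nx_n}+b u_{x_n}/x_n)$ has no boundary mass at $x_n=0$ precisely when $u_{x_n}(x',0)=0$; conversely if $u_{x_n}(x',0)\neq 0$ then, because $b>1$ forces $x_n^b\to 0$ faster than $u_{x_n}$ stays bounded away from $0$...

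Here is the cleaner route, which I would actually adopt. Rewrite \eqref{as1a} in the variable $t=x_n^2/4$ (as done in Lemma 3.3 of the excerpt), or more directly observe that $-\partial_n(x_n^b u_{x_n}) - x_n^b\Delta_{x'}u = x_n^b f$ in the sense of distributions on $B_1^+$. Integrate this over $\{x'\in\omega,\ \varepsilon<x_n<\eta\}$ for a small open $\omega\ni 0$ and send $\varepsilon\to 0$: the flux term gives $\lim_{\varepsilon\to 0}\varepsilon^b u_{x_n}(x',\varepsilon)$, which equals $0$ automatically since $b>1>0$ and $u_{x_n}$ is bounded. That shows the flux identity holds with \emph{no} boundary term regardless — so the contradiction must come from a sharper, pointwise statement, namely that $u_{x_n}(x',x_n)/x_n$ must be \emph{bounded} near $x_n=0$, which then forces $u_{x_n}(x',0)=0$. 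To get that boundedness, I would use the representation $u=T_b(f)+h$ where $h$ solves the homogeneous equation $\mathcal L_b h=0$ in $B_1^+$ with $h_{x_n}(x',0)=0$: by Theorem 5.1 (or \eqref{e5d}) applied to $T_b(\chi f)$ after cutting off, $T_b$-part has $\partial_n T_b(f)/x_n\in C^\alpha$, hence bounded, so in particular $\partial_n T_b(f)(x',0)=0$; and for the homogeneous part $h$ one uses Horiuchi's interior regularity for $\mathcal L_b$ (the Schauder estimate of Theorem 5.1 is interior/boundary with the Neumann condition built in) to conclude $h_{x_n}/x_n$ is likewise bounded near the boundary, giving $h_{x_n}(x',0)=0$. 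Summing, $u_{x_n}(x',0)=0$.

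The main obstacle, and the step I expect to require the most care, is justifying the decomposition $u=T_b(f)+h$ and the regularity of the homogeneous correction $h$: one must know that any $W^{2,p}_{loc}\cap C^1$ solution of the homogeneous Keldysh equation with the stated (weak) boundary behavior actually satisfies $h_{x_n}(x',0)=0$, which is essentially a unique-continuation / boundary-regularity fact for $\mathcal L_b$. I would handle it by a barrier argument: the function $x_n$ itself satisfies $\mathcal L_b(x_n)=-b/x_n<0$... no — rather, comparing $h$ with $\pm C(x_n^2+|x'-x_0'|^2)$-type barriers (which are supersolutions of $\mathcal L_b$ up to controlled error, using that $\mathcal L_b(x_n^2)=-2-2b$ is a negative constant, \emph{no} singularity) to show $|h_{x_n}(x',x_n)|\le C x_n$ near the boundary, hence the trace of $h_{x_n}$ at $x_n=0$ vanishes. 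Once $h_{x_n}(\cdot,0)\equiv 0$ and $\partial_nT_b(f)(\cdot,0)\equiv 0$, \eqref{as1b} follows, and the role of the hypothesis $p>n+b$ is exactly to make the Morrey-type embedding available so that $f\in L^p_{\mu_b}$ controls $T_b(f)$ in $C^1$ (via $W^{2,p}_{\mu_b}\hookrightarrow C^1$, which needs $p>n+b$ as in Corollary 5.1), legitimizing the whole decomposition.
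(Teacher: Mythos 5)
Your final strategy --- localize, split $u$ into $T_b(\cdot)$ of a compactly supported right-hand side plus a homogeneous correction $h$, read off the boundary condition from the regularity of $T_b$, and kill $h$ by a comparison argument --- is in spirit the paper's argument, and the $T_b$ half of it is sound (for $p>n+b$, Theorem \ref{T5.2} plus the Sobolev embedding and the finiteness of $\|\partial_n T_b/x_n\|_{L^p_{\mu_b}}$ do force $\partial_n T_b(\hat f)(x',0)=0$). But the step you yourself flag as ``the main obstacle'' --- showing the homogeneous correction $h$ satisfies $h_{x_n}(\cdot,0)=0$ --- is exactly the content of the lemma, and the barrier you propose cannot close it. If $h_{x_n}(x_0',0)=a\neq 0$, then $h(x_0',x_n)-h(x_0',0)\sim a\,x_n$, and a comparison function of the form $\pm C\bigl(x_n^2+|x'-x_0'|^2\bigr)$ vanishes to \emph{second} order in $x_n$ at $(x_0',0)$, so it can never dominate the linear behavior you are trying to exclude; the comparison inequality would simply fail on the segment $x'=x_0'$, $x_n\to 0$, unless $a=0$, which is what you are trying to prove. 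Your earlier attempts (the weighted integration by parts and the flux identity for $\partial_n(x_n^b u_{x_n})$) are correctly recognized as dead ends, and the ODE-flavored heuristic ($h_{nn}\sim -ab/x_n$ forces a logarithmic divergence of $h_n$) would require a priori boundedness of $\Delta_{x'}h$ up to the boundary, which a $W^{2,p}_{loc}\cap C^1$ solution does not provide.

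The paper's resolution is different in precisely this step, and it is worth seeing why it works. One sets $v=\eta_\varepsilon u$, so that $v$ solves $\mathcal R(v)=\hat f$ with $\hat f$ compactly supported, puts $W=T_b(\hat f)$, and compares $v$ with $W\mp\epsilon H$ where $H(x_n)=x_n^{1-b}$ is the \emph{singular} homogeneous solution. Because $b>1$, $H\to+\infty$ as $x_n\to 0^+$, so $\epsilon H$ dominates the bounded function $v-W$ at the boundary no matter what its normal derivative is; the decay $|W|=O(|x|^{2-n-b})$ from Lemma \ref{lemasy} handles infinity; and Aleksandrov's maximum principle then gives $|v-W|\le\epsilon H$ for every $\epsilon>0$, hence $v\equiv W$. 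In other words, the homogeneous correction is shown to vanish identically rather than merely to have vanishing normal derivative, and the barrier that accomplishes this is the blowing-up solution $x_n^{1-b}$ --- this is where the hypothesis $b>1$ is actually used, not (as you suggest) for the local integrability of $x_n^{b-1}$. Without this ingredient, or some substitute uniqueness/removable-singularity statement for $\mathcal L_b$, your proof has a genuine gap at its central step.
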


\begin{proof}
 Let $\eta(\tau)\in C_c^\infty(\mathbb R)$ be a cut-off function which satisfies
 $0\le \eta\le 1$, $\eta\equiv 1$ in  $B_1(0)$, and  supp\,$\eta\subset B_2(0)$.
Let 
$
\eta_{\ve}(x)=\eta\left(\frac{|x|}{\ve}\right)
$
for a small constant $\ve>0$, 
and 
let $v=\eta_\ve u$. Then $v$ solves the following equation
\begin{equation*}
\mathcal R(v)=:\ \Delta v+b\frac{v_n}{x_n} =\eta_\varepsilon f + 2\partial_i u\partial_i \eta_\varepsilon+u\Delta \eta_\varepsilon
                        + bu \frac{\partial_n\eta_\varepsilon}{x_n}  =:\hat f . 
\end{equation*}
Denote $W= T_b(\hat f) $,
where $T_b(\cdot)$ is the integral operator given by \eqref{as2}. 
Denote $H(x_n)=x_n^{1-b}$. 
Then $\mathcal R(H)= 0$ and $H\to \infty$ as $x_n\to 0^+$.  Hence
\begin{equation*}
\mathcal R(v-W+\epsilon H)=0\quad \text{in}\quad \mathbb R^n_+.
\end{equation*}
By Lemma \ref{lemasy}, $|W(x)|\le C/|x|^{n-2+b} = o(H)$ as $|x|\to\infty$. Hence
\begin{equation*}
\lim_{x\rightarrow \infty}(v-W+\epsilon H)\ge 0
\end{equation*}
for any given small $\eps>0$.
By Theorem \ref{T5.2}, $W\in W^{2,p}_{\mu_b}(\R^n_+)$. 
Hence $W\in L_{loc}^\infty(\R^n_+)$ by the Sobolev embedding  \cite[Lemma B.3]{HH12}.
We obtain
\begin{equation*}
 \lim_{x_n\rightarrow 0}(v-W+\epsilon H)\ge 0.
\end{equation*}
Moreover, since $W\in W^{2,p}_{\mu_b}(\R^n_+)$ for $p>n$ and $v\in C^2(B_1^+)$, 
we can apply Aleksandrov's maximum principle for strong solution \cite{GT} and obtain
$
v-W\ge -\epsilon H.
$
Letting $\epsilon\rightarrow 0$, we obtain $v\ge W$. 
Similarly, we have $W\ge v$. 
This implies 
\begin{equation} \label{uvw}
\eta_\ve u=v=W=T_b(\hat f)
\end{equation}
and so \eqref{as1b} is proved.
\end{proof}
 
The condition $u\in  C^1 (\overline{\R^n_+})$ in Lemma \ref{L5.5} is such that $u_{x_n}$ exists on $x_n=0$.
If $p>n+b$ and $u\in W^{2,p}_{loc}(\overline{\R^n_+})$, 
by the Sobolev embedding we have $u\in  C^1 (\overline{\R^n_+})$.
For the proof of \eqref{uvw}, it suffices to assume that $u\in W^{2,p}_{loc}(\R^n_+)\cap  L^\infty (\R^n_+)$.
When $b\ge 1$ is an integer, $u_{x_nx_n}+\frac{b}{x_n} u_{x_n}$ is actually the Laplacian operator 
for rotationally symmetric functions in $\R^{1+b}$. 
Lemma \ref{L5.5} means for bounded solutions, the singularity at $x_n=0$ is removable.

By \eqref{uvw}, we see that the $C^{2,\alpha}$ and $W^{2,p}$ estimates in Theorems \ref{T5.1} and \ref{T5.2} 
hold for any solutions  $u\in W^{2,p}_{loc}(\R^n_+)\cap  L^\infty (\R^n_+)$.
Moreover, the estimates can be extended easily to linear singular elliptic equations of Keldysh type with variable coefficients,
\begin{equation}\label{new1}
 {\Small\text{$\sum$}}_{i,j=1}^{n}a^{ij}\partial_{ij} u
    + {\Small\text{$\sum$}}_{i=1}^{n-1}b^i\partial_i u+\frac{b^n}{x_n}\partial_n u+cu=f\ \ \ \text{in}\ B_1^+ .
\end{equation}
Assume that
\begin{equation}\label{new2}
\begin{split}
 & 0<\lambda I \le (a^{ij})_{i,j=1}^n\le\Lambda I<+\infty \ \ \  \text{in}\ B_1^+ ,\\
 & \frac{b^n}{a^{nn}}=b>1\ \ \text{is a constant}, \\
 & |c|+{\Small\text{$\sum$}}_{i=1}^n |b^i|\le \Lambda\ \ \ \text{in}\ B_1^+,
\end{split}
\end{equation}
for two positive constants $\lambda$, $\Lambda$. 
Then by the freezing coefficient method, we have the following a priori estimates.

\begin{theorem}\label{T5.3}
Let $u\in W^{2,p}_{loc}(\R^n_+)\cap  L^\infty (\R^n_+)$ be a solution to  \eqref{new1}.
Assume that $a^{ij}\in C(\overline{B_1^+})$ satisfy conditions \eqref{new2},
and $f\in L^p_{\mu_b} (B_1^+)$ for some $p>n+b$.
Then $u\in W^{2,p}_{\mu_b}(B_{1/2}^+)$ and satisfies the estimate
\begin{equation}\label{est-fullp}
\|u\|_{W^{2,p}_{\mu_b}(B_{1/2}^+)}+\Big\|\frac{u_n}{x_n}\Big\|_{L^p_{\mu_b}(B_{1/2}^+)}
      \le C\left(\|f\|_{L^p_{\mu_b}(B_1^+)}+\|u\|_{L^p_{\mu_b}(B_1^+)}\right) ,
\end{equation}
where  $C>0$ depends only on $p, n, \lambda, \Lambda$ and the modulus of continuity of $a^{ij}$.
\end{theorem}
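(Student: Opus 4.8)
The plan is to run the standard freezing-coefficient argument that upgrades the constant-coefficient weighted estimate of Corollary \ref{C5.1}, together with the removable-singularity Lemma \ref{L5.5}, to the variable-coefficient equation \eqref{new1}; the only nonstandard features come from the singular term $\frac{b^n}{x_n}\p_n u$ and from the weight $\mu_b$. The first step is a normalization: since \eqref{new2} forces $b^n/a^{nn}\equiv b$ to be a \emph{constant}, dividing \eqref{new1} by $a^{nn}(x)$ rewrites it as
\[
  \p_{nn}u+\frac{b}{x_n}\p_n u+\sum_{(i,j)\ne(n,n)}\widehat a^{\,ij}(x)\p_{ij}u+\sum_{i=1}^{n-1}\widehat b^{\,i}(x)\p_i u+\widehat c(x)u=\widehat f(x),
\]
where $\widehat a^{\,ij},\widehat b^{\,i},\widehat c$ are bounded by a constant depending on $\lambda,\Lambda$, the $\widehat a^{\,ij}$ retain the modulus of continuity of the $a^{ij}$, and $\widehat f=f/a^{nn}$. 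The crucial point is that the coefficient of the singular term is now the constant $b$, so it is never frozen; only the bounded coefficients $\widehat a^{\,ij}$ are subject to the freezing procedure.

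Next I would prove two local estimates and patch them by a finite covering of $\overline{B_{1/2}^+}$. For an interior ball $B_\rho(x_0)\subset\subset B_1^+$ with $\mathrm{dist}(x_0,\{x_n=0\})\ge 2\rho$, the term $\frac{b}{x_n}\p_n u$ has bounded coefficient and the weight $x_n^b$ is comparable to a positive constant on $B_\rho(x_0)$, so the classical interior Calderón--Zygmund $W^{2,p}$ estimate, with the $\widehat a^{\,ij}$ frozen exactly as in \cite{GT}, gives the weighted estimate on $B_{\rho/2}(x_0)$. The boundary estimate, on a half-ball $B_\rho^+(x_0)$ centred at a point $x_0\in\{x_n=0\}$, is where the work lies. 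Freeze the $\widehat a^{\,ij}$ at $x_0$; an affine change of variables acting only in the $x'$ variables together with a dilation in $x_n$ (which leaves $\{x_n=0\}$, the $x_n$-axis and the weight $x_n^b$ invariant up to a constant, and leaves the coefficient $b$ unchanged) normalizes the frozen principal part to the model operator $\mathcal L_b=-\Delta-\frac{b}{x_n}\p_n$ of \eqref{as1}, up to constant mixed second-order terms $\p_{nj}$; these are handled by the same construction underlying Theorems \ref{T5.1}--\ref{T5.2}, whose Calderón--Zygmund-plus-kernel proof applies verbatim to the constant-coefficient operator $\sum a^{ij}\p_{ij}+\frac b{x_n}\p_n$. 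Cutting off with $\eta\in C_c^\infty(B_\rho^+(x_0))$, the function $\eta u$ solves $\mathcal L_b(\eta u)=F$ in $\R^n_+$, where $F$ collects $\eta\widehat f$, the bounded first-order commutator terms $[\mathcal L_b,\eta]u$, and the oscillation term $\bigl(\widehat a^{\,ij}(x_0)-\widehat a^{\,ij}(x)\bigr)\p_{ij}(\eta u)$; by Lemma \ref{L5.5} and the identity \eqref{uvw} we have $\eta u=T_b(F)$, so Corollary \ref{C5.1} applies (using dilation invariance of \eqref{e5b}, \eqref{e5c}) and yields
\[
  \|\eta u\|_{W^{2,p}_{\mu_b}(\R^n_+)}+\Big\|\frac{(\eta u)_n}{x_n}\Big\|_{L^p_{\mu_b}}\le C\|F\|_{L^p_{\mu_b}}.
\]
Bounding $\|F\|_{L^p_{\mu_b}}$ by $\|\widehat f\|_{L^p_{\mu_b}(B_\rho^+)}+\omega(\rho)\,\|D^2 u\|_{L^p_{\mu_b}(B_\rho^+)}+C_\rho\|u\|_{W^{1,p}_{\mu_b}(B_\rho^+)}$, with $\omega$ the modulus of continuity of the $\widehat a^{\,ij}$, and choosing $\rho$ so small that $C\,\omega(\rho)<\tfrac12$, the $D^2u$ term is absorbed.

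Finally I would cover $\overline{B_{1/2}^+}$ by finitely many balls and half-balls of the above type, with $\rho$ uniform (depending only on $n,p,\lambda,\Lambda,\omega$), sum the local estimates, absorb the second-order contributions coming from the cutoffs into the left-hand side, and control the remaining first-order terms by the interpolation inequality $\|Du\|_{L^p_{\mu_b}}\le\epsilon\|D^2u\|_{L^p_{\mu_b}}+C_\epsilon\|u\|_{L^p_{\mu_b}}$ on the fixed ball, arriving at \eqref{est-fullp}. The main obstacle is the boundary estimate on $B_\rho^+(x_0)$: one must run a perturbation/absorption argument in the degenerate weight $x_n^b$ right at $\{x_n=0\}$, and this succeeds only because (i) the singular coefficient is exactly the constant $b$, so freezing it introduces \emph{no} singular error of size $o(1)/x_n$ — this is the role of the hypothesis $b^n/a^{nn}\equiv b$ — and (ii) Lemma \ref{L5.5} converts the localized solution into $T_b$ of its right-hand side, making Corollary \ref{C5.1} applicable; a secondary point to verify is that the a priori hypothesis $u\in W^{2,p}_{loc}(\R^n_+)\cap L^\infty(\R^n_+)$ with $p>n+b$ is enough to invoke Lemma \ref{L5.5} (via $u\in C^1(\overline{\R^n_+})$ by Sobolev embedding).
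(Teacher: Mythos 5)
Your proposal follows exactly the route the paper intends: the paper offers no written proof of Theorem \ref{T5.3} beyond the phrase ``by the freezing coefficient method,'' and your normalization by $a^{nn}$ (so that the singular coefficient is the constant $b$ and is never frozen), the interior/boundary dichotomy, the identification $\eta u=T_b(F)$ via Lemma \ref{L5.5} and \eqref{uvw}, the absorption of the oscillation term by continuity of the coefficients, and the finite covering are precisely the standard implementation of that method on top of Corollary \ref{C5.1}.

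The one step you assert rather than prove is the treatment of the frozen mixed terms $\widehat a^{\,jn}(x_0)\p_{jn}$, $j<n$, at a boundary point $x_0\in\{x_n=0\}$. An affine map acting only in $x'$ together with a dilation in $x_n$ cannot remove them, and the shear $y'=x'-vx_n$, $y_n=x_n$ that would kill them turns the singular term into $\frac{b}{y_n}\bigl(\p_{y_n}u-\sum_j v_j\p_{y_j}u\bigr)$; since the tangential derivatives do not vanish on $\{x_n=0\}$, this produces a genuinely singular, non-absorbable error. The kernel $K_b$ of \eqref{G1}, its estimates \eqref{G2}, and the integration-by-parts identity behind Lemma \ref{leml2} are all specific to $-\Delta-\frac{b}{x_n}\p_n$, so the claim that the Calder\'on--Zygmund-plus-kernel proof ``applies verbatim'' to $\sum A^{ij}\p_{ij}+\frac{b}{x_n}\p_n$ with $A^{jn}\neq 0$ requires a separate construction (an anisotropic analogue of $K_b$ compatible with the reflection $y^*$), which neither you nor the paper supplies. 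In the paper's actual applications the issue does not arise: in \eqref{high1} and \eqref{high-a3} the mixed coefficients are multiples of $\zeta_{s\theta_i}$, which vanish on $\{s=0\}$ by Lemma \ref{lemconti1} (cf.\ the remark $a_{in}(0)=0$ in the proof of Theorem \ref{thm6}), so the frozen operator at a boundary point has no mixed terms and the small mixed coefficients nearby join the oscillation error. Either supply the anisotropic kernel, or restrict to $a^{jn}=0$ on $\{x_n=0\}$, which is what is actually used; with that, the rest of your argument is complete.
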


\begin{theorem}\label{T5.4}
Let $u\in C^2(B_1^+)\cap C^{1,\alpha}(\overline{B_1^+})$ be a solution to  \eqref{new1}.
Assume that $a^{ij},b^n,c,f\in C^\alpha(\overline{B_1^+})$ and condition \eqref{new2} holds.
 Then $u\in C^{2,\alpha}(\overline{B_{1/2}^+})$ and we have the estimate 
 \begin{equation}\label{544}
\|u\|_{C^{2,\alpha}(\overline{B_{1/2}^+})}+\Big\|\frac{u_n}{x_n}\Big\|_{C^\alpha(\overline{B_{1/2}^+})}
             \le C\left(\|f\|_{C^{\alpha}(\overline{B_1^+})}+\|u\|_{C^0(\overline{B_1^+})}\right)
\end{equation}
for a constant $C>0$ depending only on $n,\alpha,\lambda,\Lambda$ and $\|a^{ij}\|_{C^{\alpha}(\overline{B_1^+})},\|b^i\|_{C^{\alpha}(\overline{B_1^+})}$, $\|c\|_{C^{\alpha}(\overline{B_1^+})}$.
\end{theorem}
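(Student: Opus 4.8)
\noindent\emph{Proof proposal.}
The plan is to run the classical freezing--coefficient (Korn) argument, with Horiuchi's constant--coefficient estimate (Theorem \ref{T5.1}) as the building block, using Lemma \ref{L5.5} and the already established weighted $W^{2,p}$ bound (Theorem \ref{T5.3}) to supply, respectively, the boundary condition and the a~priori finiteness needed in the absorption step. First, dividing \eqref{new1} by $a^{nn}$ we may assume $a^{nn}\equiv 1$, so the coefficient of $\frac{u_{x_n}}{x_n}$ is the constant $b>1$; by \eqref{new2} the new matrix $(a^{ij})$ is still uniformly elliptic with constants controlled by $\lambda,\Lambda$, and all coefficient $C^\alpha$ norms are controlled. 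By Theorem \ref{T5.3}, $u\in W^{2,p}_{\mu_b,\mathrm{loc}}(\overline{B_1^+})$ for $p>n+b$; rewriting the equation as $-\Delta u-\frac b{x_n}u_{x_n}=\tilde f$ with $\tilde f=-f+\sum_{(i,j)\neq(n,n)}(a^{ij}-\delta_{ij})\partial_{ij}u+\sum_{i<n}b^i\partial_iu+cu\in L^p_{\mu_b,\mathrm{loc}}$, Lemma \ref{L5.5} yields the Neumann condition $u_{x_n}(x',0)=0$. This makes Horiuchi's representation and his estimate \eqref{e5a} available.

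\medskip\noindent\textbf{Constant--coefficient step.}
Fix $\hat x_0\in\{x_n=0\}\cap B_{3/4}$ and let $L_0=\sum_{i,j}a^{ij}(\hat x_0)\partial_{ij}+\frac b{x_n}\partial_n$ be the frozen operator. Here is the one place where the singular term genuinely interacts with the freezing: any linear change normalising the frozen principal part and preserving $\{x_n=0\}$ turns $\frac1{x_n}\partial_n$ into $\frac1{x_n}(\partial_n+\text{tangential})$, so $L_0$ cannot be reduced affinely to Horiuchi's model once the cross coefficients $a^{in}(\hat x_0)$, $i<n$, are nonzero. To circumvent this I would compose with the substitution $t=\tfrac14x_n^2$ used in passing from \eqref{002} to Lemma \ref{lemholder}: in the variables $(x',t)$ the equation reads
\begin{equation*}
t\,u_{tt}+\tfrac{b+1}{2}\,u_t+\sum_{i,j<n}a^{ij}(x)\,u_{x_ix_j}+2\sqrt t\sum_{i<n}a^{in}(x)\,u_{x_it}+(\text{l.o.t.})=\tilde f ,
\end{equation*}
so the Keldysh singularity has become the \emph{degenerate} term $t\,u_{tt}+\tfrac{b+1}{2}u_t$, \emph{separated} from the elliptic part, while the mixed second derivatives now carry the vanishing factor $\sqrt t$. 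Freezing the $a^{ij}$ at $t=0$ and normalising the $(n-1)\times(n-1)$ block by an affine change in $x'$ (which does not touch $t$), the model operator is exactly $\Delta_{x'}+t\,\partial_{tt}+\tfrac{b+1}{2}\partial_t$, i.e.\ Horiuchi's model $-\Delta-\frac b{x_n}\partial_n$ in the variable $x_n=2\sqrt t$. Theorem \ref{T5.1} (equivalently the kernel bounds \eqref{G1}--\eqref{G2}) then gives the $C^{2,\alpha}$ estimate for this model up to $\{x_n=0\}$, hence for $L_0$: on $B_r^+(\hat x_0)$,
\begin{equation*}
\|v\|_{C^{2,\alpha}}+\Big\|\tfrac{v_n}{x_n}\Big\|_{C^\alpha}\le C\big(\|L_0v\|_{C^\alpha}+\|v\|_{C^0}\big),\qquad C=C(n,\alpha,\lambda,\Lambda,b),
\end{equation*}
uniformly in $\hat x_0$.

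\medskip\noindent\textbf{Perturbation, localisation, patching.}
On $B_r^+(\hat x_0)$ write $L_0u=\tilde f-\sum(a^{ij}(x)-a^{ij}(\hat x_0))\partial_{ij}u-\sum_{i<n}b^i\partial_iu-cu=:F$. On a sufficiently small half--ball $\|a^{ij}(\cdot)-a^{ij}(\hat x_0)\|_{C^0}$ and (in the $t$--variable) the factor $\sqrt t$ multiplying the mixed terms are as small as we please, so the corresponding contributions to $[F]_{C^\alpha}$ are $\le\varepsilon[u]_{C^{2,\alpha}}+C_\varepsilon\|u\|_{C^0}$, and the lower--order terms are controlled by interpolation. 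Applying the constant--coefficient estimate, absorbing $\varepsilon[u]_{C^{2,\alpha}}$, and covering $\overline{B_{1/2}^+}$ by finitely many such boundary half--balls (of a fixed radius) together with finitely many interior balls $B_\rho(x_0)\subset\{x_n>0\}$ (where $\frac b{x_n}\partial_nu$ is a bounded first--order term and one uses classical interior Schauder with the usual scaling), one obtains \eqref{544} by a standard chaining argument for Hölder seminorms. The absorption is legitimate because the relevant weighted $C^{2,\alpha}$ quantity is a~priori finite: interior $C^{2,\alpha}$ regularity in $\{x_n>0\}$ controls $D^2u$ away from the boundary, and near $\{x_n=0\}$ Theorem \ref{T5.3} provides the $L^p_{\mu_b}$ bound feeding a Campanato--type iteration (so that, in that formulation, one need not assume $[u]_{C^{2,\alpha}}<\infty$ beforehand).

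\medskip\noindent The main obstacle is the constant--coefficient step, namely reconciling the singular term $\frac b{x_n}\partial_n$ with the cross second--order coefficients $a^{in}$ after freezing; this is precisely why the reduction is routed through $t=\tfrac14x_n^2$, which trades the Keldysh singularity for a boundary degeneracy and simultaneously demotes the mixed derivatives to lower order near $\{t=0\}$. Once the clean model estimate of Theorem \ref{T5.1} is in hand, together with the automatic Neumann condition from Lemma \ref{L5.5} and the a~priori weighted $W^{2,p}$ bound from Theorem \ref{T5.3}, the remainder is the textbook Schauder perturbation scheme, and the estimate for $\big\|\tfrac{u_n}{x_n}\big\|_{C^\alpha}$ comes along with it since it is already part of \eqref{e5a}.
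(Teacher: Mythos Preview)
The paper gives no proof of Theorem \ref{T5.4} beyond the sentence ``by the freezing coefficient method'' applied to Horiuchi's constant--coefficient estimate (Theorem \ref{T5.1}), together with Lemma \ref{L5.5} to identify a general solution with $T_b(\hat f)$. Your proposal is therefore exactly the intended route, and you have in fact gone further than the paper by flagging the one nontrivial point: after normalising $a^{nn}\equiv1$, the frozen operator at a boundary point
\[
L_0u=u_{nn}+\tfrac{b}{x_n}u_n+\sum_{i,j<n}a^{ij}(\hat x_0)u_{ij}+2\sum_{i<n}a^{in}(\hat x_0)u_{in}
\]
cannot be reduced to $\mathcal L_b$ by an affine map preserving $\{x_n=0\}$ without creating singular tangential terms $\tfrac{1}{x_n}u_{x_i}$. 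The paper simply ignores this.

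Your fix via $t=\tfrac14x_n^2$, however, does not close as written. In the $(x',t)$ variables the quantity $\sqrt t\,u_{x_it}$ equals $u_{x_ix_n}$, which is one of the top--order terms controlled by the model estimate (this is exactly the term $\|t^{1/2}D_\theta u_t\|_{\widetilde C^\alpha}$ in \eqref{high4}). Hence the perturbation $2a^{in}(\hat x_0)\sqrt t\,u_{x_it}$ contributes $|a^{in}(\hat x_0)|\cdot\|\sqrt t\,u_{x_it}\|_{\widetilde C^\alpha}$ with a \emph{fixed} (not small) constant in front; the smallness of $\sqrt t$ has already been spent in defining the top--order norm and is not available a second time for absorption. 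So the absorption step fails for general $a^{in}(\hat x_0)\neq0$.

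Two honest resolutions are available. First, in every application the paper makes of Theorem \ref{T5.4} (equation \eqref{high1} and its iterates), one has $a^{in}(\cdot,0)=0$: by Theorem \ref{thm2} the matrix in \eqref{po2} is block--diagonal on $\{s=0\}$ (since $\zeta_{s\theta}(\theta,0)=0$), so the cofactors producing $a^{in}$ vanish there. Freezing at a boundary point then gives $a^{in}(\hat x_0)=0$ and your difficulty disappears; the straightforward freezing argument you outline (normalise the tangential block, apply Theorem \ref{T5.1}, absorb $(a^{ij}(x)-a^{ij}(\hat x_0))u_{ij}$ and the lower order terms, cover and patch) goes through verbatim. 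Second, if one insists on the full generality stated in \eqref{new1}--\eqref{new2}, one needs a genuine constant--coefficient Schauder estimate for $L_0$ with nonzero cross terms; this requires either extending Horiuchi's Green--function construction in \cite{H95,H96} to that operator, or an independent argument, and is not supplied by the paper. Either way, your use of Lemma \ref{L5.5}, the interior/boundary covering, and the a~priori finiteness from Theorem \ref{T5.3} are all correct and are precisely what the paper has in mind.
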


\vskip10pt

\subsection{\bf Smoothness of  free boundary}

By the $C^{2,\alpha}$ and $W^{2,p}$ estimates in Section 5.1,  
we can prove  the higher regularity of the tangent cone of the solution $u$ to \eqref{MA-1}.
As before we assume that $u(0)=0$ and $u(x)>0$ $\forall\ x\ne 0$.

\begin{theorem}\label{thm3}
Let $\phi$ be the tangential cone of $u$ at $0$.
Then  the section $S_{1,\phi}=\{x\in \R^n:\ \phi(x)<1\}$ is uniformly convex and $C^{\infty}$ smooth 
if $g$ is positive and $C^\infty$ smooth.
\end{theorem}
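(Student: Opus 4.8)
\textbf{Proof proposal for Theorem \ref{thm3}.}
The plan is to upgrade the $C^2$ regularity of $\zeta$ obtained in Theorem \ref{thm2} to $C^\infty$ by a bootstrap argument, using the weighted $W^{2,p}$ and $C^{2,\alpha}$ estimates of Section 5.1 applied to the equations satisfied by tangential derivatives of $\zeta$. Recall that $\zeta = u/r$ satisfies the fully nonlinear, uniformly elliptic equation \eqref{po2} in $(\theta, s)$, and by Theorem \ref{thm2} we already know $\zeta \in C^2(\mathbb S^{n-1}\times[0,1))$ with $\zeta_s(\theta,0)=0$; equivalently, after the change $t = s^2/4$, $\zeta$ solves \eqref{max2} with the singular coefficient structure of Keldysh type. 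First I would differentiate the log of equation \eqref{po2} (or \eqref{max2}) once in a tangential direction $\theta_\ell$; as in the derivation of \eqref{linear-1}, the derivative $V = \zeta_{\theta_\ell}$ satisfies a linear equation of the form
\begin{equation*}
V_{ss} + \frac{n+2}{n}\frac{V_s}{s} + {\Small\text{$\sum_{i,j=1}^{n-1}$}} a^{ij} V_{\theta_i\theta_j} + {\Small\text{$\sum_{i=1}^{n-1}$}} a^i V_{\theta_i s} + {\Small\text{$\sum_{i=1}^{n-1}$}} b^i V_i + b^0 V_s = h,
\end{equation*}
where the coefficients are $C^\alpha$ (being continuous functions of $D^2\zeta$, $\zeta_s/s$, $\theta$, $s$, all of which are now $C^\alpha$ by Theorem \ref{thm2} and equation \eqref{po2}), with $b^n/a^{nn} = \frac{n+2}{n} > 1$ a constant and with $h \in C^\alpha$. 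The key point is that the leading Keldysh structure is exactly that of \eqref{new1} under \eqref{new2}; there is the extra mixed term $a^i V_{\theta_i s}$, but as in the proof of Lemma \ref{lemconti2} this can be absorbed because $a^i$ vanishes appropriately at $s=0$ (it carries a factor $s$, so after the change $t = s^2/4$ it has the right weight), so the freezing-coefficient argument of Theorem \ref{T5.4} still applies.

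Next I would run the bootstrap. Apply Theorem \ref{T5.4} to conclude $V = \zeta_{\theta_\ell} \in C^{2,\alpha}$, hence $\zeta \in C^{3,\alpha}$ in $\theta$ up to $s=0$. Then the coefficients $a^{ij}, a^i, b^i, b^0, h$ in the linearized equation for $\zeta_{\theta_\ell}$ are themselves $C^{1,\alpha}$; differentiating the linearized equation again in $\theta$ and applying Theorem \ref{T5.4} once more gives $\zeta \in C^{4,\alpha}$ in $\theta$, and so on. Iterating, $\zeta(\cdot, s)$ is $C^\infty$ in $\theta$ up to $s = 0$, with estimates uniform down to the boundary. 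To also get smoothness in $s$: once $\zeta$ is smooth in $\theta$, equation \eqref{max2} written in the variable $t = s^2/4$ is, reading off the $(1,1)$ cofactor, a second-order ODE in $t$ of the form $t\zeta_{tt} + \frac{n+1}{n}\zeta_t = F$ with $F$ smooth in $(\theta,t)$ (it is a smooth function of $\bar g$ and of the $\theta$-derivatives of $\zeta$, by the cofactor expansion of the determinant), and by the Frobenius-type analysis of such singular ODEs — equivalently, since $t\partial_t^2 + \frac{n+1}{n}\partial_t$ is the radial Laplacian in dimension $2(n+1)/n$ after the substitution back to $s$, and using the representation $\zeta(\cdot,t) = \zeta(\cdot,0) + \int_0^t \tau^{-(n+1)/n}\int_0^\tau \rho^{(n+1)/n-1} F\, d\rho\, d\tau$ as in \eqref{psimb} — one reads off that $\zeta$ is a smooth function of $\theta$ and $t = s^2/4$. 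In particular $\zeta$ is $C^\infty$ as a function of $(\theta, s)$ (with only even powers of $s$ appearing), which is what we want.

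Finally, translating back: $S_{1,\phi}$ is uniformly convex and $C^{1,1}$ by Section 2, $C^2$ by Theorem \ref{T4.1}, and by the boundary trace $\zeta(\theta,0) = \phi(\theta,r)/r$ established after \eqref{zeta0}, the smoothness of $\zeta$ in $\theta$ up to $s = 0$ is precisely the $C^\infty$ smoothness of the section $S_{1,\phi}$. This proves Theorem \ref{thm3}, and hence (by the duality of \eqref{dual}) the smoothness part of Theorems \ref{thmB} and \ref{thmA}.

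\textbf{Main obstacle.} The delicate point is verifying that the linearized equations at each stage of the bootstrap genuinely fall under the hypotheses of Theorem \ref{T5.4} — in particular that the mixed second-order term $\sum a^i V_{\theta_i s}$ can be incorporated into the Keldysh framework (via a factor of $s$, i.e. a factor of $t^{1/2}$ after $t = s^2/4$) and does not destroy the ellipticity or the constancy of the ratio $b^n/a^{nn}$, and that the new right-hand side terms produced by commuting $\theta$-derivatives through the Ricci identity remain in $C^\alpha$ at each step. Once the structural invariance of the equation class under tangential differentiation is confirmed, the iteration itself is routine, and the passage from smoothness in $\theta$ to joint smoothness in $(\theta, s^2)$ is a standard ODE computation.
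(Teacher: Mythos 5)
Your overall plan---differentiate \eqref{po2} tangentially, apply the Keldysh-type a priori estimates of Section 5.1 to the linearized equations, iterate in $\theta$, and then recover smoothness in $s$ from the explicit ODE representation in $t=s^2/4$---is the same as the paper's, and the second half (the $\theta$-bootstrap via Theorem \ref{T5.4} and the integral formula for $\zeta$ in $s$) matches the actual proof of Theorem \ref{thm4}. But there is a genuine gap at the very first step. You assert that the coefficients of the linearized equation for $V=\zeta_{\theta_\ell}$ are already $C^\alpha$ ``by Theorem \ref{thm2}''. Theorem \ref{thm2} only gives $\zeta\in C^{2}(\mathbb S^{n-1}\times[0,1))$, i.e.\ $D^2\zeta$ and $\zeta_s/s$ are merely \emph{continuous} up to $s=0$, not H\"older continuous there. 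Since the coefficients $a^{ij}$, $a^i$ and the right-hand side $h$ are built from $D^2\zeta$ and $\zeta_s/s$, you cannot invoke the Schauder-type estimate of Theorem \ref{T5.4} at this stage: its hypotheses require $a^{ij},f\in C^\alpha(\overline{B_1^+})$, and interior analyticity for $s>0$ does not supply a uniform H\"older modulus down to $s=0$.

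The paper closes exactly this gap by inserting an intermediate step: it first applies the weighted $W^{2,p}$ estimate, Theorem \ref{T5.3}, whose hypotheses only require \emph{continuous} coefficients, to the once-differentiated equation \eqref{high1}; the Sobolev embedding $W^{1,p}_{\mu_b}\hookrightarrow C^\alpha$ for $p>n+b$ then yields $\zeta_{\theta_k}\in C^{1,\alpha}$ up to $s=0$. With this in hand, the equation \eqref{po2} read as the scalar ODE $\zeta_{ss}+\frac{n+2}{n}\frac{\zeta_s}{s}=\tilde f$ with $\tilde f\in C^\alpha$ gives, via the explicit representation \eqref{ode2}--\eqref{ode3}, that $\zeta_{ss}\in C^\alpha$, hence $\zeta\in C^{2,\alpha}(\mathbb S^{n-1}\times[0,1))$. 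Only at that point are the coefficients of the linearized equations H\"older continuous, and your Schauder bootstrap with Theorem \ref{T5.4} can begin. Your proposal should be amended to include this $W^{2,p}$-plus-ODE upgrade from $C^2$ to $C^{2,\alpha}$ before the first application of Theorem \ref{T5.4}; the remainder of your argument then goes through as in the paper.
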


To prove Theorem \ref{thm3},  by the definition of $\zeta$ in \eqref{zeta0}, 
it suffices to prove that  $\zeta(\theta,0)\in C^\infty(\mathbb S^{n-1})$.
We have the following stronger result, which implies Theorems \ref{thm3} and  \ref{thmB}.

\begin{theorem}\label{thm4} 
Let $\zeta(\theta,s)\in C^{2}(\mathbb S^{n-1}\times [0,1))$ be a solution to \eqref{po2}.
Assume that $\zeta_{s}(\theta,0)=0$
and  $\bar g$ is positive and smooth.
Then $\zeta(\theta,s)\in C^{\infty}(\mathbb S^{n-1}\times[0,1))$.
\end{theorem}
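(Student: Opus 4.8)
The plan is to run a bootstrap argument based on the weighted Schauder and $W^{2,p}$ estimates of Section 5.1, starting from the $C^2$ regularity already obtained in Theorem \ref{thm2}. Since $\zeta \in C^2(\mathbb S^{n-1}\times[0,1))$ and $\bar g>0$, equation \eqref{po2} is uniformly elliptic, and because we work in $s$ (which makes the singular term $\frac{n(n+2)}{4}\frac{\zeta_s}{s}$ behave like a Keldysh term with $b = \frac{n+2}{n} > 1$), the linearization of \eqref{po2} in a tangential direction $\gamma$ falls into the class \eqref{new1}–\eqref{new2}. First I would fix $\theta_0\in\mathbb S^{n-1}$, normalize $\theta_0=0$, and work in the half-ball $B_\rho^+$ in the $(\theta,s)$ coordinates. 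Differentiating \eqref{po2} once in a tangential direction $\gamma$ and writing $v = \zeta_\gamma$, one gets a linear singular elliptic equation of the form \eqref{new1} with coefficients built from $D^2\zeta$, which are only $C^0$ at this stage; the right-hand side involves $D\bar g$, which is $C^1$ (indeed smooth). Applying Theorem \ref{T5.3} gives $v = \zeta_\gamma \in W^{2,p}_{\mu_b}(B_{\rho/2}^+)$ for every $p$, hence $\zeta \in C^{2,\alpha}$ up to $\{s=0\}$ by the weighted Sobolev embedding \cite[Lemma B.3]{HH12}. Crucially, one must also check the compatibility condition $v_s(\theta,0)=0$, i.e. $\zeta_{s\gamma}(\theta,0)=0$, which is exactly the content of Lemma \ref{lemconti1} (the mixed derivatives vanish on $\{s=0\}$); this is what allows the use of the Keldysh estimates with the Neumann-type condition.

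Once $\zeta \in C^{2,\alpha}(\overline{B_{\rho/2}^+})$, the coefficients of the linearized equation for $\zeta_\gamma$ are themselves $C^\alpha$, so Theorem \ref{T5.4} upgrades $\zeta_\gamma$ to $C^{2,\alpha}$, i.e. $\zeta \in C^{3,\alpha}$ up to the boundary. Iterating, at stage $k$ one differentiates \eqref{po2} $k$ times in tangential directions, obtains a linear singular elliptic equation for a $k$-th tangential derivative of $\zeta$ whose coefficients are $C^{k-2+\alpha}$ (hence at least $C^\alpha$) and whose right-hand side is a polynomial expression in lower-order derivatives of $\zeta$ and derivatives of $\bar g$, all known to be $C^\alpha$ by induction; Theorem \ref{T5.4} then gives $\zeta \in C^{k+1,\alpha}$ in the tangential variables up to $\{s=0\}$. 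To recover the full regularity (including $s$-derivatives), I would use the equation itself: \eqref{po2}, being uniformly elliptic, lets one solve for $\zeta_{ss}$ (equivalently $t\zeta_{tt}$ after $t=s^2/4$, cf. \eqref{max2}) in terms of the other second derivatives, and differentiating that relation together with the estimate $\|\frac{\zeta_s}{s}\|_{C^\alpha}\le C$ from Theorem \ref{T5.4} propagates the gained tangential regularity into the $s$-direction. At each stage the compatibility condition for the next differentiated equation — that the appropriate derivative has vanishing $x_n$-derivative on $\{s=0\}$ — follows from differentiating the relation $\zeta_{s\theta}(\theta,0)=0$ tangentially, together with an inductive use of the structure of \eqref{po2}. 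Carrying this induction to all orders yields $\zeta \in C^\infty(\mathbb S^{n-1}\times[0,1))$.

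The main obstacle I anticipate is the bookkeeping of the compatibility (Neumann-type boundary) conditions at each stage of the induction: Theorems \ref{T5.3}–\ref{T5.4} are stated for solutions in $W^{2,p}_{loc}\cap L^\infty$, and Lemma \ref{L5.5} guarantees the boundary condition $u_{x_n}=0$ is automatically satisfied there, so the delicate point is to verify that the differentiated quantities (tangential derivatives of $\zeta$, and the quantity $\zeta_{ss}$ read off from the equation) indeed lie in the right function space and inherit the vanishing $s$-derivative on $\{s=0\}$ at each step, so that the singular Keldysh estimates apply cleanly. A secondary technical point is handling the covariant derivatives on $\mathbb S^{n-1}$: differentiating \eqref{po2} tangentially more than once brings in curvature terms through the Ricci identity (as already noted in the proof of Lemma \ref{lemconti3}), but these are lower order and smooth, so they only add harmless terms to the right-hand side. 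Finally, for the last sentence of Theorem \ref{thmB} — analyticity of $\zeta$ in $\theta$ when $f$ (hence $\bar g$) is analytic — I would instead invoke the method of \cite{Kato96, Bla}: having the $C^\infty$ estimates with Cauchy-type bounds, one sets up the analyticity by estimating the growth of $\|D^k_\theta \zeta\|$ using the linearized singular elliptic equation and the weighted $W^{2,p}$ estimate uniformly in $k$, exactly as in the classical Morrey–Nirenberg scheme adapted to the Keldysh setting. This is the content flagged for Section 6 of the paper and I would carry it out there.
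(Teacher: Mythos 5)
Your proposal is correct and follows essentially the same route as the paper: differentiate \eqref{po2} tangentially, bootstrap with the Keldysh-type estimates of Theorems \ref{T5.3} and \ref{T5.4} to obtain all tangential derivatives of $\zeta$ in $C^{2,\alpha}$ up to $\{s=0\}$, and then propagate the regularity into the $s$-direction using the structure of the equation after the substitution $t=s^2/4$. The one place where the paper is more explicit than you are is in recovering $\zeta_{ss}\in C^\alpha$ and the higher $s$-derivatives: it integrates the ODE $\zeta_{ss}+\frac{n+2}{n}\frac{\zeta_s}{s}=\tilde f$ in closed form (formulas \eqref{ode2}--\eqref{ode3}) rather than appealing to the $C^\alpha$ bound on $\frac{\zeta_s}{s}$, and then iterates the Schauder estimate in the $t$-variable with a H\"older norm adapted to $\sqrt{x_n}$-differences, which is the precise mechanism behind your phrase ``propagates the gained tangential regularity into the $s$-direction.''
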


 \begin{proof} 
Differentiating \eqref{po2} with respect to $\theta_k$, one gets
\begin{equation}\label{high1}
\mathcal L(V)=V_{ss}+\frac{n+2}{n}\frac {V_s}{s}
  + {\Small\text{$ \sum_{i,j=1}^{n-1} $}} a^{ij} V_{\theta_i\theta_j} + {\Small\text{$ \sum_{i=1}^{n-1}  $}} a^{in} V_{\theta_is}=h
\end{equation}
where $V=\zeta_{\theta_k}$.
To apply the a priori estimate in \S5.1 to \eqref{high1}, we express equation \eqref{high1}
in a {\it local coordinates} on $\mathbb S^{n-1}$.
By Theorem \ref{thm2},  $a^{ij}$ and $h$ are continuous in $\theta,s$. 
By Lemma \ref{unif-ellip}, the operator $\mathcal L$ is uniformly elliptic.
Hence all the assumptions in Theorem \ref{T5.3} are fulfilled for $V$.  
Letting  $p>n+b$ and
by the Sobolev embedding, $W^{1,p}_{\mu_b}\to C^\alpha$ for some $\alpha>0$ (Lemma B.3, \cite{HH12}),
we have $V=\zeta_{\theta_k}\in C^{1,\alpha}(\mathbb S^{n-1}\times [0,1))$. 
 
Write equation \eqref{po2} in the form
\begin{equation}\label{ode1}
\zeta_{ss}+\frac{n+2}{n}\frac{\zeta_s}{s}=\tilde f\in C^\alpha(\mathbb S^{n-1}\times [0,1)) ,
\end{equation}
where $\tilde f$ is a smooth function of  $\theta, s, \zeta,  D \zeta,  D \zeta_\theta$.
Hence $\tilde f$ is H\"older continuous in $s, \theta$.
The solution to \eqref{ode1} is given by
\beq \label{ode2}
\zeta(\theta, s)=\zeta(\theta, 0)+\int_0^s  r^{-\frac{n+2}{n}}\int_0^r \lambda^{\frac{n+2}{n}} \tilde f(\theta,\lambda)d\lambda. 
\eeq
Hence we have  
\beq\label{ode3}
\begin{split}
 \zeta_s(\theta,s) 
    &=s^{-\frac{n+2}{n}}\int_0^s \lambda^{\frac{n+2}{n}} \tilde f(\theta,\lambda)d\lambda,\\
 \zeta_{ss}(\theta,s) 
    &=-\frac{n+2}{n}s^{-\frac{n+2}{n}-1}\int_0^s \lambda^{\frac{n+2}{n}} \tilde f(\theta,\lambda)d\lambda
                                 +\tilde f\in C^\alpha(\mathbb S^{n-1}\times[0,1)). 
\end{split}
\eeq
This implies $\zeta\in C^{2,\alpha}(\mathbb S^{n-1}\times [0,1))$. 
Hence  the coefficients $a^{ij}, h \in C^{\alpha}(\mathbb S^{n-1}\times [0,1))$.  

By the H\"older continuity of the coefficients,
we can obtain the $C^{2,\alpha}$ regularity of $ D_{\theta}^k \zeta$ for all $k\ge 1$.
Indeed, differentiating equation \eqref{high1} in $\theta$, and by Theorem \ref{T5.4},
we infer that $\zeta_{\theta_k}\in C^{2,\alpha}(\mathbb S^{n-1}\times [0,1))$. 
Differentiating equation \eqref{high1} in $\theta$ again, and also by Theorem \ref{T5.4},
we have $ D_{\theta}^2 \zeta\in C^{2,\alpha}(\mathbb S^{n-1}\times [0,1))$.
Repeating the argument we obtain
$ D_{\theta}^k \zeta\in C^{2,\alpha}(\mathbb S^{n-1}\times [0,1))$ for all integers $k\ge 0$.


To prove the higher order regularity of $\zeta$ in $s$  and the expansion \eqref{tur}, 
let  $t=\frac {s^2}4$. Then equation \eqref{po2} changes to equation  \eqref{max2}. 
Differentiating \eqref{max2} in $t$ and letting $V=\zeta_t$,  one gets
\begin{equation}\label{high-a3}
\begin{split}
\mathcal L(V)=tV_{tt}+\frac{2n+1}{n} V_{t}+ {\Small\text{$ \sum_{i,j=1}^{n-1} $}} a^{ij}V_{\theta_i\theta_j}
+t {\Small\text{$ \sum_{i=1}^{n-1} $}}  \tilde a^{ij} V_{t\theta_i}=h
\end{split}
\end{equation}
where $a^{ij}, \tilde a^{ij},h$ are smooth as functions of
$\theta,t,\zeta, D_{\theta,t} \zeta,  D^2_{\theta}\zeta, \zeta_{t\theta},t\zeta_{tt}.$
Note that the operator $\mathcal L$ in \eqref{high-a3} is different from that in \eqref{max3}.
We have moved some terms to the right hand side.

By the regularity $\zeta(\theta,s),\zeta_{\theta}(\theta,s)\in C^{2,\alpha}(\mathbb S^{n-1}\times [0,1))$ 
and the relation $t=\frac{s^2}{4}$, we have $ D^2_{\theta}\zeta, \zeta_{t\theta},t\zeta_{tt}\in C^\alpha(\mathbb S^{n-1}\times [0,1))$ in variables $\theta,t$. Hence, $a^{ij}, \tilde a^{ij},h\in {\color{red}C^{\alpha}}$ in variables $\theta,t$.

To apply  Theorem \ref{T5.4} to equation \eqref{high-a3},
we need to change the H\"older norm from the variable $s$ to $t=s^2/4$.  
Hence we denote
$${\begin{split}
 & \|f\|_{\widetilde C^{\alpha}(\overline{B^+_1})}
   = \|f\|_{L^\infty(B_1^+)} +\sup_{x\ne y\in B_1^+}\frac{|f(x)-f(y)|}{(|x'-y'|^2+(\sqrt{x_n}-\sqrt{y_n})^2)^{\frac \alpha2}} ,\\
 &   \|f\|_{\widetilde C^{k, \alpha}(\overline{B^+_1})}
   =   \|f\|_{C^k(\overline{B^+_1})} +\sup_{x\ne y\in B_1^+,|\beta|=k}\frac{|D^\beta f(x)-D^\beta f(y)|}{(|x'-y'|^2+(\sqrt{x_n}-\sqrt{y_n})^2)^{\frac \alpha2}} .
\end{split}} $$
Then estimate \eqref{544} can be reiterated as 
\begin{equation}\label{high4}
\begin{split}
 \|tu_{tt}\|_{\widetilde C^\alpha(\overline{B_{1/2}^+})}
 &+\|t^{\frac 12} D_\theta u_{t}\|_{\widetilde C^\alpha(\overline{B_{1/2}^+})}
  +\| D^2_\theta u\|_{\widetilde C^\alpha(\overline{B_{1/2}^+})}\\
 &+\|u\|_{\widetilde C^{1,\alpha}(\overline{B_{1/2}^+})}
    \le C\left(\|f\|_{\widetilde C^\alpha(\overline{B_1^+})}+\|u\|_{\widetilde C^\alpha(\overline{B_1^+})}\right).
\end{split}
\end{equation}
Applying \eqref{high4} to equation \eqref{high-a3}, we obtain
$t\partial_t^3\zeta,\ \partial_\theta^2\partial_t\zeta, \ t^{1/2}\partial_t^2\partial_\theta\zeta
\in \widetilde C^{\alpha}(\mathbb S^{n-1}\times [0,1))$, and
$\zeta_t\in \widetilde C^{1,\alpha}(\mathbb S^{n-1}\times [0,1))$.
Differentiating \eqref{high-a3} in $t$ repeatedly and using estimate \eqref{high4}, we obtain
$\zeta \in C^\infty(\mathbb S^{n-1}\times [0,1))$.

In the above we have shown that $\zeta$ is smooth in $t$.
Recall that $t=\frac{s^2}4= \frac{r^n}{4}$, and $\zeta=\frac ur$. 
Hence we obtain the Taylor expansion \eqref{tur}.
This completes the proof of Theorem \ref{thm4} and also that of Theorem \eqref{thmB}.
\end{proof}

\section{\bf Analyticity of the free boundary}

In this section, we prove the analyticity of the free  boundary $\Gamma$. 
Let $u$ be the solution to \eqref{MA-1}.
Let $\zeta=\frac ur$  and $s=r^{n/2}$ as in \S4, so that $\zeta$ satisfies equation \eqref{po2}.
Then the analyticity of the free boundary is equivalent to showing $\zeta(\theta,0)\in C^\omega(\mathbb S^{n-1})$.  
Here $C^\omega$ denotes the set of analytic functions. 
As before we assume that $u(0)=0$ and $u(x)>0$ $\forall\ x\ne 0$.
We have the following result.

\begin{theorem}\label{thm6} 
Let $\zeta(\theta,s)$ be a solution to \eqref{po2}.
Assume that $\bar g$ is positive and analytic. 
Then $\zeta(\cdot,s)\in C^{\omega}(\mathbb S^{n-1})$, for any $s\in [0,1)$.
\end{theorem}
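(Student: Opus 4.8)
The plan is to bootstrap analyticity from the $C^\infty$ regularity already proved in Theorem \ref{thm4}, following the method of Kato--Kinderlehrer type estimates used in \cite{Kato96, Bla}. Since we have shown $\zeta\in C^\infty(\mathbb S^{n-1}\times[0,1))$ and, via the expansion \eqref{tur}, that $\zeta$ is smooth in $t=s^2/4$, the remaining task is purely to establish quantitative control of the $\theta$-derivatives: namely, to show that there exists $R>0$ such that for every multi-index $\beta$ in the $\theta$-variables,
\begin{equation*}
\|D_\theta^\beta \zeta\|_{\widetilde C^{2,\alpha}(\mathbb S^{n-1}\times[0,1/2])}\le C\, R^{|\beta|}\, |\beta|!,
\end{equation*}
which is the defining estimate for analyticity of $\zeta(\cdot,s)$ locally uniformly in $s$. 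The weighted Schauder estimate of Theorem \ref{T5.4} (reiterated in the $\widetilde C^\alpha$ norm adapted to $t=s^2/4$, as in \eqref{high4}) is the linear engine that will drive the induction, exactly as the classical interior Schauder estimate drives the Morrey--Nirenberg analyticity proof.

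The key steps, in order, are as follows. First I would fix a point $\theta_0\in\mathbb S^{n-1}$, work in a fixed local coordinate chart, and set up a nested family of half-balls $B_{\rho_j}^+$ with $\rho_j\downarrow \rho_\infty>0$ chosen geometrically, $\rho_j-\rho_{j+1}\approx 2^{-j}(\rho_0-\rho_\infty)$, together with cutoff functions supported in these balls. Second, I would differentiate equation \eqref{po2} (equivalently \eqref{high-a3} after the change $t=s^2/4$) a total of $k$ times in the $\theta$-directions; because the singular term $\frac{n+2}{n}\frac{\zeta_s}{s}$ (resp.\ the $t$-form) involves only $s$ and $\theta$-derivatives commute through it cleanly, and because the Monge--Amp\`ere nonlinearity $\bar g$ is analytic in its finitely many arguments $\zeta, D\zeta, D^2_\theta\zeta, t\zeta_{tt}, \ldots$, the $k$-th differentiated equation for $V=D_\theta^k\zeta$ has the schematic form $\mathcal L(V)=H_k$, where $\mathcal L$ is the fixed uniformly elliptic, Keldysh-type operator of \eqref{high-a3} and $H_k$ is a universal polynomial (coming from Fa\`a di Bruno / the Leibniz rule applied to the analytic functions $a^{ij}$, $\bar g$ and their derivatives) in lower-order $\theta$-derivatives of $\zeta$ and their $t$-derivatives, all of which are already controlled by the inductive hypothesis. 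Third, apply the weighted Schauder estimate \eqref{high4} on the chain of shrinking balls to obtain, with cutoffs, a recursion of the form
\begin{equation*}
M_{k}\le C\sum (\text{products of }M_j\text{ with }j<k)\cdot(\text{combinatorial factor}),
\end{equation*}
where $M_k=\sup_{\rho}\, (\rho_0-\rho)^{k}\|D_\theta^k\zeta\|_{\widetilde C^{2,\alpha}(B_\rho^+)}$ or a similar scale-adapted quantity. Fourth, close the induction by the standard generating-function / majorant argument: one verifies that an ansatz $M_k\le A\,B^k k!$ (or the equivalent for the Nirenberg-type weighted seminorms) is consistent with the recursion provided $B$ is taken large depending only on $n,\alpha,C_*$ and the analyticity radii of $\bar g$; this yields the desired factorial bound on $D_\theta^k\zeta$ and hence analyticity of $\zeta(\cdot,s)$ in $\theta$ uniformly for $s$ in compact subsets of $[0,1)$, and in particular at $s=0$, giving analyticity of the free boundary.

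The main obstacle I expect is bookkeeping the combinatorics of the differentiated equation so that the recursion genuinely majorizes to a geometric-times-factorial bound rather than something worse. The Monge--Amp\`ere term contributes products of up to $n-1$ second derivatives, so differentiating it $k$ times produces sums over partitions of $k$ into up to $n-1$ parts (and further into partitions because each factor is itself a composition with the analytic $\bar g$); one must check that the number of such terms and the norms of the universal coefficients grow at most like $C^k k!$ so that, after dividing by $k!$, the series that needs to converge is dominated by a convergent one. A secondary technical point is that the estimate \eqref{high4} is in the anisotropic norm $\widetilde C^{k,\alpha}$ adapted to the degenerate variable $t=s^2/4$; one must carry this anisotropy consistently through every step of the induction (the $\theta$-derivatives are the "good" directions and are the only ones being counted in the factorial, while the finitely many $t$-derivatives needed at each stage are absorbed by the $C^\infty$ bounds of Theorem \ref{thm4}), and verify that the product and composition estimates respect it. Once these combinatorial and anisotropic-norm issues are handled, the majorant argument is routine and concludes the proof; the same reasoning, applied at every $\theta_0$, gives the global statement.
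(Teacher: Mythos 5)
Your proposal is correct in outline and rests on the same two pillars as the paper's proof: (i) the structural observation that tangential ($\theta$-)derivatives commute with the singular coefficient $\frac{1}{s}$ (resp.\ the Keldysh term in $t$), so one only iterates derivatives in the $\theta$-directions and only claims analyticity in $\theta$; and (ii) the weighted Schauder estimate of Theorem \ref{T5.4} as the linear engine driving a factorial-bound induction. The one genuine difference is organizational: you propose the classical Morrey--Nirenberg scheme with a nested family of shrinking half-balls and scale-weighted seminorms $M_k=\sup_\rho(\rho_0-\rho)^k\|D_\theta^k\zeta\|$, whereas the paper adopts Kato's simplification \cite{Kato96, Bla}, which keeps a single fixed cutoff $\rho$ and multiplies the $(N-1)$-st tangential derivative by $\rho^{N-1}$, reducing the induction to commutator estimates $[\partial_x^\beta,\rho^{N-1}]$ and $[\mathcal L_0,\rho^{N-1}]$ as in \eqref{6.7}; this avoids tracking a sequence of domains and makes the combinatorics you worry about essentially those already handled in \cite{Bla}. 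Both routes work; Kato's buys a shorter bookkeeping argument, yours is more self-contained if one does not wish to cite \cite{Bla} for the nonlinear commutator estimates. One point to tighten: your remark that the $t$-derivatives appearing at each stage are ``absorbed by the $C^\infty$ bounds'' is too weak as stated, since differentiating the nonlinearity $k$ times in $\theta$ produces terms such as $\partial_\theta^k\zeta_t$ and $\partial_\theta^{k-1}D^2\zeta$ whose size grows with $k$; you must include the mixed second derivatives in the inductive quantity itself (as your choice of the $\widetilde C^{2,\alpha}$ norm of $D_\theta^\beta\zeta$ in fact does, and as the paper makes explicit in \eqref{grow4}), rather than appeal to a fixed smoothness bound.
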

 
To prove the analyticity of a function $u$, one needs to control the growth rate of its derivatives.
That is, for any multi-index $\alpha=(\alpha_1, \cdots, \alpha_n)$,  we need to prove
\beq \label{grow1}
 |\p^\alpha u|\le  C A^{|\alpha|} \alpha! 
 \eeq
for  sufficiently large constants $C, A$, independent of $\alpha$, 
where $|\alpha|=\alpha_1+\cdots +\alpha_n$, $\alpha!=\alpha_1!\cdots\alpha_n!$,
and $\partial^\alpha=\partial_{x_1}^{\alpha_1}\cdots\partial_{x_n}^{\alpha_n}$.

We will use the local coordinate system \eqref{sphcor1} in a neighbourhood of $\theta=0$.
For simplicity of notations,  
we use $x'=(x_1, \cdots, x_{n-1})$ to denote $\theta=(\theta_1, \cdots, \theta_{n-1})$, and
use $x_n$ to denote $s$. 
Then in the coordinates  \eqref{sphcor1},  equation \eqref{po2} can be written in the form 
\begin{equation}\label{eqana1}
\zeta_{nn} + b\frac{\zeta_n}{x_n} + 
F(x, \zeta,  \p_{x_i} \zeta,  \p^2_{x_ix_j} \zeta)=0, \quad (x',x_n)\in Q_{r_0} ,
\end{equation}
where $b=\frac{n+2}n$, $Q_{r_0}= B'_{r_0}(0)\times [0, 1]$,  and $B'_{r_0}(0)=\{|x'|<r_0\}$ is a ball in $\R^{n-1}$.

The function $F(x, z, p,  r)$ is defined for  
$x\in   Q_{r_0} $, $z\in\R^1$, $p\in\R^n$, and $r=(r_{ij})\in \mathbb S^{n\times n}$ but is independent of $r_{nn}$.
As a function, $F$  is analytic in its arguments.
By Lemma \ref{unif-ellip}, equation \eqref{eqana1} is uniformly elliptic.
  
\vskip5pt

The analyticity of solutions to uniformly elliptic equations has been studied by many people \cite{F58, Mo58}.
A simple proof for the linear elliptic equation was found in \cite  {Kato96}, 
and it was extended to nonlinear elliptic equation in \cite{Bla}. 
Here we adopt the proof from \cite{Kato96, Bla}.
By \cite{F58, Mo58}, $\zeta(x)$ is analytic when $x_n>0$. 
Here we show that $\zeta(x)$ is analytic in $x'$ when $x_n=0$. 
 
In \cite{Kato96}, Kato demonstrated his idea by considering the equation
\beq\label{Du2}
\Delta u =  u^2\ \ \text{in}\ \Omega,
\eeq
where  $\Omega$ is a domain in $\R^n$.
Instead of \eqref{grow1}, Kato's strategy is to establish the estimate
\beq\label{grow2}
\|\rho^{|\alpha|} \p_x^\alpha u\|_{H^m(B_{r_0})} \le C A^{|\alpha|} |\alpha| ! ,
\eeq
where $\rho$ is a cut-off function such that $\rho=1$ in $B_{r_0/2}$ and $\rho=0$ outside $B_{r_0}$.
He chooses $m=\big[\frac n2\big] +1$ such that $\|u\|_{L^\infty(B_{r_0})}\le C_{r_0}\|u\|_{H^m(B_{r_0})}$.
Hence \eqref{grow2} implies  \eqref{grow1}.
In \cite{Bla}, Blatt extended the estimate \eqref{grow2} to the general fully nonlinear, uniformly elliptic equation
\beq\label{fne}
\Phi (x, u(x), Du(x), D^2 u(x))=0\ \ \text{in}\ \Omega.
\eeq

The proof in \cite{Kato96} is rather simple,
one can easily see that  
the norm $H^m(B_{r_0})$ in \cite{Kato96, Bla} can be replaced by the H\"older space $C^\delta(B_{r_0})$, 
using the $C^{2,\delta}$ estimate (Schauder estimate) for  the Laplace equation.

To apply the argument in \cite{Kato96, Bla} to our equation \eqref{eqana1}, we use the H\"older norm 
$\|\cdot\|_{C^\delta(B_{r_0})}$ instead of the norm $\|\cdot\|_{H^m(B_{r_0})}$, and use the $C^{2,\delta}$ estimate 
(Theorem \ref{T5.4}).
As our equation contains the singular term $\frac{\zeta_n}{x_n}$, 
we cannot obtain the analyticity of $\zeta$ on $x_n$ (near $x_n=0$) by their simple  proof, 
but we can obtain the analyticity of $\zeta$ on $x'$, namely  
\beq\label{grow3}
\|\rho^{|\alpha|} \p_{x'}^\alpha \zeta \|_{C^\delta(B_{r_0})} \le C A^{|\alpha|} |\alpha| !
\eeq
for all multi-index $\alpha=(\alpha_1, \cdots, \alpha_{n-1})$.

\begin{proof}[Proof of Theorem \ref{thm6}]
As the proof is similar to that in \cite{Kato96, Bla}, we sketch the main idea only.
Let 
\beq\label{LiE}
\mathcal L[\phi]=:\phi_{nn}+b\frac{\phi_n}{x_n} 
+{\small\text{$\sum_{i+j<2n}$}} a_{ij}(x) \phi_{ij}+ {\small\text{$\sum_{i=1}^{n}$}} b_i(x) \phi_i+c(x) \phi
\eeq
be the linearized operator of \eqref{eqana1}, 
where $a_{ij}, b_i, c$ are functions of $x, \zeta, \p_i\zeta\ (1\le i\le n)$ and $\p^2_{ij}\zeta\ (i+j<2n)$,
 and $b=\frac{n+2}{n}$.
 From the proof of Lemma \ref{lemconti1}, we have $a_{in}(0)=0$ for $i \le n-1$. 
 Denote
$$
\mathcal L_0[\phi]=:\phi_{nn}+b\frac{\phi_n}{x_n}  +{\small\text{$\sum_{i,j<n}$}} a_{ij}(0) \phi_{ij}.
$$
 
Let  $\rho =\rho(|x'|)$ be a cut-off function of $x'$, such that
$\rho(x')=1$ when $|x'|< r_0/2$ and $\rho(x)=0$ when $|x'|>r_0$.
For any multi-indices $\alpha\in\R^{n-1}$ and $\beta\in\R^n$ with $|\alpha|=N-1\ge 1$ and $|\beta|=2$, 
as in \cite{Kato96, Bla}, we compute
\beq\label{6.7}
{\begin{split}
 & \| \rho^{N-1}  \p_x^\beta\p_{x'}^\alpha  u\|  
  \le  \| \p^\beta_{x} \big[\rho^{N-1} \p_{x'}^{\alpha} u\big]\| + \| \big[\p_{x}^\beta, \rho^{N-1}\big] \p_{x'}^{\alpha} u\| \\
  \le & C\| \mathcal L_0\big[ \rho^{N-1} \p_{x'}^{\alpha} u\big]\| + \| \big[\p_{x}^\beta, \rho^{N-1}\big] \p_{x'}^{\alpha} u\|+CA^{N+1}(N+1)! \\
  =&  C\| \mathcal L_0\big[ \rho^{N-1} \p_{x'}^{\alpha} u\big]\| + \| \big[\p_{x'}^\beta, \rho^{N-1}\big] \p_{x'}^{\alpha} u\|+CA^{N+1}(N+1)! \\
 \le   &C\| \rho^{N-1} \p_{x'}^{\alpha} \mathcal L_0[u]\| + C \| \big[\mathcal L_0, \rho^{N-1}\big] \p_{x'}^\alpha u\|
             + \| [\p_{x'}^\beta, \rho^{N-1}] \p_{x'}^{\alpha} u\| +CA^{N+1}(N+1)!\\
 =   &I_1 + I_2 + I_3+CA^{N+1}(N+1)!.
\end{split}}
\eeq
Here $ \|\cdot\| = \|\cdot\|_{C^\delta(\overline{B}'_{r_0} \times [0, r_0])}$, $B'_{r_0}=\{x'\in\R^{n-1}:\ |x'|<r_0\}$, and
$$\big[\p_{x}^\beta, \rho^{N-1}\big] \p_{x'}^{\alpha} u = 
\rho^{N-1}  \p_{x'}^\alpha \p_x^\beta u -  
  \p^\beta_{x} \big[\rho^{N-1} \p_{x'}^{\alpha} u\big]. $$
In the second inequality of \eqref{6.7}, we use the $C^{2,\delta}$ estimate (Theorem \ref{T5.4}) in the form
 \begin{equation}\label{609}
\|u\|_{C^{2,\delta}(\overline{B}'_{r_0} \times [0, r_0])}
             \le C\left(\|f\|_{C^{\alpha}(\overline{B}'_{2r_0} \times [0, 2r_0])}+\|u\|_{C^0(\overline{B}'_{2r_0} \times [0, 2r_0])}\right).
\end{equation}
But the cut-off function $\rho$ is supported in $B'_{r_0}$ and $u$ is analytic in the interior. 
Hence when applying \eqref{609} to \eqref{6.7}, 
the domain $\overline{B}'_{2r_0} \times [0, 2r_0]$ on the RHS of \eqref{609} can be replaced by $\overline{B}'_{r_0}\times [0,r_0]$ plus an additional term $CA^{N+1}(N+1)!$.

 The estimation for $I_1, I_2, I_3$ is the same as  in \cite{Kato96, Bla} and is omitted here. 
Note that by \eqref{6.7} and iteration, we not only obtain \eqref{grow3}, but also 
\beq\label{grow4}
\|\rho^{|\alpha|} \p_x^2 \p_{x'}^{\alpha-2} \zeta \|_{C^\delta(B_{r_0})} \le C A^{|\alpha|} |\alpha| ! .
\eeq
\eqref{grow4} is needed in the estimation of $I_1, I_2, I_3$ when differentiating equation \eqref{LiE}.
 \end{proof}
 


\vskip20pt

 \end{document}